\title{Ruled surfaces}
\date{\today}
\theoremstyle{plain}
\newtheorem{thm}{Theorem}[section]
\newtheorem{lem}[thm]{Lemma}
\newtheorem{slem}[thm]{Sublemma}
\newtheorem{cor}[thm]{Corollary}
\newtheorem{prop}[thm]{Proposition}
\newtheorem{defn}[thm]{Definition}
\newtheorem{rem}[thm]{Remark}
\newtheorem{claim}[thm]{Claim}
\newtheorem{ex}[thm]{Example}
\newcommand{\N}{\mathbb{N}}
\newcommand{\diam}[0]{\mathrm{diam}\,}
\newcommand{\e}[0]{\epsilon}
\newcommand{\supp}[0]{\mathrm{supp}}
\newcommand{\R}[0]{\mathbb R}
\newcommand{\pa}[0]{\partial}
\newcommand{\CAT}{\mathrm{CAT}}
\newcommand{\up}[0]{\uparrow}
\newcommand{\sing}[0]{\rm sing}
\newcommand{\inte}[0]{\mathrm{int}}
\DeclareMathOperator{\interior}{int}          
\newcommand{\ca}[0]{\mathcal}
\newcommand{\pmed}[0]{\par\medskip}
\newcommand{\psmall}[0]{\par\smallskip}
\newcommand{\pbig}[0]{\par\bigskip}
\newcommand{\n}[0]{\noindent}
\newcommand{\beq}[0]{\begin{equation}}
\newcommand{\eeq}[0]{\end{equation}}
\newcommand{\beqq}[0]{\begin{equation*}}
\newcommand{\eeqq}[0]{\end{equation*}}
\newcommand{\bali}[0]{\begin{align}}
\newcommand{\eali}[0]{\end{align}}
\newcommand{\benum}[0]{\begin{enumerate}}
\newcommand{\eenum}[0]{\end{enumerate}}
\begin{document}

\title[Two-dimensional spaces 
     with curvature bounded above]
{Two-dimensional metric spaces 
     with curvature bounded above I }

\author[K.Nagano]{Koichi Nagano}
\author[T.Shioya]{Takashi Shioya} 
\author[T.Yamaguchi]{Takao Yamaguchi}

\address{Koichi Nagano, Institute of Mathematics, University of Tsukuba,  Tsukuba 305-8571, Japan}
\email{nagano@math.tsukuba.ac.jp}

\address{Takashi Shioya, Mathematical Institute, Tohoku  University, Sendai  980-8578, Japan}
\email{shioya@math.tohoku.ac.jp}

\address{Takao Yamaguchi, Department of Mathematics, Kyoto University, Kyoto  606-8502, Japan}
 \curraddr{Institute of Mathematics, University of Tsukuba,  Tsukuba 305-8571, Japan}
 \email{takao@math.tsukuba.ac.jp}

\subjclass[2010]{Primary 53C20, 53C23}
\keywords{Upper curvature bound; ruled surface; singular set}
\thanks{This work was supported by JSPS KAKENHI Grant Numbers  18H01118, 15H05739, 19K03459,15K13436, 26610012, 21740036,17204003,18740023  }

\begin{abstract}
We determine the local geometric structure of two-dimensional metric spaces 
with curvature bounded above as the union of finitely many properly
embedded/branched  immersed Lipschitz disks.
As a result, we obtain a graph structure of the topological singular 
point set of such a singular surface. 
\end{abstract}

\maketitle

\tableofcontents

\section{Introduction} \label{sec:intro}

Let $X$ be a  locally compact, geodesically complete
Alexandrov space with curvature bounded above.
In this paper, we are concerned with the local structure of 
$X$. In general $X$ may have very complicated local geometry.
For instance, $X$ may have no polyhedral structure even in local.
There is such a two-dimensional space constructed by Kleiner
(cf.\cite{Ng:asymp}).
In the present paper, we completely describe the local geometry 
of such spaces in dimension two.

The study of metric spaces with curvature bounded above 
began with the work of Alexandrov \cite{Alex:Uber-eine}.
For the dimensions of such spaces $X$, 
Kleiner \cite{Kl:local} proved that the topological dimension
coincides with the maximal dimension of topological 
manifolds embedded in $X$. For  {\it geodesically complete} metric spaces $X$ with curvature bounded above, 
Otsu-Tanoue \cite{OT:riem} implicitly 
showed that the topological dimension
coincides with the Hausdorff dimension, 
which has been verified via a different method by a recent work
due to Lytchak-Nagano \cite{NL:geodesically}. \cite{NL:geodesically} has also clarified that the local geometric properties of 
geodesically complete metric spaces $X$ with curvature bounded above  have a lot of analogues to those of Alexandrov spaces with curvature bounded below
(see also  Remarks \ref{rem:rectifiable} and \ref{rem:metric-sphere} below).
Lytchak-Stadler \cite{LS} have recently proved that
for every convex open ball in a CAT$(\kappa)$-space
there exists a complete CAT$(-1)$-metric on the ball
that is locally bi-Lipschitz to the original CAT$(\kappa)$-metric;
in particular,
in local considerations on topological properties of CAT$(\kappa)$-spaces,
we may assume $\kappa$ to be $-1$.

For basic textbooks in this subject, there are several general references,
and we refer to 
Ballmann \cite{Ball:lectures}, Bridson-Haefliger \cite{bridson-haefliger},
Burago-Burago-Ivanov \cite{BBI},   Alexander-Kapovitch-Petrunin \cite{AKP}.

\par
Now let us consider our main concern, the two-dimensional such spaces.
The study in this particular dimension began with a classical  deep work 
due to Alexandrov-Zalgeller \cite{AZ:bddcurv} on two-dimensional topological manifolds with more general curvature bound, called the {\it bounded curvature}.  
They constructed the curvature
measure on such surfaces and established the Gauss-Bonnet theorem.
See also Reshetnyak \cite{Rsh:2mfd} for the work from an analytic point of view.
Generalizing \cite{AZ:bddcurv} and succeeding the works of
Ballmann-Buyalo \cite{BallBuy} and 
Arsinova-Buyalo \cite{ArsBuy}, Burago-Buyalo \cite{BurBuy:upperII} established the theory of two-dimensional polyhedra with curvature bounded above. 

Here it should be emphasized that 
there were  no general results determining  local structure even in 
dimension two. 
The purpose of this paper is to determine the  general local geometric structure 
of  two-dimensional geodesically complete metric spaces with curvature bounded above.
\par

Let $X$ be a two-dimensional locally compact, geodesically complete
metric space with curvature $\le\kappa$ for a constant $\kappa$.
For every $p\in X$, the space of directions $\Sigma_p=\Sigma_p(X)$  is 
the disjoint union of finitely  many points and connected finite graphs.
Since we are interested in the local structure, we assume 
the most essential case when 
$\Sigma_p$ is a connected graph, called a CAT(1)-graph
 (see Section \ref{sec:basic}).
We shall determine the geometry of the closed $r$-ball $B(p,r)$
around $p$ for small enough $r>0$ as follows.

Let $\mathcal S(X)$ denote
the set of all topological singular points in $X$.
For $\ell\ge 2\pi$ and $r>0$, we denote by $D^2(\ell; r)$ the  closed disk of radius $r$
around the vertex $O$ in the Euclidean cone over the circle of length
$\ell$. 
A map $f:D^2(\ell;r)\to B(p,r)$ is called {\em proper} if 
$f^{-1}(\partial B(p,r))=\partial D^2(\ell;r)$.
Let $\tau_p(r)$ denote a function 
depending on $p$ and $r$ satisfying $\lim_{r\to 0}\tau_p(r)=0$.
Let $S(p,r)$ denote the metric sphere $\pa B(p,r)$.

The main result in this paper is stated as follows.

\begin{thm} \label{thm:main}
For every $p\in X$ such that $\Sigma_p$ is a connected
graph, there exists a positive number $r_0$ such that
for every $0< r\le r_0$, $B(p,r)$ is a union of images ${\rm Im} f_i$ 
of finitely many proper 
Lipschitz immersions $f_i:D^2(\ell_i;r)\to B(p,r)$  
for some $\ell_i\ge 2\pi$,   possibly with branch point 
$f_i^{-1}(p)=\{ O\}$ satisfying the following $:$
\begin{enumerate}
 \item With respect to the length metric induced from $X$, 
   ${\rm Im} f_i$ are $\CAT(\kappa)$-spaces $;$
 \item Either $f_i$ is an embedding, or else 
  $f_i(\pa D^2(\ell_i;r))$ is the union of two circles of length $\ge 2\pi r$ connected by an arc, which could be a point.
   In the latter case, $\ell_i\ge 4\pi$ $;$
 \item  The bi-Lipschitz constant of $f_i$  is less than $1+\tau_p(r)$ when $f_i$ is an embedding. If $f_i$ is a branched immersion, the local bi-Lipschitz constant of $f_i$ 
except at $\{ O\}$  is less than $1+\tau_p(r)$. 
\end{enumerate}
 Moreover, $\mathcal S(X)\cap B(p,r)$ consists of finitely many 
   simple Lipschitz arcs starting from $p$ and reaching 
   $S(p,r)$.
\end{thm}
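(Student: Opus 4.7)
My approach is to construct, for each ``elementary cycle'' in the $\CAT(1)$-graph $\Sigma_p$, a ruled surface swept out by geodesic rays from $p$ in the directions of that cycle, and then to show that these ruled surfaces (a) cover $B(p,r)$, (b) carry $\CAT(\kappa)$ length metrics, (c) are nearly isometric to the model conical disk $D^2(\ell_i;r)$ via $f_i$, and (d) meet along a graph whose edges form the topological singular set.

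First, using Lytchak--Stadler I may assume $\kappa=-1$. I would exploit the small-scale closeness of $B(p,r)$ to the tangent cone $C_pX=C(\Sigma_p)$: for geodesically complete $\CAT(\kappa)$-spaces the rescaled ball $(B(p,r),d/r)$ is $\tau_p(r)$-close to the unit ball of $C_pX$ in a bi-Lipschitz sense. Since $X$ is two-dimensional and geodesically complete, $\Sigma_p$ is a finite $\CAT(1)$-graph without leaves, and every embedded cycle has length $\ge 2\pi$. I would then select a finite collection of locally isometric closed curves $c_i\colon\R/\ell_i\mathbb{Z}\to\Sigma_p$ whose images cover every edge: the simple cycles themselves, plus, for each edge lying in no simple cycle (a bridge), a ``dumbbell'' curve that traverses two simple loops joined by the bridge (used twice). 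The dumbbells produce the branched-immersion alternative of condition (2), forcing $\ell_i\ge 4\pi$ because each of the two loops already has length $\ge 2\pi$.

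For each $c_i$ I define $f_i(t\cdot c_i(s)):=\gamma_{c_i(s)}(t)$, where $\gamma_{c_i(s)}$ is the unique short geodesic from $p$ with initial direction $c_i(s)$; geodesic completeness and the unique prolongation of geodesics in $\CAT(\kappa)$-spaces make $f_i$ well-defined and continuous. The main technical core, and the principal obstacle, is to show that $\mathrm{Im}\,f_i$ equipped with its induced length metric is $\CAT(\kappa)$ and that $f_i$ is a $(1+\tau_p(r))$-bi-Lipschitz (branched) immersion. I would proceed lune by lune: for close values $s<s'$, the rays $\gamma_{c_i(s)}$ and $\gamma_{c_i(s')}$ bound a lune in $X$ whose interior angle at $p$ equals $d_{\Sigma_p}(c_i(s),c_i(s'))$ up to $\tau_p(r)$, and $\CAT(\kappa)$ comparison forces this lune to be close to the corresponding model sector. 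Gluing the infinitesimal lunes along $c_i$ and invoking a Reshetnyak-type majorization would then produce the required $\CAT(\kappa)$ structure on $\mathrm{Im}\,f_i$, while the same comparison yields the bi-Lipschitz estimate away from the cone point.

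For the covering statement, every $q\in B(p,r)$ is joined to $p$ by a geodesic whose initial direction lies on some edge of $\Sigma_p$, hence on some $c_i$, so $q\in\mathrm{Im}\,f_i$. For the singular set, I would identify $\mathcal{S}(X)\cap B(p,r)\setminus\{p\}$ with the union of the open geodesic rays $\gamma_v$ from $p$, where $v$ runs over the vertices of $\Sigma_p$ of valence $\ne 2$: for $q=\gamma_v(t)$ with small $t>0$, the non-circular local combinatorics at $v$ propagates to $\Sigma_q$, making $q$ topologically singular, while points in the interiors of the ruled surfaces have $\Sigma_q\cong S^1$ by construction. Since $\Sigma_p$ has only finitely many vertices of valence $\ne 2$, $\mathcal{S}(X)\cap B(p,r)$ is a finite union of geodesic arcs from $p$ (hence $1$-Lipschitz), and geodesic completeness at small radius forces each arc to reach $S(p,r)$.
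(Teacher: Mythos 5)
Your proposal correctly identifies the high‑level strategy (cover $\Sigma_p$ by finitely many locally injective loops, build a ``sector disk'' for each, then account for the singular set) and your ``dumbbell'' idea for the branched case matches the paper's Case~II in spirit. However, two of the load‑bearing steps are wrong or unjustified, and they are precisely where the paper has to do its real work.

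First, the characterization of the singular set. You assert that $\mathcal{S}(X)\cap B(p,r)\setminus\{p\}$ is a union of radial geodesic rays $\gamma_v(t)$ for vertices $v$ of $\Sigma_p$, on the grounds that ``the non‑circular local combinatorics at $v$ propagates to $\Sigma_q$.'' This propagation claim is false, and the paper's own Example~\ref{ex:2} (Kleiner's example) is a counterexample: there $\mathcal{S}(X)$ consists of the two curves $\iota(\partial_\pm\Omega)$, which have nonzero geodesic curvature and oscillate as they leave $p$, so they are not radial geodesics. What \emph{is} true (Lemma~\ref{lem:top-vert1}) is only that $\Sigma_p(\mathcal{S}(X))=V(\Sigma_p(X))$, i.e.\ that the singular set leaves $p$ \emph{in} the vertex directions; along a fixed radial geodesic $\gamma_v$ the point $\gamma_v(t)$ need not be singular for any $t>0$. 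In the paper the singular set is instead extracted as the mutual intersection loci $C_{ijk}$ of the thin ruled surfaces (Lemma~\ref{lem:sing-arc}, Corollary~\ref{cor:sing-total}), and showing that these loci are simple \emph{Lipschitz} (not geodesic) arcs reaching $S(p,r)$ consumes Section~\ref{sec:graph}; the Lipschitz bound comes from Corollary~\ref{cor:vert}, which only pins down directions of $\mathcal{S}(X)$ up to $\tau_p(r)$, not exactly.

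Second, the construction and the $\CAT(\kappa)$ claim. Your $f_i$ is a radial sweep: $f_i(t\cdot c_i(s))=\gamma_{c_i(s)}(t)$. This is not a ruled surface in Alexandrov's sense (the ruling geodesics all emanate from the single vertex $p$ rather than running between two transverse directrices), so Theorem~\ref{thm:alex-ruled2} is not available, and the ``lune by lune plus Reshetnyak majorization'' sketch is not a proof. Indeed, when the arc $c_i(s)$ crosses a vertex $v$ of $\Sigma_p$, the radial geodesic $\gamma_v$ is in a singular direction and need not lie in the closure of the set swept out by the regular rays (compare Remark~\ref{rem:singular-nonconvex}), so your $\mathrm{Im}\,f_i$ need not even be a topological disk. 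The paper instead works with the \emph{transverse} thin ruled surfaces $S(a_i,a_j)=S(\gamma_{p,a_i},\gamma_{p,a_j})$, whose ruling geodesics $\lambda_s$ are almost perpendicular to $\nabla d_p$, and proves in Section~\ref{sec:ruled} (via the no‑return lemma, canonical balls, and the equivalence of the pull‑back and length metrics) that each such $S$ is a $\CAT(\kappa)$ disk; the branched case then needs Theorem~\ref{thm:union-CAT} for unions of these ruled surfaces. None of this is recoverable from the radial sweep picture, which is why your sketch leaves a genuine gap at the step you yourself flag as ``the main technical core.''
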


\begin{rem} \upshape
One might ask if it is possible to fill the ball $B(p,r)$ with those ${\rm Im} f_i$
that are convex in $X$ or properly embedded disks. 
However, both are impossible in general. 
For example, take the Euclidean cone $X$ over the union of two circles of length $2\pi$ joined by an arc. Note that any metric ball around the vertex of 
$X$ can not be written as a union of properly embedded disks as described in Theorem \ref{thm:main}.
For an example showing the impossibility of filling the ball
via convex properly embedded $\CAT(\kappa)$-disks, see
Example \ref{ex:3} for instance.
 \end{rem}

From the proof of Theorem \ref{thm:main}, 
we actually have the following.

\begin{cor} \label{cor:main-zeta}
Let $r=r_p$ be sufficiently small as in Theorem \ref{thm:main}. Then for any locally injective continuous map $\zeta:[a,b]\to\Sigma_p(X)$,
there is a closed subset $E$ of $X$ containing $p$
satisfying 
\begin{enumerate}
 \item $E$ is a $\CAT(\kappa)$-space with respect to 
 the length metric\,$;$ 
 \item $\Sigma_p(E)={\rm Im}(\zeta)\,;$ 
 \item $\pa E\subset S(p,r)$ possibly except the segments from $p$ directing to the endpoints of $\zeta$.
Here $\pa E$ denotes the set of points of $E$
where local geodesically completeness of $E$ fails.

\end{enumerate}
\end{cor}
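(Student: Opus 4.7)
The plan is to build $E$ by selecting an angular sector of each disk $\mathrm{Im}\, f_i$ from Theorem \ref{thm:main} corresponding to the portion of $\zeta$ passing through it, and then gluing these sectors along radial geodesics at $p$. As a preliminary step I would extract from the proof of Theorem \ref{thm:main} that the locally geodesic circles $C_i := \Sigma_p(\mathrm{Im}\, f_i)$ of length $\ell_i$ jointly cover $\Sigma_p(X)$: the covering $\bigcup_i \mathrm{Im}\, f_i = B(p,r)$, together with the near-isometry of $f_i$ stated in (3), implies that every direction in $\Sigma_p(X)$ is tangent to some $\mathrm{Im}\, f_i$, so $\bigcup_i C_i = \Sigma_p(X)$.

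Since $\zeta$ is locally injective and continuous and $\Sigma_p$ is a finite graph covered by finitely many circles $C_i$, a compactness argument produces a partition $a = t_0 < \cdots < t_n = b$ such that each restriction $\zeta|_{[t_{k-1},t_k]}$ is an injective sub-arc of a single circle $C_{i_k}$. Lifting this sub-arc to an arc $I_k \subset \pa D^2(\ell_{i_k}; r)$, define the Euclidean cone sector $S_k$ with apex $O$ over $I_k$, set $E_k := f_{i_k}(S_k)$, and let $E := \bigcup_{k=1}^n E_k$, all equipped with the length metric induced from $X$.

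To check (1), note that $S_k$ is convex in $D^2(\ell_{i_k}; r)$, so by Theorem \ref{thm:main}(1) and the near-isometry $f_{i_k}$, $E_k$ is a $\CAT(\kappa)$-space. Consecutive $E_k, E_{k+1}$ share the radial geodesic from $p$ in the direction $\zeta(t_k)$, which is a convex boundary component of each; iterated Reshetnyak gluing then yields that $E$ is $\CAT(\kappa)$. Property (2) is immediate: $\Sigma_p(E_k) = \zeta([t_{k-1},t_k])$, so $\Sigma_p(E) = \mathrm{Im}(\zeta)$. For (3), the internal radial segments become interior points of $E$ after gluing, so $\pa E$ lies in $S(p,r)$ except possibly along the two extreme radial segments corresponding to $\zeta(a)$ and $\zeta(b)$.

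The main obstacle is the gluing step when $\zeta$ revisits the same direction at $p$ (permitted by local injectivity but not by global injectivity). In that case the same radial segment in $X$ is shared by non-consecutive sectors, and the underlying subset $E \subset X$ may only carry the length metric of an \emph{abstract} gluing rather than the restriction of the ambient distance; one must verify that Reshetnyak's theorem still applies in this abstract sense and that the resulting length space projects to $X$ compatibly with property (3). A secondary subtlety is justifying the covering $\bigcup_i C_i = \Sigma_p(X)$ at vertices of the graph, which requires inspecting how the $f_i$ are produced inside the proof of Theorem \ref{thm:main}.
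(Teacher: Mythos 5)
Your scheme coincides with what the paper does in Case I of Section~\ref{sec:fill} (properly embedded disks), where the pieces are ruled surfaces $S(a_i,a_j)$ and regular sectors glued along the convex radial geodesics $\gamma_{p,a_i}$, and Reshetnyak's theorem applies. For a general locally injective $\zeta$, however, the obstacle you mention at the end is not a technicality to be ``verified'' --- it is where the strategy actually fails. When $\zeta$ revisits a vertex $v$ of $\Sigma_p(X)$, two of your sectors $E_k$ and $E_{k'}$ with $k\neq k'$ contain thin ruled surfaces based at the same $v$, and by Lemma~\ref{lem:sing-arc} (in particular the containment $\lambda_s([t_0,1])\subset S(a_i,a_j)$ established in its proof) these ruled surfaces overlap in a two-dimensional region beyond the singular curve $C_{ijk}$, not merely along a shared radial geodesic. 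Consequently the abstract Reshetnyak gluing of the pieces $S_k$ is not isometric to the subset $E=\bigcup_k E_k$ equipped with the length metric induced from $X$: in $E$ the overlapping points are already identified and distances can only shrink, and there is no mechanism that transports the $\CAT(\kappa)$ property from the abstract gluing to its image in $X$.

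This is precisely what Section~\ref{sec:approx} of the paper exists to handle. The route to Corollary~\ref{cor:main-zeta} (stated there to be ``similar to the proof of Theorem~\ref{thm:main}(1) in Case II'') goes through Theorem~\ref{thm:union-CAT}: one first shows directly that the union $E_k=\bigcup_{i<j}S_{ij}$ of all thin ruled surfaces at a vertex, with the induced length metric, is $\CAT(\kappa)$, by constructing polyhedral approximations via surgeries at singular vertices of $\ca S(X)$, applying the Burago--Buyalo criteria (Theorems~\ref{thm:BB-gluing}, \ref{thm:BB-character}), passing to the Gromov--Hausdorff limit, and using the comparisons between $E$-geometry and $X$-geometry in Lemmas~\ref{lem:E-int=ext}--\ref{lem:SE-geom}. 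Only afterwards can Reshetnyak gluing be used to attach these (now $\CAT(\kappa)$) vertex pieces to the regular sectors. Your preliminary observation $\bigcup_i\Sigma_p(\mathrm{Im}\,f_i)=\Sigma_p(X)$ does follow from Lemma~\ref{lem:coincide}, and your treatment of properties (2) and (3) is fine; the gap is entirely in property (1), and it is the same gap that forces the paper into the long argument of Section~\ref{sec:approx}.
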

The set $E$ is the image of a locally almost isometric, 
branched immersion, except at the branch locus $\{ p\}$,  
from 
the closed disk of radius $r$ 
around the vertex in the Euclidean cone over the 
interval of length $L(\zeta)$.
When $\zeta$ is surjective in addition, this provides 
another description of $B(p,r)$.

Using Theorem \ref{thm:main}, we can define a 
metric graph structure on $\ca S(X)$ in a generalized sense
(see Definition \ref{defn:metric-graph}), and we have 
\pmed

\begin{cor} \label{cor:graph}
Suppose that $\Sigma_p$ is a connected graph for every $p\in X$.
Then with respect to the induced length structure,
$\mathcal S(X)$ is isometric to a metric graph
having $($possibly locally uncountably many vertices, but$)$
the vertices of locally finite order.
\end{cor}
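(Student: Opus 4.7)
The plan is to extract a locally finite one-dimensional structure on $\mathcal{S}(X)$ directly from the last assertion of Theorem~\ref{thm:main}, and then to promote it to a genuine metric graph in the sense of Definition~\ref{defn:metric-graph}.

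First, I would fix an arbitrary $p\in\mathcal{S}(X)$ and apply Theorem~\ref{thm:main} to obtain a radius $r_p>0$ such that
\[
  \mathcal{S}(X)\cap B(p,r_p)=\gamma_1^p\cup\cdots\cup\gamma_{n(p)}^p,
\]
where each $\gamma_i^p$ is a simple Lipschitz arc from $p$ to $S(p,r_p)$. In particular $n(p)<\infty$, which already yields the locally finite order at $p$; after shrinking $r_p$ we may further assume that these arcs meet only at $p$ inside $B(p,r_p)$. Endowing $\mathcal{S}(X)$ with the length metric $d_{\mathcal{S}}$ inherited from $X$, each $\gamma_i^p$ has finite length because it is Lipschitz, so $d_{\mathcal{S}}$ takes finite values on small enough balls and turns $\mathcal{S}(X)$ into a length space that is locally a finite pod centered at $p$.

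Next, I would construct the metric graph by declaring every point of $\mathcal{S}(X)$ to be a vertex (this is legitimate under the generalized notion in Definition~\ref{defn:metric-graph}, and it absorbs the clause ``possibly locally uncountably many vertices''). The germs of the arcs $\gamma_i^p$ at $p$, subdivided at any points where they meet arcs coming from nearby centers, serve as the finite family of edges incident to $p$. Assigning to each edge its $d_{\mathcal{S}}$-length tautologically produces a metric graph whose intrinsic path metric coincides with $d_{\mathcal{S}}$, because the local decompositions from Theorem~\ref{thm:main} exhaust $\mathcal{S}(X)$ on each $B(p,r_p)$.

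The delicate step, which I expect to be the main obstacle, is the overlap compatibility: given two centers $p,q\in\mathcal{S}(X)$ with $B(p,r_p)\cap B(q,r_q)\neq\emptyset$, the arcs $\gamma_i^p$ and $\gamma_j^q$ must match set-theoretically on the overlap, up to reparameterization. I would handle this by arguing that each $\gamma_i^p$ is intrinsically determined inside $\mathcal{S}(X)$ as the unique branch through any of its interior points, and then use the local finiteness of branches from Theorem~\ref{thm:main} (applied at every point on the overlap) to force the two local descriptions to agree. Once this gluing is in place, the global metric graph structure with finite local valence follows by routine bookkeeping, completing the proof.
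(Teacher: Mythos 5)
There is a genuine gap, and it lies in the sentence ``after shrinking $r_p$ we may further assume that these arcs meet only at $p$ inside $B(p,r_p)$.'' This is false in general: the arcs $\gamma_i^p$ provided by the last clause of Theorem~\ref{thm:main} are the curves $C_{ijk}$ from Lemma~\ref{lem:sing-arc} and Corollary~\ref{cor:sing-total}, and they can intersect one another at points arbitrarily close to $p$. Example~\ref{ex:uncountable} is built precisely to exhibit a two-dimensional geodesically complete $\CAT(0)$-space in which $V_*(\mathcal{S}(X))=V(\mathcal{S}(X))$ is an $\e$-Cantor set of positive $\ca H^1$-measure, with vertices accumulating at every point of that set; so no amount of shrinking $r_p$ makes $\ca S(X)\cap B(p,r_p)$ a pod. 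This is also why the statement of the corollary explicitly hedges with ``possibly locally uncountably many vertices.'' Once the ball is not a pod, the finiteness of $n(p)$ gives the order at $p$ but says nothing about the order at the other vertices of $\ca S(X)\cap B(p,r_p)$, which is what the corollary actually asserts.

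Your proposed fix---``declare every point of $\mathcal{S}(X)$ to be a vertex''---does not conform to Definition~\ref{defn:metric-graph}: there a vertex is, by definition, a point of $\ca S(X)\setminus|E(\ca S(X))|$, i.e.\ a point that is not interior to any maximal open arc, so the vertex set is intrinsic and cannot be chosen at will. Consequently the ``locally finite order'' assertion must be proved at arbitrary intrinsic vertices $y\in V(\ca S(X))\cap B(p,r)$, not just at the chosen center $p$. The paper's argument achieves this via Lemma~\ref{lem:sing-arc}(3) and Sublemma~\ref{slem:number-C}: one decomposes $\ca S(X)\cap U(v)$ into at most $N_v-2$ singular curves along each of which $d_p$ is strictly increasing, hence each curve meets each metric sphere $S(p,t)$ exactly once, and each curve through a fixed $y$ contributes at most $2$ to the order at $y$; this yields the explicit bound $\max\{\sum_v(N_v-2),\,\max_v 2(N_v-2)\}$. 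Your proposal never uses the monotonicity of $d_p$ along the singular arcs, and that is the missing ingredient. The ``overlap compatibility'' paragraph then becomes moot: the graph structure is intrinsic (maximal open arcs are determined by $\ca S(X)$ itself), so there is no patching argument to run.
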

\pmed 

%
%

\begin{rem} \label{rem:rectifiable}  \upshape
In the general dimension, \cite{NL:geodesically} has characterized the singular set in the $k$-dimensional 
	part as a countably $(k-1)$-rectifiable set.   Corollary \ref{cor:graph} 
gives a refinement of this result in dimension two.
\end{rem}

Recall that a compact metric graph $\Sigma$ is a 
$\CAT(\mu)$-graph \,$(\mu>0)$\,
if every non-contractible loop in $\Sigma$ has length $\ge 2\pi/\sqrt{\mu}$.
\pmed
\begin{cor} \label{cor:circle}
For a given $p\in X$ such that $\Sigma_p$ is a connected graph, 
there exists a positive number $r_p$ such that 
for every $0<r\le r_p$, $S(p,r)$ with the interior metric is a 
$\CAT(\mu_\kappa(r))$-graph 
having the same homotopy type as $\Sigma_p$, where 
$\mu_\kappa(r)$ is given by the sharp constant 
 
\begin{equation*}
 \mu_\kappa(r)= \begin{cases}
  \left(\frac{\sin \sqrt \kappa r}{\sqrt \kappa}\right)^{-2}
                              &\quad  \text{if \, $\kappa>0$,} \\
        \hspace{0.7cm}  r ^{-2}   &\quad \text{if \, $\kappa=0$,}    \\
  \left(\frac{\sinh \sqrt{-\kappa} r}{\sqrt{- \kappa}}\right)^{-2}
                              &\quad \text{if  \, $\kappa<0$}.
      \end{cases}
\end{equation*}
\end{cor}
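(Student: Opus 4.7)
The plan is to reduce the corollary to two ingredients: the graph-and-homotopy structure of $S(p,r)$, coming from Theorem \ref{thm:main}, and a sharp $\CAT(\kappa)$-comparison at $p$ that promotes the $\CAT(1)$-graph condition on $\Sigma_p$ to a $\CAT(\mu_\kappa(r))$-graph condition on $S(p,r)$. Write $s_\kappa(r):=1/\sqrt{\mu_\kappa(r)}$ for the $M_\kappa$-sine, so the target bound on every non-contractible loop is length $\ge 2\pi s_\kappa(r)$.

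Choose $r_p\le r_0$ from Theorem \ref{thm:main} so that, in addition, minimizing geodesics from $p$ are unique in $B(p,r_p)$ (automatic for $\kappa\le 0$, and valid for $\kappa>0$ once $r_p<\pi/(2\sqrt{\kappa})$). Then by Theorem \ref{thm:main}(2), $S(p,r)=\bigcup_{i=1}^N f_i(\partial D^2(\ell_i;r))$ is a finite union of Jordan circles or two-circles-joined-by-an-arc sets, hence inherits a finite metric graph structure in its interior length metric, with vertex orders bounded in terms of $N$. Geodesic completeness together with the above uniqueness makes
\[
\Phi_r:\Sigma_p\to S(p,r),\qquad \Phi_r(v)=\text{time-$r$ point of the geodesic from $p$ in direction }v,
\]
a continuous bijection between compact Hausdorff spaces, hence a homeomorphism; in particular $S(p,r)\simeq\Sigma_p$.

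For the $\CAT(\mu_\kappa(r))$-condition, let $\gamma$ be a non-contractible loop in $S(p,r)$ in its interior metric and set $\bar\gamma:=\Phi_r^{-1}\circ\gamma$, a non-contractible loop in $\Sigma_p$. The $\CAT(\kappa)$-inequality applied at $p$ to each chord triangle $\triangle(p,\gamma(s),\gamma(t))$ gives
\[
d_X\bigl(\gamma(s),\gamma(t)\bigr)\ge d_\kappa\bigl(r,r,d_{\Sigma_p}(\bar\gamma(s),\bar\gamma(t))\bigr),
\]
where $d_\kappa(r,r,\theta)$ is the base of the $M_\kappa^2$-isosceles triangle with legs $r$ and apex $\theta$. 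The first-order expansion $d_\kappa(r,r,\theta)=s_\kappa(r)\,\theta+o(\theta)$ and a Riemann-sum argument over refinements then yield
\[
\mathrm{length}(\gamma)\ge s_\kappa(r)\cdot\mathrm{length}(\bar\gamma)\ge 2\pi s_\kappa(r)=2\pi/\sqrt{\mu_\kappa(r)},
\]
where the second inequality uses that $\Sigma_p$ is a $\CAT(1)$-graph. Sharpness is realized on the $\kappa$-cone over a circle of length exactly $2\pi$, whose sphere $S(p,r)$ is a round circle of length $2\pi s_\kappa(r)$. The main technical point is justifying the Riemann-sum passage: both rectifiability of $\bar\gamma$ and uniform control of the $o(\theta)$ error along refinements follow from Theorem \ref{thm:main}(3), since the $(1+\tau_p(r))$-bi-Lipschitz bounds on the $f_i$ transfer edge-by-edge to $\Phi_r$, forcing the partition angles $d_{\Sigma_p}(\bar\gamma(s_i),\bar\gamma(s_{i+1}))$ to tend uniformly to zero under refinement.
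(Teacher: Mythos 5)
Your proposal takes a genuinely different route from the paper's. The paper first establishes $S(p,r)\simeq\Sigma_p$ via Lemma \ref{lem:circle-tree} (each $S(v;r)$ is a tree), and then, for the length bound, realizes each non-contractible simple loop $\alpha\subset S(p,r)$ as the boundary $f(\partial D^2(\ell;r))$ of a properly embedded $\CAT(\kappa)$-disk from Theorem \ref{thm:main} with $\Sigma_p(\mathrm{Im}\,f)=\zeta$ a non-contractible loop of $\Sigma_p$, and then compares sector by sector inside the ruled surfaces $S_{ij}$, where radial geodesics are unique because they live in $\CAT(\kappa)$-disks. You replace all of this by a single radial map $\Phi_r:\Sigma_p\to S(p,r)$ and one global chord comparison at $p$; the inequality $d_X(\gamma(s),\gamma(t))\ge d_\kappa\bigl(r,r,\angle\gamma(s)p\gamma(t)\bigr)$ and the Riemann-sum passage are correct and do give the sharp constant.

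However, the construction of $\Phi_r$ has a gap. You infer that $\Phi_r$ is a well-defined bijection from ``minimizing geodesics from $p$ are unique in $B(p,r_p)$'' plus geodesic completeness, but uniqueness of the minimizer between two fixed endpoints does not imply uniqueness of the geodesic issued from $p$ with a prescribed initial direction $v$. In geodesically complete spaces with an upper curvature bound, geodesic prolongation branches at topological singular points: if $q$ is such a point, $\Sigma_q(X)$ is a graph and the incoming direction may have several antipodes at distance $\pi$, so several distinct prolongations exist. Singular points can accumulate at $p$ (as in Example \ref{ex:2}), so one cannot dismiss branching for all $0<r\le r_p$ without further argument. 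Consequently $\Phi_r$ may fail to be single-valued; equivalently, the well-defined map $\Psi_r:S(p,r)\to\Sigma_p$, $q\mapsto\dot\gamma_{p,q}(0)$, may fail to be injective. Then the crucial conclusion that $\bar\gamma=\Phi_r^{-1}\circ\gamma$ is a non-contractible loop in $\Sigma_p$ of length $\ge 2\pi$ is unjustified, and the proof collapses at the last step. To repair it you would need either to prove uniqueness of radial prolongation from $p$ in this setting (which seems to require the structure theory of Sections \ref{sec:ruled}--\ref{sec:fill} that you were trying to bypass), or to replace $\Phi_r$ by the correspondence $\alpha\leftrightarrow\zeta$ furnished by the embedded disks of Theorem \ref{thm:main}, which is what the paper does.
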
 
\pmed
\begin{rem} \label{rem:metric-sphere}  \upshape
A result  in \cite{NL:geodesically} shows that for every small 
$r$,  $S(p,r)$ has the same homotopy type as $\Sigma_p$ in the general dimension.
Corollary \ref{cor:circle} 
gives a refinement of this result in dimension two.
\end{rem}

\begin{rem} \label{rem:only-local}  \upshape
 All the results in this paper are local.
Therefore they are also valid under the assumption of 
local geodesic completeness of $X$.
\end{rem}

\pmed
The idea of the proof of the main result is as follows.
We know the structure of the space $\Sigma_p$  of directions at $p$, which is completely characterized as a ${\rm CAT}(1)$-graph without endpoints.
If we rescale the metric of $X$ by the factor $1/r$, then 
$(\frac{1}{r} X,p)$ converges to the tangent cone $(K_p,o_p)$ at $p$
as $r\to 0$ with respect to the pointed Gromov-Hausdorff topology.
Let $\Sigma_p^{\rm sing}$ be a small neighborhood of the vertices of the 
graph $\Sigma_p$, and
$\Sigma_p^{\rm reg}$ the complement of $\Sigma_p^{\rm sing}$. 
Now the convergence theorem (\cite{Ng:volume}) applied to 
the unit cone $K_1(\Sigma_p^{\rm reg})$ over $\Sigma_p^{\rm reg}$ yields the existence of a 
Lipschitz domain $B^{\rm reg}(p,r)$ of $B(p,r)$ consisting
of finitely many sectors corresponding to sectors of 
$K_1(\Sigma_p^{\rm reg})$. One can consider $B^{\rm reg}(p,r)$ as a regular part of $B(p,r)$.
The main problem is to determine the structure of the singular part $B^{\rm sing}(p,r)$,
the complement of $B^{\rm reg}(p,r)$ in $B(p,r)$.
To carry out  this, we consider finitely many thin ruled surfaces, say $S_{ij}$ here,
and fill $B^{\rm sing}(p,r)$ using them.  A key is to show 
that those ruled surfaces are ${\rm CAT}(\kappa)$-spaces 
with respect to the {\it interior metrics} and are homeomorphic to a disk. 
According to Alexandrov's result in 
\cite{alexandrov-ruled},
every ruled surface in a ${\rm CAT}(\kappa)$-space is also a 
 ${\rm CAT}(\kappa)$-space  with respect to the {\it pull-back metric}.
Obviously, the interior metric and the pull-back metric 
are completely different from each other in general.  
Therefore we have to show that in our thin  ruled surfaces pull-back metrics coincide with the 
interior metrics. After achieving this,
it turns out that the topological singular point set $\ca S(X)$ locally arises from the intersections of those thin ruled surfaces 
$S_{ij}$. 
We investigate how those ruled surfaces 
meet each other to get the structure of 
$\mathcal S(X)\cap B(p,r)$ as the union of finitely many 
Lipschitz curves.  Combining the structures of both 
$B^{\rm reg}(p,r)$ and $B^{\rm sing}(p,r)$ and considering the graph structure
of $\Sigma_p$, we define the embeddings or the branched immersions
$f_i:D^2(\ell_i;r) \to B(p,r)$ as described in Theorem \ref{thm:main}.\par
\psmall
As related studies on ruled surfaces, Petrunin-Stadler \cite{PS} have proved that for 
metric minimizing disks in CAT(0)-spaces, the pull-back metrics 
on the disks are CAT(0), which is a 
generalization of  Alexandrov's result \cite{alexandrov-ruled} on ruled surfaces in the CAT(0)-setting.
According to Stadler \cite[Theorem 2]{S}, for any Jordan triangle in a CAT(0)-space,
every minimal disk filling of the triangle is an embedded disk
that is CAT(0) with respect to the interior metric.

\psmall
The organization of the paper is as follows.

In Section \ref{sec:basic}, we recall and verify basic results for locally compact,
geodesically complete Alexandrov spaces with curvature bounded above.

In Section \ref{sec:ruled-alex},
we give basic properties of a ruled surface $S$ in a
$\CAT(\kappa)$-space.
We discuss the pullback metric, the induced metric,
the interior metric of $S$ and their relations.
In the original argument in Alexandrov \cite{alexandrov-ruled}, there are several unclear points for the authors. For instance,
there is no description  in \cite{alexandrov-ruled} about quasicontinuous
monotone representations.
We make clear all these points.

In Section \ref{sec:ruled}, which is a key section, we investigate a thin ruled surface $S$ in 
a two dimensional space, and prove that  $S$  actually admits the
induced metric
and therefore becomes a $\CAT(\kappa)$-space with respect to the interior metric.
Then we obtain the crucial property that $S$ is homeomorphic to a disk.

In Section \ref{sec:fill}, we fill $B(p,r)$ via those embedded/branched immersed disks 
using thin ruled surfaces essentially.
We prove Theorem \ref{thm:main}(1), (2) and (3) 
except (1) for branched immersed disks.

In Section \ref{sec:graph}, we describe $\mathcal S(X)\cap B(p,r)$
as a union of finitely many Lipschitz curves starting from $p$ 
and reaching points of $S(p,r)$.
The structure of generalized metric graph of $\ca S(X)$ is also discussed there.

In Section \ref{sec:approx}, we provide the proof of 
Theorem \ref{thm:main}(1) for branched immersed disks as well as Corollary \ref{cor:main-zeta}. 

In Appendix \ref{sec:append}, following the basic idea of 
\cite{alexandrov-ruled}, we give the proof of 
Alexandov's result on ruled sufaces in $\CAT(\kappa)$-spaces
based on the results proved in Section \ref{sec:ruled-alex}.

\psmall
\n
Burago-Buyalo \cite{BurBuy:upperII} 
gave a complete characterization of two-dimensional 
polyhedra of curvature bounded  above. 
In the second part \cite{KNSYII} of our works, we show 
the following:

\psmall\n
(a)\, We provide sufficient conditions for 
two-dimensional metric spaces to have  curvature bounded above, which shows that the results in this paper
completely characterize the local structure of two-dimensional metric spaces 
with curvature bounded above.
\psmall\n
(b)\,Every pointed two-dimensional geodesically complete locally 
$\CAT(\kappa)$-space $(X,p)$ can be approximated by a sequence 
of two-dimensional  pointed geodesically complete, polyhedral locally 
$\CAT(\kappa)$-spaces $(X_n,p_n)$ having {\it  the same homotopy type as $X$}
with respect to the pointed 
Gromov-Hausdorff topology. This solves a problem raised in Burago-Buyalo \cite{BurBuy:upperII}.
\psmall\n
(c)\,We establish a Gauss-Bonnet type theorem for 
two-dimensional geodesically complete locally 
$\CAT(\kappa)$-spaces.
\psmall

Most results in the present paper were announced in \cite{Ya:upper}.

\vspace{1em}\noindent 
{\bf Acknowledgements}
First of all, the authors would like to thank Bruce Kleiner.
The outline of the results in this paper came from the discussions with him 
on the basic idea many years ago.
We would also like to thank  Alexander  Lytchak
for informing recent related results on ruled surfaces and minimal filling disks in $\CAT(0)$-spaces.
We would also like to thank Werner Ballmann, Yuri Burago,
Sergei Buyalo,  Misha Gromov for their interest to this work.
This work was partially supported by IHES, while the last named author was in
residence there, during the summer of 2005.

We would like to thank the referee for carefully 
reading our manuscript and for valuable comments and
suggestions.
Lemma \ref{lem:comparison} was suggested by the referee 
to the authors.

\pmed

\setcounter{equation}{0}
\section{Basic properties of ${\rm CAT}(\kappa)$-spaces} \label{sec:basic}
\pmed

 For some basic results in this section, we refer to 
\cite{bridson-haefliger}, \cite{BBI}.

The distance between two points $x,y$ in a metric space
$X$ is denoted by $|x,y|$ or $ |x,y|_X$, and  $d(x,y)$ or $d_X(x,y)$ sometimes.
The metric $r$-ball around $p$ is denoted by 
 $B(p,r)$. We sometimes use $B^X(p,r)$ to 
  emphasize the metric ball in $X$.
Let $X$ be a locally compact, complete geodesic space with curvature 
$\le\kappa$. By definition, for each point $p\in X$, there exists a 
positive number $r>0$ with $r\le \pi/2\sqrt{\kappa}$ when $\kappa>0$ 
such that the ball $B(p,r)$ is convex
and having the following properties:
Let $M_\kappa^2$ be the simply connected complete surface of 
constant curvature $\kappa$, called the {\it $\kappa$-plane} in short. 
For any geodesic triangle 
$\triangle xyz$ in $B(p,r)$ with vertices $x,y$ and $z$, we denote by 
$\tilde\triangle xyz$ a {\it comparison triangle} 
in $M_\kappa^2$ having  the same side lengths as $\triangle xyz$. 
Then the natural mapping $ \tilde\triangle xyz \to\triangle xyz$
is non-expanding. A convex domain with this property is 
called a {\it $\CAT(\kappa)$-domain}.
Such a space $X$ with curvature $\le \kappa$ is called a locally 
$\CAT(\kappa)$-space, and $X$ is called a $\CAT(\kappa)$-space
if $X$ itself is a $\CAT(\kappa)$-domain.
 Although all geodesics have constant speed 
by definition, most geodesics 
are assumed to have unit speed unless otherwise stated. 
For arbitrary  $x$ and $y$ in $B(p,r)$, let $\gamma_{x,y}:[0, |x,y|]\to X$ denote a 
unique minimal geodesic joining $x$ to $y$.
We say that a curve is {\it shortest} if its length is minimal 
among all curves joining the endpoints.
The angle between the geodesics $\gamma_{y,x}$ and $\gamma_{y,z}$ is 
denoted by 
$\angle xyz$, and the corresponding angle of $\tilde\triangle xyz$
by $\tilde\angle xyz$.
The space of directions and the tangent cone  of $X$ at $p$ 
are denoted by  $\Sigma_p=\Sigma_p(X)$ and $K_p = K_p(X)$ 
respectively. We shall occasionally use the identification 
$\Sigma_p = \Sigma_p\times \{1\} \subset K_p$.   
We denote by  $\dot\gamma_{x,y}(0)$ or $\uparrow_x^y$, 
 the direction at $x$ defined by $\gamma_{x,y}$.
For every $\xi\in\Sigma_p(X)$, $\gamma_{\xi}$ denotes a geodesic with 
$\dot\gamma_\xi(0)=\xi$.
For a path-connected subset $S\subset X$ and $x,y\in S$, we  denote by 
$\gamma^S_{x,y}$ a shortest curve in $S$ joining 
$x$ to $y$ if it exists.
Occasionally, we identify a geodesic with its image,
and write as $x\in \gamma$ for instance.
The length metric of $S$ induced from $X$ is denoted by $d_S$
or $|\,\,, \,\,|_S$.
 
For a closed subset $A$ of $X$ and for an accumulation point $p$ of $A$,
the set of all directions $\xi\in\Sigma_p(X)$ such that there is a 
sequence $a_i$ in $A\setminus \{ p\}$ satisfying $a_i\to p$ and 
$\dot\gamma_{p,a_i}(0) \to \xi$ is denoted by $\Sigma_p(A)$ and called 
the space of directions of $A$ at $p$.

The upper semi-continuity of angle is fundamental in the 
geometry of spaces with curvature bounded above.

\begin{lem} \label{lem:limit}
Suppose that sequences $p_i$, $q_i$ and
$r_i$ converge to $p$, $q$ and $r$ respectively in a
$\CAT(\kappa)$-domain. 
Then we have $\limsup_{i\to\infty}\angle p_iq_ir_i \le \angle pqr$.
\end{lem}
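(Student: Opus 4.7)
The plan is the classical two-scale comparison argument: bound $\angle p_i q_i r_i$ by a comparison angle in a subtriangle of small fixed size $t$, pass to the limit in $i$, and only then let $t \to 0$. The $\CAT(\kappa)$ condition gives $\angle \le \tilde\angle$ but not equality, so a single limit in $i$ would only produce $\limsup_i \angle p_i q_i r_i \le \tilde\angle pqr$, which is weaker than what we want; we need to additionally exploit the monotone approach of $\tilde\angle$ to $\angle$ that defines the angle at the vertex.

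Fix small $t > 0$. Since we may assume $p\ne q\ne r$ (otherwise the angle is not defined), for all sufficiently large $i$ both $|q_ip_i|$ and $|q_ir_i|$ exceed $t$, so one can pick $p_i^t \in \gamma_{q_i,p_i}$ and $r_i^t \in \gamma_{q_i,r_i}$ at distance $t$ from $q_i$. Because these points lie on the same initial geodesic segments from $q_i$, the angle is unchanged, $\angle p_i^t q_i r_i^t = \angle p_i q_i r_i$, and the $\CAT(\kappa)$ condition applied to the triangle $\triangle p_i^t q_i r_i^t$ gives
\[ \angle p_i q_i r_i \;\le\; \tilde\angle p_i^t q_i r_i^t. \]

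Now let $i\to\infty$. Uniqueness of geodesics in a $\CAT(\kappa)$-domain and the convergence of endpoints imply that $\gamma_{q_i,p_i}$ and $\gamma_{q_i,r_i}$ converge uniformly to $\gamma_{q,p}$ and $\gamma_{q,r}$ respectively; hence $p_i^t\to p^t$ and $r_i^t\to r^t$, where $p^t$ and $r^t$ are the analogous points on the limit geodesics at distance $t$ from $q$. Since the comparison angle in $M_\kappa^2$ depends continuously on the three side lengths, which converge, we obtain
\[ \limsup_{i\to\infty} \angle p_i q_i r_i \;\le\; \tilde\angle p^t q r^t. \]
Finally, by the very definition of $\angle pqr$ combined with Alexandrov monotonicity in $\CAT(\kappa)$-domains, $\tilde\angle p^t q r^t \searrow \angle pqr$ as $t\to 0$; letting $t \to 0$ yields the desired bound $\limsup_i \angle p_i q_i r_i \le \angle pqr$.

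The one ingredient that must be handled with care is the uniform convergence of geodesics under perturbation of endpoints inside a $\CAT(\kappa)$-domain, which underlies the passage $p_i^t \to p^t$. This is the main (and essentially only) technical point, and it is a standard consequence of the local uniqueness and continuous dependence of geodesics on endpoints in $\CAT(\kappa)$-domains; it can be quoted from \cite{bridson-haefliger} or \cite{BBI}. Everything else is the interplay between the $\CAT(\kappa)$ inequality and the monotone definition of angle, which are built into the setup.
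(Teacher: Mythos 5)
Your proof is correct. The paper does not actually spell out a proof of Lemma~\ref{lem:limit} --- it is stated as a standard fact and implicitly left to the cited references --- and your argument is exactly the textbook two-scale proof: restrict to points at fixed distance $t$ on the sides so that the $\CAT(\kappa)$ inequality gives $\angle p_iq_ir_i\le\tilde\angle p_i^t q_i r_i^t$, use continuous dependence of geodesics on endpoints in a $\CAT(\kappa)$-domain to pass to the limit in $i$ for fixed $t$, and finally use Alexandrov monotonicity, $\tilde\angle p^tqr^t\searrow\angle pqr$ as $t\to0$, to conclude. The care you take about the order of limits (first $i\to\infty$, then $t\to0$) is precisely what makes the argument work, and the appeal to unique geodesics and their continuous dependence on endpoints in a $\CAT(\kappa)$-domain is the right justification for $p_i^t\to p^t$.
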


Next we shortly discuss the connectivity of a small neighborhood of a 
given point in $X$.
For each point $p\in X$, the set of components of $\Sigma_p$ are in 
one to one correspondence with the set of components of 
$B(p,r)\setminus\{ p\}$ if $B(p,r)$ is a $\CAT(\kappa)$-domain  (cf.\cite{Kr:local}).
We call the number of components of $\Sigma_p(X)$ {\it the order}
of $p$.

Now we state the gluing theorem proved by \cite{Rsh:theory}, which is convenient to construct 
spaces with curvature bounded above. The proof is also found in 
\cite[p.347]{bridson-haefliger}.

\begin{thm} \label{thm:glue}
Let $D_i$, $i=1,2$,  be a closed convex subset in an 
Alexandrov space $X_i$ with curvature $\le \kappa$. 
If there is an isometry $f:D_1\to D_2$, then the 
identification space $X_1\cup_f X_2$ is an 
Alexandrov space with curvature $\le \kappa$
with respect to the natural length metric.
\end{thm}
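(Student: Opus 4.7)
The plan is to verify the $\CAT(\kappa)$ four-point condition in small convex neighborhoods of each point of $Y := X_1 \cup_f X_2$; let $D \subset Y$ denote the common image of $D_1$ and $D_2$ under the identification. Away from $D$, a small metric ball in $Y$ around $p$ coincides with a ball in the ambient $X_i$, so the inherited $\CAT(\kappa)$ property is immediate. The essential case is $p \in D$, and I would first pin down the structure of shortest curves: since $D$ is convex in each $X_i$ and the identification along $f$ is isometric, every rectifiable curve in $Y$ from $x \in X_1 \setminus D$ to $y \in X_2 \setminus D$ must cross $D$, and its length equals $|x,z|_{X_1} + |z,y|_{X_2}$ for some crossing point $z \in D$. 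Local compactness together with convexity shows this infimum is attained, so $Y$ is locally geodesic and any geodesic linking the two pieces is the concatenation of two geodesics meeting at a single point of $D$.

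Next I would establish the $\CAT(\kappa)$ inequality for a geodesic triangle $\triangle xyz$ in a small ball of $Y$. If all three vertices lie in one $X_i$, convexity of $D$ forces the whole triangle into $X_i$ and we are done. The interesting case is $x \in X_1$, $y,z \in X_2$: let $p, q \in D$ be the points where $[x,y]$ and $[x,z]$ first meet $D$. Then $\triangle xyz$ decomposes into the sub-triangle $\triangle xpq$ lying in $X_1$ and the quadrilateral $(p,q,z,y)$ lying in $X_2$, which further splits into two geodesic triangles along a diagonal. Each sub-triangle satisfies the $\CAT(\kappa)$ comparison coming from its ambient space.

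The heart of the argument is Alexandrov's gluing lemma for comparison triangles in $M_\kappa^2$: build model triangles $\tilde\triangle xpq$ and model triangles for the two pieces of the quadrilateral, and glue them in $M_\kappa^2$ along the common sides $[p,q]$, $[q,y]$ (or $[p,y]$). The critical angle bookkeeping is at $p$ and $q$: because $[x,p] \cup [p,y]$ is a geodesic in $Y$, the two angles at $p$ in the original sub-triangles sum to $\pi$ in $Y$, and the $\CAT(\kappa)$ condition in each piece forces the corresponding model angles to sum to at most $\pi$. Alexandrov's lemma then says that the unfolded model figure can be replaced by a single comparison triangle $\tilde\triangle xyz$ in which every point on the boundary is at least as far from $x$ (and from the other vertices) as its counterpart in $Y$. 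Applying this on each side gives the four-point subembedding inequality characterizing $\CAT(\kappa)$.

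The main obstacle I expect is the careful geodesic analysis near $D$: one must ensure that geodesics between the two pieces are unique in small balls, cross $D$ transversely in a single point rather than along an arc, and depend continuously on their endpoints, so that the decomposition of a triangle into sub-triangles is non-degenerate and behaves well in limits (vertices on $D$, sides tangent to $D$, etc.). This is handled by shrinking the balls so that $D$ behaves as a locally convex wall on each side and by a continuity reduction to generic triangles. Once these technicalities are in place, Alexandrov's lemma provides the decisive step and the result follows.
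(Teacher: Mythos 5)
The paper does not prove this result; it cites Reshetnyak \cite{Rsh:theory} and points to Bridson--Haefliger for the proof. Your sketch is a reasonably faithful rendition of the standard Reshetnyak gluing argument from that source: reduce to small convex balls, show geodesics cross $D$ in a single point, decompose a triangle spanning the two pieces into sub-triangles meeting along $D$, build comparison triangles in $M_\kappa^2$, glue them, and invoke Alexandrov's lemma. The one structural refinement worth making explicit is that when $x\in X_1$ and $y,z\in X_2$, one usually decomposes along \emph{both} crossing points $p$ and $q$, producing three sub-triangles rather than one triangle plus a quadrilateral, and one must also handle the degenerate configurations ($p=q$, a side tangent to $D$, etc.) by a separate limiting argument; you flag this but do not address it.

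There is, however, a genuine sign error in the crucial angle bookkeeping. You write that ``the $\CAT(\kappa)$ condition in each piece forces the corresponding model angles to sum to at most $\pi$,'' and this is exactly backwards. Since $[x,p]\cup[p,y]$ is a geodesic in $Y$, the upper angles on either side of $[p,q]$ at $p$ sum to at least $\pi$ by the triangle inequality for angles. The $\CAT(\kappa)$ comparison inside each piece then gives that each actual upper angle is \emph{at most} the corresponding model angle, so the model angles at $\tilde p$ sum to \emph{at least} $\pi$. This lower bound (not an upper bound) is precisely the hypothesis needed to apply Alexandrov's lemma: it is the concavity condition that guarantees straightening the glued model figure into a single comparison triangle only increases cross-distances. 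If the model angles at $\tilde p$ summed to less than $\pi$, the stretching step would decrease distances and the comparison would fail. With the inequality reversed to $\ge\pi$, the rest of your argument goes through and recovers the standard proof.
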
 


From now, we assume $X$ to be {\it geodesically complete}.
That is, every geodesic segment in $X$ can be extended to 
a geodesic defined on $\mathbb R$.

 The following lemma follows from \cite[Corollary 13.3]{NL:geodesically}.

\begin{lem}\label{lem:conn}
For a point $p\in X$, suppose that $\Sigma_p(X)$ has no isolated points.
Then there exists a positive number $r$ such that every point $x$ in 
$B(p,r)\setminus\{ p\}$ has order one.
\end{lem}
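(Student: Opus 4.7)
My plan is to argue by contradiction and blow up at $p$, reducing the statement to the stability result of \cite[Corollary 13.3]{NL:geodesically} to which the lemma is attributed. Suppose no such $r$ exists; then there is a sequence $x_i\in X\setminus\{p\}$ with $x_i\to p$ and with each $\Sigma_{x_i}(X)$ disconnected, so that the order of every $x_i$ is at least two. Setting $r_i:=|p,x_i|$, I would rescale the metric by $1/r_i$; the pointed spaces $(\tfrac{1}{r_i}X,\,p)$ are locally compact, geodesically complete $\CAT(\kappa r_i^2)$-spaces and converge in the pointed Gromov--Hausdorff topology to the tangent cone $(K_p,o_p)$ at $p$, which is the Euclidean cone over $\Sigma_p$. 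After passing to a subsequence, the points $x_i$, now at rescaled distance $1$ from $p$, converge to a point $\hat x$ of the unit sphere $\Sigma_p\subset K_p$, corresponding to a direction $\hat\xi\in\Sigma_p$.

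Next I would analyze the local geometry of $K_p$ at $\hat x$. Because $\Sigma_p$ has no isolated points and, by geodesic completeness of $X$, also has no endpoints, a small spherical neighborhood of $\hat\xi$ in $\Sigma_p$ is a connected piece of a graph passing through $\hat\xi$. The Euclidean cone structure then makes $K_p$ near $\hat x$ locally a metric product of an open interval along the radial cone ray with the Euclidean cone on this neighborhood; consequently $\Sigma_{\hat x}(K_p)$ is the spherical join of a $0$-sphere with the nonempty connected set $\Sigma_{\hat\xi}(\Sigma_p)$, hence connected, so $\hat x$ has order one in $K_p$.

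The final step, and the main obstacle, will be to transfer the order-one conclusion at $\hat x\in K_p$ back to $x_i\in X$ for all large $i$, giving the desired contradiction. This is subtle because under general pointed Gromov--Hausdorff convergence the number of components of spaces of directions can drop in the limit (components may merge as small ``pinch points'' vanish), so connectedness of $\Sigma_{\hat x}(K_p)$ does not by itself entail connectedness of $\Sigma_{x_i}(X)$ for large $i$. Under the additional hypotheses of geodesic completeness and local compactness, however, the local connectedness of small punctured balls is stable in precisely the direction needed, and this is the content of \cite[Corollary 13.3]{NL:geodesically} that I would invoke to close the argument.
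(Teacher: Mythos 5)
Your proposal correctly identifies the cited stability result as carrying all the weight: the paper's entire ``proof'' of this lemma is the one-line attribution to \cite[Corollary 13.3]{NL:geodesically}, and your transfer-back step ultimately defers to the same corollary. The blow-up reduction you add in front of it is a reasonable and correct unpacking of what must be going on behind the citation, but it does not change the fact that the argument still rests entirely on that citation. Two smaller remarks. First, the lemma sits in Section~\ref{sec:basic} \emph{before} the two-dimensional discussion and is stated in arbitrary dimension, so the phrasing ``a connected piece of a graph passing through $\hat\xi$'' and ``no endpoints'' is unnecessarily dimension-specific; the clean general observation is that for a non-vertex point $\hat x=(\hat\xi,t)$ of the cone $K_p=C(\Sigma_p)$ the link $\Sigma_{\hat x}(K_p)$ is the spherical suspension of $\Sigma_{\hat\xi}(\Sigma_p)$, and a spherical suspension is connected if and only if its base is nonempty, which is exactly the hypothesis that $\hat\xi$ is not isolated (geodesic completeness of $\Sigma_p$ is not needed for this step). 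Second, you are right that the delicate point is that pointed Gromov--Hausdorff convergence of $(\tfrac{1}{r_i}X,p)\to(K_p,o_p)$ does not by itself control the links $\Sigma_{x_i}$; if you want a genuinely self-contained proof rather than a reframing of the paper's citation, you would need to quote the precise statement of \cite[Corollary 13.3]{NL:geodesically} and verify that it applies to the rescaled sequence $(\tfrac{1}{r_i}X,x_i)$, rather than merely asserting that it closes the gap.
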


 Let $d_p$ denote the distance function from $p$.
For every $x\neq p$, let us denote by $(\nabla d_p)(x)$
the set of all directions $\xi\in\Sigma_x(X)$ such that 
$\angle(\xi, \up_x^p)=\pi$.  For simplicity, we set
$-(\nabla d_p)(x) = \up_x^p$.
The following lemma, which describes local geometry around a given point,
is basic in our study of local structure of surfaces with 
curvature bounded above. 

We denote by $\tau_p(\epsilon_1,\ldots, \epsilon_k)$ a function 
depending on $p$ and $\epsilon_1,\ldots, \epsilon_k$ satisfying 
$\lim_{\epsilon_1,\ldots, \epsilon_k\to 0}\tau_p(\epsilon_1,\ldots, \epsilon_k)=0$.

\begin{lem} \label{lem:base}
For every $p\in X$, there exists a positive number $r_0$ such that for every
$r$ with $0<r\le r_0$, $B(p,r)$ satisfies the following $:$
\begin{enumerate}
 \item $\diam ((\nabla d_p)(x)) < \tau_p(r)$
   for every $x\in B(p,r)\setminus\{ p\}\,;$
 \item For any two geodesics $\gamma_1$ and $\gamma_2$ starting at $p$
   with angle $\theta$, 
   and for every $s\in [0,r]$, the geodesic $\sigma_s(t)$ joining 
   $\gamma_1(s)$ to $\gamma_2(s)$ satisfies that 
$$
  |\angle(-(\nabla d_p)(\sigma_s(t)), \dot\sigma_s(t)) - \pi/2| < \tau_p(\theta,s).
$$
\end{enumerate}
\end{lem}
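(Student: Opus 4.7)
The plan is to prove both parts by contradiction through a blow-up to the tangent cone $K_p = K(\Sigma_p)$, using three ingredients: the pointed Gromov--Hausdorff convergence $(\tfrac{1}{r} X, p) \to (K_p, o_p)$ as $r \to 0^+$ (which holds by local compactness, completeness, and geodesic completeness); the upper semi-continuity of angles from Lemma~\ref{lem:limit}; and the structural observation that for a point $x = (v, 1)$ in a metric cone $K(Y)$, the space of directions $\Sigma_x(K(Y))$ is canonically the spherical suspension of $\Sigma_v(Y)$, so the antipode of the south pole $\up_x^o$ is the \emph{unique} north pole and the geodesic extension of the cone ray $ox$ past $x$ is unique.

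For (1), I would assume for contradiction that there exist $\epsilon_0 > 0$ and $x_n \in X \setminus \{p\}$ with $t_n := |p, x_n| \to 0$ and $\diam((\nabla d_p)(x_n)) \ge \epsilon_0$, and pick $\xi_n^1, \xi_n^2 \in (\nabla d_p)(x_n)$ with $\angle(\xi_n^1, \xi_n^2) \ge \epsilon_0$. Using geodesic completeness I would extend in each direction $\xi_n^i$ by length $t_n$ to a point $y_n^i$; since $\angle(\xi_n^i, \up_{x_n}^p) = \pi$, the concatenation $\gamma_{p, x_n} \ast (x_n \to y_n^i)$ is a geodesic, so $|p, y_n^i| = 2 t_n$ and $|x_n, y_n^i| = t_n$. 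Rescaling by $1/t_n$ and extracting a subsequence, I obtain limits $x_\infty, y_\infty^i \in K_p$ with $|o_p, x_\infty| = 1$, $|o_p, y_\infty^i| = 2$, $|x_\infty, y_\infty^i| = 1$; the equality $|o_p, y_n^i| = |o_p, x_n| + |x_n, y_n^i|$ passes to the limit, placing $y_\infty^i$ on a geodesic extension of $o_p x_\infty$ past $x_\infty$. By the cone structural fact, this extension is unique in $K_p$, so $y_\infty^1 = y_\infty^2$; but Lemma~\ref{lem:limit} gives $\angle(\up_{x_\infty}^{y_\infty^1}, \up_{x_\infty}^{y_\infty^2}) \ge \limsup_n \angle(\xi_n^1, \xi_n^2) \ge \epsilon_0 > 0$, a contradiction.

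For (2), I would again suppose for contradiction that there exist $\epsilon_0 > 0$ and sequences $\theta_n, s_n \to 0$ with configurations $\gamma_i^n, \sigma_n := \sigma_{s_n}^n, t_n$ such that $|\alpha_n - \pi/2| \ge \epsilon_0$ at $x_n := \sigma_n(t_n)$, where $\alpha_n = \angle(\up_{x_n}^p, \dot\sigma_n(t_n))$. Set $\alpha_i^n := \angle p x_n q_i^n$ with $q_i^n = \gamma_i^n(s_n)$. Since $x_n$ is interior to $\sigma_n$, the triangle inequality in $\Sigma_{x_n}$ combined with $\angle(\up_{x_n}^{q_1^n}, \up_{x_n}^{q_2^n}) = \pi$ gives $\alpha_1^n + \alpha_2^n \ge \pi$; meanwhile the $\CAT(\kappa)$ angle comparison for $\triangle p x_n q_i^n$ gives $\alpha_i^n \le \tilde\alpha_i^n$, the comparison angle in $M_\kappa^2$. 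Reshetnyak majorization of $\triangle p q_1^n q_2^n$, combined with the cone-unrolling observation that the isosceles triangle in $K_p$ with unit legs and apex angle $\theta \le \pi$ unrolls isometrically onto a Euclidean flat sector of angle $\theta$, gives $|\tilde p, \tilde x_n| - |p, x_n| = o(s_n)$; substituting into the law of cosines yields $\tilde\alpha_i^n = \pi/2 + O(\theta_n) + o_n(1)$. Together with $\alpha_1^n + \alpha_2^n \ge \pi$, this forces both $\alpha_i^n$ into $\pi/2 + O(\theta_n) + o_n(1)$, contradicting the assumption.

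The main obstacle in (1) is the identification of $\Sigma_{x_\infty}(K_p)$ as a spherical suspension and ensuring that the limit of geodesic concatenations under the pointed Gromov--Hausdorff convergence is a genuine geodesic in $K_p$. In (2), the hard part will be obtaining two-sided control on $\alpha_i^n$: the $\CAT(\kappa)$ inequality alone yields only an upper bound, so the triangle inequality $\alpha_1^n + \alpha_2^n \ge \pi$ at interior points of $\sigma_n$ is essential for the matching lower bound; moreover, the sharpness of the estimate depends on the cone-unrolling identity in $K_p$, which may require case analysis according to the relative position of the two rays in $\Sigma_p$ and the behavior of the ratio $\theta_n/s_n$ along the chosen subsequence.
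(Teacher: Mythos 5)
The paper's own treatment is essentially a citation: part (1) is attributed to Otsu--Tanoue, and (2) is declared to follow easily from (1) with the proof omitted. Your part (1) is therefore a genuine alternative, self-contained argument, and it is correct in outline: extending past $x_n$ using geodesic completeness, blowing up at scale $t_n = |p,x_n|$, and using the fact that in the Euclidean cone $K_p = K(\Sigma_p)$ the geodesic from the vertex $o_p$ to any point is the unique radial ray, so the extension past $x_\infty$ is uniquely $(v,2)$. The only thing to flag is that Lemma~\ref{lem:limit} as stated is upper semi-continuity of angle within a single $\CAT(\kappa)$-domain, whereas you apply it across the Gromov--Hausdorff rescaling $\bigl(\tfrac{1}{t_n}X,p\bigr)\to(K_p,o_p)$; the GH version holds by the same monotonicity-of-comparison-angle argument and uniqueness of limiting geodesics, but it should be stated (or cited) rather than passed off as the in-space lemma.

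Part (2) contains a genuine gap. From the law of cosines in the comparison triangle for $\triangle p\,x_n\,q_i^n$, writing $a=|p,x_n|$, $\tilde a=|\tilde p,\tilde x_n|$, $b_i=|x_n,q_i^n|$, one finds
$\cos\tilde\alpha_i^n - \cos\alpha_i' \approx \dfrac{a-\tilde a}{b_i}$,
where $\alpha_i'$ is the angle at the corresponding comparison point on $\tilde q_1\tilde q_2$ in the big comparison triangle $\tilde\triangle p q_1 q_2$ (this $\alpha_i'$ really is $\pi/2 + O(\theta_n)$). Your Reshetnyak/cone-unrolling step gives $\tilde a - a = o(s_n)$, but $b_i$ can be as small as $o(s_n\theta_n)$ --- in fact arbitrarily small when $x_n$ drifts toward an endpoint of $\sigma_n$, which the statement allows since the bound must be uniform in $t$. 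So $\tfrac{\tilde a - a}{b_i}$ is not $O(\theta_n)+o_n(1)$, and the step ``$\tilde\alpha_i^n = \pi/2 + O(\theta_n)+o_n(1)$'' does not follow. The blow-up at scale $1/s_n$ is too coarse here: it degenerates the base of the triangle (since $\theta_n\to0$ too) and cannot see the scale $s_n\theta_n$ at which the estimate has to be sharp. A workable route is to control the angle at the \emph{endpoint} $q_i^n$ rather than the comparison distance $\tilde a$: by the Jack lemma (Lemma~\ref{lem:jack}) or by Lemma~\ref{lem:comparison} one gets $\angle p\,q_i^n\,q_{3-i}^n \ge \tilde\angle p\,q_i^n\,q_{3-i}^n - \tau_p(s_n)$, which is $\pi/2 - \theta_n/2 - \tau_p(s_n)$, and then the angle sum in the small comparison triangle $\tilde\triangle p\,x_n\,q_i^n$ gives $\tilde\alpha_i^n \le \pi - \tilde\angle p\,q_i^n\,x_n + O(\kappa s_n^2) \le \pi/2 + \theta_n/2 + \tau_p(s_n)$, and from here your two-sided squeeze $\alpha_1^n + \alpha_2^n \ge \pi$, $\alpha_i^n\le\tilde\alpha_i^n$, closes the argument. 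Note that this route uses (1) (for the existence of long extensions needed in the Jack-lemma hypothesis), which is consistent with the paper's assertion that (2) follows from (1); your current sketch of (2) never invokes (1) at all, which is another indication the argument is incomplete.
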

\begin{proof}
(1) is due to \cite{OT:riem} (see also \cite[Prop.7.3]{NL:geodesically}).
(2) easily follows from (1), and hence the proof is omitted.
\end{proof}
\pmed   
The following lemma is fundamental, and plays an important role
as in the case of Alexandrov space with curvature bounded below
(\cite{BGP}). For the proof, see \cite[Lemma 3.6]{Ng:volume}.

\begin{lem}[Jack Lemma] \label{lem:jack}
For every $p\in X$, there exists a positive number $r_0$ such that if 
$x\neq y \in B(p,r_0)$ and $q$ satisfy that 
$\tilde\angle pxq > \pi-\e$ and $|x,y|< \e\min \{ |p,x|, |q,x|\}$, then we have
\[
          |\angle pxy -\tilde\angle pxy| <\tau_p(|p,x|, \e).
\]
\end{lem}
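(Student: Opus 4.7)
The plan is to combine (i) an extension argument based on geodesic completeness, which upgrades the comparison-angle hypothesis $\tilde\angle pxq > \pi - \e$ into an actual-angle estimate $\angle pxq \ge \pi - \tau$, with (ii) a comparison-angle subadditivity $\tilde\angle pxy + \tilde\angle yxq \le \pi + \e$ extracted from Reshetnyak's majorization in $M^2_\kappa$. Since the $\CAT(\kappa)$ inequality gives $\angle pxy \le \tilde\angle pxy$ for free, these two bounds together pin down the gap.

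First I would extend $\gamma_{p,x}$ past $x$ by geodesic completeness to a geodesic $\bar\gamma$, and set $q' := \bar\gamma(|px|+|xq|)$, so that $|pq'| = |px|+|xq|$ and $\angle pxq' = \pi$. The hypothesis $\tilde\angle pxq > \pi-\e$ and the $M_\kappa^2$-cosine formula give $|pq'| - |pq| \le \tau_1(\e)\min(|px|,|xq|)$; consequently $\tilde\angle xpq$ is small, and by $\CAT$ so is $\angle xpq$. Since $\angle xpq' = 0$, the triangle inequality in $\Sigma_p$ yields $\angle qpq'$ small. Applying the $\CAT$ hinge inequality to $\gamma_{p,q}$ and $\gamma_{p,q'}$ at $p$ then gives $|qq'| \le \tau_2(\e)\min(|px|,|xq|)$. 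In the triangle $\tr qxq'$ with $|xq|=|xq'|$ and small $|qq'|$, the comparison angle $\tilde\angle qxq'$ is small, hence $\angle qxq' \le \tilde\angle qxq'$ is small. The triangle inequality in $\Sigma_x$ finally gives $\angle pxq \ge \angle pxq' - \angle qxq' \ge \pi - \tau_3(\e, |px|, |xq|)$.

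For the upper bound on the comparison-angle sum, place $\tilde p, \tilde x, \tilde q$ in $M_\kappa^2$ with all three pairwise distances matching, and place $\tilde y$ with $|\tilde p\tilde y|=|py|$ and $|\tilde x\tilde y|=|xy|$. By Reshetnyak's majorization applied to the $4$-cycle $p \to y \to x \to q \to p$, together with the monotonic inverse relationship between the two diagonals of a convex $4$-gon in $M_\kappa^2$ with fixed sides, the placement can be chosen so that $|\tilde y \tilde q| \ge |yq|$. Then $\tilde\angle yxq \le \angle_{\tilde x}(\tilde y, \tilde q)$ (a larger opposite side yields a larger comparison angle), while $\tilde\angle pxy = \angle_{\tilde x}(\tilde p, \tilde y)$ exactly. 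Hence $\tilde\angle pxy + \tilde\angle yxq \le \angle_{\tilde x}(\tilde p, \tilde y) + \angle_{\tilde x}(\tilde y, \tilde q)$, which equals either $\tilde\angle pxq$ or $2\pi - \tilde\angle pxq$ depending on which side of the line through $\tilde p, \tilde q$ the point $\tilde y$ lies; in either case this is at most $\pi + \e$. Combining with $\angle pxy + \angle yxq \ge \angle pxq \ge \pi - \tau_3$ and $\angle \le \tilde\angle$ yields $(\tilde\angle pxy - \angle pxy) + (\tilde\angle yxq - \angle yxq) \le \e + \tau_3$, and nonnegativity of each summand gives $\tilde\angle pxy - \angle pxy \le \e + \tau_3 =: \tau_p(|px|, \e)$.

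The hardest part will be the placement step in Step 2: verifying that the Reshetnyak-majorizing $4$-gon in $M_\kappa^2$ (which \emph{a priori} only satisfies $|\tilde p \tilde x| \ge |px|$) can be deformed with all four sides held fixed so as to force $|\tilde p \tilde x| = |px|$ while preserving $|\tilde y\tilde q| \ge |yq|$. This relies on (i) the triangle inequalities in $X$ guaranteeing existence of the $4$-gon with $|\tilde p\tilde x| = |px|$ in $M_\kappa^2$, and (ii) the fact that along a $1$-parameter family of convex $4$-gons with given sides in $M_\kappa^2$ the two diagonals vary in opposite senses, so shrinking $|\tilde p\tilde x|$ from the Reshetnyak value down to $|px|$ only makes $|\tilde y\tilde q|$ larger.
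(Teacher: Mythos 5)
Your proposal contains two genuine gaps, one in each main step, and the second one is fatal in a way that a concrete example makes vivid.

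\textbf{Step 1: the hinge inequality goes the wrong way.} You write that the ``$\CAT$ hinge inequality'' applied at $p$ with the small angle $\angle qpq'$ yields $|q,q'|\le\tau_2(\e)\min(|p,x|,|q,x|)$. But in a $\CAT(\kappa)$ space the angle comparison $\angle qpq'\le\tilde\angle qpq'$ combined with the $\kappa$-law of cosines gives only a \emph{lower} bound on $|q,q'|$ in terms of the hinge data $(|p,q|,|p,q'|,\angle qpq')$; an upper bound of this type is a curvature-$\ge\kappa$ (Toponogov) statement, not a $\CAT$ one. Indeed, two geodesics issuing from $p$ with Alexandrov angle $0$ may share an initial segment and then diverge along different branches of $\Sigma_z$ at some later point $z$, producing arbitrarily large $|q,q'|$ (take $X$ a tripod, $p$ on one leg at distance $s$ from the center, and $q,q'$ on the other two legs: $\uparrow_p^q=\uparrow_p^{q'}$ yet $|q,q'|$ is as large as you like). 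So the bound on $|q,q'|$ does not follow as stated. The additional structure you have ($x\in[p,q']$ at distance $|p,x|$, and $|x,q|=|x,q'|$) does not obviously repair this; the CAT thin-triangle inequality applied to $\triangle pqq'$ again produces only lower bounds on $|q,q'|$.

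\textbf{Step 2: the intermediate inequality is false.} The central claim, that $\tilde\angle pxy+\tilde\angle yxq\le\pi+\e$, fails whenever $\Sigma_x$ branches. Concretely, take $X$ a geodesically complete metric tree (or, in the paper's 2-dimensional setting, three half-planes glued along a line $L$, with $x$ on $L$). Put $p$ at distance $a$ from $x$ along one branch of $\Sigma_x$, $q$ at distance $b$ along a second branch, and $y$ at distance $d<\e\min(a,b)$ along a third branch. Then $|p,q|=a+b$, so $\tilde\angle pxq=\pi>\pi-\e$; and $|p,y|=a+d$, $|q,y|=b+d$, so both $\tilde\angle pxy$ and $\tilde\angle yxq$ equal $\pi$ and their sum is $2\pi$. (The Jack Lemma itself is not threatened here since $\angle pxy=\tilde\angle pxy=\pi$, so the gap is $0$.) Tracing your construction in this example shows where it breaks: placing $\tilde p,\tilde x,\tilde q$ as the (degenerate) comparison triangle forces $\tilde y$ onto the segment $\tilde p\tilde q$ with $|\tilde y\tilde q|=b-d<b+d=|y,q|$, and there is no alternative placement since $\triangle \tilde p\tilde x\tilde y$ is itself degenerate. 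The flexing argument does not save this: for the side lengths $(a+d,d,b,a+b)$ no \emph{embedded convex} $4$-gon exists at all (one interior angle is forced past $\pi$), so the ``diagonals vary inversely'' heuristic---which is a statement about strictly convex quadrilaterals---does not apply, and the Reshetnyak majorizing domain collapses to a lower-dimensional or merged-vertex configuration rather than the clean quadrilateral you need.

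The difficulty is not cosmetic: your final step deduces $\tilde\angle pxy-\angle pxy\le\e+\tau_3$ from $(\tilde\alpha+\tilde\beta)\le\pi+\e$ together with $(\alpha+\beta)\ge\pi-\tau_3$. In the branching example, $\tilde\alpha+\tilde\beta=\alpha+\beta=2\pi$; the conclusion is fine because both sums inflate equally, but your upper bound on $\tilde\alpha+\tilde\beta$ is simply wrong, so the argument as written does not prove the statement. A correct route (which the paper delegates to \cite[Lemma~3.6]{Ng:volume}) must control the \emph{difference} $\tilde\alpha-\alpha$ more intrinsically rather than via an absolute ceiling of $\pi+\e$ on $\tilde\alpha+\tilde\beta$.
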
 
\pmed

In the study of spaces of curvature bounded below, the theory of 
the Gromov-Hausdorff convergence has been useful.  
We apply  it in our case of curvature bounded above.

We denote by $\mathcal H^n$ the $n$-dimensional Hausdorff measure,
and set $\omega_n :=\mathcal H^n(S^n(1))$, where $S^n(1)$ is the unit $n$-sphere.

\begin{thm}[\cite{Ng:volume}, Compare \cite{Ym:conv}] \label{thm:conv}
For each positive integer $n$, there is a positive number $\epsilon_n$
satisfying the following:
Let $X_i$, $i=1,2,\ldots$, and $X$ be $n$-dimensional locally compact, 
geodesically complete, pointed Alexandrov spaces with curvature $\le\kappa$, and 
suppose that a compact $\CAT(\kappa)$-domain $U_i$ of $X_i$  
converges to a compact $\CAT(\kappa)$-domain $U$ of $X$ with respect to 
the Gromov-Hausdorff distance. Then for every compact domain 
$V$ in $\interior U$ satisfying $\mathcal H^{n-1}(\Sigma_x(X))<\omega_{n-1} +\epsilon$
with $\e\le \epsilon_n$ for all $x\in V$, 
there are a compact domain $V_i$ in $\interior U_i$ and a $\tau(\e, 1/i)$-almost isometry $\varphi_i:V_i\to V$ in the sense that 
\[
       \left|\frac{|\varphi_i(x),\varphi_i(y)|}{|x,y|}-1\right| < \tau(\e,1/i),
\]
for all $x,y\in V_i$.
\end{thm}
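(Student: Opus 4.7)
The plan is to use the volume hypothesis $\mathcal{H}^{n-1}(\Sigma_x(X))<\omega_{n-1}+\e$ to force a quantitative near-Euclidean structure on $V$, and then to transfer this structure to $V_i$ using the Gromov--Hausdorff approximations realizing $U_i\to U$. First I would show that for each $x\in V$ one has $d_{GH}(\Sigma_x,\mathbb{S}^{n-1})<\tau_n(\e)$. The argument rests on the lower volume bound $\mathcal{H}^{n-1}(\Sigma_x)\ge\omega_{n-1}$ valid in a geodesically complete $n$-dimensional locally $\CAT(\kappa)$-space (the reverse-Bishop--Gromov phenomenon in this setting, compare \cite{NL:geodesically}), combined with a rigidity statement: among geodesically complete $\CAT(1)$-spaces of Hausdorff dimension $n-1$, equality $\mathcal{H}^{n-1}(\Sigma)=\omega_{n-1}$ forces $\Sigma$ to be isometric to $\mathbb{S}^{n-1}$. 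A compactness/contradiction argument turns this rigidity into the quantitative closeness, and this is where the threshold $\e_n$ enters.

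Next I would promote this near-sphericity to near-Euclideanity of small balls. Fix a $\de$-net $\{x_\alpha\}\subset V$ and for each $\alpha$ pick $p_{\alpha,0},\ldots,p_{\alpha,n}$ near $x_\alpha$ such that the directions $\up_{x_\alpha}^{p_{\alpha,j}}\in\Sigma_{x_\alpha}$ form an almost regular simplex under the near-isometry $\Sigma_{x_\alpha}\sim \mathbb{S}^{n-1}$. The distance-function chart $F_\alpha=(d_{p_{\alpha,0}},\ldots,d_{p_{\alpha,n}})$ on $B(x_\alpha,\rho)$ should be a $(1+\tau(\e,\rho))$-bi-Lipschitz embedding into $\R^{n+1}$: the nonexpanding direction is immediate from $\CAT(\kappa)$-comparison, while the reverse estimate follows from Lemma~\ref{lem:jack} combined with the fact that the $\up_{x_\alpha}^{p_{\alpha,j}}$ realize a nearly orthonormal system in the near-spherical $\Sigma_{x_\alpha}$. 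Using a GH approximation $\phi_i\colon U\to U_i$, I would set $x_\alpha^i=\phi_i(x_\alpha)$ and $p_{\alpha,j}^i=\phi_i(p_{\alpha,j})$; upper semicontinuity of angles (Lemma~\ref{lem:limit}) together with lower semicontinuity of $\mathcal{H}^{n-1}(\Sigma_{\cdot})$ under noncollapsing GH convergence forces the analogous volume bound for $\Sigma_{x_\alpha^i}(X_i)$ up to an additional error $\tau(1/i)$, so the same construction yields charts $F_\alpha^i$ on $B(x_\alpha^i,\rho)\subset X_i$ with essentially the same quantitative bounds.

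Finally, define $\varphi_i$ locally on $B(x_\alpha^i,\rho)$ by $\varphi_i:=F_\alpha^{-1}\circ F_\alpha^i$, and globalize by a partition-of-unity averaging performed inside the near-Euclidean target charts and then pulled back; the set $V_i$ is the union of these chart domains. The hard part will be twofold. The first obstacle is the rigidity input $\mathcal{H}^{n-1}(\Sigma)=\omega_{n-1}\Rightarrow\Sigma\cong\mathbb{S}^{n-1}$ in the geodesically complete $\CAT(1)$ setting: this controls $\e_n$, is the deepest ingredient, and rests on the noncollapsing regularity theory developed in \cite{NL:geodesically}. The second obstacle is the patching step: the locally defined $F_\alpha^{-1}\circ F_\alpha^i$ need not agree on overlaps, and averaging metric-space-valued maps must be executed inside the Euclidean local charts on $V$ and then pushed back, with a careful check that the glued map still satisfies the $(1+\tau(\e,1/i))$-bi-Lipschitz estimate rather than merely being a $\tau$-GH-approximation.
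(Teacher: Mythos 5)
Since the paper cites Theorem~\ref{thm:conv} from \cite{Ng:volume} without reproducing the proof, there is no in-paper argument for me to compare against, so I will assess your proposal directly. Your overall architecture --- near-sphericity of links forced by the two-sided volume pinch, distance-function charts built from a nearly orthogonal system of directions, transfer to $X_i$ via a GH approximation, and a final patching step --- does capture the broad structure of the argument in \cite{Ng:volume}, and the ingredients you assemble for the first two steps (the bound $\mathcal H^{n-1}(\Sigma_x)\ge\omega_{n-1}$ in the geodesically complete setting, volume rigidity of $\mathbb{S}^{n-1}$, Lemma~\ref{lem:jack}, and $\CAT(\kappa)$-comparison) are all in principle available.

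The gap is in the transfer step. You claim that "lower semicontinuity of $\mathcal H^{n-1}(\Sigma_\cdot)$ under noncollapsing GH convergence" yields $\mathcal H^{n-1}(\Sigma_{x^i_\alpha}(X_i))<\omega_{n-1}+\e+\tau(1/i)$. Because the opposite inequality $\mathcal H^{n-1}(\Sigma_\cdot)\ge\omega_{n-1}$ is automatic, what you actually need is an \emph{upper} bound on the link volume of the approximating space, i.e.\ the statement that a fat link in $X_i$ cannot disappear in the GH limit. That statement is not an available input: it is effectively the conclusion of the theorem rephrased at the level of tangent cones, and appealing to it at this stage is circular. Moreover Lemma~\ref{lem:limit}, which you also invoke, is formulated for triples of points in a single fixed $\CAT(\kappa)$-domain, not across a GH-convergent family of spaces, so it does not by itself control angles in $X_i$. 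The way around both problems is to phrase the regularity hypotheses needed for the bi-Lipschitz estimate on $F_\alpha$ entirely in terms of \emph{comparison} angles $\tilde\angle$: these depend only on distances and therefore transfer verbatim, with error $\tau(1/i)$, from the configuration $(x_\alpha,p_{\alpha,j})$ in $X$ to $(x^i_\alpha,p^i_{\alpha,j})$ in $X_i$ under any GH approximation, and inside the $\CAT(\kappa)$-space $X_i$ those comparison-angle conditions feed directly into the bi-Lipschitz estimate for $F^i_\alpha$ without any reference to the intrinsic geometry of $\Sigma_{x^i_\alpha}(X_i)$. You correctly flag the rigidity input and the patching as hard, but the transfer you treated as routine is a third hard point, and as written the shortcut through link-volume semicontinuity does not go through.
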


\begin{lem}\label{lem:comparison}
For every $p\in X$ and  arbitrary $x,y\in B(p,r)$ we have
\begin{align}\label{eq:diff-comarison-angle}
   \tilde\angle xpy -\angle xpy <\tau_p(r), \quad
   \tilde\angle pxy -\angle pxy <\tau_p(r),\quad
   \tilde\angle pyx -\angle pyx <\tau_p(r).
\end{align}
\end{lem}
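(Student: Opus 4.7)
I argue by contradiction using rescaling and convergence to the tangent cone. Suppose the conclusion fails: then for some $\epsilon_0 > 0$ there exist $r_n \downarrow 0$ and $x_n, y_n \in B(p, r_n) \setminus \{p\}$ with $x_n \ne y_n$ such that (after passing to a subsequence) the same one of the three differences in \eqref{eq:diff-comarison-angle} is at least $\epsilon_0$ for every $n$. Set $\lambda_n := \max\{|p, x_n|, |p, y_n|, |x_n, y_n|\} \to 0$. Under rescaling by $\lambda_n^{-1}$ the pointed spaces $(\lambda_n^{-1} X, p)$ are locally $\mathrm{CAT}(\lambda_n^2 \kappa)$ and converge to the tangent cone $(K_p, o_p)$ in the pointed Gromov--Hausdorff topology. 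After a further subsequence, $x_n \to x_\infty$ and $y_n \to y_\infty$ in $K_p$ with $\max\{|o_p, x_\infty|, |o_p, y_\infty|, |x_\infty, y_\infty|\} = 1$.

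The three comparison angles $\tilde\angle_\kappa$ depend only on side lengths and $\kappa$; since $\lambda_n^2 \kappa \to 0$, each converges to the corresponding Euclidean comparison angle $\tilde\angle_0$ of the limit triangle $(o_p, x_\infty, y_\infty)$ in $K_p$. The key geometric fact is that, $K_p$ being the Euclidean cone over the $\mathrm{CAT}(1)$ graph $\Sigma_p$, this triangle lies isometrically in a flat Euclidean sector of $K_p$ (with the degenerate case $d_{\Sigma_p}(\uparrow_{o_p}^{x_\infty}, \uparrow_{o_p}^{y_\infty}) \ge \pi$ yielding a collinear configuration through $o_p$). In either case all three actual angles of the triangle in $K_p$ coincide with their Euclidean comparison angles; denote this common value at the relevant vertex by $\alpha_\infty$. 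Thus $\tilde\angle_\kappa \to \alpha_\infty$ in $X$.

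It remains to show the corresponding actual angle in $X$ also tends to $\alpha_\infty$. At $p$ this is immediate: $\angle x_n p y_n = d_{\Sigma_p}(\uparrow_p^{x_n}, \uparrow_p^{y_n}) \wedge \pi$, and the initial directions converge in $\Sigma_p$ (because the geodesics $\gamma_{p, x_n}$ converge Gromov--Hausdorff to the unique radial segment in $K_p$), so the Lipschitz continuity of $d_{\Sigma_p}$ gives $\angle x_n p y_n \to \alpha_\infty$. At $x_n$ (and symmetrically $y_n$), the $\mathrm{CAT}(\kappa)$ bound yields $\limsup \angle p x_n y_n \le \alpha_\infty$. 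For the matching lower bound I use geodesic completeness to extend $\gamma_{p, x_n}$ past $x_n$ to a point $x_n^\dagger$ with $|x_n, x_n^\dagger| = \lambda_n$; by Lemma~\ref{lem:base}(1) every such extension has direction within $\tau_p(r_n)$ of a single outward radial direction, so $x_n^\dagger$ converges to $x_\infty^\dagger$, the outward radial extension of $x_\infty$ in $K_p$. Since $\angle p x_n x_n^\dagger = \pi$, the triangle inequality in $\Sigma_{x_n}$ gives
\[
\angle p x_n y_n \,\ge\, \pi - \angle y_n x_n x_n^\dagger \,\ge\, \pi - \tilde\angle_\kappa y_n x_n x_n^\dagger,
\]
and the right-hand side converges to $\pi - (\pi - \alpha_\infty) = \alpha_\infty$ by the same flat-sector computation applied to the triangle $(y_\infty, x_\infty, x_\infty^\dagger)$. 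Combining, $\angle p x_n y_n \to \alpha_\infty$, contradicting $\tilde\angle - \angle \ge \epsilon_0$. The main obstacle is precisely this lower semi-continuity of angles at $x_n$ and $y_n$, which is not automatic in a general $\mathrm{CAT}(\kappa)$ space; it is supplied here by geodesic completeness together with Lemma~\ref{lem:base}(1).
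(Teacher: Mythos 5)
Your contradiction/rescaling scheme is essentially correct and matches the paper's argument on the balanced regime, but as written it has a genuine gap in the degenerate sub-cases, which the paper treats with two extra tools.

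Concretely: with $\lambda_n=\max\{|p,x_n|,|p,y_n|,|x_n,y_n|\}$, the limit configuration in $K_p$ may have $x_\infty=o_p$ (if $|p,x_n|/\lambda_n\to0$) or $x_\infty=y_\infty$ (if $|x_n,y_n|/\lambda_n\to0$), and your argument tacitly excludes both. In those cases your statement ``each $\tilde\angle_\kappa$ converges to the Euclidean comparison angle of the limit triangle'' is false: if, say, $a_n:=|p,x_n|/\lambda_n\to 0$ and $b_n:=|p,y_n|/\lambda_n$, $c_n:=|x_n,y_n|/\lambda_n\to c_y>0$, then $\cos\tilde\angle_\kappa x_npy_n\to\frac{a_n^2+(b_n-c_n)(b_n+c_n)}{2a_nb_n}$, and since the triangle inequality only forces $|b_n-c_n|\le a_n$, this ratio can oscillate throughout $[-1,1]$ along the sequence. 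Simultaneously $\uparrow_p^{x_n}$ has no canonical limit (since $x_\infty=o_p$), so ``the initial directions converge in $\Sigma_p$'' and ``$x_n^\dagger$ converges to the outward radial extension of $x_\infty$'' are both unjustified, and the flat-sector identity at the vertex $x_\infty$ is vacuous. An analogous oscillation of the comparison angle at $x_n$ occurs when $x_\infty=y_\infty$. What the lemma actually asserts in these regimes is not that $\tilde\angle$ and $\angle$ converge separately, but that their \emph{difference} goes to zero, and that requires a tool that controls the difference directly.

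The paper's proof supplies exactly that: in Case 1 it first isolates the sub-case $|x,y|<\zeta|p,x|$, where Lemma~\ref{lem:jack} (Jack Lemma) bounds $\tilde\angle pxy-\angle pxy$ and $\tilde\angle pyx-\angle pyx$ directly, and only in the complementary sub-case $|x,y|\ge\zeta_0|p,x|$ does it run the rescaling-to-$K_p$ contradiction (the argument you give, including the use of the extension point $z_n$ of $\gamma_{p,x_n}$ past $x_n$, which coincides with your $x_n^\dagger$). Case 2 ($|p,x|\ll|p,y|$) is reduced to Case 1 via the auxiliary point $z$ on the extension of $\gamma_{p,x}$ past $x$ with $|p,z|=|p,y|$, together with the monotonicity $\tilde\angle ypz\ge\tilde\angle ypx$ of comparison angles under extension of a side and a quadrangle estimate. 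Your proposal captures the middle sub-case accurately and identifies the right obstruction (lower semi-continuity of angles, rescued by geodesic completeness), but to be a complete proof it must incorporate something like the Jack Lemma for $|x,y|\ll|p,x|$ and the auxiliary-point/monotonicity reduction for $|p,x|\ll|p,y|$.
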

\begin{proof} For the proof, it suffices to show that 
for every $\e>0$ there is an $r$ such that
for arbitrary $x,y\in B(p,r)$ we have
\eqref{eq:diff-comarison-angle} for $\e$ in place of 
$\tau_p(r)$.
Fix a constant $C\ge1$.

\pmed\n
Case 1). $C^{-1}\le |p,x|/|p,y| \le C$.
\pmed
We show \eqref{eq:diff-comarison-angle} for 
$\e=\tau_{p,C}(r)$, where $\tau_{p,C}(\,\cdot\,)$ is a function depending on $p$, $C$ with $\lim_{r\to 0}\tau_{p,C}(r) =0$. 
Suppose $|x,y|< \zeta |p,x|$ for $\zeta>0$.
Then 
Lemma \ref{lem:jack} implies 
\[
  \tilde\angle pxy - \angle pxy <\tau_p(r,\zeta),\quad
   \tilde\angle pyx - \angle pyx <\tau_p(r,\zeta).
\]   
Since  $\tilde\angle xpy<\tau(\zeta)$, 
\eqref{eq:diff-comarison-angle} holds when 
$r\le r_0$ and $\zeta\le\zeta_0$ for some
$r_0, \zeta_0$.

Next, suppose $|x,y|\ge \zeta_0 |p,x|$.
We proceed by contradiction. 
Suppose the lemma does not hold in this case.
Then there are $x_n,y_n\to p$ with $|x_n,y_n|\ge\zeta_0|p,x_n|$. Choose $z_n$ such that $x_n\in\gamma_{p,z_n}$
and $|z_n,x_n|=|p,x_n|$.
Consider the rescaling limit
\[
   \left(\frac{1}{|p,x_n|}X,p\right) \to (K_p,o_p).
\]
Since 
$\lim_{n\to\infty} (\angle px_n y_n + \angle z_nx_n y_n)=\pi$,
we have 
\[
\lim_{n\to\infty} \angle px_ny_n = \angle o_p x_\infty y_\infty=\lim_{n\to\infty} \tilde\angle px_ny_n,
\]
where $x_\infty$  and  $y_\infty$  are  the limits of
$x_n$ and $y_n$. Similarly, we have 
$
\lim_{n\to\infty} \angle py_nx_n = \lim_{n\to\infty} \tilde\angle py_n x_n.
$
Since obviously we have 
$$
\lim_{n\to\infty} \angle x_n p y_n =
\angle x_\infty o_p y_\infty= \lim_{n\to\infty} \tilde\angle x_n py_n,
$$
we derive a contradiction.

\pmed\n
Case 2). $|p,x|< C^{-1}|p,y|$.
\pmed
We show \eqref{eq:diff-comarison-angle} for 
$\e=\tau_{p}(r)+\tau(C^{-1})$. 
First note that
\begin{align} \label{eq:comp-2}
   \tilde\angle pyx<\tau(C^{-1}). 
\end{align}
Thus considering large $C$, we only have to consider
the angles at $p$ and $x$.
Take $z$ with $x\in\gamma_{p,z}$ and $|z,p|=|y,p|$.
Then we have 
\begin{align} \label{eq:comp-1}
    \tilde\angle ypz \ge \tilde\angle ypx\ge \angle ypx.
\end{align}
From Case 1), we have 
\begin{align} \label{eq:comp0}
\tilde\angle ypz- \angle ypx<\tau_{p,1}(r).
\end{align}
Combining \eqref{eq:comp-1} and   
 \eqref{eq:comp0}, we certainly have 
\begin{align} \label{eq:comp1}
  \tilde\angle ypx- \angle ypx<\tau_{p,1}(r).
\end{align}
From $|p,x|< C^{-1}|p,y|$, we have 
\begin{align} \label{eq:comp2}
   |\tilde\angle xyz -\tilde\angle pyz|<\tau(C^{-1}), \quad
  |\tilde\angle xzy -\tilde\angle pzy|<\tau(C^{-1}).
\end{align}
From \eqref{eq:comp1} and \eqref{eq:comp0}, we have 
\[
|\tilde\angle xpy -\tilde\angle zpy|<\tau_{p,1}(r).
\]
From \eqref{eq:comp-2} and the first inequality in  \eqref{eq:comp2}, we have 
\[
|\tilde\angle pyx + \tilde\angle xyz  -\tilde\angle pyz|<\tau(C^{-1}).
\]
Now  consider the quadrangle $\tilde p\tilde x\tilde z\tilde y$ on $M^2_{\kappa}$ which is the union of 
the triangles $\tilde\triangle pxy$ and $\tilde\triangle xyz$ glued along the edge $\tilde x\tilde y$ corresponding to $xy$.
We estimate the deviation of the angle of 
the quadrangle $\tilde p\tilde x\tilde z\tilde y$ at 
$\tilde x$ from $\pi$.
Combining the last two inequalities and the second inequality in \eqref{eq:comp2}, we have 
\[
|\tilde\angle pxy + \tilde\angle yxz  -\pi |<\tau_{p,1}(r)+\tau(C^{-1}).
\]
Since 
\[
|\angle pxy + \angle yxz  - \pi|<\tau_p(r),
\]
the last two inequalities yield
$\tilde\angle pxy -\angle pxy<\tau_p(r)+\tau(C^{-1})$ as required.
This completes the proof.
\end{proof}

A point $p$ in $X$ is called a {\it topological singular point} of $X$
if any neighborhood of $p$ is not homeomorphic to a disk,
and the set of all topological singular point of $X$ is denoted 
by $\mathcal S(X)$.
It is  proved in \cite{NL:geodesically} that if $\dim X=n$, then $\dim_H\mathcal S(X)\le n-1$. In particular $X\setminus\mathcal S(X)$  
has full measure with respect to 
$\mathcal H^n$ (\cite{OT:riem}).

\pmed
\n
{\bf Two-dimensional case.} 
By a result of Otsu-Tanoue \cite{OT:riem}, the Hausdorff dimension
of  every relatively compact open domain of $X$ is an integer.
See \cite{NL:geodesically} for a different proof. 
It is also known that $\Sigma_p(X)$ is a compact geodesically complete
$\CAT(1)$-space for every $p\in X$.

Obviously, if $X$ is $1$-dimensional, then 
it is a locally finite graph without endpoints.
Now we assume $X$ has dimension $2$. Then  any component $\Sigma$
of $\Sigma_p$ has dimension $\le 1$.
If $\dim\Sigma=1$, then $\Sigma$ has the structure of a finite graph without endpoints.
Furthermore $\Sigma$ 
is a so called {\it  $\CAT(1)$-graph} without endpoints in the sense that 
each simple closed curve in $\Sigma$ has length at least $2\pi$.
If $\dim\Sigma=0$, then $\Sigma$ is a point and the component of 
$B(p,r)\setminus\{ p\}$ corresponding to $\Sigma$ is an arc for any small enough $r$.
Thus, a small neighborhood of any point $p\in X$
is the gluing at $p$ of several purely $2$-dimensional spaces
with all links conected graphs and a ball around the vertex
in the cone over finitely many points.
Therefore the study of local structure around $p$ reduces 
to the case when $\Sigma_p$ is a connected $\CAT(1)$-graph without endpoints.

\begin{lem} \label{lem:disk}
A neighborhood of $p\in X$ is homeomorphic to 
a two-dimensional disk if and only if $\Sigma_p(X)$ is a circle. 
\end{lem}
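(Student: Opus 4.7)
The plan is to relate the topology of $\Sigma_p$ to that of small punctured balls around $p$, using the homotopy equivalence $\Sigma_p\simeq S(p,r)$ recalled in Remark~\ref{rem:metric-sphere} from \cite{NL:geodesically}.

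For the ``only if'' direction, suppose a neighborhood $V$ of $p$ is homeomorphic to an open $2$-disk. Shrinking $V$ and then choosing $r$ so small that $\overline{B(p,r)}\subset V$, one has on one hand $V\setminus\{p\}\simeq S^1$; on the other hand, using the convexity of $d_p$ on $B(p,r)$ in the $\CAT(\kappa)$-sense together with Lemma~\ref{lem:base}(1), the gradient flow of $d_p$ yields a deformation retraction of $B(p,r)\setminus\{p\}$ onto $S(p,r)$. Comparing $B(p,r)\setminus\{p\}$ with the family of smaller topological concentric disks about $p$ inside $V$, one obtains $S(p,r)\simeq S^1$, and hence $\Sigma_p\simeq S^1$. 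Now $\Sigma_p$ is a connected finite $\CAT(1)$-graph without endpoints, so every vertex has valence $\ge 2$, giving $|E|\ge|V|$ and $\chi(\Sigma_p)\le 0$; the condition $\chi(\Sigma_p)=0$ then forces $|E|=|V|$, whence every vertex has valence exactly $2$ and $\Sigma_p$ is a single cycle, i.e.\ a circle.

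For the ``if'' direction, assume $\Sigma_p$ is a circle of length $\ell\ge 2\pi$. Choose $r$ small enough that $B(p,r)$ is a $\CAT(\kappa)$-ball with unique geodesics between any two of its points. Define
\[
\phi:[0,r]\times\Sigma_p\longrightarrow B(p,r),\qquad \phi(t,\xi):=\gamma_\xi(t),
\]
where $\gamma_\xi$ is the unique unit-speed geodesic from $p$ with initial direction $\xi$ (existence and uniqueness follow from geodesic completeness together with the absence of branching inside the $\CAT(\kappa)$-ball around $p$). Continuity of $\phi$ follows from continuous dependence of $\CAT(\kappa)$-geodesics on initial data, and since $\phi(0,\cdot)\equiv p$ it factors through
\[
\tilde\phi:C_r(\Sigma_p):=([0,r]\times\Sigma_p)/(\{0\}\times\Sigma_p)\longrightarrow B(p,r).
\]
Because $\Sigma_p\cong S^1$, $C_r(\Sigma_p)$ is homeomorphic to a closed $2$-disk. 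Surjectivity of $\tilde\phi$ follows from the existence of a minimal geodesic from $p$ to any $q\in B(p,r)$; injectivity off the cone point follows because $\phi(t_i,\xi_i)=q\ne p$ forces $t_i=|p,q|$, and the uniqueness of the $p$-$q$ geodesic in the $\CAT(\kappa)$-ball then gives $\xi_1=\xi_2$. Since $C_r(\Sigma_p)$ is compact and $B(p,r)$ is Hausdorff, the continuous bijection $\tilde\phi$ is a homeomorphism, so $B(p,r)\cong D^2$.

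The main obstacle is the ``only if'' direction, specifically verifying that $B(p,r)\setminus\{p\}$ has the same homotopy type as both $S(p,r)$ and $V\setminus\{p\}\simeq S^1$. The first equivalence rests on building a continuous radial deformation retract from the (a priori multi-valued) gradient flow of $d_p$, using Lemma~\ref{lem:base}(1) to control the direction $-\nabla d_p$ up to small diameter; the second is a cofinal-family argument squeezing concentric topological disks of $V$ inside $B(p,r)$ so as to preserve linking with $p$. The ``if'' direction is comparatively routine given the standard uniqueness of geodesics in small $\CAT(\kappa)$-balls and the fact that a circle link admits no branching.
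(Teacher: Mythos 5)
The paper proves this lemma by citing \cite[Prop.\,3.1, Remark\,3.4]{Ng:sphere} rather than arguing directly, so you have supplied an independent proof. Your \emph{if} direction is essentially correct: identifying $B(p,r)$ with the cone $C_r(\Sigma_p)$ via the exponential map is the right construction, and each supporting step (every $\xi\in\Sigma_p$ is realized by a geodesic, via Arzel\`a--Ascoli plus the upper semi-continuity in Lemma~\ref{lem:limit}; uniqueness from the $\CAT(\kappa)$-comparison; continuity and the compact-to-Hausdorff bijection trick) is available in a small $\CAT(\kappa)$-ball of a locally compact, geodesically complete space. The Euler-characteristic endgame in your \emph{only if} direction is also correct and clean: a connected finite $\CAT(1)$-graph without endpoints and with $b_1=1$ has $\chi=0$, forcing all valences to equal $2$, hence a circle.

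The gap is in how you establish $b_1(\Sigma_p)=1$. You want a deformation retraction of $B(p,r)\setminus\{p\}$ onto $S(p,r)$ along the ``gradient flow'' of $d_p$; but that flow goes in the \emph{increasing} direction of the convex function $d_p$, where $(\nabla d_p)(x)$ is genuinely multivalued and Lemma~\ref{lem:base}(1) only bounds its diameter, it does not produce a continuous selection. Building such a retraction is of comparable difficulty to the Lytchak--Nagano homotopy equivalence $S(p,r)\simeq\Sigma_p$ you are already quoting, so this step cannot be taken as given. The ``concentric disk'' squeeze as stated is likewise not conclusive on its own: a space sandwiched between two copies of $S^1$ with homotopy-equivalent composites is only forced to be $S^1$ once one also knows the inclusion $B(p,r')\setminus\{p\}\hookrightarrow B(p,r)\setminus\{p\}$ is a homotopy equivalence for $r'<r$, which is again the nontrivial input. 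One rigorous repair replaces both steps by excision and Alexander duality: $B(p,r)$ is compact, contractible, and contains $p$ in its interior inside $V\cong D^2\subset S^2$, so $\tilde H_0(S^2\setminus B(p,r))\cong\tilde H^1(B(p,r))=0$, hence $S^2\setminus S(p,r)$ has exactly two components, $\tilde H^1(S(p,r))\cong\mathbb Z$, and $b_1(\Sigma_p)=1$. A shorter route in the spirit of the paper avoids the homotopy equivalence entirely: the converse half of the proof of Lemma~\ref{lem:top-vert1} (which nowhere uses $p\in\ca S(X)$) shows that any vertex $v$ of $\Sigma_p(X)$ of valence $\ge3$ is a limit of directions $\uparrow_p^{x_i}$ with $x_i\in\ca S(X)$, $x_i\to p$; a disk neighborhood of $p$ lies in $\ca R(X)$ and contains no such $x_i$, so $\Sigma_p(X)$ has no valence-$\ge3$ vertices and, being a connected finite graph without endpoints, is a circle.
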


\begin{proof}  
This follows from \cite[Prop.3.1,  Remark 3.4]{Ng:sphere}. 
\end{proof}

\begin{lem} \label{lem:top-vert1}
Let $p\in\mathcal S(X)$. Then
$\Sigma_p(\mathcal S(X))$ coincides with the set $V(\Sigma_p(X))$ of all 
vertices of the graph $\Sigma_p(X)$.
 \end{lem}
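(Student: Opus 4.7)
By Lemma~\ref{lem:conn} together with Lemma~\ref{lem:disk}, for all sufficiently small $r$ every $x\in B(p,r)\setminus\{p\}$ has connected space of directions, and $x\in\mathcal S(X)$ if and only if the $\CAT(1)$-graph $\Sigma_x(X)$ is not a circle, equivalently possesses a vertex of degree $\neq 2$. The statement therefore reduces to showing that, for $\xi\in\Sigma_p(X)$, the point $\gamma_\xi(t)$ lies in $\mathcal S(X)$ for all small $t>0$ if and only if $\xi$ is a vertex of the graph $\Sigma_p(X)$. The overall strategy is to read off the local topology near $\gamma_\xi(t)$ from the structure of the tangent cone $K_p=K(\Sigma_p(X))$ at $\xi$, using the standard identification of $\Sigma_\xi(K_p)$ with the spherical suspension of $\Sigma_\xi(\Sigma_p(X))$.

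\textbf{Inclusion $V(\Sigma_p)\subseteq\Sigma_p(\mathcal S(X))$.} Let $\xi$ be a vertex of degree $k\ge 3$, and let $v_1,\dots,v_k\in\Sigma_\xi(\Sigma_p)$ be the incident edge-directions. Extending each $v_j$ a small distance along its edge yields a family of points $\xi_j(s)\in\Sigma_p$ with $\xi_j(0)=\xi$ which are pairwise distinct for $s>0$, and lifting them to $X$ gives geodesics $\sigma_j^s(u)=\gamma_{\xi_j(s)}(u)$ starting at $p$. Applying Lemma~\ref{lem:base}(2) together with the Jack Lemma~\ref{lem:jack} to the pairs $(\gamma_\xi,\sigma_j^s)$, the segment from $\gamma_\xi(t)$ to $\sigma_j^s(t)$ is almost perpendicular to $\dot\gamma_\xi(t)$ modulo $\tau_p$-terms, and Lemma~\ref{lem:comparison} guarantees that for distinct indices $j$ these almost-perpendicular directions at $\gamma_\xi(t)$ remain at positive angle inherited from the graph angles at $\xi$. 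Together with $\pm\dot\gamma_\xi(t)$ this furnishes at least $k\ge 3$ distinct directions at $\gamma_\xi(t)$ comprising a non-circle subgraph of $\Sigma_{\gamma_\xi(t)}(X)$; hence $\gamma_\xi(t)\in\mathcal S(X)$ for every small $t>0$, and $\xi\in\Sigma_p(\mathcal S(X))$.

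\textbf{Inclusion $\Sigma_p(\mathcal S(X))\subseteq V(\Sigma_p)$.} If $\xi$ is interior to an edge, then $\Sigma_\xi(\Sigma_p)=\{+,-\}$ at mutual distance $\pi$, and therefore $\Sigma_\xi(K_p)$ is a round circle of length $2\pi$, so $K_p$ is locally isometric to $\mathbb R^2$ near $\xi$. In this regime the hypothesis $\mathcal H^1(\Sigma_x)<2\pi+\e$ required by Theorem~\ref{thm:conv} holds throughout a neighborhood of $\xi$ in $K_p$, and the convergence theorem applied to the rescaling $(t^{-1}X,p)\to(K_p,o_p)$ produces an almost-isometric identification of a small ball about $\gamma_\xi(t)$ in $X$ with a Euclidean disk for $t$ small. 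Thus $\Sigma_{\gamma_\xi(t)}(X)$ is a $\CAT(1)$-graph without endpoints almost isometric to the round circle of length $2\pi$; since any connected $\CAT(1)$-graph without endpoints which is not a circle contains either a cycle of length strictly greater than $2\pi$ or a vertex of degree $\ge 3$, both of which obstruct Gromov--Hausdorff closeness to the round circle, such a graph must itself be a circle. Hence $\gamma_\xi(t)\notin\mathcal S(X)$ for $t$ small, and a singular sequence cannot accumulate in direction $\xi$.

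\textbf{Main obstacle.} The delicate step is the transfer of the topology of $\Sigma_x(X)$ from the tangent-cone model to $X$ itself under rescaling. Theorem~\ref{thm:conv} delivers the required bi-Lipschitz almost-isometries only where $\mathcal H^1(\Sigma_x)$ is close to $2\pi$, which holds near an interior edge point but fails near a branching vertex; for the vertex direction one must therefore construct the distinct directions at $\gamma_\xi(t)$ by hand from the graph structure of $\Sigma_p$, using the angle estimates of Lemmas~\ref{lem:base}, \ref{lem:jack}, and \ref{lem:comparison}, rather than invoking the convergence theorem.
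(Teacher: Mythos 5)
The first inclusion ($\Sigma_p(\mathcal S(X)) \subseteq V(\Sigma_p(X))$) of your proposal is essentially correct and matches the paper: use Theorem~\ref{thm:conv} and the flatness of $K_p(X)$ near an interior edge point to produce an almost-Euclidean neighbourhood. (The paper rescales about the singular sequence points $x_i$ rather than about $p$, but the conclusion is the same.)

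The second inclusion ($V(\Sigma_p(X)) \subseteq \Sigma_p(\mathcal S(X))$) has two genuine problems. First, the stated reduction is wrong: $\xi\in\Sigma_p(\mathcal S(X))$ means only that \emph{some} sequence of singular points accumulates at $p$ in direction $\xi$; it does \emph{not} say that the radial geodesic $\gamma_\xi$ lies in $\mathcal S(X)$. These are not equivalent, and the stronger statement you try to prove is actually false. In Kleiner's construction (Example~\ref{ex:2} of the paper), $\mathcal S(X)$ near $p$ consists of two Lipschitz curves $\iota(\partial_\pm\Omega)$ which approach $p$ in a vertex direction $v$ but which meet the radial geodesic $\gamma_v$ only at the discrete cusp points; for generic $t$, $\gamma_v(t)$ is a topologically regular point. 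So the claim ``$\gamma_\xi(t)\in\mathcal S(X)$ for all small $t$'' cannot be salvaged, and your proof is attempting to establish a false statement. Second, even granting your framing, the key step is unjustified: exhibiting $k\ge 3$ distinct directions at $\gamma_\xi(t)$ that are almost perpendicular to $\pm\nabla d_p$ does not yield a ``non-circle subgraph'' of $\Sigma_{\gamma_\xi(t)}(X)$—every point of $\mathbb R^2$ has uncountably many directions and a circle link. One needs an actual incompatibility of angles. The paper's argument supplies exactly this: assuming a cone neighbourhood $C(v,\delta_0,\epsilon_0)$ of $v$ were contained in $\mathcal R(X)$, it picks three directions $\xi_1,\xi_2,\xi_3$ at angle $\delta_0/2$ from $v$, forms the geodesic $[x_1(\e),x_2(\e)]$, takes the nearest point $y(\e)$ from $x_3(\e)$, and notes that the three directions $\eta_i=\uparrow_{y(\e)}^{x_i(\e)}$ are all nearly perpendicular to $\nabla d_p$ (Lemma~\ref{lem:base}) while $\angle(\eta_1,\eta_2)=\pi$ and $\angle(\eta_3,\eta_i)\ge\pi/2$; in a circle of length $\approx 2\pi$ these conditions are incompatible. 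To repair your proof you would have to replace the ray-based claim with this kind of contradiction argument located at an auxiliary point, not at $\gamma_\xi(t)$ itself.
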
 

\begin{proof}
For every $v\in \Sigma_p(\mathcal S(X))$, take a sequence
$x_i$ in $\ca S(X)$ converging to $p$ such that 
$\lim_{i\to\infty}\angle(\dot\gamma_{p,x_i}(0),v)=0$. If $v$ is not a vertex of 
$\Sigma_p(X)$, choose $\epsilon>0$ 
such that the $\epsilon$-neighborhood of $v$ contains no vertices of
$\Sigma_p(X)$. Let $\delta_i:=|x_i,p|$. Theorem \ref{thm:conv}
applied to the convergence $(\frac{1}{\delta_i} X, x_i)\to (K_p(X), v)$
yields that a small neighborhood of $x_i$ is almost isometric to 
a neighborhood in $\R^2$. This is a contradiction.

Conversely, suppose there is $v\in V(\Sigma_p(X))$ that is not 
contained in $\Sigma_p(\mathcal S(X))$.
Choose  $\e_0>0$ and $\delta_0>0$ such that the cone neighborhood 
\begin{align} \label{eq:cone-neighborhood}
  C(v,\delta_0,\e_0):=\{ x\,|\,
      \angle(\uparrow_p^x,v)\le\delta_0, |p,x|\le \e_0\}
\end{align}
is included in 
$\mathcal R(X)$.
Take three distinct directions $\xi_1,\xi_2,\xi_3\in\Sigma_p(X)$
having  angle $\delta_0/2$ with $v$, and set 
$x_i(\e):=\gamma_{\xi_i}(\e)$, where $\e\le\e_0$, $1\le i\le 3$.
Note that the geodesic $[x_1(\e),x_2(\e)]$ converges 
to the geodesic $[\xi_1,\xi_2]$ in $K_p(X)$ under the 
convergence 
$$
\left( \frac{1}{\e} X, p\right)  \to (K_p(X), o_p)
\qquad \text{as $\e\to 0$}.
$$
Let $y(\e)$ be a nearest point of $[x_1(\e),x_2(\e)]$ from 
$x_3(\e)$. Since $y(\e)\in\mathcal R(X)$, 
$\Sigma_{y(\e)}(X)$ must be a circle of length almost
equal to $2\pi$ with $\angle(\nabla d_p(y(\e)), -\nabla d_p(y(\e)))=\pi$. 
However, Lemma \ref{lem:base} shows that the angle
$\angle(\nabla d_p(y(\e)), \eta_i(\e))$ is almost 
$\pi/2$, where $\eta_i:=\uparrow_{y(\e)}^{x_i(\e)}$, $1\le i\le 3$, which implies $\angle(\eta_1(\e),\eta_2(\e))$ is almost 
equal to $0$. This is a contradiction since 
$\angle(\eta_1(\e),\eta_2(\e))=\pi$. 
\end{proof}

\begin{rem} \upshape
In place of the above geometric argument of the second half of the proof of Lemma \ref{lem:top-vert1},
we can also use more general topological result in \cite[Theorem 2.1] {GPW}.
\end{rem}

\begin{lem} \label{lem:near-vert}
Let $p\in\mathcal S(X)$.
For any $x\in \mathcal S(X)\cap (B(p,r)\setminus\{ p\})$, $V(\Sigma_x(X))$ is contained in the $\tau_p(r)$-neighborhood of $\{ (-\nabla d_p)(x),  (\nabla d_p)(x) \}$.

Therefore there is a positive integer $m\ge 3$ such that 
the Gromov-Hausdorff distance between $\Sigma_x(X)$ 
and the spherical suspension over $m$ points is less than
$\tau_p(r)$.
\end{lem}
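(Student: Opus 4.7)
The plan is to identify $\Sigma_x(X)$ with the space of directions at a corresponding point of the tangent cone $K_p$ via a rescaling argument. Because $x\in\mathcal{S}(X)$, compactness together with Lemma \ref{lem:top-vert1} forces $\uparrow_p^x$ to lie within $\tau_p(r)$ of some vertex $v\in V(\Sigma_p)$: otherwise a subsequence $x_n\to p$ in $\mathcal{S}(X)$ would produce a non-vertex limit direction in $\Sigma_p(\mathcal{S}(X))$, contradicting Lemma \ref{lem:top-vert1}. Set $m:=\deg v$; since $\Sigma_p$ is a $\CAT(1)$-graph without endpoints we have $m\ge 3$.

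Next, I would analyze the explicit structure of $K_p$ near the point $(v,1)$. Because $K_p$ is the Euclidean cone over $\Sigma_p$, a neighborhood of the ray through $v$ is an ``$m$-page book'': $m$ flat Euclidean sectors, one from each edge of $\Sigma_p$ incident to $v$, glued along their common axis. A direct computation gives that $\Sigma_{(v,1)}(K_p)$ is the spherical suspension over $m$ points, with only two vertices, namely the poles $\pm\uparrow_{(v,1)}^{o_p}$.

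The core step is transferring this model back to $\Sigma_x(X)$ via the rescaled convergence $(\tfrac{1}{r_x}X,p)\to(K_p,o_p)$ with $r_x:=|p,x|$, under which $x\mapsto\tilde x\approx(v,1)$. For each regular point of $K_p$ lying in the interior of one of the book's pages, the Hausdorff-measure hypothesis of Theorem \ref{thm:conv} is satisfied, yielding local almost isometries between small balls in $K_p$ and in $\tfrac{1}{r_x}X$. Piecing these together, and using Lemma \ref{lem:top-vert1} both at $p$ and at $x$ to match the singular axis of $K_p$ near $(v,1)$ with $\mathcal{S}(X)\cap B(x,\rho r_x)$, one obtains a Gromov-Hausdorff approximation of error $\tau_p(r)$ between $B(\tilde x,\rho)\subset K_p$ and $\tfrac{1}{r_x}B(x,\rho r_x)$. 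Passing to unit spheres around the base points yields the GH-closeness of $\Sigma_x(X)$ to the suspension over $m$ points, which is the second assertion. The first assertion follows: the only vertices of this suspension are its two poles, which under the almost isometry correspond to $\pm\nabla d_p(x)$, essentially a single antipodal pair in $\Sigma_x(X)$ by Lemma \ref{lem:base}(1).

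The hardest part is the gluing, since Theorem \ref{thm:conv} supplies almost isometries only off the singular axis. I would carry it out by a contradiction and compactness argument: assume the conclusion fails, extract $x_n\to p$ in $\mathcal{S}(X)$ with vertices $v_n\in V(\Sigma_{x_n}(X))$ uniformly separated from $\pm\nabla d_p(x_n)$, rescale by $|p,x_n|$, and pass to a subsequential limit in $K_p$. By vertex-preservation under Gromov-Hausdorff convergence of $\CAT(1)$-graphs, the limit direction would be a vertex of $\Sigma_{(v,1)}(K_p)$, hence forced to be a pole by the previous paragraph, contradicting the angle bound.
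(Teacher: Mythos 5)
The paper disposes of this lemma by citing two results of Lytchak--Nagano \cite{NL:geodesically}; you are attempting to supply a self-contained argument, which is a legitimate goal. Your analysis of the model cone $K_p$ near $(v,1)$ is correct: locally it is $m$ half-planes glued along a line, so $\Sigma_{(v,1)}(K_p)$ is the spherical suspension of $m$ points with the two poles being $\pm\uparrow_{(v,1)}^{o_p}$, and the opening observation that $\uparrow_p^x$ must lie near a vertex $v$ is a correct application of Lemma \ref{lem:top-vert1} at $p$ together with the finiteness of $V(\Sigma_p)$.

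The gap is in the transfer step, and it is the essential one. You assert ``vertex-preservation under Gromov--Hausdorff convergence of $\CAT(1)$-graphs'' to conclude that a vertex $v_n\in V(\Sigma_{x_n}(X))$ must limit onto a vertex of $\Sigma_{(v,1)}(K_p)$. This presupposes that the links $\Sigma_{x_n}(X)$ of the rescaled spaces actually Gromov--Hausdorff converge to the link of the limit point $(v,1)\in K_p$. That is precisely the kind of stability statement that fails in general for metric spaces, is highly nontrivial for geodesically complete spaces with an upper curvature bound, and is in fact the substance of the Lytchak--Nagano results the paper cites. Theorem \ref{thm:conv}, which you correctly note only applies away from the singular axis (it requires $\mathcal H^{n-1}(\Sigma_y)<\omega_{n-1}+\e$), gives you regularity propagation off the axis, but it does not by itself tell you anything about the links at the singular points $x_n$ themselves, which is what you need. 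So the sentence ``by vertex-preservation\ldots'' is not a lemma you can invoke; it is the claim you are trying to prove.

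There is a second, related gap in the compactness step. When you use Lemma \ref{lem:top-vert1} at $x_n$ to produce singular points $y_n$ in the direction $v_n$, the lemma only says $v_n\in\Sigma_{x_n}(\mathcal S(X))$, i.e.\ there exist singular points approaching $x_n$ tangent to $v_n$; it gives no lower bound on $|x_n,y_n|/|p,x_n|$. After rescaling by $|p,x_n|$, such $y_n$ may collapse onto $\tilde x$ and carry no information about the geometry of $K_p$ at scale one. Controlling this requires exactly the quantitative structure of $\mathcal S(X)$ near $x_n$ (e.g.\ that singular curves have definite directions and reach a macroscopic scale), but in this paper that structure (Lemma \ref{lem:sing-arc}, Corollary \ref{cor:vert}) is derived \emph{from} Lemma \ref{lem:near-vert}, so you would be reasoning in a circle. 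As it stands, your proposal is a plausible blueprint, but the two steps above are not reductions to anything established earlier in the paper; they would each need an argument of the same depth as the cited external results.
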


\begin{proof}
This follows from \cite[Proposition 6.6, Corollary 13.3]{NL:geodesically}.
\end{proof}

As an immediate consequence of Lemmas \ref{lem:top-vert1} and \ref{lem:near-vert},
we have 

\begin{cor} \label{cor:vert}
Let $p\in\mathcal S(X)$.
For every $x\in\mathcal S(X)\cap (B(p,r)\setminus\{ p\})$,
$\Sigma_x(\mathcal S(X))$ is contained in a 
$\tau_p(r)$-neighborhood of $\{ (-\nabla d_p)(x),  (\nabla d_p)(x)\}$.
\end{cor}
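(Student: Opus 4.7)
The plan is to deduce the corollary directly from Lemmas \ref{lem:top-vert1} and \ref{lem:near-vert} by applying each at the appropriate base point. First, since the hypothesis furnishes $x\in\ca S(X)$, I would apply Lemma \ref{lem:top-vert1} with $x$ in place of $p$ to obtain the identification $\Sigma_x(\ca S(X))=V(\Sigma_x(X))$ of the space of directions of $\ca S(X)$ at $x$ with the vertex set of the $\CAT(1)$-graph $\Sigma_x(X)$. This step converts the task of controlling an a priori awkward subset of tangent directions into the task of controlling a finite combinatorial object attached to $\Sigma_x(X)$.

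Next, I would invoke Lemma \ref{lem:near-vert} with the original $p$, which places that vertex set inside the $\tau_p(r)$-neighborhood of the two-point set $\{-(\nabla d_p)(x),(\nabla d_p)(x)\}\subset\Sigma_x(X)$, with a function $\tau_p(r)$ that depends on $p$ and $r$ but not on the particular choice of $x\in\ca S(X)\cap(B(p,r)\setminus\{p\})$. Chaining the equality from the first step with this inclusion then yields the desired bound for $\Sigma_x(\ca S(X))$ with exactly the same $\tau_p(r)$.

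No genuine obstacle is present here, which is consistent with the corollary being stated as an immediate consequence. The only point worth explicitly verifying is that Lemma \ref{lem:top-vert1}, whose formulation singles out a topological singular point $p$, applies unchanged at the new base point $x\in\ca S(X)$; this is immediate from the statement, so the argument requires no further estimates and recovers precisely the asymptotic control supplied by Lemma \ref{lem:near-vert}.
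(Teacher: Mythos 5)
Your proof is correct and matches the paper's approach exactly: the paper states Corollary \ref{cor:vert} as an immediate consequence of Lemmas \ref{lem:top-vert1} and \ref{lem:near-vert}, and your argument — applying Lemma \ref{lem:top-vert1} at $x$ to identify $\Sigma_x(\mathcal S(X))$ with $V(\Sigma_x(X))$, then applying Lemma \ref{lem:near-vert} at $p$ to place that vertex set in the required neighborhood — is precisely the intended deduction. No gaps.
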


\psmall
Finally in this subsection, we shortly discuss the cardinality of singular points
in a two-dimensional manifold $X$ with curvature $\le\kappa$.
Let $\e>0$. We say that $x\in X$ is an {\it $\e$-singular} point if 
$L(\Sigma_x(X))\ge 2\pi+\e$.   
We also say that $x$ is a {\it singular} point if it is $\e$-singular for some $\e>0$. 

\begin{lem} $($cf. \cite{AZ:bddcurv}, \cite[Prop.4.5]{BurBuy:upperII}$)$  \label{lem:sing-mfd}
For a domain  $D$ of a two-dimensional manifold $X$ 
with curvature $\le\kappa$, the set of all singular points contained in $D$
is at most countable.
\end{lem}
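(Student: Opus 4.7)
The plan is to show that for each fixed $\epsilon>0$ the set $S_\epsilon$ of $\epsilon$-singular points inside any relatively compact subset of $D$ is finite; since the set of all singular points equals $\bigcup_{n\ge1}S_{1/n}$, this will give the claim. I would argue by contradiction: if for some $\epsilon>0$ some relatively compact subset of $D$ contained infinitely many $\epsilon$-singular points, they would accumulate at a point $p$, and I could pass to a small closed $\CAT(\kappa)$-convex disk $B=\overline{B(p,r)}\subset D$ containing an arbitrarily large number $N$ of $\epsilon$-singular points $x_1,\dots,x_N$ in its interior.

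The core of the argument is a discrete Gauss--Bonnet estimate via triangulation. Fix $M$ auxiliary vertices $v_1,\dots,v_M$ on $\partial B$ and triangulate $B$ by geodesic segments with vertex set $\{x_1,\dots,x_N,v_1,\dots,v_M\}$. This can be done inductively: insert each $x_i$ into the subtriangle that contains it and join it to the three corners by geodesics, which remain inside $B$ by convexity. The Euler relation for a disk gives the face count $F=2N+M-2$. For each triangle $T'$ of the triangulation the $\CAT(\kappa)$ inequality yields $\sum\angle T'\le \pi+\kappa\,\mathrm{Area}(\tilde T')$, where $\tilde T'$ is the comparison triangle in $M_\kappa^2$; summing over all triangles,
\[
\sum_{T'}\sum\angle T'\ \le\ \pi F+\kappa\sum_{T'}\mathrm{Area}(\tilde T').
\]
Resumming the same total by vertices, the angles at each interior vertex $x_i$ add to $L(\Sigma_{x_i})\ge2\pi+\epsilon$, and the angles at each boundary vertex $v_j$ add to some $\theta_j\ge0$. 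Comparing the two expressions and using $F=2N+M-2$ gives
\[
N\epsilon\ \le\ (M-2)\pi-\sum_j\theta_j+\kappa\sum_{T'}\mathrm{Area}(\tilde T').
\]
Once one verifies that the right hand side is bounded by a constant depending only on $B$, $M$ and $\kappa$, in particular independently of $N$, this contradicts the assumption that $B$ contains arbitrarily many $\epsilon$-singular points.

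The main obstacle is controlling $\sum_{T'}\mathrm{Area}(\tilde T')$ uniformly in the triangulation, since a priori the triangles can be arbitrarily elongated and the comparison areas need not sum to something comparable to $\mathcal H^2(B)$. The cleanest remedy is a preliminary subdivision: refine the triangulation by adding further vertices so that every triangle has diameter below a fixed scale $\eta=\eta(\kappa,r)$, on which the comparison area and the intrinsic area are Lipschitz-equivalent, after which $\sum_{T'}\mathrm{Area}(\tilde T')\le C_\kappa\,\mathcal H^2(B)$; one then recycles the interior-vertex identity $\sum=L(\Sigma)\ge 2\pi$ (even without the $\epsilon$-bound) for the auxiliary vertices to absorb them harmlessly in the count. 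Equivalent in substance is to invoke the upper Gauss--Bonnet inequality for $\CAT(\kappa)$-surfaces of Alexandrov--Zalgaller~\cite{AZ:bddcurv} and Burago--Buyalo~\cite{BurBuy:upperII}, which packages this comparison into $\sum_{x\in\interior B}(L(\Sigma_x)-2\pi)\le\kappa\,\mathcal H^2(B)+\tau(\partial B)-2\pi$, from which the conclusion is immediate.
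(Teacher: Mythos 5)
Your approach via a discrete Gauss--Bonnet estimate is genuinely different from the paper's, and it matches in spirit the references \cite{AZ:bddcurv}, \cite{BurBuy:upperII} cited next to the statement. The paper's own proof is a one-liner: by Lemma \ref{lem:base}(1), $\diam((\nabla d_p)(x))<\tau_p(r)$ uniformly on $B(p,r)\setminus\{p\}$, whereas an $\e$-singular $x$ in a two-manifold (with $\Sigma_x$ a circle of length $\ge 2\pi+\e$) satisfies $\diam((\nabla d_p)(x))\ge\min(\e,\pi)$, since the antipodal set $(\nabla d_p)(x)$ is an arc of length $L(\Sigma_x)-2\pi$; hence $\e$-singular points are isolated, and a compactness argument over the domain finishes. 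No triangulation or Gauss--Bonnet enters.

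The Euler count, the per-triangle comparison inequality, and the vertex resummation in your sketch are correct in outline, but the remedy you propose for the area term does not close the gap you correctly identify, and it is not really a matter of elongated triangles. Refining to small scale does \emph{not} make comparison and intrinsic areas Lipschitz-equivalent: a small geodesic triangle enclosing cone points whose total excess is close to $\pi$ (total angle near $3\pi$) has comparison area bounded away from zero while its intrinsic area collapses, so the ratio is unbounded no matter how small the triangle is. Singular points of excess $<\e$ are not among your vertices and can accumulate inside triangles, and controlling the total excess per triangle is essentially the Gauss--Bonnet bound one is trying to establish; as set up, the refinement remedy is circular. (For $\kappa\le0$ the term $\kappa\sum\mathrm{Area}(\tilde T')$ is nonpositive and can simply be dropped, so the difficulty arises only when $\kappa>0$.) A secondary point: the angles in the $\CAT(\kappa)$ inequality are Alexandrov angles, truncated at $\pi$, so they sum at an interior vertex $x_i$ to $L(\Sigma_{x_i})$ only if every incident arc in $\Sigma_{x_i}$ has length $\le\pi$, which the naive insertion of $x_i$ (degree $3$, possibly with very unequal arcs) does not guarantee; the resulting loss eats directly into the $\e$ you are trying to accumulate. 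Your final alternative --- citing the upper Gauss--Bonnet inequality of \cite{AZ:bddcurv} and \cite{BurBuy:upperII} outright --- does give a complete proof, but then it is the classical black box rather than the elementary estimate you set out to give, and is in any case a different mechanism from the gradient estimate the paper actually uses.
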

\begin{proof}
By Lemma \ref{lem:base}(1), the set of all $\e$-singular points contained in a bounded set is finite for every $\e>0$, which immediately yields the conclusion of the lemma.
\end{proof}
\psmall

\setcounter{equation}{0}

\section{Basic properties of ruled surfaces}\label{sec:ruled-alex}

We recall the notion of ruled surfaces in metric spaces
introduced by Alexandrov \cite{alexandrov-ruled}.
The metric on a ruled surface discussed in \cite{alexandrov-ruled}  is the pull-back metric defined below,
although an explicit definition was not given in \cite{alexandrov-ruled}.
See also Remark \ref{rem:def-ruled}.
In this section, we provide  some fundamental properties
of the pull-back metric, most of which are not contained in \cite{alexandrov-ruled}.
These are used in the proof of Alexandrov's result
(Theorem \ref{thm:alex-ruled2}), which is  presented in Appendix \ref{sec:append}. 
There are related results in \cite[Section 2]{PS}.
\psmall

For our purpose, it is sufficient to consider ruled surfaces 
in spaces with curvature bounded above.
Throughout this section, let $X$ be a locally compact, complete geodesic space with curvature 
$\le\kappa$ with metric $d_X$, where we do not need the dimension restriction, nor geodesic completeness.

We fix a rectangle  $R := [0,\ell] \times [0,1]$ in this section.
\pmed\n
{\bf  Ruled surfaces.} 

\begin{defn} \label{defn:ruled} \upshape
A continuous map $\sigma \colon R \to X$
is called a \emph{ruled surface} in $X$
if
\begin{enumerate}
\item
for every $s \in [0,\ell]$
the $t$-curve $\lambda_s \colon [0,1] \to X$ of $\sigma$ defined as
$\lambda_s(t) := \sigma(s,t)$
is a minimal geodesic in $X$
from $\sigma(s,0)$ to $\sigma(s,1)$;
\item  for some continuous function $\xi:[0,\ell]\to [
0,1]$,   the curve $\Sigma(s)=\sigma(s,\xi(s))$ $\,(0\le s\le \ell)$
is rectifiable with respect to $d_X$.
\end{enumerate}

As usual,
the subset $S$ of $X$ defined as $S := \sigma(R)$
is also called a 
\emph{ruled surface in $X$}.
For each $s\in [0,\ell]$,
the minimal geodesic $\lambda_s \colon [0,1] \to X$ is called a 
\emph{generator} of $\sigma$, or a {\it ruling geodesic} of $\sigma$.
\end{defn}

For each $t \in [0,1]$,
the curve $\sigma_t \colon [0,\ell] \to X$ is called a 
\emph{directrix of $\sigma$ at $t$}.

\pmed\n
{\bf Pull-back metrics and induced metrics on ruled surfaces}

Let $\sigma:R\to X$ be a ruled surface in $X$ defined as above.
We denote by ${\rm Sing}(\sigma)$
(resp. by  ${\rm Reg}(\sigma)$) the set of all 
$s\in [0, \ell]$ such that $\lambda_s$ are constant
(resp. nonconstant).
For $s\in [0,\ell]$, we set 
$$
I_s:=\{ s\}\times [0,1]\subset R.
$$

\begin{defn} \upshape
We say that a  (not necessarily continuous) map
$c:[a,b]\to R$ is {\it monotone} if 
\begin{itemize}
 \item $p_1\circ c$ is monotone 
non-decreasing or  monotone non-increasing  where $p_1:R\to [0,\ell]$ is the projection to the first factor\,$;$
 \item  if $p_1\circ c(t)=p_1\circ c(t')=s$ with $t<t'$, then  $p_2\circ c|_{[t,t']}$ is monotone, where 
 $p_2 : R \to [0,1]$ is the projection to the second factor.
\end{itemize}
Similarly, $c$ is said to be {\it strictly monotone} if $p_1\circ c$ is strictly monotone. 

We say that a monotone  map $c:[a,b]\to R$
is  a {\it quasicontinuous curve}  if the following hold:
\begin{enumerate}
\item $p_1\circ c([a,b])$ is a closed interval\,$;$
\item $c$ is continuous on the set of all $t$ with 
$p_1\circ c(t) \in {\rm Reg}(\sigma)\cup {\rm int} \,{\rm Sing}(\sigma)$.
\end{enumerate}
\end{defn}
We define the {\it pull-back metric} $e_{\sigma}$  on $R$ induced from $\sigma$  as
\beq \label{eq:pull-back}
e_{\sigma} (u, u') := \inf_c \, L(\sigma \circ c), 
\eeq
where $c$ runs over all quasicontinuous curves in $R$ from $u$ to $u'$,
and $L$ denotes the length of curves with respect to $d_X$.
Note that the metric $e_{\sigma}$ is certainly finite
since our ruled surface $\sigma$ has the rectifiable curve $\Sigma$.

We denote by $R_*$ the quotient metric space 
$$
       (R_*, e_\sigma):= (R, e_\sigma)/\{ e_\sigma=0\}.
$$ 
Let $\pi:R\to R_*$ be the projection.

\begin{ex} \label{ex:almost-conti}
Let $\sigma_k:[0,1/\pi]\to\R^2$ \,$(k=0,1)$ be the curve defined by
\[
      \sigma_0(s)=\left(s, -\left|s\cos \frac{1}{s}\right| \right),\qquad
       \sigma_1(s)=\left(s, \left|s\sin \frac{1}{s}\right|\right).
\]
For $R := [0,1/\pi] \times [0,1]$, define the ruled surface
$\sigma \colon R \to \R^2$ as in the above definition,
where we  have ${\rm Sing}(\sigma)=\{ 0\}$.
For $u=(0,0)$, $u'=(1/\pi, 0)$, consider the map 
$c:[0,1/\pi]\to R$ by 
\[
    c(s)=\begin{cases}
                 \sigma^{-1}(s,0),    &\quad  (0<s\le 1/\pi)  \\
                   \quad (0,0),               & \quad (s=0).
          \end{cases}
\]
Since $c$ oscillates infinitely many times near 
$\{ 0\}\times [0,1]$,
it is quasicontinuous, but realizing the distance $e_\sigma(u,u')$. 
\end{ex}

Remark that there is no continuous curve realizing 
$e_\sigma(u,u')$ in Example \ref{ex:almost-conti}.
Note also that $\pi\circ c$ is always continuous for 
every quasicontinuous curve $c$.
These are the reasons why we employ the notion of 
quasicontinuous curves in the definition \eqref{eq:pull-back} of the pull-back metric $e_\sigma$.

\pmed
Obviously, $e_\sigma(u,u')=0$ implies $\sigma(u)=\sigma(u')$.
Therefore,  we can define 
a  map $\sigma_*:R_*\to X$ such that $\sigma=\sigma_*\circ\pi$. Note that 
 $\sigma_*:R_*\to X$  is continuous.
 The properties  of the projection $\pi: R \to R_*$ depend
 on those of the end $s$-curves $\sigma_0$ and $\sigma_1$.
 If $\sigma_0$ and $\sigma_1$ are Lipschitz continuous, then 
 so is $\sigma$, and hence $\pi: R \to R_*$ is continuous.
  However,  in the general case, $\pi: R \to R_*$ is not necessarily continuous (see Example \ref{ex:Koch} below).

\begin{rem} \label{rem:def-ruled} Here are some remarks on
the relation between the conditions of ruled surfaces given in 
\cite{alexandrov-ruled} and ours.
\benum
 \item 
Some ruling geodesics $\lambda_s$ of $\sigma$  may be
constant geodesics for all $s$ in an interval of $[0,\ell]$. 
This case was excluded in \cite{alexandrov-ruled}
as the conditions of ruled surfaces defined there $\,;$
 \item  More restricted property, the existence of continuous arc
 in the preimage of  any point of $R_*$ by $\pi$,  than the existence of quasicontinuous curve given in Corollary \ref{cor:esigma=0}
was assumed in \cite{alexandrov-ruled}
as one of the conditions of the
metric on the ruled surface under consideration.
\eenum
\end{rem}

 \begin{ex} \label{ex:Koch} \upshape
Let $\sigma_1(s)$ \,$(0\le s\le\ell)$ be a 
 continuous parametrization of a Koch curve on the unit sphere
 $\mathbb S^2(1)\subset\mathbb R^3$.
Letting $\sigma_0(s)=O$, we define the ruled surface
$\sigma:[0,\ell]\times [0,1]\to \mathbb R^3$ by 
$\sigma(s,t)=t\sigma_1(s)$. Note that 
\[
    e_\sigma((s,t), (s',t'))= \begin{cases}
               |t-t'|   & \text{if  \, $s=s'$}  \\
             \,\,  t+t'     & \text{if \, $s\neq s'$}.
       \end{cases}
\]
Note that $\pi:R\to R_*$ is continuous only at $\{ t=0\}$.
\end{ex}  
\psmall 
 
For $s\in [0,\ell]$, we set 
$$
   I_s^*:=\pi(I_s).
$$ 

For a continuous curve $c_*:[a_0,b_0]\to R_*$
(resp. $\gamma:[a_0,b_0]\to S$), we simply say that
a quasicontinuous curve $c:[a,b] \to R$ is a {\it lift}
of $c_*$ (resp. of $\gamma$) if $c_*=\pi\circ c$ 
(resp. $\gamma=\sigma\circ c$) up to monotone
parametrization. 
\psmall
From now, we fix arbitrary $u, u'\in R$ with  $u=(s_0,t_0),u'=(s_0',t_0')$ and $s_0 < s_0'$.
Take a sequence of quasicontinuous curves $c_n:[0,1]\to R$ from $u$ to $u'$ such that 
$e_{\sigma} (u, u')=\lim_{n\to\infty} L(\sigma\circ c_n)$, where 
we may assume that $c_n$ is monotone.
By the Arzela-Ascoli theorem, passing to a subsequence
we may assume that a Lipschitz parametrization $\gamma_n$ of $\sigma\circ c_n$ converges to a Lipschitz curve $\gamma$ in $S$ from $\sigma(u)$ to $\sigma(u')$. 
Note 
\begin{align}\label{eq:L(gamma)}
L(\gamma) \le e_{\sigma} (u, u').
\end{align}

We set
\[ \text{ $J:= [s_0, s_0']$,
 $J_{\rm reg}:=J\cap {\rm Reg}(\sigma)$,
 $J_{\rm sing}:=J\cap {\rm Sing}(\sigma)$.}
 \]
 In the following proposition, we show the equality in \eqref{eq:L(gamma)}.

\begin{prop}\label{prop:e-sigma} Under the above situation, there is a lift $c$ of $\gamma$ in $R$ from $u$ to $u'$. 

In particular,  $\pi\circ c$ provides a (continuous)
shortest curve $c_*$  in $R_*$ from  $\pi(u)$ to $\pi(u')$, and we have 
$$
      L(c_*)=L(\gamma)=e_\sigma(\pi(u),\pi(u')).
$$
\end{prop}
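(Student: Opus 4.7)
The plan is to construct $c$ as a suitably corrected subsequential limit of the given monotone quasicontinuous curves $c_n$, and then compute lengths. First I would observe that each first coordinate $s_n := p_1\circ c_n\colon [0,1]\to [s_0,s_0']$ is monotone non-decreasing with interval image (by the definition of quasicontinuous curve), hence automatically continuous. Helly's selection then furnishes a subsequence, still denoted $s_n$, with $s_n(\tau)\to s_\infty(\tau)$ pointwise for some monotone $s_\infty$, and after a further diagonal extraction over a countable dense subset of $[0,1]$ the second coordinates $t_n(\tau)=p_2\circ c_n(\tau)$ converge along a subsequence on that dense set. Note that $\gamma_n \to \gamma$ uniformly by our hypothesis and $\gamma_n(\tau) = \lambda_{s_n(\tau)}(t_n(\tau))$.

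Next I would define a pointwise lift $\hat c=(s_\infty,t_\infty)\colon [0,1]\to R$. When $s_\infty(\tau)\in J_{\rm reg}$, continuity of $\sigma$ yields $\lambda_{s_n(\tau)}\to \lambda_{s_\infty(\tau)}$ uniformly in parameter, so the uniform limit $\gamma(\tau)$ lies on the non-degenerate geodesic $\lambda_{s_\infty(\tau)}$ and one defines $t_\infty(\tau)$ as the unique pre-image; this also forces $t_n(\tau)\to t_\infty(\tau)$ without choosing subsequences. When $s_\infty(\tau)\in J_{\rm sing}$ the generator $\lambda_{s_\infty(\tau)}$ is constant, so any value of $t_\infty(\tau)$ gives $\sigma(\hat c(\tau))=\gamma(\tau)$, and I pick it compatibly with monotonicity (using the diagonal subsequence on the dense set and extending monotonely). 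The resulting $\hat c$ is monotone and satisfies $\sigma\circ\hat c=\gamma$, but $s_\infty$ may have countably many jumps. At each jump $\tau_0$ with $s_\infty(\tau_0^-)=a<b=s_\infty(\tau_0^+)$ I would record the key geometric fact: for every $v\in [a,b]$ one can choose $\tau_n^v\to \tau_0$ with $s_n(\tau_n^v)=v$ (by continuity and monotonicity of $s_n$), and then $\gamma_n(\tau_n^v)\in\lambda_v$; passing to the limit using uniform convergence of $\gamma_n$ gives $\gamma(\tau_0)\in\lambda_v$ for \emph{every} $v\in[a,b]$, i.e.\ all generators over the skipped interval pass through the single point $\gamma(\tau_0)$.

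Using this common-point observation I would convert $\hat c$ into a genuine quasicontinuous lift $c$: reparametrize on a larger interval, inserting at each jump $\tau_0$ a short interval $I_{\tau_0}$ along which the first coordinate increases monotonely from $a$ to $b$ and the second coordinate is chosen so that $\sigma\circ c$ stays constantly equal to $\gamma(\tau_0)$ — legitimate because every $\lambda_v$, $v\in[a,b]$, passes through $\gamma(\tau_0)$. The resulting $c$ is monotone, starts at $u$ and ends at $u'$, has $p_1\circ c$ onto $[s_0,s_0']$, and is continuous on $\{p_1\circ c\in J_{\rm reg}\}$ (by uniqueness of the pre-image on regular generators together with continuity of $\gamma$ and $s_\infty$) as well as on $\mathrm{int}\,J_{\rm sing}$ (by the monotone choice of $t_\infty$); hence $c$ is a quasicontinuous curve. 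On each filled interval $I_{\tau_0}$ the inserted piece of $\sigma\circ c$ is constant, so $L(\sigma\circ c)=L(\gamma)$, and moreover any two points of $c(I_{\tau_0})$ may be joined through $\gamma(\tau_0)$ by a quasicontinuous path of arbitrarily small $\sigma$-length, giving $e_\sigma=0$ between them, so $\pi\circ c$ is constant on $I_{\tau_0}$. Combined with continuity on the regular parts this yields a continuous curve $c_*:=\pi\circ c$ in $R_*$ of length $L(\gamma)$. Finally, since $c$ is an admissible competitor in the infimum \eqref{eq:pull-back}, $e_\sigma(u,u')\le L(\sigma\circ c)=L(\gamma)$, and combined with \eqref{eq:L(gamma)} we obtain the desired equality $L(c_*)=L(\gamma)=e_\sigma(\pi(u),\pi(u'))$; shortestness of $c_*$ is then immediate from this length-minimization.

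The main obstacle is handling possible jumps of the limit monotone function $s_\infty$: the definition of quasicontinuous curve demands that $p_1\circ c$ have an interval as image, so one cannot simply take the pointwise limit. The geometric input that resolves this is the \emph{common-point property} at a jump — all skipped generators must pass through the unique limit value $\gamma(\tau_0)$ — which simultaneously enables the filling construction, guarantees that the inserted pieces have zero length, and makes $\pi\circ c$ continuous across the jumps. Getting these three conclusions cleanly from the uniform convergence of $\gamma_n$ and the monotonicity of $c_n$ is the technical heart of the proof.
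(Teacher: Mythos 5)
Your approach is genuinely different from the paper's. The paper organizes the proof fiber-by-fiber: for each $s$ it analyzes the limit set $E_s\subset I_s$ of the sequence $\{c_m\}$, shows $E_s$ is an interval, establishes continuity and countability statements for the family $\{E_s\}$, and then stitches these into a quasicontinuous lift parametrized over $[s_0,s_0'+L_0]$. You instead apply Helly's selection to the first coordinates $s_n=p_1\circ c_n$, obtain a monotone pointwise limit $s_\infty$, and handle its jump discontinuities by the ``common-point'' observation that at a jump of $s_\infty$ from $a$ to $b$ at $\tau_0$, every generator $\lambda_v$ for $v\in[a,b]$ passes through the single point $\gamma(\tau_0)$, so the jump can be filled by a zero-length curve. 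That observation is correct, and it is a clean way to handle the jumps.

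There is, however, a genuine gap: you only address one of the two obstructions to producing a quasicontinuous lift. Recall that a monotone curve in $R$ is required to have $p_2\circ c|_{[t,t']}$ \emph{monotone} whenever $p_1\circ c(t)=p_1\circ c(t')=s$. In your construction, whenever $s_\infty$ is \emph{constant} equal to some $s\in J_{\rm reg}$ on a nondegenerate interval $[\tau_1,\tau_2]$, the value $t_\infty(\tau)$ is forced to be the unique preimage of $\gamma(\tau)$ under $\lambda_s$ for each $\tau$, and you have no freedom to choose it. If $\gamma|_{[\tau_1,\tau_2]}$ backtracks along $\lambda_s$, then this forced $t_\infty$ is \emph{not} monotone on $[\tau_1,\tau_2]$, the curve $\hat c$ fails the second monotonicity requirement, and your filled $c$ is not a quasicontinuous curve at all. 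You assert ``the resulting $\hat c$ is monotone,'' but offer no argument for this on flat stretches of $s_\infty$ over regular values. This is exactly the difficulty the paper's $J_{\mathrm{reg},2}$ and steps 3)-(b), 3)-(c) address: there the possibility of oscillation along $E_s$ is ruled out by showing that oscillation of the $c_m$ near a generator either makes $L(\sigma\circ c_m)\to\infty$ or contradicts that $\{\sigma\circ c_m\}$ is a minimizing sequence. You need an analogue of that argument; for instance, observe that if $\gamma$ oscillated along a generator, one could straighten that stretch and obtain an admissible quasicontinuous competitor of strictly smaller $\sigma$-length, contradicting $L(\gamma)\le e_\sigma(\pi(u),\pi(u'))$ from \eqref{eq:L(gamma)}. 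Without such a step the proof does not close; indeed, the jumps of $s_\infty$, which you identify as ``the main obstacle,'' are the easier half of the problem, and the flat stretches over $J_{\rm reg}$ are where the minimality of $\{c_n\}$ has to enter.
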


\begin{ex}\label{ex:trivial} \upshape
Let $\gamma:[0,1]\to X$ be a minimal geodesic between distinct two points in a $\CAT(\kappa)$-space.
Consider the ruled surface $\sigma:[0,1]\times [0,1]\to X$
defined as $\sigma(s,t)=\gamma(t)$.
Then $e_ \sigma((0,0), (1,1))=L(\gamma)$.
Note that any curve $c(t)=(x(t), y(t))$ such that $x(t)$ and
$y(t)$ are monotone from $0$ to $1$
is a lift of $\gamma$ from $(0,0)$ to $(1,1)$.
\end{ex}

The above simple example suggests that in 
Proposition \ref{prop:e-sigma},
one cannot construct a lift of the limit $\gamma$
only from $\gamma$, and one needs to take a subsequence of $c_n$ properly to obtain a limit,
which is expected as a lift of $\gamma$. 
In the proof of Proposition \ref{prop:e-sigma} below,
we proceed in this way.

\begin{proof}[Proof of Proposition \ref{prop:e-sigma}]
We show that 
the monotone curve $c_n$ converges to
a monotone quasicontinuous curve  $c$, up to monotone  parametrization, except on 
${\rm Sing}(\sigma)\times [0,1]$.
By the Arzela-Ascoli theorem, this is obvious if the length of $c_n|_{J_{\rm reg}\times [0,1]}$ is uniformly bounded.
However, when one of the end curves $\sigma_0(s)$ and 
$\sigma_1(s)$  is not recifiable, one can not expect 
that the length of $c_n|_{J_{\rm reg}\times [0,1]}$ is even finite. 

In the argument below, we use the idea of the proof of 
the Arzela-Ascoli theorem taking the monotonicity of $c_n$ into account.
Since each $c_n$ is continous, for any $s\in J$ there is 
$t_n(s)\in [0,1]$ satisfying $c_n(t_n(s))\in I_s$.
Let $J_0$ be a countable dense subset of $J$.
Take a subsequence $\{ m\}\subset \{ n\}$ such that 
$c_m(t_m(s))$ converges to a point $x(s)\in I_s$
for every $s\in J_0$.

Roughly speaking, the limit curve $c$ is defined via
the limit set of the sequence $\{ {\rm Im}(c_m)\}_m$.
For every $s\in J_{\rm reg}$, 
let us consider the subset $E_s\subset I_s$ defined as the set of all points $x\in I_s$ with 
$\lim_{i\to\infty}c_{m_i}(t_i)=x$ for a subsequence $\{ m_i\}\subset \{ m\}$ and $t_i\in [0,1]$.
We set 
\[
    J_{{\rm reg},1}:= \{ s\in J_{\rm reg}\,| \text{ $E_s$ is a single point}\}, \quad  
    J_{{\rm reg},2}:=J_{\rm reg}\setminus  J_{{\rm reg},1}.
\] 
\[
    J_{{\rm reg},1}^0:=J_{{\rm reg},1}\cap J_0\quad  
    J_{{\rm reg},2}^0:=J_{{\rm reg},2}\cap J_0.
\] 
For $s\in J_{{\rm reg},1}$, we define $x(s)$ by  
\[
        E_s =\{ x(s)\}.
\]
Note also that $ J_{{\rm reg},1}^0$ or $J_{{\rm reg},2}^0$
may be empty.
We begin with 

\pmed\n
1)\, $x(s)$ is continuous on $J_{\rm reg,1}$.
\pmed
This is obvious since if $s_i\in J_{\rm reg,1}$ converges
to $s\in J_{\rm reg,1}$, then any limit of $\{ x(s_i)\}$ 
must belong to $E_s=\{ x(s)\}$. 
\pmed\n
2)\, Next we show that $E_s$ is an interval  for every 
$s\in J_{\rm reg,2}$.

For arbitrary $y, y'\in E_s$, choose subsequences $\{ m_i\}$ and $\{ m_{i'}\}$ of $\{ m\}$
such that $c_{m_i}(t_i)\to y$ and $c_{m_{i'}}(t_i')\to y'$ as 
$i, i'\to\infty$ for some $t_i, t_i'\in [0,1]$.
Take $s_j\in J_{\rm reg}^0$ with $s<s_j$ converging to $s$.
Note that $x(s_j)=\lim_{i\to\infty} c_{m_i}(t_{m_i}(s_j))
=\lim_{i'\to\infty} c_{m_{i'}}(t_{m_{i'}}(s_j))$.
Passing to a subsequence, we may assume that 
$x(s_j)$ converges to a point $z\in E_s$ as $j\to\infty$.
As $i,i'\to\infty$ and then $j\to\infty$,
the arcs $c_{m_i}([t_i, t_{m_i}(s_j)])$ and 
$c_{m_{i'}}([t_{i'}, t_{m_{i'}}(s_j)])$ converge to
$[y, z]$ and $[y', z]$ respectively.
Since $[y, z]\cup [y', z]\subset E_s$, we obtain 
$[y,y']\subset E_s$.

\pmed\n
3)\, For  $s_i<s$ (resp. $s_i>s$)  with $s_i\in J_{\rm reg}$,
$s\in J_{\rm reg, 2}$, let $s_i$  converge to $s$.
In the below, we show that $x(s_i)$ converges to an endpoint of $E_s$
(resp. the other endpoint of $E_s$).

 Let $\{ y, y'\}=\pa E_s$.

\pmed\n
(a)\, We assume $s_i<s$. The other case is similar.
Suppose that $x(s_{i_k})$ converges to an interior point
$v$ of $E_s$ as $k\to\infty$,
for a subsequence $\{ i_k\}$ of $\{ i\}$.
We also have subsequences $\{ m_{\ell}\}$ and 
$\{ m_{\ell'}\}$  of $\{ m\}$
such that $c_{m_{\ell}}(t_\ell)\to y$ and 
$c_{m_{\ell'}}(t_{\ell'})\to y'$ for some $t_\ell, t_{\ell'}\in [0,1]$.
As $\ell, \ell'\to \infty$ and then $k\to\infty$, the arcs $c_{m_{\ell}}([t_{m_{\ell}}(s_{i_k}), t_{\ell}])$
and  $c_{m_{\ell'}}([t_{m_{\ell'}}(s_{i_k}), t_{\ell'}])$ converge to 
the subarcs $[v,y]$ and $[v,y']$ respectively.
Now take a sequence $s_\alpha\in J_{\rm reg}^0$ with $s_\alpha>s$ such that $x(s_\alpha)$ converges to a 
point $w\in I_s$ as $\alpha\to\infty$.
Note that
\[
x(s_\alpha)=\lim_{\ell\to\infty} c_{m_{\ell}}(t_{m_{\ell}}(s_\alpha))
     =\lim_{\ell'\to\infty} c_{m_{\ell'}}(t_{m_{\ell'}}(s_\alpha)).
\]
Then we see that 
as $\ell, \ell'\to \infty$ and then $k\to\infty$, 
the arcs $c_{m_{\ell}}([t_{m_{\ell}}(s_{i_k}), t_{m_{\ell}}(s_\alpha)])$
and  $c_{m_{\ell'}}([t_{m_{\ell'}}(s_{i_k}), t_{m_{\ell'}}(s_\alpha)])$ converge to 
the unions $[v,y]\cup [y,w]$ and $[v,y']\cup [y',w]$ respectively. However, considering $\sigma\circ c_{m_\ell}$
or $\sigma\circ c_{m_{\ell'}}$, we have a contradiction 
since $\sigma\circ c_m$ is a sequence  minimizing
$e_{\sigma}(u,u')$.

\pmed\n
(b)\,We show that as $s_\alpha<s$ converges to $s$, then 
$x(s_\alpha)$ converges to a unique endpoint of $E_s$.
Suppose that for subsequences $s_i\to s$ and $s_{i'}\to s$
with $s_i,s_{i'}<s$, $x(s_i)$ (resp. $x(s_{i'})$) converges to
$y$ (resp. to $y'$).
Choose large $i$ and $i'=i'(i)$ with $i'\gg i$. Then 
as $m\to\infty$, the arc $c_m([t_m(s_i), t_m(s_{i'})])$ 
oscillates many times near $E_s$, which implies
$\lim_{m\to\infty} L(\sigma\circ c_m) =\infty$.
This is a contradiction.

\pmed\n
(c)\,We show that as $s_i\to s$, $s_{i'}\to s$ with $s_i<s<s_{i'}$,
if $x(s_i)$ converges to $y$, then $x(s_{i'})$ converges to $y'$. Otherwise, as $m\to\infty$, $i, i'\to\infty$, the arc $c_m([t_m(s_i), t_m(s_{i'})])$ converges to the union 
$[y,y']\cup [y',y]$, which is a contradiction to the 
hypothesis that $\sigma\circ c_m$ is a minimizing sequence.
\pmed

\pmed\n
4)\, 
We show that $J_{\rm reg,2}$ is at most countable,
and 
\[
   \sum_{s\in J_{\rm reg,2}} L(\sigma(E_s))  \le L(\gamma).
\]
For arbitrary finite set $s_1<s_2<\cdots <s_k$ of $J_{\rm reg, 2}$, the argument in 3)-(c) shows that 
some subarcs of $c_m$ are so close to the union
$E_{s_1}\cup \ldots \cup E_{s_k}$ for any large $m$.
Thus,  $\sigma(E_{s_1})\cup \ldots \cup \sigma(E_{s_k})$
is the union of finite subarcs of $\gamma$. Therefore we 
have
\[
      \sum_{i=1}^k L(\sigma(E_s))  < L(\gamma).
\]
The conclusion follows immediately.
\pmed

\pmed\n
5)\,
For $s\in J_{\rm sing}$, let $x(s):=(s, a)\in I_s$  for any 
fixed constant $a\in [0,1]$, for instance.
Let $L_0$ be the total sum of $L(\sigma(E_s))$ for all
$s\in J_{\rm reg,2}$.
 Now we consider the collection $\mathcal C$ 
 consisting of points 
 $\{ x(s)\,|\, s\in J_{\rm sing} \cup J_{\rm reg,1}\}$ and the intervals  $E_s$ for all $s\in J_{\rm reg,2}$.
In view of 1) $\sim$ 4), it is possible to parametrize $\mathcal C$ as a quasicontinuous curve $c:[s_0, s_0'+L_0]\to R$
  from $u$ to $u'$. For the detail, see 2) in the proof of 
  Proposition \ref{cor:e-sigma2}.
   
From construction,  we see that $c$ is a lift of $\gamma$. 

 The second half of the assertion of the proposition is
 immediate. This completes the proof of Proposition \ref{prop:e-sigma}. 
\end{proof}  

As an immediate consequence of Proposition \ref{prop:e-sigma}, we have 

\begin{cor} \label{cor:esigma=0}
If $e_\sigma(u,u')=0$, then there is a strictly monotone quasicontinuous  curve
$c:[0,1]\to R$ joining $u$ to $u'$ such that 
$\pi(c)=\pi(u)=\pi(u')$.

 In particular, if ${\rm Sing}(\sigma)$ is empty,
$\pi^{-1}(\pi(u))$ is a strictly monotone $($continuous$)$ curve.
\end{cor}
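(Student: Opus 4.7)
The plan is to deduce this as a direct strengthening of Proposition~\ref{prop:e-sigma}, continuing its setup with $u=(s_0,t_0)$ and $u'=(s_0',t_0')$ satisfying $s_0<s_0'$ (the case $s_0=s_0'$ is degenerate and handled separately: if $s_0\in{\rm Reg}(\sigma)$, then injectivity of the generator $\lambda_{s_0}$ combined with $e_\sigma(u,u')\ge d_X(\sigma(u),\sigma(u'))$ forces $u=u'$). Apply Proposition~\ref{prop:e-sigma} to obtain a quasicontinuous lift $c$ of the limit curve $\gamma$, with $L(\gamma)\le e_\sigma(u,u')=0$, so $\gamma$ reduces to the constant curve at $\sigma(u)=\sigma(u')$.

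The central observation will be that the index set $J_{{\rm reg},2}$ from step 4 of the proof of Proposition~\ref{prop:e-sigma} is empty. Indeed, for any $s\in J_{\rm reg}$ the generator $\lambda_s$ is a non-constant minimal geodesic, hence $\sigma\vert_{I_s}$ is injective. If $s$ lay in $J_{{\rm reg},2}$, then $E_s$ would be a nondegenerate sub-interval of $I_s$ whose $\sigma$-image is a nondegenerate sub-arc of $\gamma$; but ${\rm Im}(\gamma)$ is a single point, contradiction. Hence $J_{{\rm reg},2}=\emptyset$ and the constant $L_0$ vanishes, so the lift $c$ is parametrized on $[s_0,s_0']$ with $p_1\circ c$ equal to the identity. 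After rescaling to $[0,1]$ we obtain a strictly monotone quasicontinuous curve from $u$ to $u'$. For any $t$, the restriction $c\vert_{[0,t]}$ is itself a quasicontinuous curve whose $\sigma$-image lies in the constant $\gamma$, so it has length zero; this yields $e_\sigma(u,c(t))=0$ and hence $\pi(c(t))=\pi(u)=\pi(u')$ for every $t$.

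For the ``in particular'' assertion, suppose ${\rm Sing}(\sigma)=\emptyset$. Then every generator is non-constant, so $\sigma\vert_{I_s}$ is injective for each $s$, and the fiber $A:=\pi^{-1}(\pi(u))$ meets each slice $I_s$ in at most one point. Set $I:=\{s\in[0,\ell]: A\cap I_s\ne\emptyset\}$ and let $\phi(s)$ denote the unique second coordinate of $A\cap I_s$ for $s\in I$. For any pair $v,v'\in A$ with distinct first coordinates, the first part of the corollary provides a strictly monotone quasicontinuous curve from $v$ to $v'$ lying in $A$; since ${\rm Sing}(\sigma)=\emptyset$ forces ${\rm Reg}(\sigma)=[0,\ell]$, the quasicontinuity condition collapses to ordinary continuity, so this curve is continuous. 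This exhibits the portion of $A$ between $v$ and $v'$ as the graph of $\phi$ over an interval, showing that $I$ is an interval on which $\phi$ is continuous. Closedness of $I$ follows from the continuity of $\sigma$ on the compact square $R$: any accumulation point of $I$ belongs to $I$, since the corresponding accumulation of $\phi$-values (along a subsequence in $[0,1]$) yields a point of $\sigma^{-1}(\sigma(u))\cap I_{s^*}$, which by uniqueness must be $(s^*,\phi(s^*))$.

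I expect the main obstacle to be purely bookkeeping inside the construction of $c$ in Proposition~\ref{prop:e-sigma}: one must confirm that the canonical parametrization degenerates cleanly when $L_0=0$, that the chosen values of $x(s)$ on $J_{\rm sing}$ can be arranged to match the prescribed endpoints $u,u'$, and that the ``sub-arc has length zero'' argument producing $\pi(c(t))=\pi(u)$ is valid even though sub-arcs of quasicontinuous curves need not themselves be obviously minimizing. None of these is conceptually difficult, but each requires care with the definitions.
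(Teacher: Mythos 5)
Your approach is the paper's intended one: the corollary is stated as an ``immediate consequence of Proposition~\ref{prop:e-sigma},'' and your observation that $L(\gamma)=e_\sigma(u,u')=0$ forces $\gamma$ constant, hence $\sigma(E_s)$ degenerate for $s\in J_{\rm reg}$ (via injectivity of $\sigma|_{I_s}$ for regular $s$), hence $J_{{\rm reg},2}=\emptyset$ and $L_0=0$, is exactly the right way to see that the lift constructed in the proof of Proposition~\ref{prop:e-sigma} becomes strictly monotone with $p_1\circ c={\rm id}$ on $[s_0,s_0']$. The finishing step $e_\sigma(u,c(t))\le L(\sigma\circ c|_{[0,t]})=0$ is correct, since $c|_{[0,t]}$ is still quasicontinuous and $e_\sigma$ is an infimum. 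Two small caveats. First, your parenthetical treatment of the case $s_0=s_0'$ covers only $s_0\in{\rm Reg}(\sigma)$; if $s_0=s_0'\in{\rm Sing}(\sigma)$ and $t_0\neq t_0'$, then $e_\sigma(u,u')=0$ yet no strictly monotone curve joins $u$ to $u'$. This case is tacitly excluded by the standing hypothesis $s_0<s_0'$ of Proposition~\ref{prop:e-sigma}, so it is harmless, but you should not suggest the degenerate case is fully ``handled.'' Second, and more to the point of the argument: in the closedness step for the ``in particular'' part you conclude only that the accumulation point lies in $\sigma^{-1}(\sigma(u))\cap I_{s^*}$, but you need it to lie in $A=\pi^{-1}(\pi(u))$, which can be a strictly smaller set (the two differ in general even when ${\rm Sing}(\sigma)=\emptyset$, cf.\ Example~\ref{ex:trivial}). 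The phrase ``which by uniqueness must be $(s^*,\phi(s^*))$'' presupposes $s^*\in I$, i.e.\ what was to be shown. A correct repair is to use injectivity of $\sigma|_{I_{s^*}}$ to see that $c(s)\to v^*$ (the limit is unique), and then observe that the extended curve $\hat c$ obtained by appending $v^*$ is continuous and monotone with $\sigma\circ\hat c$ constant, giving $e_\sigma(u,v^*)=0$. That said, closedness of $I$ is not actually asserted by the corollary (which only claims $\pi^{-1}(\pi(u))$ is a strictly monotone continuous curve, and indeed $\pi$ need not be continuous so the fiber need not be closed), so this extra step is extraneous and its gap does not affect the validity of your proof of what is claimed.
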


The following example shows that it is impossible to 
take a monotone (continuous) curve $c$ in Corollary \ref{cor:esigma=0}
as well as in Proposition \ref{prop:e-sigma}.

\begin{ex}\label{ex:osx} \upshape
Let $X$ be the one point union of two copies, say $\mathbb R^2_0$
and $\mathbb R^2_1$,  of $\mathbb R^2$
at the origin $O$. Let  $\sigma_k(u)$ be straight lines on 
$\mathbb R^2_k$ with $\sigma_k(0)=O$\, $(k=0,1)$. Consider strictly monotone continuous parametrizations
$\sigma_k(\varphi_k(s))$ of $\sigma_k(t)$ with $\varphi_k(0)=0$.
Joining $\sigma_0(\varphi_0(s))$ and $\sigma_1(\varphi_1(s))$
by the minimal geodesics, we define a ruled surface
$\sigma:\mathbb R\times [0,1]\to X$.
Note that ${\rm Sing}(\sigma)=\{ 0\}$.
For each $s\in\mathbb R\setminus \{ 0\}$, let $t(s)\in (0,1)$ be
such that $\lambda_s(t(s))=O$. Thus we have
\[
     \sigma^{-1}(O)=\{ (s, t(s))\,|\, s\in\mathbb R\setminus \{0\}\} \cup I_0.
\]
Now choosing the two parameters $\varphi_0(s)$ and $\varphi_1(s)$ properly,
we can let the function $t(s)$ oscillate as $s\to 0$.
In that case, for arbitrary  $u,u'\in \mathbb R\times [0,1]$ with 
$\sigma(u)=\sigma(u')=O$ and $p_1(u)<0<p_1(u')$, 
there is no continuous curve in $\sigma^{-1}(O)$ joining $u$ and $u'$
but quasicontinuous one.
\end{ex}

Next using the procedure in the proof of Proposition 
\ref{prop:e-sigma}, 
we provide a condition for a curve
$c_*$ in $R_*$ to have a lift $c$ in $R$. 

\begin{defn} \label{defn:s(x)} \upshape
For $x\in R_*$, we set
\begin{align*}
 &s(x):=\{ s\in [0,\ell]\,|\, x\in I_s^*\}=p_1(\pi^{-1}(x)),\\
  &   s_{\min}(x) :=\min s(x), \quad s_{\max}(x) :=\max s(x).
\end{align*}
We write  $s(x)< s(y)$ when $s_{\max}(x) <s_{\min}(y)$.
For a subset $A$ of $R_*$, we also write
$s(A) :=\{ s\in [0,\ell]\,| A\cap I_s^*\neq\emptyset\})=p_1(\pi^{-1}(A))$.
\end{defn}

We need a lemma.

\begin{lem}\label{lem:p1-interval}
For any  continuous curve $c_*:[a,b]\to R_*$,
$s(c_*([a,b]))$ is connected.
\end{lem}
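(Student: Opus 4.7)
The plan is to argue by contradiction using the Intermediate Value Theorem applied to lifted quasicontinuous curves supplied by Proposition~\ref{prop:e-sigma}. Suppose $s(c_*([a,b]))$ is disconnected, so there exists $s_0 \in [0,\ell]$ that lies strictly between two points of $s(c_*([a,b]))$ but is not itself in $s(c_*([a,b]))$. By Corollary~\ref{cor:esigma=0}, each $s(c_*(t))$ is a closed interval, so the assumption forces it to lie entirely on one side of $s_0$ for every $t$. This decomposes $[a,b]$ as a disjoint union $A \sqcup B$ with $A := \{t : s(c_*(t)) \subset [0,s_0)\}$ and $B := \{t : s(c_*(t)) \subset (s_0,\ell]\}$, both nonempty. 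By connectedness of $[a,b]$ one of them fails to be open, say $A$, so I pick $t \in A$ and $t_n \in B$ with $t_n \to t$.

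Next, I produce points on the ruling $I_{s_0}$ whose $\pi$-images approach $c_*(t)$. By continuity of $c_*$, $d_{R_*}(c_*(t_n),c_*(t)) \to 0$, and since this distance equals $e_\sigma(u_n,v_n)$ for any representatives $u_n \in \pi^{-1}(c_*(t_n))$ and $v_n \in \pi^{-1}(c_*(t))$, we get $e_\sigma(u_n,v_n) \to 0$, with $p_1(u_n) > s_0 > p_1(v_n)$ automatic from the definitions of $A$ and $B$. By Proposition~\ref{prop:e-sigma} there is a quasicontinuous curve $w_n$ from $v_n$ to $u_n$ with $L(\sigma \circ w_n) = e_\sigma(v_n,u_n) \to 0$. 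The map $p_1 \circ w_n$ is monotone with closed interval image, hence continuous, and takes values on both sides of $s_0$; by the Intermediate Value Theorem, pick $\tau_n$ with $p_1(w_n(\tau_n)) = s_0$ and set $y_n := w_n(\tau_n) \in I_{s_0}$. Then $e_\sigma(v_n, y_n) \leq L(\sigma \circ w_n) \to 0$, so $\pi(y_n) \to c_*(t)$ in $R_*$.

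The main obstacle is to conclude from this that $c_*(t) \in I_{s_0}^*$, which requires closedness of $I_{s_0}^*$ in $R_*$. This is where the rigid geometry of a single ruling intervenes: if $s_0 \in \mathrm{Sing}(\sigma)$ then $\lambda_{s_0}$ is constant and $I_{s_0}^*$ is a single point, while if $s_0 \in \mathrm{Reg}(\sigma)$, injectivity of the minimal geodesic $\lambda_{s_0}$ together with the estimate $e_\sigma((s_0,t),(s_0,t')) = d_X(\lambda_{s_0}(t),\lambda_{s_0}(t')) = |t-t'|\,L(\lambda_{s_0})$ shows $\pi|_{I_{s_0}}$ is an isometric embedding of a closed interval, so $I_{s_0}^*$ is compact. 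Either way $I_{s_0}^*$ is closed in $R_*$, so $c_*(t) \in I_{s_0}^*$, i.e.\ $s_0 \in s(c_*(t))$, contradicting $t \in A$. The subtle point here is that $\pi: R \to R_*$ can be badly discontinuous in general (cf.~Example~\ref{ex:Koch}), so closedness of the $\pi$-image of a compact subset of $R$ is not automatic; what saves the argument is the fact that $\pi$ behaves rigidly on an individual ruling because the ruling is a minimal geodesic.
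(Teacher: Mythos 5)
Your proof is correct but takes a genuinely different route from the paper's. Both arguments proceed by contradiction and locate a parameter $t$ together with a sequence $t_n \to t$ whose $\pi$-preimages straddle the putative gap at $s_0$; the divergence is in how the contradiction is extracted. The paper takes a Euclidean-convergent subsequence of preimages $x_n \in \pi^{-1}(c_*(t_n)) \subset R$ and argues, via the identity $\pi^{-1}(c_*([a,b])) = \sigma^{-1}(\gamma([a,b]))$ and continuity of $\sigma$, that the limit $x_\infty$ must lie in $\pi^{-1}(c_*(t))$. You instead stay entirely inside $(R_*, e_\sigma)$: from near-minimizing quasicontinuous curves joining representatives of $c_*(t)$ and $c_*(t_n)$ you extract points $y_n \in I_{s_0}$ via the Intermediate Value Theorem (using that $p_1 \circ w_n$ is monotone with connected image, hence continuous), obtain $\pi(y_n) \to c_*(t)$ in the $e_\sigma$-metric, and close by noting that $I_{s_0}^*$ is compact in $R_*$: the elementary computation $e_\sigma((s_0,\tau),(s_0,\tau')) = |\tau-\tau'|\,L(\lambda_{s_0})$ (upper bound from the trivial curve along the ruling, lower bound from $d_X$ and minimality of $\lambda_{s_0}$) makes $\pi|_{I_{s_0}}$ a rescaled isometric embedding when $s_0 \in {\rm Reg}(\sigma)$, and $I_{s_0}^*$ is a point otherwise. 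What your approach buys is that it never needs to pass a Euclidean limit through the map $\pi$, which — as you correctly flag via Example \ref{ex:Koch} — can be discontinuous in general; the closedness you use comes from the rigid geodesic structure of an individual ruling and is therefore completely robust. Your explicit discussion of this subtlety is exactly the right remark to make. One small stylistic point: invoking Proposition \ref{prop:e-sigma} to produce an exactly-minimizing quasicontinuous curve is more than you need; taking quasicontinuous curves with $\sigma$-length within $1/n$ of $e_\sigma(v,u_n)$, directly from the infimum definition \eqref{eq:pull-back}, would serve just as well.
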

\begin{proof}
Choose $u\in \pi^{-1}(c_*(a))$ and $u'\in \pi^{-1}(c_*(b))$.
We may assume $p_1(u)<p_1(u')$.
Let $\gamma:=\sigma_*\circ c_*$.
Since $\pi^{-1}(c_*([a,b]))=\sigma^{-1}(\gamma([a,b]))$, 
$p_1(\pi^{-1}(c_*([a,b])))$ is closed.
Suppose that $p_1(\pi^{-1}(c_*([a,b])))$ is not connected.
Then there are some $s_-<s_+$ in $[p_1(u),p_1(u')]$ satisfying
$$
\pi^{-1}(c_*([a,b]))\subset [0,s_-]\times [0,1]\cup [s_+,\ell]\times [0,1].
$$
Set $R_{-}:=[0,s_-]\times [0,1]$, 
$R_{+}:=[s_+,\ell]\times [0,1]$.   In view of Corollary \ref{cor:esigma=0},  we may assume that $\pi^{-1}(c_*(a))\subset R_-$ and 
$\pi^{-1}(c_*(b))\subset R_+$.
Let us consider 
\[
      t_-:=\sup \{ t\,|\,\pi^{-1}(c_*([a,t]))\subset R_-\}. 
\]
Note that $\pi^{-1}(c_*(t_-))\subset R_-$.
Take  $t_n>t_-$ with $t_n\to t_-$ such that
$\pi^{-1}(c_*(t_n))\subset R_+$. 
Choose a point 
$x_n\in \pi^{-1}(c_*(t_n))$.
Passing to a subsequence, we may assume that $x_n$ 
converges to a point $x_\infty\in R_+$. 
This is a contradiction since $x_\infty \in \pi^{-1}(c_*(t_-))$.
\end{proof}


\begin{defn} \upshape
For a continuous curve $c_*$ in $R_*$,
we say that a subset $A_*\subset I_s^*$ is {\it 
$c_*$-convex} if $c_*(t), c_*(t')\in A_*$ with $t\le t'$, then
$c_*([t,t'])\subset A_*$.
For a continuous curve $\gamma$ in $S$, the notion of 
{\it $\gamma$-convexity} of  a subset $\Lambda\subset
\lambda_s$  is similarly defined.
\end{defn}

Let $c_*:[a,b]\to (R_*,e_\sigma)$ be a continuous curve
from $\pi(u)$ to $\pi(u')$,  and put $s_0=p_1(u)$,  $s_0'=p_1(u')$.  

For any $s\in [s_0, s_0']$, 
we consider   
\[
    E_s^*:= I_s^*\cap c_*([a,b]),
\]
which is nonempty by Lemma \ref{lem:p1-interval}.

\begin{lem} \label{lem:*-monotone}
For a continuous curve  $c_*:[a,b]\to (R_*,e_\sigma)$
with $s_{\min}(c_*(a))\le s_{\max}(c_*(b))$,
suppose that $E_s^*$ is 
$c_*$-convex for every $s\in p_1(c_*([a,b]))$.
Then we have the monotonicity for all $t<t'$ in $[a,b]$,
\[
       s_{\min}(c_*(t))\le s_{\max}(c_*(t'))
\]
\end{lem}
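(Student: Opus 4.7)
The plan is to argue by contradiction: suppose there exist $t<t'$ in $[a,b]$ with $s_{\min}(c_*(t))>s_{\max}(c_*(t'))$, and set
$$s^-:=s_{\min}(c_*(t)),\quad s^+:=s_{\max}(c_*(t')),\quad \alpha:=s_{\min}(c_*(a)),\quad \beta:=s_{\max}(c_*(b)),$$
so that $s^+<s^-$ and, by hypothesis, $\alpha\le \beta$. The strategy is to locate an intermediate parameter $s_0$ in $[s^+,s^-)$ or $(s^+,s^-]$ such that $c_*$ visits the fiber $I_{s_0}^*$ at a pair of times bracketing $t$ (respectively $t'$), while $c_*(t)\notin I_{s_0}^*$ (respectively $c_*(t')\notin I_{s_0}^*$). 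This will directly contradict the $c_*$-convexity of $E_{s_0}^*$.

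First I would apply Lemma \ref{lem:p1-interval} to each of the three restrictions $c_*|_{[a,t]}$, $c_*|_{[t,t']}$ and $c_*|_{[t',b]}$, obtaining that $s(c_*([a,t]))$, $s(c_*([t,t']))$ and $s(c_*([t',b]))$ are (closed) intervals in $[0,\ell]$. Since these intervals contain, respectively, the pairs $\{\alpha,s^-\}$, $\{s^+,s^-\}$ and $\{s^+,\beta\}$, they contain the full intervals between these values; in particular, $s(c_*([t,t']))\supset [s^+,s^-]$.

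Next I would observe the key dichotomy forced by the hypothesis $\alpha\le \beta$ together with $s^+<s^-$: at least one of $\alpha<s^-$ or $\beta>s^+$ must hold, since otherwise $s^-\le \alpha\le \beta\le s^+$, contradicting $s^+<s^-$. In the case $\alpha<s^-$, the intersection $s(c_*([a,t]))\cap s(c_*([t,t']))$ contains $[\max(\alpha,s^+),s^-]$, which is nondegenerate; pick $s_0\in[\max(\alpha,s^+),s^-)$. Then there are $t_1\in[a,t]$ and $t_2\in[t,t']$ with $c_*(t_1),c_*(t_2)\in I_{s_0}^*$, hence both lie in $E_{s_0}^*$; but $s_0<s^-=s_{\min}(c_*(t))$ gives $s_0\notin s(c_*(t))$, i.e.\ $c_*(t)\notin I_{s_0}^*$, contradicting the $c_*$-convexity of $E_{s_0}^*$ at the times $t_1\le t\le t_2$. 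The case $\beta>s^+$ is handled symmetrically, using $c_*|_{[t,t']}$ and $c_*|_{[t',b]}$ and choosing $s_0\in(s^+,\min(s^-,\beta)]$ so that $s_0>s^+=s_{\max}(c_*(t'))$ forces $c_*(t')\notin I_{s_0}^*$.

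I expect the proof to be short once this dichotomy is set up. The main subtlety, and the reason the hypothesis $s_{\min}(c_*(a))\le s_{\max}(c_*(b))$ appears at all, is precisely to rule out the degenerate configuration $\alpha=s^-$, $\beta=s^+$; so the only real obstacle is to organize the case analysis cleanly against this hypothesis, after which the contradiction follows directly from Lemma \ref{lem:p1-interval} and the definitions of $s^\pm$.
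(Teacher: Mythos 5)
Your proof is correct and takes essentially the same approach as the paper: apply Lemma~\ref{lem:p1-interval} to restrictions of $c_*$ to obtain connected $s$-intervals, locate a fiber $I_{s_0}^*$ revisited across a parameter where $c_*$ leaves it, and contradict $c_*$-convexity of $E_{s_0}^*$. Your three-subinterval decomposition with the dichotomy $\alpha<s^-$ or $\beta>s^+$ is a cleaner organization of the same case split that the paper handles via the condition $s_{\max}(c_*(b))\gtrless s_{\min}(c_*(t_1))$ and a terse ``similarly.''
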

\begin{proof}
Suppose that there are $t_1<t_2$ such that
$ s_{\min}(c_*(t_1)) > s_{\max}(c_*(t_2))$. 
If $s_{\max}(c_*(b))\ge s_{\min}(c_*(t_1))$, then 
Lemma \ref{lem:p1-interval} shows the existence of
$t_3\in [t_2, b]$ such that $s(c_*(t_3))$ meets
$s(c_*(t_1))$. This contradicts the $c_*$-convexity
of $I_s^*$ for $s\in s(c_*(t_1))\cap s(c_*(t_3))$,
since $c_*(t_1), c_*(t_3)\in I_s^*$ and $c_*(t_2)\notin I_s^*$
.
If $s_{\max}(c_*(b)) <  s_{\min}(c_*(t_1))$, then we have
$s(c_*(b))<s(c_*(t_1))$ with 
$s_{\min}(c_*(a))\le s_{\max}(c_*(b))$. Therefore similarly,  
we have a contradiction.
\end{proof}

\begin{prop}\label{cor:e-sigma2}
Let $c_*:[a,b]\to (R_*,e_\sigma)$ be a continuous curve
with $L(c_*)<\infty$ from $\pi(u)$ to $\pi(u')$.
We assume the following$:$\,For each  
$s\in s(c_*([a,b]))$, 
\begin{enumerate}
\item $E_s^*$ is $c_*$-convex\,$;$
\item the restriction $c_*|_{E_s^*}$ is monotone for every 
  $E_s^*$ that is an interval\,$;$
\end{enumerate}
Then there is a lift of $c_*$ in $R$ from $u$ to $u'$.
\end{prop}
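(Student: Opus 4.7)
The plan is to construct the lift $c$ by first partitioning the image of $c_*$ by the $s$-coordinate and then stitching together pointwise lifts with ``vertical'' monotone arcs wherever $c_*$ traverses a nontrivial subset of a single generator.  By hypothesis (1), Lemma \ref{lem:*-monotone} forces $t\mapsto s_{\min}(c_*(t))$ to be monotone on $[a,b]$, which after possibly reversing orientation we take to be non-decreasing, and by Lemma \ref{lem:p1-interval} the set $J:=s(c_*([a,b]))$ is a closed interval $[s_0,s_0']$.  For each $s\in J$, the set $T_s:=c_*^{-1}(E_s^*)\subset[a,b]$ is a closed interval by $c_*$-convexity, and by the monotonicity of $s_{\min}$ the intervals $T_s$ are weakly ordered by $s$.

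Next I would establish the dichotomy: if $s\in {\rm Sing}(\sigma)$ then $I_s^*$, and hence $E_s^*$, is a single point, because any two points of $I_s$ are joined by the quasicontinuous curve $I_s$ itself, on which $\sigma$ is constant, so $e_\sigma$ vanishes.  Write $J_1:=\{s\in J:E_s^*\text{ is a single point}\}$ and $J_2:=J\setminus J_1$, so $J_2\subset {\rm Reg}(\sigma)$.  For $s\in J_2$ the generator $\lambda_s$ is a non-constant minimal geodesic, so $\pi|_{I_s}$ is injective; hence $E_s^*$ lifts uniquely to an arc $\tilde E_s\subset I_s$, and by hypothesis (2) the restriction $c_*|_{T_s}$ parametrizes $\tilde E_s$ monotonically under $\pi$.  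Since the interiors of the $T_s$ for $s\in J_2$ are pairwise disjoint and $\sigma_*$ sends them to disjoint subarcs of $\gamma:=\sigma_*\circ c_*$, the set $J_2$ is at most countable and
\[
L_0:=\sum_{s\in J_2}L(\tilde E_s)\le L(\gamma)\le L(c_*)<\infty.
\]

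Finally I would construct $c$ on an enlarged parameter interval $[s_0,s_0'+L_0]$ together with a monotone surjection $h:[s_0,s_0'+L_0]\to J$ that inserts, above each $s\in J_2$, a subinterval of length $L(\tilde E_s)$ and is a translation elsewhere.  Define $c$ by: on an inserted interval $h^{-1}(s)$ with $s\in J_2$, the arc-length parametrization of $\tilde E_s$ compatible with the direction of $c_*|_{T_s}$; on $h^{-1}(s)$ with $s\in J_1\cap {\rm Reg}(\sigma)$, the unique point of $I_s\cap\pi^{-1}(E_s^*)$; on $h^{-1}(s)$ with $s\in J_1\cap {\rm Sing}(\sigma)$, the fixed point $(s,a)$ for some $a\in[0,1]$.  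Monotonicity of $c$ is immediate from monotonicity of $h=p_1\circ c$ and the construction on inserted fibers, and quasicontinuity on $h^{-1}({\rm Reg}(\sigma)\cup{\rm int}\,{\rm Sing}(\sigma))$ follows from the continuity of $c_*$, injectivity of $\pi|_{I_s}$ for $s\in{\rm Reg}(\sigma)$, and the constant choice on the interior of ${\rm Sing}(\sigma)$.  The identity $\pi\circ c=c_*$ holds up to monotone reparametrization once we realize the correspondence $\{(u,t)\in[s_0,s_0'+L_0]\times[a,b]:\pi(c(u))=c_*(t)\}$ as a common non-decreasing parametrization.  The main obstacle is the quasicontinuity check at points where $J_2$ accumulates inside $J_1$: continuity of the cumulative insertion function there hinges on the at-most-countability of $J_2$ and the finiteness of $L_0$, while hypotheses (1) and (2) together are what prevent adjacent vertical segments from being traversed in conflicting directions.
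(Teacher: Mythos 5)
Your overall plan — decompose $[s_0,s_0']$ according to whether $E_s^*$ is a single point or a nondegenerate interval, lift the single-point fibers pointwise, insert the vertical lifts of the interval fibers, and bound the total inserted length by $L(c_*)$ — is exactly the paper's strategy. There is, however, a genuine gap at the step where you conclude that the extended parameter interval $[s_0,s_0'+L_0]$ is finite. You assert that the interiors of the $T_s$ for $s\in J_2$ are pairwise disjoint, hence that $J_2$ is at most countable and $L_0=\sum_{s\in J_2}L(\tilde E_s)<\infty$. This fails whenever $\pi$ identifies the same positive-length subarc across a whole range of generators. The simplest instance is Example~\ref{ex:trivial}: there $I_s^*=R_*$ for every $s$, so $E_s^*=c_*([a,b])$ for \emph{all} $s\in[0,\ell]$; if $c_*$ is nonconstant and monotone (so hypotheses (1) and (2) hold), then $J_2=[0,\ell]$ is uncountable, the intervals $T_s=[a,b]$ all coincide rather than having disjoint interiors, and $\sum_{s\in J_2}L(\tilde E_s)=\infty$. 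The paper avoids this by summing over the collection $\ca E_+^*$ of \emph{distinct subsets} $E_s^*$ of positive length, rather than over the index set $J_2$; a maximal $p_1$-interval of $s$-values on which $E_s^*$ is a fixed interval is treated as a single unit, and the arc $c_{E^*}$ attached to it is given two end ``arms'' (subarcs of $\pi^{-1}(c_*(t))$ and $\pi^{-1}(c_*(t'))$) precisely so that its $p_1$-image absorbs the range of $s$ over which that interval repeats and the $p_1$-images of all the $c_{E^*}$ partition $[s_0,s_0']$. You need to build in the same identification before the length bound and the insertion procedure make sense.

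A secondary issue: you attribute to Lemma~\ref{lem:*-monotone} the statement that $t\mapsto s_{\min}(c_*(t))$ is monotone. The lemma actually gives only the weaker, asymmetric inequality $s_{\min}(c_*(t))\le s_{\max}(c_*(t'))$ for $t<t'$, which is all the paper invokes. The weak ordering of the sets $T_s$ that your construction requires can be deduced with additional work from that inequality together with Lemma~\ref{lem:p1-interval} and the $c_*$-convexity hypothesis, but it is not a direct restatement of Lemma~\ref{lem:*-monotone}, and the justification should be supplied.
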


\begin{proof} 
1)\,Since we only need to construct a lift $c$ on ${\rm Reg}(\sigma)\times [0,1]$, we may assume ${\rm Sing}(\sigma)$ is empty.
If $s(c_*(a))$ meets $s(c_*(b))$, then 
$c_*$ is a geodesic subarc of $I_s^*$  for 
$s\in s(c_*(a))\cap s(c_*(b))$, and hence certainly
has a lift in $R$ by  Corollary  \ref{cor:esigma=0}.

Thus we may assume
$s(c_*(a))< s(c_*(b))$.
We denote by $\ca E^*_+$ (resp. $\ca E^*_0$) the collection of all 
$E_s^*$ having positive length (resp. zero length, that is points).
Since $L(c_*)<\infty$,  $\ca E^*_+$ is at most countable, and 
\[
    L_0:=  \sum_{{E_s^*}\in \ca E^*_+} L(E_s^*) \le L(c_*).
\]
For each $E_s^*\in \ca E_0^*$, by Corollary \ref{cor:esigma=0}, 
$\pi^{-1}(E_s^*)$ is a continuous strictly monotone arc, denoted by $c_{E_s^*}$.

For $E_s^*\in\ca E_+^*$ with endpoints $c_*(t), c_*(t')$ 
\,$(t<t')$, from the convexity condition together with
Lemma \ref{lem:*-monotone}, we have 
\begin{align} \label{eq:s-monotone}
         s_{\rm min}(c_*(t))\le s_{\rm max}(c_*(t')).
\end{align}
 Let $a(t), b(t')$ be the endpoints of 
 $E_s:=\pi^{-1}(E_s^*)\cap I_s$ corresponding
 to $c_*(t), c_*(t')$ respectively.
 Let $a_{\rm min}(t)\in \pi^{-1}(c_*(t))$ and 
 $b_{\rm max}(t')\in \pi^{-1}(c_*(t'))$ be such that
 $p_1(a_{\rm min}(t))=s_{\rm min}(c_*(t))$ and 
$p_1(b_{\rm max}(t'))=s_{\rm max}(c_*(t'))$.
Then let us denote by $c_{E_s^*}$ the union of the subarc of $\pi^{-1}(c_*(t))$ from $a_{\rm min}(t)$ to $a(t)$, $E_s^*$ and the subarc of $\pi^{-1}(c_*(t'))$ from $b(t')$ to  $b_{\rm max}(t')$.

Let $\ca E^*$ be the union of the collections 
$\ca E_0^*$ and $\ca E_+^*$.
Note that from construction, the family of $p_1$-images
$\{ p_1(c_{E_s^*})\, |\, E_s\in\ca E^*\}$ is pairwise disjoint,
and all the union coincides with $[s_0,s_0']$. 
In particular, we can define the natural order
on the set $\ca E^*$.

\par\n
2)\,We are now ready to parametrize the union of all those arcs 
$c_{E_s^*}$ \, $(E_s^*\in\ca E^*)$, to construct a lift 
$c:[s_0, s_0'+L_0]\to R$ of $c_*$.
For each $E_s^*\in \ca E^*$, let $\ca E^*_{+}(s)$ denote
the set of all $E_{s'}^*\in \ca E_+^*$ with $E_{s'}^*<E_s^*$.
We set
\[
      \ell(E_s^*):= \sum_{E_{s'}^*\in \ca E^*_{+}(s)}
             L(E_{s'}^*).
\]
For  $E_s^*\in \ca E_0^*$, 
let $a, b$ be the endpoints of the arc $c_{E_s^*}$ with
$p_1(a)\le p_1(b)$.
We  parametrize 
$c_{E_s^*}$ on $[\ell(E_s^*)+p_1(a), \ell(E_s^*)+p_1(b)]$
by  the condition
$$
       p_1(c_{E_s^*}(\ell(E_s^*)+t))=t \quad \text{for $t\in [p_1(a), p_1(b)]$}.
$$

For $E_s^*\in\ca E_+^*$ with endpoints $c_*(t), c_*(t')$\,
$(t<t')$, let $a(t), b(t')\in \partial E_s^*$,
$a_{\rm min}(t), b_{\rm max}(t')$ be defined as in the previous paragraph. Then we  parametrize 
$c_{E_s^*}$ on 
$[\ell(E_s^*)+p_1(a_{\rm min}(t)), \ell(E_s^*)+L(E_s^*)+p_1(b_{\rm max}(t))]$
by  the conditions that 
\[
  p_1(c_{E_s^*}(\ell(E_s^*)+t))=t 
\]
for $t\in [p_1(a_{\min}(t)), p_1(a(t))]\cup [L(E_s^*)+p_1(b(t')), L(E_s^*)+p_1(b_{\rm max}(t'))]$  and
\[
c_{E_s^*}(\ell(E_s^*)+p_1(a(t))+t)=E_s^*(t)
\]
for  $t\in[0, L(E_s^*)]$,
where $E_s^*(t)$ is the arc-length parameter
from $a(t)$ to $b(t')$.

Finally we observe the continuity of the family 
 $\{ c_{E_s^*}\,|\, E_s^*\in\ca E^*\}$ in the following sense: Let  $\{ E_{s_i}^*\}\in \ca E_0^*$ be a Cauchy sequence
 in $R_*$ satisfying $E_{s_i}^*<E_s^*$
  (resp.   $E_{s_i}^* >E_s^*$) such that its limit
 meets $E_s^*$. Let 
 $a,b$ be the initial and terminal 
 points of $c_{E_{s}^*}$ respectively.
 Then $c_{E_{s_i}^*}$  converges to $a$ (resp. to $b$). 
 This follows from the conditions
 (1), (2) and \eqref{eq:s-monotone}, and the detail is omitted here.
  
Thus  we can define the curve 
$c:[s_0, s_0+L_0]\to R$ as the union of 
all $c_{E_s^*}$\,$(E_s^*\in \ca E^*)$.
It is easy to see that $c$ is a
continuous and monotone lift of $c_*$.
This completes the proof.
\end{proof}

\begin{rem} \upshape
To consider the problem of lifting a curve $\gamma$ in $S$, we need an
extra condition on $\sigma$ or  $\gamma$,
which will be discussed later in 
Proposition \ref{prop:e-sigma3}. 
\end{rem} 

By  Proposition \ref{cor:e-sigma2},
we immediately have the following.

\begin{prop}\label{prop:e-sigma-shortest}
Let $c_*:[a,b]\to (R_*,e_\sigma)$ be a shortest curve
from $\pi(u)$ to $\pi(u')$.
Then there is a lift $c$ of $c_*$ from $u$ to $u'$.
\end{prop}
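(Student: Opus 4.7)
The proof plan is to invoke Proposition~\ref{cor:e-sigma2}, reducing the task to verifying its three hypotheses for any shortest curve $c_*$ from $\pi(u)$ to $\pi(u')$. The finiteness $L(c_*) < \infty$ is immediate since $L(c_*) = e_\sigma(\pi(u), \pi(u'))$ is finite by Definition~\ref{defn:ruled}(2), which guarantees a rectifiable directrix.

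The main work is to verify condition~(1), the $c_*$-convexity of $E_s^* = I_s^* \cap c_*([a,b])$. First I would observe that each $I_s^*$ is isometrically embedded in $(R_*, e_\sigma)$: the $\sigma$-image of any quasicontinuous curve joining two points of $I_s$ in $R$ has length at least the $\lambda_s$-distance between their $\sigma$-images (because $\lambda_s$ is a minimal geodesic in the $\CAT(\kappa)$-space $X$), while the $I_s$-subarc itself attains this length. Hence for $c_*(t_1), c_*(t_2) \in E_s^*$ with $t_1 < t_2$, the restriction $c_*|_{[t_1,t_2]}$ is itself shortest in $R_*$, so its length equals the $I_s^*$-arc length between its endpoints. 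Since $\sigma_*$ is $1$-Lipschitz, the composition $\sigma_* \circ c_*|_{[t_1,t_2]}$ is then a minimal geodesic in $X$ with the same endpoints as the corresponding subarc of $\lambda_s$; local uniqueness of minimal geodesics in a $\CAT(\kappa)$-space (on small enough scales) identifies them, and one concludes that $c_*(t) \in I_s^*$ for every $t \in [t_1, t_2]$.

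Condition~(2), the monotonicity of $c_*$ on $c_*^{-1}(E_s^*)$ when $E_s^*$ is an interval, then follows automatically: after condition~(1) the restriction parametrizes a subarc of the isometric interval $I_s^*$, and any nonmonotone parametrization would backtrack, producing a strictly shorter competitor and contradicting the shortest property of $c_*$.

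The main obstacle I anticipate is the final step in~(1), namely passing from \emph{$\sigma_* \circ c_*|_{[t_1,t_2]}$ traces the subarc of $\lambda_s$} to \emph{$c_*([t_1,t_2]) \subset I_s^*$}: a priori, two distinct classes in $R_*$ can share the same $\sigma_*$-image, so one must exclude that $c_*$ leaves $I_s^*$ by wandering into a different generator $I_{s'}^*$ whose $\lambda_{s'}$ happens to overlap $\lambda_s$. I expect the shortest property to rule this out by combining the monotonicity of any quasicontinuous lift with Proposition~\ref{prop:e-sigma} applied to a minimizing sequence of lifts of $c_*|_{[t_1,t_2]}$, forcing the lifts to concentrate on $I_s$ and hence $c_*$ to remain in $I_s^*$.
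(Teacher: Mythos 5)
Your strategy --- reduce to Proposition~\ref{cor:e-sigma2} and verify its hypotheses --- is exactly the paper's, which states the result as ``immediate'' from that proposition without further comment. Your verification of $L(c_*)<\infty$ and of hypothesis~(2) is fine: finiteness holds since $L(c_*)=e_\sigma(\pi(u),\pi(u'))<\infty$, and once $c_*$ is known to travel inside the isometric copy $I_s^*$, any nonmonotone parametrization would yield a strictly shorter competitor.

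The obstacle you flag in verifying hypothesis~(1) is, however, a genuine one, and the fix you sketch does not close it. You correctly show, using the $1$-Lipschitz property of $\sigma_*$, that $\sigma_*\circ c_*|_{[t_1,t_2]}$ parametrizes the subarc of $\lambda_s$ isometrically and hence coincides with it by uniqueness of minimal geodesics in $X$. But this controls only the $\sigma_*$-image, and two distinct classes of $R_*$ may share a $\sigma_*$-image; you need $c_*(t)=\pi(s,\tau(t))$, i.e. the vanishing of $e_\sigma\bigl(c_*(t),\pi(s,\tau(t))\bigr)$, which your argument does not produce. Your proposed remedy via Proposition~\ref{prop:e-sigma} has a different problem: applied to a minimizing sequence of quasicontinuous curves from $(s,\tau(t_1))$ to $(s,\tau(t_2))$, it yields \emph{some} shortest curve with a lift --- and since one such minimizer is simply the $I_s$-arc itself, the shortest curve it produces is the $I_s^*$-arc, which a priori need not coincide with $c_*|_{[t_1,t_2]}$. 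Deducing the coincidence amounts to uniqueness of geodesics between points of $I_s^*$ in $(R_*,e_\sigma)$, which would normally be extracted from the $\mathrm{CAT}(\kappa)$ property of $(R_*,e_\sigma)$; but Proposition~\ref{prop:e-sigma-shortest} is invoked in the Appendix precisely en route to proving that property (Theorem~\ref{thm:alex-ruled2}), so one cannot appeal to it here without risking circularity. To complete your argument you would need a direct verification of the $c_*$-convexity of $E_s^*$ that does not presuppose uniqueness of $R_*$-geodesics --- for instance, an argument controlling $\pi^{-1}(c_*(t_0))$ inside $\sigma^{-1}(\lambda_s(\tau(t_0)))$ using Corollary~\ref{cor:esigma=0} and Lemma~\ref{lem:p1-interval}; this step, which the paper's one-line proof leaves implicit, is where the real work lies.
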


%

%

In \cite[Theorem 2]{alexandrov-ruled}, Alexandrov proved the following result,
which plays a crucial role in the present paper.

\begin{thm} [\cite{alexandrov-ruled}] \label{thm:alex-ruled2}
Let $S$ be a ruled surface in a $\CAT(\kappa)$-space
$X$ with parametrization $\sigma \colon R \to X$.
Then $(R_*,e_\sigma)$ is a $\CAT(\kappa)$-space.
\end{thm}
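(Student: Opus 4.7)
The plan is to follow Alexandrov's original strategy, approximating $(R_*, e_\sigma)$ by polyhedral ruled surfaces built in $M^2_\kappa$ and invoking Reshetnyak's gluing theorem (Theorem \ref{thm:glue}).

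For each partition $P: 0 = s_0 < s_1 < \cdots < s_n = \ell$ of $[0,\ell]$, I would construct a $\CAT(\kappa)$ polyhedral surface $S_P$ as follows. On each sub-strip $[s_{i-1},s_i]\times[0,1]$, pick a minimal geodesic $\delta_i$ in $X$ from $\sigma(s_{i-1},0)$ to $\sigma(s_i,1)$, and form two comparison triangles in $M^2_\kappa$ with the same side lengths as the two triangles in $X$ cut out by $\delta_i$: one bounded by $\delta_i$, $\lambda_{s_{i-1}}$ and $\sigma_1|_{[s_{i-1},s_i]}$, the other by $\delta_i$, $\lambda_{s_i}$ and $\sigma_0|_{[s_{i-1},s_i]}$. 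Glue them along the comparison diagonal to obtain a model quadrilateral $Q_i\subset M^2_\kappa$, which is $\CAT(\kappa)$ by Theorem \ref{thm:glue}. Glue $Q_1,\ldots,Q_n$ in succession along the isometric ruling edges $\lambda_{s_i}$; $n-1$ further applications of Theorem \ref{thm:glue} give a $\CAT(\kappa)$ space $S_P$ together with a natural piecewise ``stretch'' map $\phi_P:R\to S_P$ sending each strip to $Q_i$ via straight rulings in $M^2_\kappa$.

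The next task is to compare the pull-back metric $d_P := \phi_P^* d_{S_P}$ on $R$ with $e_\sigma$. The $\CAT(\kappa)$ comparison in $X$, applied to each pair of triangles making up $Q_i$, gives $d_X(\sigma(u),\sigma(v))\le d_P(u,v)$, which integrates along quasicontinuous curves to yield $e_\sigma\le d_P$ after descent to $R_*$. For the opposite direction, fixing a refining sequence $P_n$ with mesh $|P_n|\to 0$, I would take a near-optimal quasicontinuous curve for $e_\sigma(u,v)$, cut it at the partition lines, and within each strip replace the piece by a path of nearly the same length inside $Q_i$; Propositions \ref{prop:e-sigma} and \ref{cor:e-sigma2} on the existence and liftability of shortest curves ensure this approximation succeeds with cumulative error $\tau(|P_n|)$, so that $d_{P_n}\to e_\sigma$ pointwise. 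Since $\CAT(\kappa)$-ness is preserved under Gromov--Hausdorff limits of geodesic spaces via the four-point condition, this forces $(R_*,e_\sigma)$ itself to be $\CAT(\kappa)$.

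The main obstacle, as the lifting propositions already hint, lies in the possible pathologies of the end directrices $\sigma_0,\sigma_1$: they need not be rectifiable, and optimal paths need be only quasicontinuous rather than continuous (Examples \ref{ex:almost-conti} and \ref{ex:osx}). Consequently, bounding $d_P$ from above by $e_\sigma$ requires genuine use of monotone quasicontinuous approximations together with a careful separation of the regular part of $[0,\ell]$ from $\mathrm{Sing}(\sigma)$, exactly as furnished by Propositions \ref{prop:e-sigma} and \ref{cor:e-sigma2}. A minor secondary issue when $\kappa>0$ is to take $|P|$ small enough that each $Q_i$ sits inside a $\CAT(\kappa)$-domain of $M^2_\kappa$; this is automatic once the partition is fine, and reduces the problem to its natural local form.
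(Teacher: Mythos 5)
Your proposal takes a genuinely different route from the paper's. The paper's Appendix~\ref{sec:append} never constructs polyhedral models or passes to a Gromov--Hausdorff limit: instead it works directly with $\Delta$-minimizing chains, proves the ``over-$\pi$'' property at break points via the first variation formula (Lemma~\ref{lem:overpi}), assembles comparison polygons in $M^2_\kappa$, applies the Alexandrov stretching lemma to obtain the monotonicity of comparison angles (Lemmas~\ref{lem:chain-comparison}, \ref{lem:delta-monotone}), and then lets the mesh go to zero inside a fixed geodesic triangle of $(R_*,e_\sigma)$ using the convergence Sublemma~\ref{slem:geodesics-ruled-surface-approximation}. The pay-off of the paper's route is that it never has to control any auxiliary pull-back metric $d_P$; the entire comparison lives at the level of angles in $M^2_\kappa$ and the identity $e_\sigma=\sup_\Delta e_\sigma^\Delta$ (Lemma~\ref{lem:esigma-decomp}).

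The gap in your plan sits exactly at the step you label ``cumulative error $\tau(|P_n|)$.'' Non-expansion of comparison maps gives the inequality you need only in one direction: crossing a $d_P$-geodesic with the ruling lines $\tilde\lambda_{s_i}$ produces a chain in $X$ of no greater total length, hence $e_\sigma^{\Delta_P}\le d_P$, and since $e_\sigma=\sup_\Delta e_\sigma^\Delta$ this yields $\liminf d_{P_n}\ge e_\sigma$. The reverse bound $d_{P_n}\le e_\sigma+\tau(|P_n|)$ is the crux, and the $\CAT(\kappa)$-comparison runs against you there: for points $\tilde x_{i-1},\tilde x_i$ on the model edges one has $|\tilde x_{i-1},\tilde x_i|_{M^2_\kappa}\ge |x_{i-1},x_i|_X$, so each piece of your replacement path inside $Q_i$ is \emph{at least} as long as the corresponding piece of the near-optimal curve in $X$, not ``of nearly the same length.'' Controlling the cumulative excess as $|P|\to 0$ is not automatic for a quasicontinuous curve that may oscillate wildly and whose directrix traces need not be rectifiable; it is essentially equivalent to the degeneracy/stretching analysis the paper carries out via Lemmas~\ref{lem:overpi}, \ref{lem:chain-comparison} and Sublemma~\ref{slem:geodesics-ruled-surface-approximation}, so your argument is not a shortcut around that work but a reformulation of it with the hard step left unproved. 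A secondary issue: to invoke Theorem~\ref{thm:glue} when gluing $Q_i$ to $Q_{i+1}$ along $\tilde\lambda_{s_i}$, that edge must be convex in $Q_i$, and a quadrilateral built from two comparison triangles glued along a diagonal need not be convex at the shared vertex, so this requires a separate verification (or a subdivision into single triangles glued only along diagonals and rulings).
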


The proof  of  Theorem \ref{thm:alex-ruled2}
is differed  to Appendix \ref{sec:append}.

\psmall
One might expect to define the induced ``metric''
$d_{\sigma}$ on $S$ along $\sigma$  as
\[
d_{\sigma}(x, y) := 
\inf \left\{ \, e_{\sigma} (u, v) \mid 
\text{$\sigma(u) = x$ and $\sigma(v) = y$} \, \right\}.
\]
However, $d_{\sigma}$ does not neessarily satisfy the triangle inequality.
See Remark \ref{rem:non-disk:non-metric}.
Even if $(S, d_\sigma)$ becomes a metric space, 
in certain cases, it could be far from the notion of ``induced metric'', as described in the following example.

\begin{ex}   \label{ex:non-induced-metric} \upshape
Let us consider the following curve  $\alpha:[0,5\pi] \to \mathbb C$ on $\mathbb C=\mathbb R^2$ defined as 
\begin{equation*}
\alpha(s)=
 \begin{cases}
 \hspace{0.2cm} 
   e^{\sqrt{-1} s} & \quad (0\le s\le \pi/2), \\
     (0,2s/\pi)       & \quad (\pi/2\le s\le \pi),\\
    (0, 4- 2s/\pi)   & \quad (\pi\le s\le3\pi/2),  \\
 \hspace{0.2cm}  
  e^{\sqrt{-1}(s-\pi)}  & \quad (3\pi/2\le s\le 5\pi).   
\end{cases}
\end{equation*}
We define the ruled surface $\sigma:[0,5\pi]\times [0,1]\to\mathbb R^3$ by
$\sigma(s,t)=(\alpha(s), t)$.
In this case, $d_\sigma$ is a distance on the image $S$ of 
$\sigma$. Actually $d_\sigma$ coincides with the interior metric 
of $S$ defined in Definition \ref{defn:int-metric}.

On the other hand, if we consider the restriction $\sigma'$ of $\sigma$ to 
$[0,3\pi]\times [0,1]$,
then $d_{\sigma'}$ is not the distance on the ruled surface $S'$ defined by $\sigma'$.
\end{ex}

\begin{lem}\label{lem:non-metric}
Suppose that we have for all $u,v\in R$, 
\beq \label{eq:sigma=uv}
      \sigma(u)=\sigma(v)  \iff e_\sigma(u,v)=0.
\eeq
Then $(S, d_\sigma)$ is a metric space, and $\sigma_*:(R_*,e_\sigma)\to (S, d_\sigma)$ 
is an isometry.
\end{lem}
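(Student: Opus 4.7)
The plan is to exploit the hypothesis \eqref{eq:sigma=uv} to show that $\sigma_*\colon R_*\to S$ is a bijection and that the pull-back distance $e_\sigma$ descends unambiguously to the image, after which everything falls out of the fact that $(R_*,e_\sigma)$ is already known to be a metric space.

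First I would record the basic equivalences. Since $\pi(u)=\pi(v)$ in $R_*$ precisely when $e_\sigma(u,v)=0$, the hypothesis \eqref{eq:sigma=uv} translates to $\pi(u)=\pi(v)\iff\sigma(u)=\sigma(v)$. Because $\sigma=\sigma_*\circ\pi$, this says exactly that the induced map $\sigma_*\colon R_*\to S$ is injective; it is surjective by the definition of $S$, so $\sigma_*$ is a bijection.

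Next I would show that $e_\sigma$ is well-defined on the image. If $\sigma(u)=\sigma(u')$ and $\sigma(v)=\sigma(v')$, the hypothesis gives $e_\sigma(u,u')=0=e_\sigma(v,v')$, so the triangle inequality for the pseudometric $e_\sigma$ on $R$ yields $e_\sigma(u,v)=e_\sigma(u',v')$. Consequently, for any $x,y\in S$ and any choice of preimages $u\in\sigma^{-1}(x)$, $v\in\sigma^{-1}(y)$, the infimum in the definition of $d_\sigma$ is attained at a single common value:
\[
d_\sigma(x,y)=e_\sigma(u,v)=e_\sigma(\pi(u),\pi(v)).
\]

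From this identity the conclusion is immediate. Setting $x=\sigma_*(\bar u)$, $y=\sigma_*(\bar v)$ for $\bar u,\bar v\in R_*$, the displayed formula reads $d_\sigma(\sigma_*(\bar u),\sigma_*(\bar v))=e_\sigma(\bar u,\bar v)$, which is precisely the isometry statement for the bijection $\sigma_*\colon(R_*,e_\sigma)\to(S,d_\sigma)$. Since $(R_*,e_\sigma)$ is a metric space by construction, transporting the metric structure along the bijective isometry $\sigma_*$ shows that $(S,d_\sigma)$ is a metric space as well. There is no real obstacle here; the only subtle point is verifying that the infimum defining $d_\sigma$ does not depend on the chosen lifts, which is where \eqref{eq:sigma=uv} together with the triangle inequality for $e_\sigma$ is used.
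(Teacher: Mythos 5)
Your proof is correct and takes essentially the same approach as the paper's (terser) argument: both hinge on the observation that, under \eqref{eq:sigma=uv}, the triangle inequality for the pseudometric $e_\sigma$ forces $e_\sigma(u,v)$ to be independent of the chosen preimages $u\in\sigma^{-1}(x)$, $v\in\sigma^{-1}(y)$, so the infimum defining $d_\sigma$ collapses to a single value and the metric on $(R_*,e_\sigma)$ is transported along the induced bijection $\sigma_*$. You merely spell out the triangle-inequality step and the bijectivity of $\sigma_*$, which the paper leaves implicit.
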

\begin{proof}
First note that  $e_\sigma(u,u')=0$ implies $\sigma(u)=\sigma(v)$.
Suppose \eqref{eq:sigma=uv} holds for all $u,v\in R$.
Then we have $d_\sigma(x,y)=e_\sigma(u,v)$ for all
$x,y\in S$ and $u\in\sigma^{-1}(x)$, $v\in\sigma^{-1}(y)$.
This implies that $d_\sigma$ is a metric on $S$.
It is also obvious that $\sigma_*:(R_*,e_\sigma) \to (S, d_\sigma)$ is an isometry.
\end{proof}

\begin{defn} \label{defn:induced-metricS}\upshape
We say that $S$ has the {\it induced metric} from $\sigma$
if $\sigma_*:R_*\to S$ is injective.
This is the case when \eqref{eq:sigma=uv} holds   
for all $u,v\in R$, and therfore 
$\sigma_*:(R_*, e_\sigma) \to (S,d_\sigma)$ is an isometry by Lemma \ref{lem:non-metric}.
In this case, $d_\sigma$ is called the induced metric from $\sigma$.
\end{defn}

\begin{cor}\label{cor:alex-ruled}
Let $S$ be a ruled surface in a $\CAT(\kappa)$-space
$X$ with parametrization $\sigma \colon R \to X$.
If $S$ has the induced metric from $\sigma$, then $(S, d_{\sigma})$ is a $\CAT(\kappa)$-space.
\end{cor}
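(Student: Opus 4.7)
The plan is to invoke Theorem \ref{thm:alex-ruled2} together with Lemma \ref{lem:non-metric} and transport the $\CAT(\kappa)$ property across an isometry. First I would unpack the hypothesis that $S$ has the induced metric from $\sigma$ (Definition \ref{defn:induced-metricS}): this says $\sigma_*:R_*\to S$ is injective. I would then verify that this injectivity is equivalent to condition \eqref{eq:sigma=uv}. One direction is automatic because continuity of $\sigma_*$ (with $\sigma=\sigma_*\circ\pi$) forces $\sigma(u)=\sigma(v)$ whenever $e_\sigma(u,v)=0$. For the converse, if $\sigma(u)=\sigma(v)$, then $\sigma_*(\pi(u))=\sigma_*(\pi(v))$, and injectivity of $\sigma_*$ forces $\pi(u)=\pi(v)$, i.e.\ $e_\sigma(u,v)=0$.

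Once \eqref{eq:sigma=uv} is established, Lemma \ref{lem:non-metric} applies verbatim and shows that $d_\sigma$ is a genuine metric on $S$ and that $\sigma_*:(R_*,e_\sigma)\to (S,d_\sigma)$ is an isometry (in particular a bijection). By Theorem \ref{thm:alex-ruled2}, the space $(R_*,e_\sigma)$ is $\CAT(\kappa)$, and since the $\CAT(\kappa)$ condition is invariant under surjective isometry, we conclude that $(S,d_\sigma)$ is $\CAT(\kappa)$ as desired.

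There is essentially no obstacle here: the geometric content has already been absorbed into Theorem \ref{thm:alex-ruled2}, whose proof is deferred to the appendix, and the role of the ``induced metric'' hypothesis is precisely to convert $R_*$ into $S$ without distortion. The only point demanding a line of care is the equivalence between injectivity of $\sigma_*$ and \eqref{eq:sigma=uv}, which is a short formal check as above.
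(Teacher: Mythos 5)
Your proposal is correct and follows exactly the route the paper intends: the corollary is a direct consequence of Theorem \ref{thm:alex-ruled2}, Definition \ref{defn:induced-metricS}, and Lemma \ref{lem:non-metric}, with the $\CAT(\kappa)$ property transported across the isometry $\sigma_*$. The one small thing to note is that the forward direction of your equivalence check (that $e_\sigma(u,v)=0$ implies $\sigma(u)=\sigma(v)$) does not really come from ``continuity of $\sigma_*$'' --- in the paper this implication is established before $\sigma_*$ is even defined, from the trivial inequality $e_\sigma(u,v)\ge |\sigma(u),\sigma(v)|_X$, and the well-definedness of $\sigma_*$ is what that observation licenses; but this is a cosmetic point and the argument is sound.
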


From now, in the rest of this section,  we consider
curves $\gamma$ in $S$ with respect to the topology of $S$ induced from $X$.
 
\begin{lem} \label{lem:S-conn}
If $S$ has the induced metric from $\sigma$, then
$s(\gamma([a,b]))$ is an interval for any continuous curve 
$\gamma:[a,b]\to S$.
\end{lem}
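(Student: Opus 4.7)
The plan is to mimic the proof of Lemma \ref{lem:p1-interval}, arguing by contradiction and replacing $\pi^{-1}$ by $\sigma^{-1}$, with one extra ingredient needed to handle the fact that a single point of $S$ may have preimages in $R$ whose $p_1$-coordinates are far apart.

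First I would suppose, for contradiction, that $s(\gamma([a,b])) = p_1(\sigma^{-1}(\gamma([a,b])))$ is not connected. Since $\gamma([a,b])$ is compact and $\sigma$ is continuous, $\sigma^{-1}(\gamma([a,b]))$ is compact, so its $p_1$-image is a compact set that, by assumption, is disconnected. Hence there exist $s_-<s_+$ with
\[
\sigma^{-1}(\gamma([a,b])) \subset R_- \cup R_+, \qquad R_-:=[0,s_-]\times[0,1],\ R_+:=[s_+,\ell]\times[0,1],
\]
and both $R_-\cap \sigma^{-1}(\gamma([a,b]))$ and $R_+\cap \sigma^{-1}(\gamma([a,b]))$ nonempty. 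The next step, which is the crucial point using the induced-metric hypothesis, is to show that for each fixed $t\in[a,b]$ the entire fiber $\sigma^{-1}(\gamma(t))$ lies in a single $R_{\pm}$. If not, there are $u_-\in R_-$ and $u_+\in R_+$ with $\sigma(u_-)=\sigma(u_+)=\gamma(t)$. Since $S$ has the induced metric, $\sigma_*$ is injective, so $\pi(u_-)=\pi(u_+)$, i.e.\ $e_\sigma(u_-,u_+)=0$. Applying Corollary \ref{cor:esigma=0} yields a strictly monotone quasicontinuous curve $c:[0,1]\to R$ from $u_-$ to $u_+$ with $\pi\circ c$ constant. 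By definition of quasicontinuous curve, $p_1\circ c$ is monotone with closed-interval image, and strict monotonicity forces this image to be $[p_1(u_-),p_1(u_+)]\supset(s_-,s_+)$. But $\sigma\circ c\equiv \gamma(t)$, so $c([0,1])\subset \sigma^{-1}(\gamma(t))\subset R_-\cup R_+$, contradicting $p_1(R_-\cup R_+)\cap(s_-,s_+)=\emptyset$.

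With this fiber dichotomy established, I would set
\[
A_\pm:=\{\,t\in[a,b]\mid \sigma^{-1}(\gamma(t))\subset R_\pm\,\},
\]
obtaining a disjoint decomposition $[a,b]=A_-\sqcup A_+$ with both pieces nonempty. Each $A_{\pm}$ is closed: if $t_n\in A_-$ with $t_n\to t$, pick $u_n\in \sigma^{-1}(\gamma(t_n))\subset R_-$; compactness of $R_-$ gives a subsequential limit $u\in R_-$, and continuity of $\sigma$ gives $\sigma(u)=\gamma(t)$, so by the fiber dichotomy $\sigma^{-1}(\gamma(t))\subset R_-$, i.e.\ $t\in A_-$. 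The analogous argument works for $A_+$. This contradicts the connectedness of $[a,b]$ and finishes the proof.

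The main obstacle is the fiber-dichotomy step; everything else is a routine adaptation of Lemma \ref{lem:p1-interval}. Its resolution depends essentially on the hypothesis that $S$ carries the induced metric, which via Corollary \ref{cor:esigma=0} allows us to connect any two preimages of the same point of $S$ by a quasicontinuous curve whose $p_1$-image is a full interval --- this is precisely what forbids preimages from splitting between $R_-$ and $R_+$.
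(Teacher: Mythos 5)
Your proof is correct and follows essentially the same route as the paper: decompose the image of $p_1\circ\sigma^{-1}$ into $R_-\cup R_+$ across a gap $(s_-,s_+)$, use Corollary \ref{cor:esigma=0} together with the induced-metric hypothesis to show each fiber $\sigma^{-1}(\gamma(t))$ lies entirely in one of $R_\pm$, and then derive a nontrivial clopen partition of $[a,b]$. The only cosmetic differences are that you spell out the fiber-dichotomy step explicitly via the closed-interval property of $p_1\circ c$ for a quasicontinuous curve (which the paper leaves as a one-line appeal to Corollary \ref{cor:esigma=0}), and you argue the $A_\pm$ are closed whereas the paper argues $J_\pm$ are open --- equivalent routes to the same contradiction.
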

\begin{proof} Let $J:=[a,b]$.
If the conclusion does not hold, we have $s_-<s_+$ in  $s(\gamma(J))$
such that $(s_-,s_+)$ does not meet $s(\gamma(J))$.
Set $R_-:= [0,s_-]\times [0,1]$, 
$R_+:= [s_+,\ell]\times [0,1]$.
Let $J_+$ and $J_-$ be the set of all $t\in J$
such that the arc $\sigma^{-1}(\gamma(t))$ is 
contained in $R_-$ and $R_+$ respectively.
Since $S$ has the induced metric from $\sigma$, Corollary \ref{cor:esigma=0} implies that 
$J=J_+\cup J_-$.
We show that $J_-$ and $J_+$ are open, yielding a contradiction. Suppose $J_-$ is not open for instance,
and choose  $t\in J_-\setminus {\rm int} J_-$ and a sequence $t_n$
in $J_+$ converging to $t$.
Choose any $x_n\in \sigma^{-1}(\gamma(t_n))$
converging to a point $x_\infty\in R_+$.
Since $\sigma(x_n)=\gamma(t_n)\to \sigma(x_\infty)$ as 
$n\to\infty$, we have $\gamma(t)=\sigma(x_\infty)$.
It turns out that $\sigma^{-1}(\gamma(t))\in R_+$.
This is a contradiction to $t\in R_-$.
\end{proof}

\psmall

\pmed\n
{\bf Interior metrics on ruled surfaces}

Let $S$ be a ruled surface in $X$ with parametrization
$\sigma \colon R \to X$.

\begin{defn}   \upshape \label{defn:int-metric}
We denote by $d_S$ the {\it interior metric} on $S$ 
associated with $d_X$
defined as
\[
d_S(x_0,x_1) := 
\inf \left\{ \,
L(\gamma) \mid 
\text{$\gamma$ is a curve in $S$ from $x_0$ to $x_1$}
\right\}.
\]
\end{defn}

 Due to the Arzela-Ascoli theorem,
$(S, d_S)$ is a geodesic space.

We discuss the problem of lifting curves in $S$.
For a subset $A\subset S$, we set
\[
  s(A):=\{ s\,|\,\lambda_s\cap A\neq\emptyset\}.
\]
Note that  $s(A)=p_1(\sigma^{-1}(A))$.
 In particular, for every $x\in S$, 
 we define $s(x)$, $s_{\max}(x)$, $s_{\min}(x)$ in this way 
 as in Definition \ref{defn:s(x)}.

\begin{prop}\label{prop:e-sigma3}
Let $\gamma:[a,b]\to S$ be a continuous curve of finite
length from $\sigma(u)$ to $\sigma(u')$ with 
$p_1(u)<p_1(u')$.
Set $J:=[p_1(u),p_1(u')]$, and 
\[
    \Lambda_s^*:=\sigma_*^{-1}(\gamma([a,b]))\cap I_s^*
      \quad (s\in J).
\]
We assume the following:
\begin{enumerate}
\item For arbitrary $t<t'$ in $[a,b]$, $s(\gamma([t,t']))$ is connected\,$;$
\item $\sigma_*(\Lambda_{s}^*)$ is 
   $\gamma$-convex for each $s\in J$\,$;$
\item $\gamma$ is monotone on 
   $\sigma_*(\Lambda_{s}^*)$ that is an interval.
\end{enumerate}
Then there is a lift of $\gamma$ in $R$ from $u$ to $u'$.
\end{prop}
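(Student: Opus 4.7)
The strategy is to mirror the construction in the proof of Proposition \ref{cor:e-sigma2}, building the lift $c$ directly in $R$ from the curve $\gamma$ in $S$ rather than going through $R_*$. The three hypotheses are designed to play the roles of the hypotheses of Proposition \ref{cor:e-sigma2}, with $E_s^* \subset I_s^*$ replaced by the subinterval $\Lambda_s := \sigma^{-1}(\gamma([a,b])) \cap I_s \subset I_s$.

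As in Proposition \ref{cor:e-sigma2}, I would first reduce to $\mathrm{Sing}(\sigma) = \emptyset$ and dispose of the trivial case $s(\gamma(a)) \cap s(\gamma(b)) \neq \emptyset$: there condition (2) forces $\gamma$ to lie inside a single $\lambda_s$, and condition (3) then provides the lift immediately. In the essential case $s_{\max}(\gamma(a)) < s_{\min}(\gamma(b))$, I would set up the structural picture as follows. Condition (1), together with a nested-intersection/limit argument, shows that $s(\gamma(t))$ is a closed subinterval of $[0,\ell]$ for every $t$. Condition (2) then yields that for each $s \in \mathcal{S} := s(\gamma([a,b]))$ the set $\{t : \gamma(t) \in \lambda_s\}$ is a closed interval $[t^-(s), t^+(s)]$, and condition (3) ensures that $\gamma|_{[t^-(s), t^+(s)]}$ monotonically traces a subarc $E_s$ of $\lambda_s$, so $\Lambda_s$ is a subinterval of $I_s$. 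Setting $\mathcal{S}_+ := \{s : L(E_s) > 0\}$, the countability of $\mathcal{S}_+$ and the bound $L_0 := \sum_{s \in \mathcal{S}_+} L(\Lambda_s) \le L(\gamma)$ follow from $L(\gamma) < \infty$ together with the fact that the $E_s$ lie on distinct rulings and so overlap only at isolated points of $\gamma([a,b])$.

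Then I would parametrize $c : [s_0, s_0' + L_0] \to R$ piecewise, following the scheme of Proposition \ref{cor:e-sigma2}. For $s \in \mathcal{S} \setminus \mathcal{S}_+$, the lift $c$ at the offset parameter $s + \ell(s)$ is the unique point of $\Lambda_s$, where $\ell(s) := \sum_{s' < s,\, s' \in \mathcal{S}_+} L(\Lambda_{s'})$; for $s \in \mathcal{S}_+$, $c$ on $[s + \ell(s), s + \ell(s) + L(\Lambda_s)]$ monotonically traverses $\Lambda_s$ in the direction in which $\gamma$ traces $E_s$. A monotone reparametrization of $[a,b]$ onto $[s_0, s_0' + L_0]$ sending $a \mapsto s_0$ and $b \mapsto s_0' + L_0$ then makes $\sigma \circ c$ coincide with $\gamma$, so the endpoints match $u$ and $u'$.

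The hard part will be verifying continuity, hence quasicontinuity, of $c$ at the boundaries between different $s$-pieces, especially at parameter values where $s(\gamma(t))$ is a non-degenerate interval and $c$ must slide from one fiber $I_s$ to another while $\gamma$ sits at a point shared by several rulings. The key tool will be an analog of Lemma \ref{lem:*-monotone} for the curve $\gamma$ in $S$, derived from conditions (1) and (2), which ensures that the endpoints of $\Lambda_s$ vary consistently with the time order $t^{\pm}(s)$ as $s$ moves through $\mathcal{S}$, so that the gluing is continuous exactly as in the corresponding continuity verification at the end of the proof of Proposition \ref{cor:e-sigma2}.
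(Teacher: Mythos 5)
Your proposal is correct and takes essentially the same approach as the paper: the paper first establishes the monotonicity statement (Lemma \ref{lem:*-monotoneS}, proved from conditions (1) and (2) exactly as Lemma \ref{lem:*-monotone} but with condition (1) replacing Lemma \ref{lem:p1-interval}), and then, using this together with conditions (2) and (3), repeats the construction of Proposition \ref{cor:e-sigma2} with $\Lambda_s^*$ in place of $E_s^*$, parametrizing the resulting arcs on an interval of length $(s_0'-s_0)+L_0$ and verifying continuity at the gluing boundaries via the same monotonicity lemma. You correctly identified this monotonicity lemma as the key ingredient and correctly assigned the roles of the three hypotheses.
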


\begin{lem} \label{lem:*-monotoneS}
For a continuous curve  $\gamma:[a,b]\to S$ with
$s_{\min}(\gamma(a))\le s_{\max}(\gamma(b))$,
suppose (1), (2) of Proposition \ref{prop:e-sigma3} for 
$\gamma$.
Then we have the monotonicity for all $t<t'$ in $[a,b]$,
\[
       s_{\min}(\gamma(t))\le s_{\max}(\gamma(t')).
\]
\end{lem}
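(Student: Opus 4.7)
The plan is to mirror the proof of Lemma \ref{lem:*-monotone} almost verbatim, with the ``upstairs'' inputs replaced by their downstairs analogues on $S$. Specifically, the connectedness of $s$-projections that was supplied for free by Lemma \ref{lem:p1-interval} in $R_*$ will now be imported as hypothesis (1) of Proposition \ref{prop:e-sigma3}, and the $c_*$-convexity of $E_s^*$ will be replaced by the $\gamma$-convexity of $\sigma_*(\Lambda_s^*)$ granted by hypothesis (2). The translation is made painless by the dictionary
$$
\gamma(t)\in \sigma_*(\Lambda_s^*) \iff s\in s(\gamma(t)),
$$
which follows from $\sigma_*(\Lambda_s^*)=\gamma([a,b])\cap \sigma_*(I_s^*)$; thus $\gamma$-convexity records exactly the set-valued information carried by $s$.

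I would argue by contradiction: suppose there exist $t_1<t_2$ in $[a,b]$ with $s_{\min}(\gamma(t_1))>s_{\max}(\gamma(t_2))$, and split into two cases depending on the relative position of $s_{\max}(\gamma(b))$ and $s_{\min}(\gamma(t_1))$. In Case 1, when $s_{\max}(\gamma(b))\ge s_{\min}(\gamma(t_1))$, hypothesis (1) applied to $[t_2,b]$ makes $s(\gamma([t_2,b]))$ a connected subset of $\mathbb R$ containing both $s_{\max}(\gamma(t_2))$ and $s_{\max}(\gamma(b))$, hence containing $s:=s_{\min}(\gamma(t_1))$. Therefore some $t_3\in[t_2,b]$ satisfies $s\in s(\gamma(t_3))$, so $\gamma(t_1)$ and $\gamma(t_3)$ both lie in $\sigma_*(\Lambda_s^*)$, and the $\gamma$-convexity from (2) forces $\gamma(t_2)\in \sigma_*(\Lambda_s^*)$, contradicting $s_{\max}(\gamma(t_2))<s$.

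In Case 2, when $s_{\max}(\gamma(b))<s_{\min}(\gamma(t_1))$, I would now take instead $s:=s_{\max}(\gamma(b))$. The anchor hypothesis $s_{\min}(\gamma(a))\le s_{\max}(\gamma(b))=s<s_{\min}(\gamma(t_1))$ combined with (1) on $[a,t_1]$ yields a connected set containing both $s_{\min}(\gamma(a))$ and $s_{\min}(\gamma(t_1))$, hence containing $s$; this produces $t_0\in[a,t_1)$ with $s\in s(\gamma(t_0))$, the strict inequality $t_0\ne t_1$ being forced by $s<s_{\min}(\gamma(t_1))$. Since also $s\in s(\gamma(b))$, both $\gamma(t_0)$ and $\gamma(b)$ lie in $\sigma_*(\Lambda_s^*)$, so $\gamma$-convexity forces $\gamma(t_1)\in \sigma_*(\Lambda_s^*)$, contradicting $s<s_{\min}(\gamma(t_1))$.

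I do not anticipate a real obstacle here: the proof is essentially a formal transcription of Lemma \ref{lem:*-monotone}, the only genuine issue being that connectedness of $s$-projections, automatic in $R_*$, must be supplied by hypothesis (1) because $\gamma$ lives in $S$ where distinct fibers of $\sigma_*$ can be identified. The subtlety worth highlighting is the second case, where one must choose $s=s_{\max}(\gamma(b))$ rather than $s=s_{\min}(\gamma(t_1))$ in order to exploit the anchor hypothesis when locating the auxiliary parameter $t_0$.
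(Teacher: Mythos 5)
Your proof is correct and is exactly the route the paper takes: the paper simply instructs the reader to repeat the proof of Lemma \ref{lem:*-monotone} with hypothesis (1) of Proposition \ref{prop:e-sigma3} substituted for Lemma \ref{lem:p1-interval}, which is precisely the translation you carry out via the dictionary $\gamma(t)\in\sigma_*(\Lambda_s^*)\iff s\in s(\gamma(t))$. You have in fact spelled out the second case more carefully than the original (the choice $s=s_{\max}(\gamma(b))$ and the anchor $t_0$ on $[a,t_1]$), where the paper's Lemma \ref{lem:*-monotone} merely says ``similarly,'' and your version matches what that remark must mean.
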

\begin{proof}
Using the condition (1) of Proposition \ref{prop:e-sigma3}
 in place of Lemma \ref{lem:p1-interval},
we can proceed in the same manner as  the proof of Lemma \ref{lem:*-monotone} 
in our setting, to get the conclusion.
\end{proof}

\begin{proof}[Proof of Proposition \ref{prop:e-sigma3}]
In view of the conditions (2),(3) and Lemma \ref{lem:*-monotoneS}, 
using $\Lambda_s^*$ in place of 
$E_s^*$, we construct the family of continuous arcs
$c_{\Lambda_s^*}$ by the same manner as in 
Proposition \ref{cor:e-sigma2}. Then  parametrize them  and take the union of those arcs to obtain a lift
of $\gamma$ in $R$. Since the procedure is the same, 
we omit the detail.
\end{proof} 


\begin{thm}  \label{thm:int=ind}
Let $S$ be  a ruled surface in a  $\CAT(\kappa)$-space $X$
with parametrization $\sigma:R\to X$.
If $S$ has the induced metric from $\sigma$, 
then we have $d_S=d_\sigma$, and 
$(S,d_S)$ is  a $\CAT(\kappa)$-space.
\end{thm}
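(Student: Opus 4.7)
The plan is to prove $d_S=d_\sigma$; the $\CAT(\kappa)$ assertion is then immediate from Corollary \ref{cor:alex-ruled}.

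\textbf{Easy inequality $d_\sigma\ge d_S$.} Since $S$ has the induced metric, Lemma \ref{lem:non-metric} makes $\sigma_*\colon(R_*,e_\sigma)\to(S,d_\sigma)$ an isometry, so by Theorem \ref{thm:alex-ruled2} the space $(S,d_\sigma)$ is $\CAT(\kappa)$, in particular a length space. Every quasicontinuous curve in $R$ from $u$ to $v$ maps to a curve in $X$ of length at least $d_X(\sigma(u),\sigma(v))$, giving $e_\sigma(u,v)\ge d_X(\sigma(u),\sigma(v))$ and hence $d_\sigma\ge d_X|_S$. Because the length-metric operator is monotone and $d_\sigma$ is already its own length metric, $d_\sigma=\ell(d_\sigma)\ge\ell(d_X|_S)=d_S$.

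\textbf{Reverse inequality $d_\sigma\le d_S$.} I would argue locally and then globalize. Fix $p\in S$ and a $\CAT(\kappa)$-domain $U\subset X$ around $p$ in which minimal $d_X$-geodesics are unique; for $x,y\in S\cap U$ with $d_S(x,y)$ small enough that any $d_S$-minimizer from $x$ to $y$ is contained in $U$, I would first produce a $d_S$-geodesic $\gamma\colon[a,b]\to S$ from $x$ to $y$ by Arzela--Ascoli applied to a constant-$d_X$-speed minimizing sequence (which is uniformly Lipschitz in $d_X$ on the compact set $S=\sigma(R)$), together with lower-semicontinuity of $d_X$-length. Next, I would verify the three hypotheses of Proposition \ref{prop:e-sigma3}: condition (1) is Lemma \ref{lem:S-conn}; for condition (2), whenever $\gamma(t_1),\gamma(t_2)\in\lambda_s$, the sub-arc of $\lambda_s$ itself lies in $S$ and has $d_X$-length $d_X(\gamma(t_1),\gamma(t_2))$, so $d_S(\gamma(t_1),\gamma(t_2))=d_X(\gamma(t_1),\gamma(t_2))$; minimality of $\gamma$ then forces $L_{d_X}(\gamma|_{[t_1,t_2]})=d_X(\gamma(t_1),\gamma(t_2))$, making the sub-arc itself a minimal $d_X$-geodesic in $U$ between two points of $\lambda_s$ and, by uniqueness in $U$, equal to the corresponding segment of $\lambda_s$; condition (3) follows from the same minimality, since a non-monotone traversal of $\lambda_s$ would backtrack. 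Proposition \ref{prop:e-sigma3} then furnishes a quasicontinuous lift $c$ of $\gamma$ in $R$ from some $u\in\sigma^{-1}(x)$ to some $v\in\sigma^{-1}(y)$, and since $\sigma\circ c$ is a monotone reparametrization of $\gamma$,
\[
d_\sigma(x,y)\le e_\sigma(u,v)\le L_{d_X}(\sigma\circ c)=L_{d_X}(\gamma)=d_S(x,y).
\]

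\textbf{Globalization and main obstacle.} Because $d_S$ and $d_\sigma$ are length metrics agreeing on a neighborhood of every point, every curve has the same length with respect to each, so the two metrics coincide globally; Corollary \ref{cor:alex-ruled} then delivers the $\CAT(\kappa)$ conclusion for $(S,d_S)$. The delicate step is condition (2) of Proposition \ref{prop:e-sigma3}: at a large scale, two points of a generator $\lambda_s$ can admit minimal $d_X$-geodesics in $X$ not lying inside $\lambda_s$, so an arbitrary $d_S$-geodesic between them might leave $\lambda_s$; restricting attention to a $\CAT(\kappa)$-ball $U$ in which minimal $d_X$-geodesics are unique is precisely what traps the sub-arc inside $\lambda_s$ and makes the verification go through.
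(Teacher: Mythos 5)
Your proof is correct and follows the same essential route as the paper: obtain a $d_S$-shortest curve $\gamma$, check the three hypotheses of Proposition~\ref{prop:e-sigma3} (condition~(1) via Lemma~\ref{lem:S-conn}, conditions~(2),(3) via minimality of $\gamma$), extract a lift, and conclude with Corollary~\ref{cor:alex-ruled}. The difference is one of packaging rather than strategy. Where the paper argues globally on an arbitrary pair $x,x'\in S$ and simply asserts that minimality makes conditions~(2),(3) hold, you run the verification inside a $\CAT(\kappa)$-domain $U$ where $X$-geodesics are unique, and then patch with a length-metric globalization. This localization is a sound clarification of a step the paper leaves implicit, but it is not strictly necessary: since each generator $\lambda_s$ is already a \emph{minimal} $X$-geodesic in a $\CAT(\kappa)$-space, any two of its points at $X$-distance $<D_\kappa$ have a unique $X$-geodesic in $X$ itself (the corresponding subarc of $\lambda_s$), and the same calculation you give for condition~(2) — namely $L_{d_X}(\gamma|_{[t_1,t_2]})=d_S(\gamma(t_1),\gamma(t_2))\le d_X(\gamma(t_1),\gamma(t_2))$ using the $\lambda_s$-subarc as a competitor, hence equality and $X$-minimality of $\gamma|_{[t_1,t_2]}$ — traps the subarc without shrinking to $U$. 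Your extra Arzela--Ascoli step to justify existence of the $d_S$-minimizer and the spelled-out easy inequality $d_S\le d_\sigma$ via Proposition~\ref{prop:e-sigma} are likewise details the paper silently assumes; they are welcome but change nothing structurally.
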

\begin{proof} 
Since $d_S \le d_{\sigma}$,
to see  $d_S=d_\sigma$,
it suffices to show 
$d_S(x,x')\ge d_\sigma(x,x')$ for arbitrary $x, x'\in S$.
Take a $d_S$-shortest curve
$\gamma:[a,b]\to S$ from $x$ to $x'$.  
Since $\gamma$ is $d_S$-shortest, the conditions (2),(3) in Proposition \ref{prop:e-sigma3} certainly hold for $\gamma$.
By Lemma \ref{lem:S-conn},
$s(\gamma([t_1,t_2]))$ is an interval,
and the condition (1) in Proposition \ref{prop:e-sigma3} holds too.
Therefore by Proposition \ref{prop:e-sigma3},
we have a lift of $\gamma$ in $R$. Thus we have $d_S(x,x')=L(\gamma)\ge d_\sigma(x,x')$.
Finally Corollary  \ref{cor:alex-ruled} implies that
$(S,d_S)$ is a $\CAT(\kappa)$-space. 
This completes the proof.
\end{proof}

\setcounter{equation}{0}

\section{Thin ruled surfaces} \label{sec:ruled}
Let $X$ be a locally compact, geodesically complete two-dimensional space with curvature $\le\kappa$, 
and fix $p\in X$.
It is known that $\Sigma_p(X)$ is a finite metric graph without endpoints.
For a vertex $v$ of $\Sigma_p(X)$, take  $\nu_1, \nu_2\in\Sigma_p(X)$ 
with equal distance to $v$ 
such that $\angle(\nu_1, v)+\angle(v,\nu_2)=\angle(\nu_1,\nu_2)$ and 
$v$ is the unique vertex contained in the shortest geodesic joining $\nu_1$ and $\nu_2$ in $\Sigma_p(X)$. We set 
\begin{align}  
       \delta := \angle(\nu_1, v) = \angle(\nu_2, v), \label{eq:delta}
\end{align}
where $\delta$ is assumed to be small enough and will be determined later on in Section \ref{sec:ruled}. 
Let $\alpha_i:[0,\ell]\to X$ be geodesics in the directions $\nu_i$, $i=1,2$.
Joining $\alpha_1(s)$ to $\alpha_2(s)$ by the minimal geodesic $\lambda_s:[0,1] \to X$, 
we have a ruled surface $S$ in $X$. 
Let $B(p,r)$ be a small ball, and we assume  $\ell =2r$.
Set $R=[0,\ell]\times [0,1]$.
Let $\sigma:R\to S$ be the map that defines $S$:
\[
       \sigma(s,t) = \lambda_s(t).
\]

\begin{center}
\begin{tikzpicture}
[scale = 1]
\draw [-, very thick] (-6,0)--(2,1);
\draw [-, very thick] (-6,0)--(2,-1);
\draw [-, very thick] (2,1)--(2,-1);
\draw [-, very thick] (0,0.75)--(0,-0.75);
\draw [dotted, thick] (-4,0) arc(0:15:1.1);
\draw [dotted, thick] (-4,0) arc(0:-15:1.1);
\draw (-6,-0.5) node[circle] {$p$};
\draw (0,-1.2) node[circle] {$\alpha_1(s)$};
\draw (0,1.2) node[circle] {$\alpha_2(s)$};
\draw (-0.3,-0.25) node[circle] {$\lambda_s$};
\draw (2.3,0.25) node[circle] {$\lambda_{\ell}$};
\fill (0,0.25) coordinate (A) circle (2pt) node [right] {$\sigma(s,t)$};
\draw (-4,-0.75) node[circle] {$\nu_1$};
\draw (-4,0.75) node[circle] {$\nu_2$};
\fill (-4,0) coordinate (A) circle (1pt) node [right] {$v$};
\end{tikzpicture}
\end{center}

We define the {\it boundary} and the {\it interior} of $S$ as  
\beq \label{eq:bdyS}
    \pa S:= \alpha_1\cup\alpha_2\cup\lambda_{\ell},  \qquad 
     {\rm int}\,S:=S\setminus\pa S.
\eeq
\pmed
The purpose of this section is to prove the following

\begin{thm} \label{thm:ruled}
There exists an $r_p>0$ such that for every $r\in (0, r_p]$, $S$ with length metric is a ${\rm CAT}(\kappa)$-space
homeomorphic to a two-disk.
\end{thm}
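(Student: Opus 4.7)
The plan is to verify that $S$ admits the induced metric from $\sigma$ in the sense of Definition~\ref{defn:induced-metricS}, invoke Theorem~\ref{thm:int=ind} to conclude that $(S,d_S)$ is a $\CAT(\kappa)$-space, and then identify $S$ topologically with the quotient $R_*$. Since $\lambda_0$ is the constant geodesic at $p$ while every other $\lambda_s$ is nondegenerate, once distinct nondegenerate rulings are shown to be pairwise disjoint one has $R_*=R/I_0$ topologically, which is a two-disk. The entire argument therefore reduces to proving that $\sigma_*\colon R_*\to S$ is injective, i.e.\ that $\lambda_{s_1}\cap\lambda_{s_2}=\emptyset$ for all $0<s_1<s_2\le\ell$.

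The infinitesimal model for the expected structure lives in the tangent cone $(K_p,o_p)$, the Euclidean cone over the graph $\Sigma_p$. Because the unique shortest path in $\Sigma_p$ from $\nu_1$ to $\nu_2$ passes through $v$, unfolding the two adjacent edge-strips of $K_p$ along the $v$-ray realizes the ruling in $K_p$ from $(s,\nu_1)$ to $(s,\nu_2)$ as a straight segment in a Euclidean half-plane crossing the $v$-ray at a unique point at distance $s\cos\delta$ from $o_p$. Distinct rulings in $K_p$ therefore cross the $v$-ray at distinct heights and are globally disjoint off $o_p$, so the model ruled surface is exactly $R/I_0\simeq D^2$.

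To transfer pairwise disjointness to $X$ for $r\le r_p$, apply Lemma~\ref{lem:base}(2) with $\gamma_i=\alpha_i$ and $\theta=2\delta$: at every $q=\lambda_s(t)$ with $s>0$ the angle $\angle(-\nabla d_p(q),\dot\lambda_s(t))$ is $\pi/2\pm\tau_p(\delta,r)$, and by Lemma~\ref{lem:base}(1), $\nabla d_p(q)$ is essentially a single direction. Assume for contradiction that $q=\lambda_{s_1}(t_1)=\lambda_{s_2}(t_2)$ with $0<s_1<s_2\le\ell$, and analyze the triangles $\triangle p\,\alpha_1(s_i)\,q$: the angle at $\alpha_1(s_i)$ is $\pi/2\pm\tau_p(\delta,r)$, and the angle at $p$ in both triangles equals the common value $\angle(\uparrow_p^q,\nu_1)$ because $\alpha_1(s_1)$ and $\alpha_1(s_2)$ share the direction $\nu_1$ at $p$. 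By Lemma~\ref{lem:comparison} the comparison angles in $M_\kappa^2$ satisfy the same bounds, so the law of cosines for near-right comparison triangles gives $s_i=|p,q|\cos\angle(\uparrow_p^q,\nu_1)+O(\tau_p(\delta,r)\cdot r)$ for $i=1,2$, forcing $|s_1-s_2|=O(\tau_p(\delta,r)\cdot r)$. To upgrade this to a genuine contradiction, use the midpoint curve $\beta(s):=\lambda_s(\tfrac12)$: the rescaling $((1/r)X,p)\to(K_p,o_p)$ together with the model computation $|o_p,\beta^{K_p}(s)|=s\cos\delta$ yields that $|p,\beta(s)|/s$ tends to $\cos\delta$ uniformly, so for small $r$ the map $s\mapsto|p,\beta(s)|$ is strictly monotone with slope close to $\cos\delta$; this monotonicity excludes any actual coincidence $\lambda_{s_1}\cap\lambda_{s_2}\ne\emptyset$ with $s_1\ne s_2$.

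Once non-crossing of rulings is established, $\sigma_*\colon R_*\to S$ is a continuous bijection from a compact Hausdorff space, hence a homeomorphism; $S$ then has the induced metric from $\sigma$, Theorem~\ref{thm:int=ind} yields that $(S,d_S)$ is $\CAT(\kappa)$, and the identification $R_*\simeq R/I_0\simeq D^2$ completes the topological conclusion. The main obstacle is the injectivity step: the angle estimates alone only force the two parameters of a hypothetical crossing to be very close, and closing the gap to strict disjointness requires the monotonicity of $|p,\beta(s)|$ coming from the tangent-cone structure near the singular direction $v$.
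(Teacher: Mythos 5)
Your reduction to the claim that distinct rulings are pairwise disjoint is not correct as stated, and this misidentification propagates through the whole argument. Injectivity of $\sigma_*\colon R_*\to S$ is strictly weaker than $\lambda_{s_1}\cap\lambda_{s_2}=\emptyset$ for $s_1\ne s_2$: it only requires that whenever $\sigma(u)=\sigma(v)$ one has $e_\sigma(u,v)=0$, i.e.\ that the whole fiber $\sigma^{-1}(x)$ collapses to a single point of $R_*$. The paper's key Lemma~\ref{lem:point-crossing} establishes exactly this weaker statement and no more: $s(x)$ may well be a nondegenerate closed interval, in which case $\sigma^{-1}(x)$ is a strictly monotone arc in $R$ along which $e_\sigma$ vanishes (so the points are identified in $R_*$). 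Nothing in the paper rules out genuine crossings of distinct rulings, and the framework of Section~\ref{sec:ruled-alex} (in particular the quotient $R_*$ and Lemma~\ref{lem:non-metric}) is designed precisely so that crossings are harmless. Lemma~\ref{lem:s(x)-diam}(1) likewise gives only $\diam(s(x))/|p,x|<\tau_p(|p,x|)$, not $\diam(s(x))=0$. Concluding $R_*\cong R/I_0$ therefore skips over the possibility of nontrivial fibers over interior points, which is exactly what Lemma~\ref{lem:point-crossing} is needed to control.

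Your attempt to prove disjointness also fails on its own terms. The rescaling $(\tfrac1r X,p)\to(K_p,o_p)$ gives $|p,\beta(s)|=s\bigl(\cos\delta+o(1)\bigr)$ uniformly for $s\le r$, but this is a ratio estimate, not a derivative estimate: it does not make $s\mapsto|p,\beta(s)|$ monotone on scales $|s_1-s_2|\ll r$, and your own angle analysis shows that a putative crossing forces precisely $|s_1-s_2|=O(\tau_p(\delta,r)\,r)$. So the monotonicity argument offers no control in the regime you must rule out. Moreover, even a strictly monotone midpoint curve would not preclude two rulings from meeting at a point that is not their common midpoint: $\beta(s_1)\ne\beta(s_2)$ says nothing about whether $\lambda_{s_1}$ and $\lambda_{s_2}$ intersect elsewhere. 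The actual mechanism in the paper is the no-return Lemmas~\ref{lem:noreturn} and~\ref{lem:noreturn2}, together with Lemma~\ref{lem:pass} and the canonical-ball machinery, which force $s(x)$ to be connected (hence an interval) by showing that a ``gap'' in $s(x)$ would produce a ruling returning across $\lambda_{s_0}$ in a forbidden direction. These lemmas, and the analysis of $\Sigma_x(S)$ via Lemmas~\ref{lem:basic}, \ref{lem:inS}, \ref{lem:S-circle}, \ref{lem:ext=int}, constitute the substance of the proof; the proposal bypasses them entirely. Finally, the topological conclusion in the paper does not follow from compactness of $R_*$ alone: it uses $\CAT(\kappa)$, the circle structure of $\Sigma_x(S)$ at interior points, geodesic completeness (Lemma~\ref{lem:g-complete}), and a gluing/doubling argument to invoke Lemma~\ref{lem:disk}, rather than the direct identification $R_*\simeq R/I_0$ you propose.
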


The proof of Theorem \ref{thm:ruled} is completed in Subsection \ref{ssec:monotonicity}.
As shown in the following example, Theorem \ref{thm:ruled} does 
not hold for a general ruled surface even in a two-dimensional 
ambient space.

\begin{ex}  \label{ex:non-disk}  \upshape
For any $0<a<\pi/2$, let $X_0$ be the complement of the domain 
$\{ (x,y)\,|\, |y| < (\tan a) x\}$ on the $xy$-plane.
For $b$ with 
\beq  \label{eq:ab-ineq}
           a<b<a+\pi/4,
\eeq
consider the Euclidean cone $K(I)$ 
over a closed interval $I$ of length $2b$.
Let $X_1$ be the gluing of $X_0$ and $K(I)$ along their boundaries, where the origin $o$ of $X_0$ is identified 
with the vertex of $K(I)$.
Let $\xi$ be the midpoint of $I$ and $\gamma_{\xi}$ denote
the geodesic ray of $X_1$ from $o$ in the derection $\xi$.
Next consider the Euclidean cone $K(J)$ 
over an interval $J$ of length $\theta$ with $\pi-(b-a)\le \theta<\pi$.
Let $X$ be  the gluing of $X_1$ and $K(J)$ in such a way 
that $\partial K(J)$ is identified with $\gamma_{\xi}$ and 
$L:=\{ (x,0)\,|\,x\le 0\}\subset X_0$ in an obvious way.
It is easy to see that $X$ is 
a locally compact, geodesically complete, 
two-dimensional $\CAT(0)$-space.
Let $p=(0, -10)\in X_0\subset X$,
and let $\sigma_+$ (resp.$\sigma_-$) be the geodesic ray 
starting from $o$ defined by the ray
$y=(\tan a)x$ (resp. by the ray $y=-(\tan a)x$).
Note that the geodesic in $X$ joining $p$ and $\sigma_+(1)$
intersect $\sigma_-\setminus \{ o\}$ because of 
\eqref{eq:ab-ineq}.
Let $\ell:=2d(p, \sigma_+(1))$,
and let $\alpha_1:[0,\ell]\to X$ be the geodesic starting from $p$ through $\sigma_+(1)$.
Let $q_1$ be the intersection point of $\alpha_1$ with $\gamma_{\xi}$.
Let $q_2$ be the point of $L$ such that
$d(p,q_1)=d(p,q_2)$.
Letting  $\alpha_2:[0,\ell]\to X$ be the geodesic starting from $p$
through $q_2$,
consider 
the ruled surface $S=S(\alpha_1,\alpha_2)$ in $X$.
Let  $\Delta_1$ (resp.$\Delta_2$) be the geodesic 
triangle region in $X_1$ (resp in $K(J)$) with vertices $p$, 
$\alpha_1(\ell)$ and $\alpha_2(\ell)$ (resp.
$o$, $q_1$ and $q_2$).
Obviously, $S$ is the gluing of $\Delta_1$ and $\Delta_2$ 
along the geodesic segments $oq_1$ and $oq_2$.
In particular 
$S$ is not homeomorphic to a disk.
\end{ex}

\begin{center}
\begin{tikzpicture}
[scale = 1]
\draw [-, very thick] (-4.5,-1)--(1,1.75);
\draw [-, very thick] (-4.5,-1)--(0.25,-1);
\draw [-, very thick] (1.25,-1)--(3,-1);
\draw [-, very thick] (0,0)--(-0.5,1);
\draw [-, very thick] (0,0)--(0.25,-1);
\draw [-, very thick] (0,0)--(0.75,-0.25);
\draw [-, very thick] (0.62,-0.5)--(1.25,-1);
\draw [-, dotted, very thick] (3,-1)--(3.5,-1);
\draw [-, dotted, very thick] (1,1.75)--(1.5,2);
\draw [-, dotted, very thick] (0,0)--(0.62,-0.5);
\draw [-, dotted, very thick] (0.75,-0.25)--(1.2,-0.4);
\draw [-, dotted, very thick] (-0.5,1)--(-1,2);
\draw [-, very thick] (-0.5,1)--(0.75,-0.25);
\draw [-, very thick] (0.25,-1)--(0.75,-0.25);
\draw [-, very thick] (0.75,-0.25)--(1.25,-1);
\draw (-4.8,-1) node[circle] {$p$};
\draw (-0.2,-0.2) node[circle] {$o$};
\draw (1,0) node[circle] {$q_1$};
\draw (-0.9,1.1) node[circle] {$q_2$};
\draw (3,-0.7) node[circle] {$\alpha_1$};
\draw (0.8,2) node[circle] {$\alpha_2$};
\draw (0.6,0.7) node[circle] {$K(J)$};
\draw (-1.5,-0.5) node[circle] {$S(\alpha_1,\alpha_2)$};
\draw (-3,0.5) node[circle] {$X$};
\filldraw [fill=gray, opacity=.3] 
(-0.5,1) -- (0,0) -- (0.75,-0.25) -- cycle;
\filldraw [fill=gray, opacity=.1] 
(-4.5,-1) -- (0.25,-1) -- (0,0) -- (-0.5,1) -- cycle;
\filldraw [fill=gray, opacity=.1] 
(0,0) -- (0.25,-1) -- (0.75,-0.25) -- cycle;
\filldraw [fill=gray, opacity=.1] 
(0,0) -- (1.25,-1) -- (0.75,-0.25) -- cycle;
\filldraw [fill=gray, opacity=.1] 
(-0.5,1) -- (0.75,-0.25) -- (1.25,-1) -- (3.5,-1) -- (1.5,2) -- cycle;
\end{tikzpicture}
\end{center}

\begin{rem}  \label{rem:non-disk:non-metric} \upshape
(1)\,Note that in Example \ref{ex:non-disk},
$\diam((\nabla d_p)(o))=\pi$, which never happens 
in a small neighborhood of $p$ by Lemma \ref{lem:base}.
This suggests the validity of Theorem \ref{thm:ruled}, 
which is verified in the argument below.
\par\n 
(2)\, In Example \ref{ex:non-disk}, take two points $x, y$ from the distinct components of $S\setminus \Delta_2$ 
respectively.
Then  if $x$ and $y$ are sufficiently close to the point $o$, then we have $d_\sigma(x,y)> d_\sigma(x,o)+d_\sigma(o,y)$. Thus $d_\sigma$ is not a distance 
for Example \ref{ex:non-disk}.
\end{rem}

\medskip
\subsection{Behavior of ruling geodesics}
\psmall

In this subsection, we start the study of the behavior of ruling geodesics of $S$.
We begin with two examples, which help us 
to understand the argument in the rest of the paper.
  
\begin{ex} [Kleiner] $($cf. \cite{Ng:volume}$)$ \label{ex:2}\upshape
First consider a smooth nonnegative function $f:\R\to\R_+$ 
such that $\{ f=0\} =\{ 1/n\,|\,n=1,2,\ldots\}\cup [1,\infty)\cup (-\infty,0]$.
Let $\Omega:= \{ (x,y)\,|\,|y|\le f(x), x\in\R\}$, equipped with the natural length
metric induced from that of $\R^2$. 
We set $I_n^+:=\{ (x,+f(x))\,|\,1/(n+1)\le x\le 1/n\}$, 
$I_n^-:=\{ (x,-f(x))\,|\,1/(n+1)\le x\le 1/n\}$,
and 
$L_+:=\{(x,0)\,|\,x\ge 1\}$, $L_-:=\{(x,0)\,|\,x\le 0\}$. 
Let $\ell_n$ denote the length of $I_n^\pm$, and 
let $\kappa_n$ be the maximum of absolute geodesic curvature of $I_n^\pm$.
We choose $f$
satisfying 
\begin{align}  \label{eq:sum-ell-kappa}
    \sum \ell_n<\infty,   \quad \sum \kappa_n\ell_n< 2\pi. 
\end{align}
By these conditions, one can take a closed domain $H$ in $\R^2$
such that 
\begin{enumerate}
 \item $\partial H$ is smooth, connected and concave in the sense that 
    the geodesic curvature  is nonpositive everywhere;
 \item there are consecutive points $p_1,p_2,\ldots, $ on $\partial H$ such that 
    the subarc $K_n$ between $p_n$ and $p_{n+1}$ of $\partial H$ has 
    length equal to $\ell_n$;
 \item if we denote $p_{\infty}$ the limit of $p_n$, the closure of the complement 
    of the arc
    between $p_1$ and $p_{\infty}$ in $\partial H$ consists of two geodesic rays,
    say $R_+$ and $R_-$, in $\R^2$ with $p_1\in R_+$ and $p_{\infty}\in R_-$;
 \item the absolute geodesic curvature of $K_n$ is greater than or equal to $\kappa_n$
    everywhere.
\end{enumerate}
Take four copies $H_1,\ldots,H_4$ of $H$, and denote
$K_n, R_{\pm}\subset \pa H_{\alpha}$ by $K_n^{(\alpha)}$, $R_{\pm}^{(\alpha)}$\,
$(1\le \alpha\le 4)$ respectively.
We put
\begin{align*}
  \partial_+\Omega:& =\left(\bigcup_{n=1}^{\infty} I_n^+\right)\cup L_+\cup L_-,     \\
  \partial_-\Omega:& =\left(\bigcup_{n=1}^{\infty} I_n^-\right)\cup L_+\cup L_-.
\end{align*}
Now glue $H_1$, $H_2$ and $\Omega$ along their boundaries $\partial H_1$,
$\partial H_2$, $\partial_+ \Omega$ in such a way that 
$I_n$, $L_+$ and $L_-$ are glued with $K_n^{(\alpha)}$, 
$R_+^{(\alpha)}$ and $R_-^{(\alpha)}$\,$(\alpha=1,2)$  respectively
in an obvious way. Similarly glue $H_3$, $H_4$ and $\Omega$ along their boundaries  $\partial H_3$,
$\partial H_4$ and $\partial_-\Omega$.

Let  $X$ be the result of these gluings equipped with natural length metric, which is a two-dimensional locally compact, geodesically complete space.
Let $\iota:\Omega\to X$ be the natural inclusion, and let
$O=(0,0)\in \Omega$. 
Note that no neighborhood of $p:=\iota(O)$ in $X$ has triangulation.
Approximating $f$ by functions $f_k$\,$(k=1,2,\ldots)$\, that are  $0$ near $0$, we have polyhedral spaces $X_k$ in 
a similar way which approximate $X$ in the sense of Gromov-Hausdorff
distance.
Applying a result in \cite{BurBuy:upperII}, we see that 
 $X_k$ are $\CAT(0)$-spaces. Thus the limit space $X$ is also 
a $\CAT(0)$-space.
Note that $\mathcal S(X)$ consists of the two curves
$\iota(\partial_+\Omega)$ and $\iota(\partial_-\Omega)$.
\end{ex}

\begin{frame}

\begin{center}
\begin{tikzpicture}
[scale = 1]
\draw [-, thick] (-3,2)--(4,2);
\draw [-, thick] (-2,1.5)--(5,1.5);
\draw [-, dotted, thick] (-3,-1.5)--(4,-1.5);
\draw [-, thick] (-2,-2)--(5,-2);
\draw [-, thick] (-3,2)--(-2.5,0);
\draw [-, thick] (-3,-1.5)--(-2.5,0);
\draw [-, thick] (-2,1.5)--(-2.5,0);
\draw [-, thick] (-2,-2)--(-2.5,0);
\draw [-, dotted, thick] (4.5,0)--(4,2);
\draw [-, dotted, thick] (4.5,0)--(4,-1.5);
\draw [-, thick] (4.5,0)--(5,1.5);
\draw [-, thick] (4.5,0)--(5,-2);
\draw [-, thick] (4,2)--(4.125,1.5);
\draw [-, thick] (-3,-1.5)--(-2.1,-1.5);
\draw [-, very thick] (3.7,0)--(4.5,0);
\draw [-, very thick] (-2.5,0)--(0,0);
\fill (0,0) coordinate (A) circle (1.2pt) node [below] {$p$};
\draw (2.8,0) node[circle] {$\Omega$};
\draw (1.5,0.55) node[circle] {$I_n^+$};
\draw (1.5,-0.55) node[circle] {$I_n^-$};
\draw (4.1,0.3) node[circle] {$L_+$};
\draw (-1,0.3) node[circle] {$L_-$};
\draw (-2.5,1.7) node[circle] {$H_1$};
\draw (4.5,1.2) node[circle] {$H_2$};
\draw (-2.5,-1.2) node[circle] {$H_3$};
\draw (4.5,-1.7) node[circle] {$H_4$};
\coordinate (P1) at (3.7,0);
\coordinate (P2) at (2,0);
\coordinate (P3) at (1,0);
\coordinate (P4) at (0.3,0);
\coordinate (P5) at (0.1,0);
\draw [very thick]
(P1) .. controls +(160:1cm) and +(20:1cm) ..
(P2) .. controls +(340:1cm) and +(200:1cm) .. (P1); 
\filldraw [fill=gray, opacity=.1]
(P1) .. controls +(160:1cm) and +(20:1cm) ..
(P2) .. controls +(340:1cm) and +(200:1cm) .. (P1); 
\draw [very thick]
(P2) .. controls +(160:0.6cm) and +(20:0.6cm) ..
(P3) .. controls +(340:0.6cm) and +(200:0.6cm) .. (P2); 
\filldraw [fill=gray, opacity=.1]
(P2) .. controls +(160:0.6cm) and +(20:0.6cm) ..
(P3) .. controls +(340:0.6cm) and +(200:0.6cm) .. (P2); 
\draw [very thick]
(P3) .. controls +(160:0.4cm) and +(20:0.4cm) ..
(P4) .. controls +(340:0.4cm) and +(200:0.4cm) .. (P3); 
\filldraw [fill=gray, opacity=.1]
(P3) .. controls +(160:0.4cm) and +(20:0.4cm) ..
(P4) .. controls +(340:0.4cm) and +(200:0.4cm) .. (P3); 
\draw [very thick]
(P4) .. controls +(160:0.15cm) and +(20:0.15cm) ..
(P5) .. controls +(340:0.15cm) and +(200:0.15cm) .. (P4); 
\filldraw [fill=gray, opacity=.1]
(P4) .. controls +(160:0.15cm) and +(20:0.15cm) ..
(P5) .. controls +(340:0.15cm) and +(200:0.15cm) .. (P4); 
\end{tikzpicture}
\end{center}

\end{frame}


\begin{ex} \label{ex:3} \upshape
This example is based on Example \ref{ex:2}.
The point is we make different gluings.
This time we glue $H_1, H_2, H_3, H_4$ and $\Omega$ along their boundaries as follows:
\begin{enumerate}
 \item $K_n^{(1)}$ is glued with $I_n^+$ for all $n\,;$
 \item $K_n^{(2)}$ is glued with $I_n^+$ if $n\not\equiv 2\,\, ({\rm mod}\,4)$ and with $I_n^-$ if $n\equiv 2\,\, ({\rm mod}\,4)\,;$
 \item $K_n^{(3)}$ is glued with $I_n^+$ if $n\equiv 0,1\,\, ({\rm mod}\,4)$ and with $I_n^-$ if $n\equiv 2,3\,\, ({\rm mod}\,4)\,;$
 \item $K_n^{(4)}$ is glued with $I_n^+$ if $n\equiv 3\,\, ({\rm mod}\,4)$ and with $I_n^-$ if $n\not\equiv 3\,\, ({\rm mod}\,4)$. 
\end{enumerate}
Here, $R_+^\alpha$ and $R_-^\alpha$\,$(1\le\alpha\le 4)$ are 
glued with $L_+$ and $L_-$ respectively in those gluings.
The result $Y$ of these gluings equipped with natural length metric is a 
two-dimensional locally compact, geodesically complete $\CAT(0)$-space.
Let $\iota:(\amalg_{i=1}^4 H_i)\amalg\Omega\to Y$ be the identification map.
Note that 
\begin{align}\label{eq:choice-glueing}
\text{for all $1\le\alpha\neq\beta\le 4$,\, 
     $\iota(K_n^\alpha)=\iota(K_n^\beta)$\,\, for some $n$.}
    \hspace{1.5cm}
\end{align}
Let $p:=\iota(O)$, where $O$ is the origin of $\Omega$,
and let $v$ denote the direction at $p$ defined by the union of 
all $I_n^{\pm}$\,$(n=1,2,\ldots)$.
For small $\epsilon>0$, take sufficiently small $r>0$ and 
choose $a_i\in S(p,r)\cap\iota(H_i)$, $1\le i\le 4$,
such that $\angle(\dot\gamma_{p,a_i}(0),v)=\epsilon$.
Let $S(a_i,a_j)$ be the ruled surface defined 
by the geodesic segments $\gamma_{p,a_i}$ and 
$\gamma_{p,a_j}$.
Then it follows from \eqref{eq:choice-glueing} that 
$S(a_i,a_j)$ are not convex 
in $Y$ for ll $i\neq j$.
\end{ex}

\begin{rem}  \label{rem:ex-convex} \upshape
(1)\,In Example \ref{ex:2}, if we take $a_i$ in a way similar to Example \ref{ex:3}, then 
$S(a_i, a_j)$  for $i=1,2$ and $j=3,4$ are convex in $X$ while 
$S(a_1, a_2)$ and $S(a_3, a_4)$ are not convex. 
Considering the other vertex of $\Sigma_p(X)$, it is possible 
to fill a neighborhood of the singular set $B(p,r)\cap \ca S$ via those convex ruled surfaces. 
This is not the case of Example \ref{ex:3}. 
\par\n
(2)\, In Example \ref{ex:3}, it is impossible to fill the ball
$B(p,r)$ for any $r>0$ via a properly embedded convex disks.
More strongly, there is no such a  convex disk 
properly embedded in $B(p,r)$. If there is such a 
convex disk $D$, from the convexity of $D$, 
we can take some $a_i\neq a_j$ in $\pa D$.  
The convexity of $D$ also implies that $S(a_i,a_j)\subset D$,
and hence $S(a_i,a_j)$ must be convex.
However this is impossible as indicated in Example \ref{ex:3}.
\end{rem}
\pmed

For $x\in S$ with $x=\lambda_s(t)$, from Lemma \ref{lem:base},
we have
\begin{align} \label{eq:lambda-gradient}
  |\angle(\pm\dot\lambda_s(t), (\nabla d_p)(x))-\pi/2|<
 \tau_p(|p,x|,\delta).
\end{align}

For $x\in S$, let $\Sigma_x(S)$ denote the set of all directions $\xi\in\Sigma_x(X)$
such that $\xi=\lim_{i\to\infty} \up_x^{x_i}$ for some sequence $x_i\in S$ with $|x, x_i|_X\to 0$, as in Section \ref{sec:basic}.
We call  $\Sigma_x(S)$ the {\it extrinsic space of directions of $S$ at $x$}. 

\begin{center}
\begin{tikzpicture}
[scale = 1]
\draw [-] (-6,0)--(0,0);
\draw [-, very thick] (-6,0)--(2,2);
\draw [-, very thick] (-6,0)--(2,-2);
\draw [-, very thick] (0,1.5)--(0,-1.5);
\draw [dotted, thick] (0,0) circle [radius=1];
\draw [->, dotted, thick] (0,0)--(0.9,0.2);
\draw [->, dotted, thick] (0,0)--(0.9,-0.2);
\draw [->, dotted, thick] (0,0)--(-1,0);
\draw (-6,-0.5) node[circle] {$p$};
\draw (-0.2,0.2) node[circle] {$x$};
\draw (0,2) node[circle] {$\lambda_s(1)$};
\draw (0,-2) node[circle] {$\lambda_s(0)$};
\draw (-1.3,-0.3) node[circle] {$\uparrow_x^p$};
\draw (1.8,0) node[circle] {$(\nabla d_p)(x)$};
\draw (1.2,1) node[circle] {$\Sigma_x(X)$};
\end{tikzpicture}
\end{center}

In this paper, we use the following terminology.
We call a direction $\xi\in \Sigma_x(X)$
\begin{align*} 
 \begin{cases}
 {\it horizontal} \,\,\,&{\rm if}\,\, \angle(\xi, \pm (\nabla d_p)(x))\le 3\pi/10, \hspace{2cm}\\    
  {\it vertical} \,\,\,&{\rm if}\,\, \angle(\xi, \pm (\nabla d_p)(x))\ge  \pi/5,\\
  {\it medial}  \,\,\,&{\rm if \,\,it \,\,is \,\, horizontal\,\, and\,\, vertical}.\end{cases}
\end{align*}
We also call a direction $\xi\in\Sigma_x(X)$ 
\begin{align*} 
 \begin{cases}
  {\it negative} \,\,\,&{\rm if}\,\,  \angle(\xi,  - (\nabla d_p)(x))<\pi/2  \hspace{2cm}\\   
{\it positive} \,\,\,&{\rm if}\,\,   \angle(\xi,  (\nabla d_p)(x)) <\pi/2.
\end{cases}
\end{align*}
We say that an open subset $\Omega\subset\Sigma_x(X)$ is  in the {\it positive side} 
(resp. {\it negative side}) of  $\Sigma_x(X)$ if every element of $\Omega$ is positive (resp. negative).

Assume that a Lipschitz curve  $c:[a,b] \to B(p,r)\setminus \{ p\}$  has the right and left directions $\dot c_+(t)$ and $\dot c_-(t)$ respectively at every $t\in [a,b]$. 
We say that such a curve  $c$ is {\it vertical} (resp. {\it horizontal} or {\it medial}\,) if both  $\dot c_+(t)$ and 
$\dot  c_-(t)$ 
are vertical  (resp. horizontal or medial) for every $t\in [a,b]$. 
\pmed

Recall that for every  $x\in S$, 
\begin{align*}
 & s(x) = \{ \, s\in [0,\ell]\,|\, x\in \lambda_s \}, \qquad\\
 &  s_{\rm max} (x) = \max s(x), \quad
   s_{\rm min} (x) = \min s(x).
 \end{align*}
For every  $x\in {\rm int}\,S$, we set
 \[  
 +\dot {\boldmath \lambda}(x) := \{\, \up_x^{\lambda_s(1)}\,|\, s   \in s(x)\,\}, \quad
  -\dot {\boldmath \lambda}(x) := \{\, \up_x^{\lambda_s(0)}\,|\, s\in s(x)\,\}.
\]
We show that $s(x)$ is a closed interval later in Lemma \ref{lem:point-crossing}.

\begin{center}
\begin{tikzpicture}
[scale = 1]
\draw [->, dotted, thick] (0,0)--(-1,0);
\draw [-, very thick] (0,2)--(0,-2);
\draw [dotted, thick] (0,0) circle [radius=1];
\draw [-, very thick] (-3,2)--(3,2);
\draw [-, very thick] (-3,-2)--(3,-2);
\draw [very thick] (-1.15,2) arc(60:-60:2.3);
\draw [very thick] (-0.55,2) arc(30:-30:4);
\draw [very thick] (0.55,2) arc(150:210:4);
\draw [very thick] (1.15,2) arc(120:240:2.3);
\draw (0.3,0) node[circle] {$x$};
\draw (-1.5,0) node[circle] {$\uparrow_x^p$};
\draw (0,2.5) node[circle] {$\alpha_2(s(x))$};
\draw (0,-2.5) node[circle] {$\alpha_1(s(x))$};
\draw (-1.8,2.3) node[circle] {{\footnotesize $\alpha_2(s_{\min}(x))$}};
\draw (1.8,2.3) node[circle] {{\footnotesize $\alpha_2(s_{\max}(x))$}};
\draw (-1.8,-2.3) node[circle] {{\footnotesize $\alpha_1(s_{\min}(x))$}};
\draw (1.8,-2.3) node[circle] {{\footnotesize $\alpha_1(s_{\max}(x))$}};
\filldraw [fill=gray, opacity=.2] 
(-1.15,-2) arc(-60:60:2.3) -- (-1.15,2) -- (1.15,2) arc(120:240:2.3) (1.15,-2) -- cycle;
\end{tikzpicture}
\end{center}

\begin{lem} \label{lem:s(x)-diam}
For every  $x\in {\rm int}\, S$, we have
\benum
 \item  $\diam(s(x))/|p,x| < \tau_p(|p,x|);$
 \item$\diam(\pm  \dot {\boldmath \lambda}(x)) <   
\tau_p(|p,x|)).$
\eenum
\end{lem}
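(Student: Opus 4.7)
The plan is to prove (1) by a blow-up argument at $p$, and then to deduce (2) from a $\CAT(\kappa)$ angle-sum estimate applied to the geodesic triangle $\alpha_2(s_1)\,x\,\alpha_2(s_2)$ for $s_1<s_2\in s(x)$.

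For (1) I argue by contradiction. Suppose there is a sequence $x_n\in\mathrm{int}\,S$ with $r_n:=|p,x_n|\to 0$ while $\diam(s(x_n))/r_n\ge c>0$, and pick $s_n<s_n'\in s(x_n)$ with $s_n'-s_n\ge cr_n/2$. The bound $|\lambda_{s_n}|=O(\delta s_n)$ together with the triangle inequality forces the ratios $s_n/r_n,s_n'/r_n$ to be bounded, so after extraction $s_n/r_n\to\mu$, $s_n'/r_n\to\mu'>\mu$, and $x_n/r_n\to x_\infty$ under the pointed Gromov--Hausdorff convergence $(r_n^{-1}X,p)\to(K_p,o_p)$. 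The tangent cone $K_p$ is $\CAT(0)$, and since $d_{\Sigma_p}(\nu_1,\nu_2)=2\delta<\pi$, the sub-cone of $K_p$ over the geodesic arc from $\nu_1$ to $\nu_2$ in $\Sigma_p$ is isometric to a flat Euclidean sector of apex angle $2\delta$. Inside this sector the unique minimizing geodesics $\tilde\lambda_\mu$ from $\mu\nu_1$ to $\mu\nu_2$ and $\tilde\lambda_{\mu'}$ from $\mu'\nu_1$ to $\mu'\nu_2$ are straight chords at distinct heights, hence disjoint. The GH-convergence of minimizing geodesics between converging endpoint sequences (with unique limit) gives $\lambda_{s_n}/r_n\to\tilde\lambda_\mu$ and $\lambda_{s_n'}/r_n\to\tilde\lambda_{\mu'}$, so $x_\infty\in\tilde\lambda_\mu\cap\tilde\lambda_{\mu'}=\emptyset$, a contradiction.

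For (2) fix $s_1<s_2\in s(x)$ and consider the triangle $T=\alpha_2(s_1)\,x\,\alpha_2(s_2)$. Since $x=\lambda_{s_i}(t_i)$ with $t_i<1$, one has $\up_{\alpha_2(s_i)}^x=\up_{\alpha_2(s_i)}^{\alpha_1(s_i)}=-\dot\lambda_{s_i}(1)$, which by \eqref{eq:lambda-gradient} makes angle $\pi/2\pm\tau_p(r)$ with $\nabla d_p(\alpha_2(s_i))\ni\dot\alpha_2(s_i)$. Since $\up_{\alpha_2(s_1)}^{\alpha_2(s_2)}=\dot\alpha_2(s_1)$, this directly yields $\angle x\,\alpha_2(s_1)\,\alpha_2(s_2)\ge\pi/2-\tau_p(r)$. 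On the other side, $\up_{\alpha_2(s_2)}^{\alpha_2(s_1)}=-\dot\alpha_2(s_2)=-(\nabla d_p)(\alpha_2(s_2))$, and applying the triangle inequality in the $\CAT(1)$ link $\Sigma_{\alpha_2(s_2)}$ to $\up_{\alpha_2(s_2)}^x,\dot\alpha_2(s_2),-\dot\alpha_2(s_2)$ together with $\angle(\dot\alpha_2(s_2),-\dot\alpha_2(s_2))=\pi$ (as $\alpha_2$ extends as a geodesic past $\alpha_2(s_2)$) gives $\angle x\,\alpha_2(s_2)\,\alpha_2(s_1)\ge\pi-(\pi/2+\tau_p(r))=\pi/2-\tau_p(r)$. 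The sides of $T$ have length $O(\delta r)$ by the triangle inequality, so the $\CAT(\kappa)$ angle-sum bound $\sum\angle(T)\le\pi+O(|\kappa|r^2)$ forces $\angle\alpha_2(s_1)\,x\,\alpha_2(s_2)<\tau_p(r)$, which bounds $\diam(+\dot {\boldmath \lambda}(x))$. The bound on $\diam(-\dot {\boldmath \lambda}(x))$ follows by the symmetric argument with $\alpha_1$ in place of $\alpha_2$.

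The main obstacle is the blow-up step in (1): I need to verify that the rulings $\lambda_{s_n}/r_n$, viewed as minimizing geodesics with converging endpoint sequences in the $\CAT(r_n^2\kappa)$-spaces $(r_n^{-1}X,p)$ GH-converging to the $\CAT(0)$-cone $(K_p,o_p)$, actually converge to the unique minimizing chord in the flat sector. Once this convergence and the identification of the sub-cone $K([\nu_1,\nu_2])$ with a Euclidean sector are in hand, the disjointness of transverse chords at distinct heights $\mu\ne\mu'$ closes the argument.
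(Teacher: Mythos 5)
Your proof is correct, and it takes a genuinely different route from the paper for part (1), while part (2) is close to the paper in spirit but organized differently. The paper proves both (1) and (2) by one contradiction argument: for a bad sequence $x_i$ it looks at the triangle $x_i\,\alpha_1(s_i)\,\alpha_1(s_i')$, observes that the angles at the two feet on $\alpha_1$ converge to $\pi/2-\delta$ and $\pi/2+\delta$ respectively (summing to $\pi$), and uses that the comparison-angle sum tends to $\pi$ to force $\tilde\angle\alpha_1(s_i)\,x_i\,\alpha_1(s_i')\to 0$; this gives (2) directly and then (1) via a law-of-sines type bound $|s_i-s_i'|\le |p,x_i|\,t_i\,\tilde\angle$. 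You instead prove (1) by a clean blow-up picture — the rulings converge to parallel chords at distinct heights in the flat sub-sector $K([\nu_1,\nu_2])\subset K_p$, which are disjoint — and prove (2) independently by an angle-sum estimate in the triangle $\alpha_2(s_1)\,x\,\alpha_2(s_2)$ using \eqref{eq:lambda-gradient} (which the paper records immediately before Lemma \ref{lem:s(x)-diam} but does not actually invoke in its proof). Both routes are sound; what yours buys is a more geometrically transparent argument for (1) and a direct (rather than by-contradiction) proof of (2), at the cost of having to justify the convergence of the rescaled rulings to the unique limiting chord — a standard Arzel\`a--Ascoli plus uniqueness argument in the CAT-setting, which you correctly flag and which is exactly the ingredient the paper also relies on implicitly when it asserts $\lim\angle x_i\,\lambda_{s_i}(0)\,p=\pi/2-\delta$. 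The paper's proof, by contrast, couples (1) and (2) so that one contradiction closes both; yours decouples them, making each step independently checkable.
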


\begin{proof}
Suppose that the conclusion does not hold. Then we have a sequence
$x_i\in\inte\, S$ and a positive constant $c$ such that one of the following holds:
\benum
 \item[$(i)$] $\diam s(x_i)/|p,x_i| \ge c;$
 \item[$(ii)$] $\diam (\pm\dot\lambda(x_i))\ge c$.
\eenum
Let $x_i=\lambda_{s_i}(t_i)$. Note that $0<t_i<\ell$.

\pmed\n

We may assume $t_i=\min\{t_i, 1-t_i\}$ since the other case is similar.
For any other $s_i'\in s(x_i)$, from $|x_i,p|\to 0$, 
we have 
\[
\lim_{i\to\infty} \,\angle x_i \lambda_{s_i}(0) p=\pi/2 -\delta, \quad
\lim_{i\to\infty} \,\angle x_i \lambda_{s_i'}(0) p=\pi/2 -\delta.
\]
We may assume that $s_i' < s_i$ without loss of generality.
Note also that 
\begin{align*}
   & \lim_{i\to\infty} \angle x_i\lambda_{s_i'}(0)\lambda_{s_i}(0)  = \pi/2+\delta, \\
  & \lim_{i\to\infty} ( \tilde\angle\lambda_{s_i}(0) x_i \lambda_{s_i'}(0)
    + \tilde\angle x_i \lambda_{s_i}(0)  \lambda_{s_i'}(0) 
+ \tilde\angle\lambda_{s_i}(0) \lambda_{s_i'}(0) x_i )  = \pi.
\end{align*}
It follows that
\begin{align*}
  \lim_{i\to\infty} \angle\lambda_{s_i}(0) x_i \lambda_{s_i'}(0)
 &\le  \lim_{i\to\infty} \tilde\angle\lambda_{s_i}(0) x_i \lambda_{s_i'}(0) \\
 & = \lim_{i\to\infty} (\pi -  \tilde\angle x_i \lambda_{s_i}(0)  \lambda_{s_i'}(0) 
      -  \tilde\angle\lambda_{s_i}(0) \lambda_{s_i'}(0) x_i ) \\
 & = 0.
\end{align*}
Thus we conclude that $\diam(-\dot\lambda(x_i))\to 0$. 
Therefore the assumption $(ii)$ does not hold. 
Note that 
\[
   |s_i-s_i'|\le |p,x_i|t_i \tilde\angle\lambda_{s_i}(0) x_i \lambda_{s_i'}(0),
\]
Therefore, from $\lim_{i\to\infty} \tilde\angle\lambda_{s_i}(0) x_i \lambda_{s_i'}(0)=0$,
we see that the assumption $(i)$ does not hold either.
\end{proof}

\pmed
Now, for any $x\in S$ and $s\in (0,\ell)$ with $x\notin \lambda_s$,
let $y \in\lambda_s$ be such that $|x,y|=|x,\lambda_s|$.
By Lemma \ref{lem:base}, we have either
\psmall
\[
\text{
$\angle(\uparrow_y^x, -\nabla d_p)<\tau_p(\delta,r)$ \, or \,
 $\angle(\uparrow_y^x, \nabla d_p)<\tau_p(\delta,r)$}.
\]
\psmall

\begin{rem} \label{rem:tau_p} \upshape
From the proof of Lemma \ref{lem:s(x)-diam} by contradiction,  one might think the function  $\tau_p(\,\cdot\,)$ depends also 
on $S$. However the number of possible $S$ at $p$
is finite. Therefore it is possible to find such a function   depending only on $p$. From now on, we use this convention.
\end{rem}

\begin{lem} \label{lem:foot}
Under the above situation, 
if $\angle(\uparrow_y^x, -\nabla d_p)<\tau_p(\delta,r)$
$($resp. $\angle(\uparrow_y^x, \nabla d_p)<\tau_p(\delta,r))$,
then we have
$$\angle(\uparrow_x^y, \nabla d_p)<\tau_p(\delta,r)\,\,
(\text{resp}.\,\, \angle(\uparrow_x^y, -\nabla d_p)<\tau_p(\delta,r)).
$$
\end{lem}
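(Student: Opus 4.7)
The strategy is to pass through the comparison triangle $\tilde\triangle pxy$ in $M_\kappa^2$, showing that two of its angles are small, forcing the third to be close to $\pi$, and then to use geodesic completeness of $\Sigma_x(X)$ to realize $\uparrow_x^y$ as being close to an element of $(\nabla d_p)(x)$.

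First I would establish the bound $\angle xpy \le 2\delta + \tau_p(r)$. Since both $x$ and $y$ lie in the ruled surface $S$, under the rescaling $(\tfrac{1}{r}X,p) \to (K_p,o_p)$ the rescaled surface $S/r$ converges to the Euclidean cone over the segment $[\nu_1,\nu_2]$ of length $2\delta$ in $\Sigma_p(X)$; hence $\uparrow_p^x$ and $\uparrow_p^y$ lie within $\tau_p(r)$ of this segment. Lemma \ref{lem:comparison} then upgrades this to $\tilde\angle xpy \le 2\delta + \tau_p(r)$. Combining the hypothesis $\angle xyp = \angle(\uparrow_y^x,\uparrow_y^p) < \tau_p(\delta,r)$ with Lemma \ref{lem:comparison} again yields $\tilde\angle xyp < \tau_p(\delta,r)$.

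Next, the comparison triangle in $M_\kappa^2$ has sides of length $O(r)$, so its angle sum lies within $\tau_p(r)$ of $\pi$. Subtracting the two small angles gives $\tilde\angle pxy \ge \pi - 2\delta - \tau_p(\delta,r)$, and one final use of Lemma \ref{lem:comparison} produces $\angle pxy = \angle(\uparrow_x^p,\uparrow_x^y) \ge \pi - \tau_p(\delta,r)$.

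Finally, set $\varepsilon := \pi - \angle pxy \le \tau_p(\delta,r)$ and recall that $\Sigma_x(X)$ is a geodesically complete $\mathrm{CAT}(1)$-space. The unique geodesic in $\Sigma_x(X)$ from $\uparrow_x^p$ to $\uparrow_x^y$ has length $\pi - \varepsilon < \pi$; by geodesic completeness we extend it past $\uparrow_x^y$ by an additional length $\varepsilon$ to obtain a local geodesic of length exactly $\pi$. Since local geodesics of length $\le \pi$ in a $\mathrm{CAT}(1)$-space are globally minimizing, the endpoint $\xi$ satisfies $d_{\Sigma_x}(\uparrow_x^p,\xi) = \pi$, so $\xi \in (\nabla d_p)(x)$. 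By construction $\uparrow_x^y$ lies on the geodesic $[\uparrow_x^p,\xi]$ at distance $\varepsilon$ from $\xi$, whence $\angle(\uparrow_x^y,\xi) = \varepsilon \le \tau_p(\delta,r)$, giving the desired conclusion. The second case (with $+(\nabla d_p)(y)$ in place of $-(\nabla d_p)(y)$) is handled by the symmetric argument.

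The main obstacle is the first step: bounding $\angle xpy$ by $2\delta + \tau_p(r)$ relies on identifying $\Sigma_p(S)$ with the segment $[\nu_1,\nu_2]$ of length $2\delta$ in $\Sigma_p(X)$, which requires the convergence of the rescaled ruled surface to the cone over $[\nu_1,\nu_2]$ together with continuity of directions under this convergence. After this, the rest is a straightforward combination of comparison geometry via Lemma \ref{lem:comparison} and the CAT(1) extension property.
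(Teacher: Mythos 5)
Your proof is correct and follows essentially the same route as the paper: the hypothesis gives a small angle at $y$, the ruled-surface geometry gives a small angle at $p$, Lemma~\ref{lem:comparison} (angle almost-equal to comparison angle) combined with the near-vanishing angle defect of small triangles in $M_\kappa^2$ forces $\angle pxy > \pi - \tau_p(\delta,r)$, and then geodesic completeness of $\Sigma_x(X)$ places $\uparrow_x^y$ within $\tau_p(\delta,r)$ of $(\nabla d_p)(x)$. The only difference is cosmetic: you make explicit two steps the paper leaves implicit (the rescaling/GH argument that $\uparrow_p^x,\uparrow_p^y$ lie near $[\nu_1,\nu_2]$, and the CAT(1) geodesic-extension step turning $\angle pxy \approx \pi$ into proximity to $(\nabla d_p)(x)$), which is harmless.
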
 
\begin{proof} 
We assume 
$\angle(\uparrow_y^x, -\nabla d_p)<\tau_p(\delta,r)$.
The proof of the other case is similar.
By Lemma \ref{lem:base}, we have 
$\angle pyx <\tau_p(\delta,r)$. Since
$\angle xpy <2\delta$, it follows from Lemma
\ref{lem:comparison} that
$\angle pxy > \pi - \tau_p(\delta,r)$,
which implies 
$\angle(\uparrow_x^y, \nabla d_p)<\tau_p(\delta,r)$.
\end{proof}

\begin{lem} \label{lem:basic}
 For $x\in  {\rm int}\,S$, fix $s_0$ and $t_0$ with $x=\lambda_{s_0}(t_0)$.  
Then for every $u\in\Sigma_x(S)$ with 
\beq
                       \angle(u, \pm\dot\lambda_{s_0}(t_0))\ge\pi/3,  \label{eq:horiz}
\eeq
there exists a shortest path 
$\xi_{\infty}:[-1, 1]\to \Sigma_x(X)$ 
satisfying
\beq  
\left\{
 \begin{aligned} \label{eq:infinite-app}
     &u\in\xi_{\infty}([-1,1]), \\ 
   & \angle_X(\xi_{\infty}(\pm 1), \pm\dot\lambda_{s_0}(t_0))<\tau_p(r), \hspace{3cm}\\ 
  &  \xi_{\infty}([-1, 1])\subset \Sigma_x(S).
\end{aligned}
\right.
\eeq
\end{lem}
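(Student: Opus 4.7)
The strategy is to blow up at $x$ along a sequence realizing $u$, extract a complete geodesic line $\ell_\infty$ in the tangent cone $K_x(X)$ as a limit of the rulings, and read off $\xi_\infty$ from the cone-geometric chord-direction path determined by $\ell_\infty$.

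Fix $x_n\in S$ with $x_n\to x$ and $\uparrow_x^{x_n}\to u$, and write $x_n=\lambda_{s_n}(t_n)$. After passing to a subsequence, Lemma~\ref{lem:s(x)-diam}(1) gives $s_n\to s_\infty\in s(x)$ and $t_n\to t_\infty$ with $\lambda_{s_\infty}(t_\infty)=x$, while Lemma~\ref{lem:s(x)-diam}(2) forces $\angle(\pm\dot\lambda_{s_\infty}(t_\infty),\pm\dot\lambda_{s_0}(t_0))<\tau_p(r)$; it thus suffices to produce the required arc with endpoints $\pm\dot\lambda_{s_\infty}(t_\infty)$. Setting $\epsilon_n:=|x,x_n|$, the rescaled pointed spaces $(\tfrac{1}{\epsilon_n}X,x)$ converge to the CAT$(0)$ Euclidean cone $(K_x(X),o_x)$ over the CAT$(1)$-space $\Sigma_x(X)$, and $x_n$ converges to the point $q_u$ at distance $1$ from $o_x$ in direction $u$. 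Each rescaled $\lambda_{s_n}$ is a geodesic through $x_n$ of rescaled length $\to\infty$, so by stability of geodesics a subsequential limit $\ell_\infty\subset K_x(X)$ is a complete geodesic line through $q_u$. Moreover $o_x\notin\ell_\infty$: otherwise $u$ would be a radial direction of $\ell_\infty$ and hence $\tau_p(r)$-close to $\pm\dot\lambda_{s_0}(t_0)$, contradicting $\angle(u,\pm\dot\lambda_{s_0}(t_0))\ge\pi/3$.

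The Euclidean cone distance formula applied along $\ell_\infty$ shows that the chord map $t\mapsto\uparrow_{o_x}^{\ell_\infty(t)}$ is a continuous monotone curve in $\Sigma_x(X)$ whose asymptotic values $\xi^\pm$ at $\pm\infty$ satisfy $d_{\Sigma_x}(\xi^-,\xi^+)=\pi$; after linear reparametrization on $[-1,1]$ this gives $\xi_\infty$, a length-$\pi$ geodesic through $u$, and being a length-realizing curve between antipodal points of the CAT$(1)$ graph $\Sigma_x(X)$ it is automatically shortest. To identify $\xi^\pm$ with $\pm\dot\lambda_{s_0}(t_0)$ up to $\tau_p(r)$, take $y_n^{(i)}:=\alpha_i(s_n)=\lambda_{s_n}(i-1)$ for $i=0,1$: in $\tfrac{1}{\epsilon_n}X$ they escape to $\pm\infty$ along $\ell_\infty$, hence $\uparrow_x^{y_n^{(i)}}$ converges to the corresponding $\xi^\pm$; in the unrescaled picture continuity of direction gives $\uparrow_x^{\alpha_i(s_n)}\to\uparrow_x^{\alpha_i(s_\infty)}=(-1)^i\dot\lambda_{s_\infty}(t_\infty)$, which by the previous paragraph is $\tau_p(r)$-close to $(-1)^i\dot\lambda_{s_0}(t_0)$.

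For the inclusion $\xi_\infty([-1,1])\subset\Sigma_x(S)$, each $\xi_\infty(\tau)=\uparrow_{o_x}^{q_\tau}$ with $q_\tau\in\ell_\infty$ must be realized from inside $S$. For $\tau\in(-1,1)$, $q_\tau$ is at finite cone distance $\rho$ from $o_x$; choose $\sigma_n\to t_\infty$ so that $|x,\lambda_{s_n}(\sigma_n)|/\epsilon_n\to\rho$, and set $y_n:=\lambda_{s_n}(\sigma_n)\in S$. Then $y_n\to x$ and $\uparrow_x^{y_n}\to\xi_\infty(\tau)$. For the endpoints $\tau=\pm 1$, choose $\sigma_n\to t_\infty$ slowly enough that $|x,y_n|\to 0$ while $|x,y_n|/\epsilon_n\to\infty$, so $y_n\to x$ remains in $S$ and its chord direction tends to $\xi^\pm$. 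The main obstacle is precisely this double-scale approximation at the endpoints: each $\xi_\infty(\tau)$ has to be realized by points of $S$ that themselves converge to $x$, which requires a careful balance between the ruling parameter $\sigma_n\to t_\infty$ and the zoom-in scale $\epsilon_n\to 0$.
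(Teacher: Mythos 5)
Your blow-up strategy --- rescale at $x$ along the sequence realizing $u$, extract a complete geodesic line $\ell_\infty\subset K_x(X)$ as the limit of the rescaled rulings $\hat\lambda_{s_n}$, and read off $\xi_\infty$ from the chord-direction path $t\mapsto\uparrow_{o_x}^{\ell_\infty(t)}$ --- is exactly the approach the paper takes. Your treatment of the inclusion $\xi_\infty([-1,1])\subset\Sigma_x(S)$, including the double-scale diagonalization at the endpoints (choosing $\sigma_n\to t_\infty$ so that $|x,y_n|\to 0$ while $|x,y_n|/\epsilon_n\to\infty$), is in fact more explicit than what the paper writes, and is a genuine contribution.

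The gap is in the step ``$o_x\notin\ell_\infty$.'' You write that otherwise ``$u$ would be a radial direction of $\ell_\infty$ and hence $\tau_p(r)$-close to $\pm\dot\lambda_{s_0}(t_0)$,'' but the implication ``radial $\Rightarrow$ close to $\pm\dot\lambda_{s_0}(t_0)$'' is precisely the nontrivial quantitative content here, and you leave it as an assertion. It is not a formal consequence of $\ell_\infty$ being a limit of the rulings: the radial direction of $\ell_\infty$ at $o_x$ is a chord direction in $\Sigma_x(X)$, while $\pm\dot\lambda_{s_n}$ are tangent directions at moving base points, and relating the two across the rescaling requires a uniform lower bound on $\mathrm{dist}(o_x,\ell_\infty)$, which is what you are trying to prove. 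The paper handles this by proving Sublemma~\ref{slem:trivial}: for a horizontal direction $\xi\in\Sigma_x(S)$ and $y_i\in S$ with $y_i\to x$, $\uparrow_x^{y_i}\to\xi$, and $\lambda_{s_i}\ni y_i$, one has $|x,\lambda_{s_i}|\ge c\,|x,y_i|$ with a uniform $c>0$; the proof uses Lemma~\ref{lem:foot}, the Jack Lemma~\ref{lem:jack}, and the angle-comparison Lemma~\ref{lem:comparison}. Applying this to $\xi=u$ (which \eqref{eq:horiz} together with \eqref{eq:lambda-gradient} makes horizontal) gives $|x,\lambda_{s_n}|\ge c\,\epsilon_n$, hence $\mathrm{dist}(o_x,\ell_\infty)\ge c>0$ in the rescaled limit. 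Your later identification of $\xi^\pm$ with $\pm\dot\lambda_{s_\infty}(t_\infty)$ via $\alpha_i(s_n)$ has a similar issue: those points escape in the rescaled picture at rate $\asymp 1/\epsilon_n$, which may exceed the rate at which the convergence $\hat\lambda_{s_n}\to\ell_\infty$ is uniform, so the ``hence $\uparrow_x^{y_n^{(i)}}$ converges to the corresponding $\xi^\pm$'' needs an argument (the paper instead reads off the angle estimate from the near-circle structure of $\Sigma_x(S)$ and \eqref{eq:lambda-gradient}). You should prove a version of Sublemma~\ref{slem:trivial} --- or cite it --- before concluding $o_x\notin\ell_\infty$.
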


\begin{center}
\begin{tikzpicture}
[scale = 1]
\draw [->, dotted, thick] (0,0)--(-1,0);
\draw [-, ultra thick] (0,2)--(0,-2);
\draw [dotted, thick] (0,0) circle [radius=1];
\draw [->, dotted, thick] (0,0)--(0.9,0.3);
\draw [very thick] (1,0) arc(0:75:1);
\draw [very thick] (1,0) arc(0:-75:1);
\draw (-0.2,-0.2) node[circle] {$x$};
\draw (0,2.3) node[circle] {$\lambda_{s_0}(1)$};
\draw (0,-2.3) node[circle] {$\lambda_{s_0}(0)$};
\draw (-1.5,0) node[circle] {$\uparrow_x^p$};
\draw (2,0.4) node[circle] {$u \in \Sigma_x(S)$};
\draw (1.4,-0.4) node[circle] {$\xi_{\infty}$};
\draw (0.7,1.3) node[circle] {$\xi_{\infty}(1)$};
\draw (0.8,-1.3) node[circle] {$\xi_{\infty}(-1)$};
\draw (-1.3,1) node[circle] {$\Sigma_x(X)$};
\end{tikzpicture}
\end{center}
We need a sublemma.

\begin{slem} \label{slem:trivial}
There is a uniform positive constant $c$ satisfying the following$:$
 For any $x\in S$ and any horizontal direction $\xi\in\Sigma_x(S)$,
let $y_i\in S$ be a sequence such that $y_i\to x$ and $\uparrow_x^{y_i}\to \xi$.
There is an $i_0$ such that if  $\lambda_{s_i}$ meets $y_i$ for some $i\ge i_0$, then we have 
\[
        |x, \lambda_{s_i}|\ge c|x, y_i|.
\]
\end{slem}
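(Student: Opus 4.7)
The plan is to compare $|x, y_i|$ with $|x, z_i|$, where $z_i \in \lambda_{s_i}$ is a foot of perpendicular from $x$ onto $\lambda_{s_i}$, so that $|x,z_i|=|x,\lambda_{s_i}|$. Showing that the angle at $x$ in the comparison triangle $\tilde\triangle xz_iy_i$ is bounded above, strictly away from $\pi/2$, will then yield the desired uniform lower bound for $|x,z_i|/|x,y_i|$ via the law of cosines in $M^2_\kappa$.

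First, I would analyze the angles of $\triangle xz_iy_i$. By first variation of the distance function from $x$ along the convex geodesic $\lambda_{s_i}$ in the $\CAT(\kappa)$-domain $B(p,r)$, we get $\angle xz_iy_i\ge\pi/2$. Combined with Lemma \ref{lem:base}(2), which says $\dot\lambda_{s_i}$ is almost perpendicular to $\nabla d_p$ at every point, this forces $\uparrow_{z_i}^x$ to be $\tau_p(\delta,r)$-close to one of $\pm\nabla d_p(z_i)$; Lemma \ref{lem:foot} then yields that $\uparrow_x^{z_i}$ is $\tau_p(\delta,r)$-close to the opposite sign of $\nabla d_p(x)$, hence horizontal. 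Independently, by Lemma \ref{lem:comparison} and the fact that the angle sum of the comparison triangle in $M^2_\kappa$ is close to $\pi$ for small triangles, we obtain $\tilde\angle z_ixy_i\le\pi/2+\tau_p(r)$.

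Second, I would use the horizontality of $\xi$ to exclude one of two configurations. For sufficiently large $i$, $\uparrow_x^{y_i}$ is arbitrarily close to $\xi$, hence within $3\pi/10+o(1)$ of one of $\pm\nabla d_p(x)$. If $\xi$ and $\uparrow_x^{z_i}$ were close to opposite signs of $\nabla d_p(x)$, then $\angle z_ixy_i\ge 7\pi/10-\tau_p(r)$, contradicting the upper bound just established. Hence they lie near the same sign of $\nabla d_p(x)$, and consequently $\angle z_ixy_i\le 3\pi/10+\tau_p(r)$.

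Finally, applying the law of cosines in $M^2_\kappa$ to the small triangle $\tilde\triangle xz_iy_i$, whose angle at $z_i$ satisfies $\pi/2\le \tilde\angle xz_iy_i\le \pi/2+\tau_p(r)$, yields
\[
|x,z_i|\ge \bigl(\cos(3\pi/10)-\tau_p(r)\bigr)\,|x,y_i|.
\]
Taking $r_p$ small enough that the error is dominated by $\tfrac12\cos(3\pi/10)$ gives a uniform constant $c:=\tfrac12\cos(3\pi/10)$ (using Remark \ref{rem:tau_p} to ensure $\tau_p$ depends only on $p$). The main subtlety is the quantitative elimination of the ``opposite-sign'' configuration: the strict inequality $\pi/2+\tau_p(r)<7\pi/10$ must hold, which forces us to shrink $r$ at the outset and is the only place where the $3\pi/10$ threshold in the definition of ``horizontal'' is genuinely used.
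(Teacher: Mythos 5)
Your proposal follows the same opening moves as the paper's proof---drop the foot of perpendicular $z_i$ from $x$ onto $\lambda_{s_i}$, use first variation together with Lemmas \ref{lem:base} and \ref{lem:foot} to place $\uparrow_{z_i}^x$ and $\uparrow_x^{z_i}$ near $\pm\nabla d_p$, and use horizontality of $\xi$ to eliminate the opposite-sign configuration---but the concluding law-of-cosines step has a genuine gap. To extract $|x,z_i|\ge c\,|x,y_i|$ from the comparison triangle $\tilde\triangle xz_iy_i$ with angle near $\pi/2$ at $\tilde z_i$, you need the \emph{comparison} angle $\tilde\angle z_ixy_i$ at $\tilde x$ bounded strictly away from $\pi/2$, whereas your argument controls only the \emph{actual} angle $\angle z_ixy_i\le 3\pi/10+\tau_p(r)$. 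In a $\CAT(\kappa)$-domain the inequality runs the wrong way: $\angle\le\tilde\angle$, so the actual-angle bound gives nothing from above. The only a priori comparison-angle bound available (from $\tilde\angle xz_iy_i\ge\pi/2$ and the angle-sum estimate) is $\tilde\angle z_ixy_i\le\pi/2+\tau_p(r)$, whose cosine is not bounded below by a positive constant. Likewise, your two-sided bound $\pi/2\le\tilde\angle xz_iy_i\le\pi/2+\tau_p(r)$ is unjustified: first variation yields only the actual-angle inequality $\angle xz_iy_i\ge\pi/2$, and Lemma \ref{lem:comparison} cannot be applied to $\triangle xz_iy_i$ since it concerns only triangles with $p$ as a vertex.

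The paper's proof is careful about exactly this distinction. It converts actual to comparison angles via the Jack Lemma (Lemma \ref{lem:jack}) applied to $\triangle pz_iy_i$, combined with Lemma \ref{lem:comparison} for $\triangle pz_ix$---both anchored at $p$, which is where those lemmas have force---to reach $|\tilde\angle xz_iy_i-\pi/2|<\tau_p(\delta,r)$, and then passes to the rescaling limit $(\tfrac{1}{|x,z_i|}X,x)\to(K_x(X),o_x)$, where the cone law of cosines at the apex $o_x$ is exact; the estimates $\angle y_\infty o_xz_\infty<\pi/5$ and $|\angle o_xz_\infty y_\infty-\pi/2|<\tau_p(\delta,r)$ in $K_x(X)$ then deliver the uniform ratio directly. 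Your plan could be repaired by additionally bounding the actual angle at $y_i$ from below (say $\angle xy_iz_i\ge\pi/5-\tau_p(r)$, since $\uparrow_{y_i}^x$ is nearly horizontal while $\uparrow_{y_i}^{z_i}=\pm\dot\lambda_{s_i}$ is nearly perpendicular to $\nabla d_p$ by Lemma \ref{lem:base}(2)); combined with the angle-sum estimate and $\tilde\angle xz_iy_i\ge\pi/2$, this would force $\tilde\angle z_ixy_i\le 3\pi/10+\tau_p(r)$ and make the law-of-cosines conclusion legitimate. As written, the passage from actual to comparison angles is missing, and that is the heart of the lemma.
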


\begin{proof}
Note that $\lambda_{s_i}$ does not pass through $x$
for any large enough $i$ since $\xi$ would be a 
direction tangent to $\lambda_{s_i}$ at $x$ otherwise,
which is a contradiction.
Take $z_i\in\lambda_{s_i}$ such that $|x, z_i|=|x, \lambda_{s_i}|$. 
By Lemmas \ref{lem:base} and \ref{lem:near-vert}, we have 
\[
      |\angle x z_i y_i - \pi/2| <\tau_p(\delta, r).
\]
It follows that 
$\angle (\uparrow_ {z_i}^x, (-\nabla d_p)) <\tau_p(\delta, r)$
or $\angle (\uparrow_ {z_i}^x, \nabla d_p) <\tau_p(\delta, r)$.
We assume the former since the latter case is similar. 
It follows from Lemma \ref{lem:foot} that 
$\angle (\uparrow_ x^{z_i},  \nabla d_p) <\tau_p(\delta, r)$.
Lemma \ref{lem:jack} implies that 
\beq \label{eq:pzy}
  |\tilde\angle pz_i y_i -\pi/2|<\tau_p(\delta, r).
\eeq
By Lemma \ref{lem:comparison}, we have
\beq \label{eq:pzx}
  \tilde\angle pz_i x <\tau_p(\delta, r).
\eeq
Let $x_i$ be the point on the geodesic $z_i p$ satisfying $|z_i,x_i|=|z_i,x|$.
By \eqref{eq:pzy}, we have $|\tilde\angle x_iz_i y_i -\pi/2|<\tau_p(\delta, r).$
It follows from \eqref{eq:pzx} that 
\beq \label{eq:xzy}
  |\tilde\angle xz_i y_i -\pi/2|<\tau_p(\delta, r) .
\eeq
Now let us consider the convergence 
\[
       \left(\frac{1}{|x, z_i|} X, x\right )   \rightarrow (K_x(X), o_x),
\]
as $i\to\infty$. Let $z_\infty\in \Sigma_x\subset K_x(X)$
be the limit of $z_i$ under this convergence.
Since $\angle(z_\infty, \nabla d_p)<\tau_p(\delta, r)$
and since $\uparrow_x^{y_i}\to v$,
the limit $y_\infty$ of $y_i$ under the above convergence
certainly exists, and we have
\beq \label{eq:yoz}
     \angle y_\infty o_x z_\infty <\pi/2-\pi/3 +\tau_p(\delta,r)
              < \pi/5.
\eeq
By \eqref{eq:xzy}, we also have 
\beq \label{eq:ozy}
  |\angle o_x z_\infty y_\infty - \pi/2| <\tau_p(\delta, r),
\eeq
\eqref{eq:yoz} and \eqref{eq:ozy} imply that 
$|o_x,z_\infty|\ge c|o_x, y_\infty|$ for some uniform 
constant $c>0$.
This yields the conclusion of the lemma via contradiction.
\end{proof}

\begin{proof}[Proof of Lemma \ref{lem:basic}]
Let $y_i\in S$ be such that 
$|x, y_i|_X\to 0$ and $\uparrow_x^{y_i}$ converges to $u$.
Take $s_i \in (0,\ell)$, $t_i\in (0,1)$ such that $ y_i=\lambda_{s_i}(t_i)$. 
Let 
 $\e_i:=|x, y_i|_X$, and consider the convergence
\[
       \left(\frac{1}{\e_i} X, x\right )   \rightarrow (K_x(X), o_x),
\]
as $i\to\infty$.
Note that the minimal geodesic $\hat\lambda_{s_i}(t):=\lambda_{s_i}(t_i+\e_i t)$,  $(-t_i/\e_i < t < (1-t_i)/\e_i)$, 
has a uniformly bounded speed for $\frac{1}{\e_i} X$ 
independent of $i$. Therefore passing to a subsequence, we may assume that 
 $\hat\lambda_{s_i}(t)$ converges to a minimal geodesic $\hat\lambda_{\infty}(t)$ in $K_x(X)$
defined on $(-\infty, \infty)$, where this convergence is uniform on every 
bounded interval. 
Note that $\hat\lambda_{\infty}(0)=u$.
From Sublemma \ref{slem:trivial} and \eqref{eq:horiz},  
the geodesic $\hat\lambda_{\infty}$ does not pass through 
$o_x$.
Consider the curve
\beq \label{eq:hlambda}
   \hat\xi_{\infty}(t):=\hat\lambda_{\infty}(t)/|\hat\lambda_{\infty}(t)|.
\eeq
Obviously, $\hat\xi_{\infty}$ is a shortest path
in $\Sigma_x(X)$ and $\hat\xi((-\infty,\infty))\subset\Sigma_x(S)$.
Let $\xi_\infty:[-1,1]\to \Sigma_x(S)$ be a 
reparametrization of the extension $\check\xi_\infty:[-\infty, \infty]\to \Sigma_x(S)$
of $\hat\xi_\infty$.

Take an arbitrary $w_+\in (\nabla d_p)(x)$ and set 
$w_-=-(\nabla d_p)(x)$. 
Consider the two sets
$\{ w_+,  \dot\lambda_{s_0}(t_0),
w_-,  -\dot\lambda_{s_0}(t_0)\}$
and 
$\{ w_+,  \xi_{\infty}(1),
w_-,  \xi_{\infty}(-1)\}$.  They 
are on a circle $C$ in $\Sigma_x(X)$ in these orders,
where
\beq
   |L(C)-2\pi|<\tau_p(r). \label{eq:lengthC}
\eeq
Since $|\angle(\pm w, \xi_\infty(\pm 1)) -\pi/2|<\tau_p(\delta,r)$ and 
$|\angle(\pm w, \pm\dot\lambda_{s_0}(t_0)) -\pi/2|<\tau_p(\delta,r)$, 
we have the conclusion  \eqref{eq:infinite-app}.

This completes the proof.
\end{proof}

\pmed 
A direction $\xi\in\Sigma_x(X)$ is called {\it regular}
if $\xi\notin \mathcal S(\Sigma_x(X))$.

\begin{lem} \label{lem:pass}
For $x\in{\rm int}\, S$, let $\xi_1, \xi_2\in\Sigma_x(S)$ be positively
horizontal (resp. negatively horizontal). 
Assume that $\xi_1$ is regular. Take an $X$-geodesic
$\gamma_1$ such that $\dot\gamma_1(0)=\xi_1$, and 
a sequence $x_i\in S$ such that 
$|x,x_i|_X\to 0$ and $\uparrow_x^{x_i}\to \xi_2$.
Then for $s_0$ with $\lambda_{s_0}\ni x$, there exists an 
$\e>0$ such that if some ruling geodesic $\lambda_s$ with $|s-s_0|<\e$ passes through $x_i$ for a sufficiently large $i$, 
then it passes through $\gamma_1$, too.
\end{lem}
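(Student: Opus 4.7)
The plan is to argue by contradiction via a blowup at $x$, following the tangent-cone strategy used in Lemma \ref{lem:basic}. Suppose no such $\e>0$ exists. Then, after a diagonal extraction, one finds a sequence $s_i\to s_0$ and a subsequence (still denoted $x_i$) of the given points with $x_i\in\lambda_{s_i}$ and $\lambda_{s_i}\cap\gamma_1=\emptyset$.

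Set $\e_i:=|x,x_i|_X$ and consider the rescaling $(\e_i^{-1}X,\,x)\to (K_x(X),\,o_x)$. Under this convergence, $x_i$ tends to $\hat\xi_2:=(\xi_2,1)\in\Sigma_x(X)\times\{1\}\subset K_x(X)$; the geodesic $\gamma_1$ tends to the ray $\hat\gamma_1$ from $o_x$ in direction $\xi_1$; and the rescaled rulings $\hat\lambda_i(t):=\lambda_{s_i}(t_i+\e_i t)$, where $x_i=\lambda_{s_i}(t_i)$, are unit-speed geodesics of length diverging to infinity on each side of $0$, using that $x\in\inte\,S$, $s_i\to s_0$, and Sublemma \ref{slem:trivial} applied to the horizontal direction $\xi_2$ (exactly as in the proof of Lemma \ref{lem:basic}). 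Passing to a subsequence, $\hat\lambda_i$ converges to a complete geodesic line $\hat\lambda_\infty$ in $K_x(X)$ through $\hat\xi_2$; by Lemma \ref{lem:base}(2), the direction of $\hat\lambda_\infty$ at $\hat\xi_2$ is vertical.

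Next, I analyze the relevant portion of $K_x(X)$. Since $\xi_1$ is regular and $\xi_1,\xi_2$ lie on the same (positive or negative) horizontal side of $\Sigma_x(X)$, Lemma \ref{lem:near-vert} localizes all vertices of $\Sigma_x(X)$ within $\tau_p(r)$ of $\pm(\nabla d_p)(x)$, so a small neighborhood of the angular arc of $\Sigma_x(X)$ joining $\xi_1$ and $\xi_2$ is a single regular edge through which $\xi_1$ passes without branching. Consequently the corresponding sector of $K_x(X)$ is isometric to a planar Euclidean sector, in which the horizontal ray $\hat\gamma_1$ meets the vertical line $\hat\lambda_\infty$ transversally at a unique interior point $\hat q$. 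Transferring this intersection back through the convergence produces, for all large $i$, a genuine point $q_i\in\lambda_{s_i}\cap\gamma_1$ with $q_i\to\hat q$, contradicting $\lambda_{s_i}\cap\gamma_1=\emptyset$.

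The main obstacle is justifying the planar-sector picture in $K_x(X)$: one must verify that no vertex of $\Sigma_x(X)$ is interposed between $\xi_1$ and $\xi_2$ along the horizontal arc of $\Sigma_x(S)$, so that $\hat\lambda_\infty$ really crosses $\hat\gamma_1$ inside a flat $2$-dimensional sector rather than near a branch direction. This is where the regularity of $\xi_1$, the sign agreement of $\xi_1,\xi_2$, and the vertex-localization of Lemma \ref{lem:near-vert} enter essentially, together with the horizontal/vertical angle estimates of Lemmas \ref{lem:base} and \ref{lem:s(x)-diam}.
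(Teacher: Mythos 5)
There is a genuine gap in your Step~3. You claim that, because $\xi_1$ is regular and $\xi_1,\xi_2$ are on the same horizontal side, "a small neighborhood of the angular arc of $\Sigma_x(X)$ joining $\xi_1$ and $\xi_2$ is a single regular edge," so that the relevant part of $K_x(X)$ is a flat planar sector. This does not follow from Lemma~\ref{lem:near-vert}. Positively horizontal only means $\angle(\xi,\nabla d_p(x))\le 3\pi/10$, while Lemma~\ref{lem:near-vert} puts the vertices of $\Sigma_x(X)$ within $\tau_p(r)$ of $\pm\nabla d_p(x)$ --- i.e., exactly in the region that the arc joining $\xi_1$ and $\xi_2$ may well traverse. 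Moreover, $\xi_2$ itself is not assumed regular and may be a vertex; and even if both endpoints were regular, $\Sigma_x(X)$ is a graph, and there is nothing a priori forcing the geodesic arc $\hat\xi_\infty$ (which is determined by the blown-up rulings through $\xi_2$, via \eqref{eq:hlambda}) to pass through $\xi_1$ at all. Consequently the "vertical line crosses the horizontal ray in a flat sector" picture is unjustified, and without it the conclusion that $\hat\lambda_\infty$ meets $\hat\gamma_1$ does not follow.

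The paper's proof supplies exactly this missing step. It applies Lemma~\ref{lem:basic} twice to build two shortest arcs $\xi_\infty$ (through $\xi_2$) and $\bar\xi_\infty$ (through $\xi_1$) in $\Sigma_x(X)$, both joining points $\tau_p(r)$-close to $\pm\dot\lambda_{s_0}(t_0)$ and both passing through the positive side; near both endpoints they lie in the regular part of $\Sigma_x(X)$ (Corollary~\ref{cor:vert}) and therefore overlap, so by uniqueness of geodesics of length $<\pi$ in the $\CAT(1)$-space $\Sigma_x(X)$ they coincide. This is the argument that places $\xi_1$ on the image of $\hat\xi_\infty$. Your final "transfer back" step is also too quick: Gromov--Hausdorff convergence alone does not make an intersection persist; the paper closes the argument by enclosing $\gamma_1([0,\e_1])$ in an open disk $D$ bounded by geodesics in regular directions near $\xi_1$ and using the openness of $\interior D$ together with the fact that $\xi_\infty$ was produced by the rescaled rulings. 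Your blow-up setup is in the right spirit and matches the mechanism of Lemma~\ref{lem:basic}, but the crucial $\CAT(1)$-uniqueness step and the open-disk conclusion are what you need to add.
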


\begin{center}
\begin{tikzpicture}
[scale = 1]
\draw [->, dotted, thick] (0,0)--(-1,0);
\draw [-, very thick] (1.5,1.5)--(1.5,-1.5);
\draw [-, very thick] (0,1.5)--(0,-1.5);
\draw [dotted, thick] (0,0) circle [radius=1];
\draw [->, dotted, thick] (0,0)--(0.9,0.3);
\draw [->, dotted, thick] (0,0)--(0.9,-0.3);
\draw [-, thick] (0,0)--(2.8,-1.0);
\draw (-0.2,-0.2) node[circle] {$x$};
\draw (-0.4,-1.3) node[circle] {$\lambda_{s_0}$};
\draw (1.2,-1.3) node[circle] {$\lambda_{s}$};
\draw (-1.5,0) node[circle] {$\uparrow_x^p$};
\draw (1.1,0.7) node[circle] {$\xi_2$};
\draw (1.1,-0.7) node[circle] {$\xi_1$};
\draw (2.6,-1.2) node[circle] {$\gamma_1$};
\draw (-1.3,1) node[circle] {$\Sigma_x(X)$};
\fill (1.5,0.5) coordinate (A) circle (2pt) node [right] {$x_i$};
\end{tikzpicture}
\end{center}

\begin{proof}
Suppose that the conclusion does not hold.
Then there exists a sequence $s_i\to s_0$ such that 
$\lambda_{s_i}$ meets $x_i$ while 
$\lambda_{s_i}$ does not pass through $\gamma_1$.
Applying Lemma \ref{lem:basic} to $x=\lambda_{s_0}(t_0)$ and 
$\xi_2$,
we have a shortest arc 
$\xi_{\infty}:[-1, 1]\to \Sigma_x(X)$ joining two points 
close to $\pm\dot\lambda_{s_0}(t_0)$
such that $\xi_2\in\xi_{\infty}([-1,1])$.
Similarly, applying Lemma \ref{lem:basic} to 
$\xi_1$,
we have a shortest arc 
$\bar\xi_{\infty}:([-1,1])\to \Sigma_x(X)$ joining two points 
close to $\pm\dot\lambda_{s_0}(t_0)$
such that $\xi_1\in\bar\xi_{\infty}([-1,1])$.

Note that  both $\xi_{\infty}$  and  $\bar\xi_{\infty}$  pass through the
positive side of $\Sigma_x(X)$ and connect points close 
to $\pm\dot\lambda_{s_0}(t_0)$.
From construction,  $\xi_{\infty}$ and $\bar\xi_{\infty}$ 
pass through horizontal directions $\xi_2$ and $\xi_1$
respectively. 
Furthermore both intersections 
$\xi_{\infty}([-1, -1+\e_1])\cap \bar\xi_{\infty}([-1, -1+\bar \e_1])$ and  
$\xi_{\infty}([1-\e_2, 1])\cap \bar\xi_{\infty}([1-\bar\e_2, 1])$
are not empty for some small $\e_i, \bar \e_i >0$ \,$(i=1,2)$\, 
since they are in the regular parts of $\Sigma_x(X)$ by Corollary \ref{cor:vert}.
Therefore by the uniqueness of geodesics in the $\CAT(1)$-space $\Sigma_x(X)$, 
we conclude that  
$\xi_{\infty}([-1+\e_3, 1-\e_3])= \bar\xi_{\infty}([-1+\bar\e_3, 1-\bar\e_3])$
some small $\e_3, \bar \e_3 >0$,  
and in particular
$\xi_{\infty}$ and $\bar\xi_{\infty}$ pass through both $\xi_1$ and 
$\xi_2$.

Take $\xi_3, \xi_4\in\Sigma_x(X)$ close to $\xi_1$ such that 
every element of the arc $[\xi_3,\xi_4]$ in $\Sigma_x(X)$ is regular and 
$\xi_1$ is the midpoint of $[\xi_3,\xi_4]$.
Let $\gamma_i:[0,\e]\to X$ be $X$-geodesics with 
$\dot\gamma(0)=\xi_i$\, $(i=3,4)$, and $\gamma_5$ the $X$-minimal 
geodesic joining $\gamma_3(\e)$ to $\gamma_4(\e)$.
If $\e>0$ is small enough, then the triangle $\triangle(\gamma_3, \gamma_4,\gamma_5)$ bounds a domain in $X$ homeomorphic to a two-disk $D$.
Let ${\rm int} \,D$ denote the interior of the disk $D$.
Note that ${\rm int} \,D$ is open in $X$ and that
$\gamma_1([0,\e_1])\subset D$ for a small $\e_1<\e$.
Since ${\rm int} \,D$ is open in $X$ and since
$\xi_{\infty}$  is constructed by \eqref{eq:hlambda}, 
$\lambda_{s_i}$ really passes through $\gamma_1$ for large $i$,
which is a contradiction. This completes the proof.
\end{proof}

\subsection{Canonical balls}

In this subsection, we introduce the notion of canonical balls,
which turns out to be useful to have better understanding of the behavior 
of ruling geodesics of $S$.

We denote by $\mathcal R(X)$ the set of topological regular points, 
$\mathcal R(X)=X\setminus \mathcal S(X)$.

\begin{defn} \upshape
For $x\in B(p,r)$, a ball $B(x, \e)$ is called {\it canonical} if for every $y \in B(x,\e)\setminus\{ x\}$ with vertical $\uparrow_x^y$, we have 
$y \in\mathcal R(X)$.
\end{defn}

\begin{lem} \label{lem:exist-canonical}
There exists an $r=r_p>0$ such that there is a
canonical ball around every point  in $B(p,r)\setminus \{ p\}$. 
\end{lem}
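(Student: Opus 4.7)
The plan is to reduce to two cases depending on whether $x$ is a topologically regular or singular point, and handle each directly via the structural lemmas for singular directions already established in Section \ref{sec:basic}.

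First I would fix $r_p>0$ small enough that $\tau_p(r_p)<\pi/5$, where $\tau_p$ is the function appearing in Lemma \ref{lem:near-vert} and Corollary \ref{cor:vert}, and also small enough that the conclusions of those lemmas apply to every $x\in B(p,r_p)\setminus\{p\}$. Fix any $x\in B(p,r_p)\setminus\{p\}$. If $x\in\ca R(X)$, then $\Sigma_x(X)$ is a circle by Lemma \ref{lem:disk}, and $x$ has a neighborhood homeomorphic to a disk; any small enough ball $B(x,\epsilon)$ is then contained in $\ca R(X)$, and so is trivially canonical (there is nothing to check).

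Now assume $x\in\ca S(X)$. I argue by contradiction: suppose no canonical ball around $x$ exists. Then one can choose a sequence $y_i\in\ca S(X)\cap(B(x,1/i)\setminus\{x\})$ with $\uparrow_x^{y_i}$ vertical, i.e.\ $\angle(\uparrow_x^{y_i},\pm(\nabla d_p)(x))\ge\pi/5$. Passing to a subsequence, $\uparrow_x^{y_i}$ converges to some $\xi\in\Sigma_x(X)$, which by construction lies in $\Sigma_x(\ca S(X))$ and still satisfies $\angle(\xi,\pm(\nabla d_p)(x))\ge\pi/5$.

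By Lemma \ref{lem:top-vert1} applied at $x$, we have $\Sigma_x(\ca S(X))=V(\Sigma_x(X))$, so $\xi$ is a vertex of $\Sigma_x(X)$. On the other hand, since $p\in\ca S(X)$ (the case $p\in\ca R(X)$ being disposed of by the same observation as in the first paragraph, as then $B(p,r)\subset\ca R(X)$ for small $r$) and $x\in\ca S(X)\cap(B(p,r_p)\setminus\{p\})$, Lemma \ref{lem:near-vert} gives $V(\Sigma_x(X))\subset B(\{-(\nabla d_p)(x),(\nabla d_p)(x)\},\tau_p(r_p))$. Thus $\angle(\xi,\pm(\nabla d_p)(x))<\tau_p(r_p)<\pi/5$, contradicting verticality. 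Hence a canonical ball around $x$ exists.

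There is no genuine obstacle here: the argument is essentially a direct packaging of Lemmas \ref{lem:top-vert1} and \ref{lem:near-vert}. The only mildly delicate point is to make sure the choice of $r_p$ is made once so that the $\tau_p(r)$ in Lemma \ref{lem:near-vert} is strictly less than the $\pi/5$ threshold coming from the definition of vertical direction; this is accomplished simply by shrinking $r_p$.
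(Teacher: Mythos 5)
Your argument is correct and follows essentially the same route as the paper: the paper derives the lemma from Lemma \ref{lem:sing-nabla}, which is itself Corollary \ref{cor:vert} restated, and Corollary \ref{cor:vert} is exactly the combination of Lemmas \ref{lem:top-vert1} and \ref{lem:near-vert} that you invoke directly. Your explicit compactness/contradiction step (extracting a vertical limit direction in $\Sigma_x(\mathcal S(X))$ from a putative sequence of vertical singular points) is the right way to unpack the paper's phrase ``direct consequence,'' and the rest is a matter of routing through or around the intermediate restatements.
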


Lemma \ref{lem:exist-canonical} is a direct consequence of 
the following Lemma \ref{lem:sing-nabla}, which is immediate from Corollary \ref{cor:vert}.

\begin{lem} \label{lem:sing-nabla}
For every $x\in \ca S(X)\cap B(p,r)\setminus \{ p\}$, we have 
\begin{align*}
 & \sup\{\angle(\xi,\nabla d_p)\,|\,\xi\in\Sigma_x(\mathcal S(X))\text{ is positive}\}
         <\tau_p(|p,x|), \\
& \sup\{\angle(\xi,-\nabla d_p)\,|\,\xi\in\Sigma_x(\mathcal 
S(X))\text{ is negative}\}
         <\tau_p(|p,x|).
\end{align*}
\end{lem}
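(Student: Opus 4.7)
The plan is to reduce the lemma to Corollary \ref{cor:vert} by a localization argument and a trivial separation of cases. First, I would dispose of the case when $p \notin \mathcal{S}(X)$. Since topological regularity is an open condition (a neighborhood of $p$ is homeomorphic to a disk), there exists $r_0 > 0$ with $B(p,r_0)\cap \mathcal{S}(X) = \emptyset$, so the statement is vacuous. Hence I may assume $p \in \mathcal{S}(X)$, which is exactly the hypothesis of Corollary \ref{cor:vert}.

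Next I would upgrade the bound $\tau_p(r)$ in Corollary \ref{cor:vert} to $\tau_p(|p,x|)$ by a trivial rescaling. Fix any $x \in \mathcal{S}(X) \cap (B(p,r)\setminus\{p\})$ and apply Corollary \ref{cor:vert} on the ball $B(p, 2|p,x|)$ (which certainly contains $x$); this gives
\[
\Sigma_x(\mathcal{S}(X)) \subset N_{\tau_p(2|p,x|)}\bigl(\{(-\nabla d_p)(x), (\nabla d_p)(x)\}\bigr),
\]
where $\tau_p(2|p,x|)$ is still a function of $|p,x|$ vanishing as $|p,x|\to 0$, so I can absorb it into $\tau_p(|p,x|)$.

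It then remains to separate the positive and negative parts. If $r$ is chosen small enough that $\tau_p(r) < \pi/4$, then a direction within $\tau_p(|p,x|)$ of $(-\nabla d_p)(x)$ makes angle at least $\pi - \tau_p(|p,x|) > \pi/2$ with $(\nabla d_p)(x)$, and hence cannot be positive. Combined with Lemma \ref{lem:base}(1), which bounds $\diam((\nabla d_p)(x))$ by $\tau_p(|p,x|)$ so that talking about the angle to $(\nabla d_p)(x)$ is unambiguous up to $\tau_p(|p,x|)$, this forces any positive $\xi \in \Sigma_x(\mathcal{S}(X))$ to lie in the $\tau_p(|p,x|)$-neighborhood of $(\nabla d_p)(x)$, and symmetrically for negative directions and $(-\nabla d_p)(x)$. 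This yields both inequalities.

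There is no real obstacle here; the only thing to be careful about is that the function $\tau_p$ must be chosen uniformly so that the conclusion of Corollary \ref{cor:vert} with radius $2|p,x|$ can legitimately be rewritten as $\tau_p(|p,x|)$, and that the threshold below which the separation into positive/negative sides makes sense is absorbed into the same $\tau_p$.
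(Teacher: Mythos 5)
Your proof is correct and fills in exactly the details the paper leaves implicit (the paper simply asserts that the lemma "is immediate from Corollary \ref{cor:vert}"). The localization step — applying Corollary \ref{cor:vert} on the ball $B(p,2|p,x|)$ to upgrade $\tau_p(r)$ to $\tau_p(|p,x|)$ — and the positive/negative separation via the antipodality $\angle((-\nabla d_p)(x),(\nabla d_p)(x))=\pi$ are the intended argument; your preliminary reduction to $p\in\mathcal S(X)$ (so that the hypothesis of Corollary \ref{cor:vert} applies, and the claim is vacuous otherwise) is a sensible bookkeeping step that is implicit in the paper's local setup.
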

\pmed

\begin{defn} \upshape
For $x\in {\rm int}\, S$, let $B(x, \e)$ be a canonical ball. We set 
\begin{align*}
      U_+(x,\e) &:= \{ y \in B(x,\e)\,|\, \angle(\uparrow_x^y, \dot {\boldmath \lambda}(x))<\pi/4\,\},\\
      U_-(x,\e) &:= \{ y \in B(x,\e)\,|\, \angle(\uparrow_x^y,-\dot {\boldmath \lambda}(x) )<\pi/4\,\}.
\end{align*}
Note that both $U_+(x,\e)$ and $U_-(x,\e)$ are convex in $X$ for small $\e>0$.
\end{defn}
\pmed

In Lemma \ref{lem:inS}, we show that $U_{\pm}(x,\e)\subset S$ for a small $\e>0$.

We denote by $|A|$ the cardinality of a set $A$.

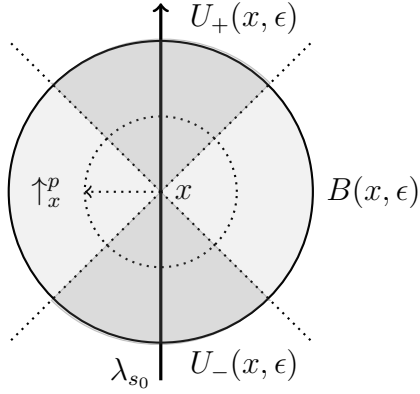
\begin{figure}
\begin{tikzpicture}
[scale = 1]
\draw [->, dotted, thick] (0,0)--(-1,0);
\draw [->, very thick] (0,-2.5)--(0,2.5);
\draw [-, dotted, thick] (0,0)--(2,2);
\draw [-, dotted, thick] (0,0)--(-2,2);
\draw [-, dotted, thick] (0,0)--(2,-2);
\draw [-, dotted, thick] (0,0)--(-2,-2);
\draw [thick] (0,0) circle [radius=2];
\draw [dotted, thick] (0,0) circle [radius=1];
\draw (0.3,0) node[circle] {$x$};
\draw (-0.4,-2.4) node[circle] {$\lambda_{s_0}$};
\draw (-1.5,0) node[circle] {$\uparrow_x^p$};
\draw (1.1,2.3) node[circle] {$U_+(x,\epsilon)$};
\draw (1.1,-2.3) node[circle] {$U_-(x,\epsilon)$};
\draw (2.8,0) node[circle] {$B(x,\epsilon)$};
\filldraw [fill=gray, opacity=.2] 
(0,0) -- (1.44,1.44)  arc (45:135:2) -- (-1.44,1.44) -- (0,0) -- cycle;
\filldraw [fill=gray, opacity=.2] 
(0,0) -- (-1.44,-1.44)  arc (225:315:2) -- (1.44,-1.44) -- (0,0) -- cycle;
\filldraw [fill=gray, opacity=.1] (0,0) circle [radius=2];
\end{tikzpicture}
\caption{A canonical ball around $x$}
\label{fig:figcanonical}
\end{figure}

\par
\medskip

\begin{lem} \label{lem:Sfinite}
Let $\gamma:[0,1]\to X$ be a vertical $X$-geodesic in $B(p,r)$.
Then we have
 $|\gamma\cap \mathcal S(X)|<\infty$
\end{lem}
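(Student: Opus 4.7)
The plan is to argue by contradiction, combining compactness of $[0,1]$ with Lemma \ref{lem:sing-nabla}. First I would record the preliminary observation that $\mathcal S(X)$ is closed in $X$: by definition $\mathcal R(X) = X\setminus \mathcal S(X)$ consists of points some neighborhood of which is homeomorphic to a disk, and this property obviously propagates to a neighborhood, so $\mathcal R(X)$ is open.

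Assume for contradiction that $\gamma\cap \mathcal S(X)$ is infinite. Since $\gamma$ is vertical, every direction $\dot\gamma_\pm(t)$ satisfies $\angle(\dot\gamma_\pm(t), \pm(\nabla d_p)(\gamma(t))) \ge \pi/5$, and in particular $\gamma([0,1]) \subset B(p,r)\setminus\{p\}$. Pick $t_i \in [0,1]$ with $\gamma(t_i) \in \mathcal S(X)$ and (by compactness, passing to a subsequence) $t_i \to t_\infty$ with $t_i \ne t_\infty$. Then $x_\infty := \gamma(t_\infty)$ is the limit of the singular points $\gamma(t_i)$, and the closedness of $\mathcal S(X)$ gives $x_\infty \in \mathcal S(X)\cap (B(p,r)\setminus\{p\})$.

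Passing to a further subsequence, I may assume $t_i > t_\infty$ (the opposite case being analogous after reversing $\gamma$), so that $\uparrow_{x_\infty}^{\gamma(t_i)} \to \dot\gamma_+(t_\infty)$. Since each $\gamma(t_i) \in \mathcal S(X)$, this limit direction lies in $\Sigma_{x_\infty}(\mathcal S(X))$ by the very definition of the latter. Now I will apply Lemma \ref{lem:sing-nabla} at $x_\infty$: whether $\dot\gamma_+(t_\infty)$ is positive or negative, we obtain
\[
\angle(\dot\gamma_+(t_\infty), \pm (\nabla d_p)(x_\infty)) < \tau_p(|p,x_\infty|) \le \tau_p(r).
\]
Choosing $r_p$ small enough that $\tau_p(r) < \pi/5$ for $r \le r_p$, this directly contradicts the verticality of $\dot\gamma_+(t_\infty)$, completing the proof.

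There is no serious obstacle here: the content is entirely absorbed into Lemma \ref{lem:sing-nabla}, which already forces directions in $\Sigma_x(\mathcal S(X))$ to be nearly horizontal for $x$ close to $p$. The only delicate point is to check that the limit point $x_\infty$ itself is singular, which is handled by the closedness of $\mathcal S(X)$ above.
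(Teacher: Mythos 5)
Your argument is correct, and it lands on the same underlying fact as the paper's proof, just unpacked: the paper contradicts the existence of a canonical ball around the accumulation point (Lemma \ref{lem:exist-canonical}), while you invoke Lemma \ref{lem:sing-nabla} directly, and both are repackagings of Corollary \ref{cor:vert}. One small imprecision to fix: Lemma \ref{lem:sing-nabla} bounds the angles for positive and for negative directions of $\Sigma_{x_\infty}(\mathcal S(X))$ separately, but does not by itself assert that every such direction \emph{is} positive or negative, so the phrase ``whether $\dot\gamma_+(t_\infty)$ is positive or negative'' leaves a logical corner. It is cleaner to cite Corollary \ref{cor:vert} (or Lemma \ref{lem:near-vert}) directly, which places all of $\Sigma_{x_\infty}(\mathcal S(X))$ in the $\tau_p(r)$-neighborhood of $\{\pm(\nabla d_p)(x_\infty)\}$ and hence immediately contradicts the verticality of $\dot\gamma_+(t_\infty) \in \Sigma_{x_\infty}(\mathcal S(X))$ once $\tau_p(r)<\pi/5$; otherwise your reduction and compactness argument is exactly right.
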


\begin{proof}
Suppose that the lemma does not hold. Then we would have an accumulation 
point $x=\gamma(t_0)$ of $\gamma\cap \mathcal S(X)$. It turns out that 
either $\dot\gamma(t_0)$ or $-\dot\gamma(t_0)$ is in $\Sigma_x(\mathcal S(X))$,
which is a contradiction to the existence of a canonical ball around $x$. 
\end{proof}

\begin{rem} \upshape
At this stage, we do not know yet a uniform bound 
on  $|\gamma\cap \mathcal S(X)|$ for all 
the vertical geodesics $\gamma$. 
 In Section \ref{sec:graph}, we give
a uniform bound (see Sublemma \ref{slem:number-C}).
\end{rem}
\pmed


The following is a key lemma.

\begin{lem} $(${\rm no-return lemma}$)$ \label{lem:noreturn}
For every $s_0$, there exists an $\e>0$ such that for any $s_1\in (s_0-\e, s_0)$ $($resp.  $s_1\in (s_0,  s_0+\e))$, 
there are no $t_0, t_1\in [0,1]$ satisfying that 
 $\uparrow_{\lambda_{s_0}(t_0)}^{\lambda_{s_1}(t_1)}$ is positively horizontal
 $($resp.  negatively horizontal $)$ of 
$\Sigma_{\lambda_{s_0}(t_0)}(X)$.
\end{lem}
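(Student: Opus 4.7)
My plan is to argue by contradiction via a blow-up to the tangent cone at a limit point, treating the case $s_1<s_0$ (positive horizontality forbidden); the case $s_1>s_0$ is symmetric after swapping $\nabla d_p\leftrightarrow -\nabla d_p$. Assuming the conclusion fails for some $s_0$, by letting $\e=1/k$ I obtain sequences $s_1^{(k)}\nearrow s_0$ and $t_0^{(k)},t_1^{(k)}\in[0,1]$ with $x_k:=\lambda_{s_0}(t_0^{(k)})$, $y_k:=\lambda_{s_1^{(k)}}(t_1^{(k)})$, and $\uparrow_{x_k}^{y_k}$ positively horizontal. After passing to a subsequence, $t_0^{(k)}\to t_0^\infty$, so $x_k\to x:=\lambda_{s_0}(t_0^\infty)$, and by uniform continuity of $s\mapsto\lambda_s$ I may also arrange $y_k\to y\in\lambda_{s_0}$. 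In the easy case $y\neq x$, continuity of geodesics in $\CAT(\kappa)$-domains gives $\uparrow_{x_k}^{y_k}\to\uparrow_x^y=\pm\dot\lambda_{s_0}(t_0^\infty)$, which is within $\tau_p(r)$ of perpendicular to $\nabla d_p$ by \eqref{eq:lambda-gradient}, contradicting the positively horizontal hypothesis for $r$ small.

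It therefore remains to treat the case $y=x$, which is the heart of the argument. I plan to blow up at $x$: setting $\e_k:=|x_k,y_k|_X\to 0$, a subsequence of $(\tfrac{1}{\e_k}X,x)$ converges in the pointed Gromov--Hausdorff topology to the tangent cone $(K_x,o_x)$. Under this rescaling, the ruling $\lambda_{s_0}$ converges to a geodesic line $L_0\subset K_x$ through $o_x$ in direction $\pm\dot\lambda_{s_0}(t_0^\infty)$, while the rulings $\lambda_{s_1^{(k)}}$ converge to a geodesic line $L_1\subset K_x$ (their endpoints move off to infinity under the rescaling, since $|x,\alpha_i(s_1^{(k)})|_X/\e_k\to\infty$). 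The crucial claim will be that $L_1$ is a parallel translate of $L_0$ whose shift vector lies on the ray from $o_x$ in direction $-\nabla d_p(x)$. This follows from the first-variation analysis of $\{\lambda_s\}_s$ at $s_0$: since each $\alpha_i(s)$ is a unit-speed geodesic from $p$, the infinitesimal variation $\partial_s\alpha_i(s_0)$ coincides with the outward radial direction $\nabla d_p(\alpha_i(s_0))$, and this variation propagates along the ruling $\lambda_{s_0}$ as a parallel variation in the tangent cone; the sign $-\nabla d_p(x)$ comes from $s_1^{(k)}<s_0$.

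Consequently, the rescaled $y_k$ converges to a point $y_\infty\in L_1$, and the direction from the rescaled base point (which lies on $L_0$) to $y_\infty$ is constrained to the closed half of $\Sigma_{o_x}(K_x)$ consisting of directions of angle at most $\pi/2$ from $-\nabla d_p(x)$. Passing back through the blow-up, upper semicontinuity of angles yields $\angle(\uparrow_{x_k}^{y_k},\nabla d_p(x_k))\ge\pi/2-\tau_p(r)>3\pi/10$ for $r$ small, contradicting the positively horizontal assumption. The hardest step will be making the parallel-shift statement rigorous in the possibly singular tangent cone: when $x$ is a regular point of $X$, then $K_x\cong\R^2$ and this is a direct Euclidean first-variation computation, while for singular $x$ the suspension-like structure of $\Sigma_x$ near the poles $\pm\nabla d_p(x)$ provided by Lemma \ref{lem:near-vert} gives a well-defined radial direction in $K_x$ along which to measure and propagate the first variation of the ruling family.
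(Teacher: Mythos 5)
Your reduction to a blow-up at a single accumulation point $x$ does not match the paper's argument, and it has a genuine gap at exactly the step you flag as "the crucial claim." You want $L_1$, the blow-up limit of the rulings $\lambda_{s_1^{(k)}}$, to be a translate of $L_0$ shifted toward $-\nabla d_p(x)$, and you propose to get the sign from a first-variation computation: $\partial_s\alpha_i(s_0)=\nabla d_p(\alpha_i(s_0))$ at the two endpoints, "propagated along the ruling as a parallel variation in the tangent cone." That propagation is precisely the content of the lemma, not a tool available to prove it. In a singular $\CAT(\kappa)$ space there is no parallel transport, no Jacobi equation, and no reason a variation that is radial-outward at $\alpha_1(s_0)$ and $\alpha_2(s_0)$ must remain on the radial-inward side along the interior of $\lambda_{s_0}$. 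The sign could flip if $\lambda_{s_1^{(k)}}$ crossed $\lambda_{s_0}$ somewhere between $\alpha_1$ and $x$, and a blow-up concentrated at the single point $x$ cannot detect a crossing that happens elsewhere along the ruling — it only sees the germ of the picture at $x$, by which time the sign information has, so to speak, already been decided at another scale and location.

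That is exactly what the paper's proof supplies and yours is missing. The paper covers all of $\lambda_{s_0}$ by a finite chain of canonical balls $B(x_\alpha,\e_\alpha)$, tracks whether $\xi_\alpha^i(\cdot)=\uparrow_{x_\alpha}^{\lambda_{s_i}(\cdot)}$ lies in the negative or positive side of $\Sigma_{x_\alpha}(X)$, observes that it is negative near $\alpha_1(s_0)$ and (by the assumed positively-horizontal $\uparrow_{\lambda_{s_0}(t_i)}^{\lambda_{s_i}(u_i)}$) positive at $\alpha_0$, and deduces a switch; the switch forces a transversal crossing of $\lambda_{s_i}$ and $\lambda_{s_0}$ inside a convex disk domain. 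The same applied to the other half of $\lambda_{s_i}$ yields a second crossing, contradicting minimality. The global, along-the-ruling bookkeeping — not any local first-variation statement — is the mechanism that fixes the sign. To salvage your approach you would need a separate lemma asserting that, for $s_1$ close to $s_0$, $\lambda_{s_1}$ stays on one side of $\lambda_{s_0}$ everywhere; but that lemma is essentially the statement you are trying to prove, so the argument is circular. (As a secondary remark, in the case $y\ne x$ you should replace "continuity of geodesics gives $\uparrow_{x_k}^{y_k}\to\uparrow_x^y$" by a comparison-angle argument: $\tilde\angle px_ky_k\to\tilde\angle pxy$ by continuity of distances, together with Lemma~\ref{lem:comparison}, since directions at different base points do not converge by appeal to geodesic continuity alone.)
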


\begin{proof}
Suppose that the conclusion does not hold.  
Then we have some sequence $s_i<s_0$ with $\lim_{i\to\infty} s_i= s_0$ such that 
\begin{align} \label{eq:reverse}
  \uparrow_{\lambda_{s_0}(t_i)}^{\lambda_{s_i}(u_i)}  \text{ is  positively horizontal
                                                     for some} \,\,  t_i, u_i \in (0,1).
\end{align}
(see Figure \ref{fig:figreverse}).
 We show that both $\lambda_{s_i}((0,u_i))$ and $\lambda_{s_i}((u_i, 1))$ meet  $\lambda_{s_0}$,  which yields a contradiction
to the minimality of $\lambda_{s_0}$.

\begin{figure}
\begin{tikzpicture}
[scale = 1]
\draw [-, very thick] (0,2)--(0,-2);
\draw [-, very thick] (-0.5,2) to [out=270, in=120] (-0.1,1);
\draw [-, very thick] (0.1,0.8) to [out=300, in=90] (0.5,0.2);
\draw [-, very thick] (0.5,0.2) -- (0.5,-0.2);
\draw [-, very thick] (0.1,-0.8) to [out=60, in=270] (0.5,-0.2);
\draw [-, very thick] (-0.5,-2) to [out=90, in=240] (-0.1,-1);
\draw [->, dotted, thick] (0,0)--(0.45,0.05);
\draw (0.4,-1.8) node[circle] {$\lambda_{s_0}$};
\draw (-0.9,-1.8) node[circle] {$\lambda_{s_i}$};
\fill (0.5,0.1) coordinate (A) circle (2pt) node [right] {$\lambda_{s_i}(u_i)$};
\fill (0,0) coordinate (A) circle (2pt) node [left] {$\lambda_{s_0}(t_i)$};
\end{tikzpicture}
\caption{$\uparrow_{\lambda_{s_0}(t_i)}^{\lambda_{s_i}(u_i)}$
is positively horizontal}
\label{fig:figreverse}
\end{figure}
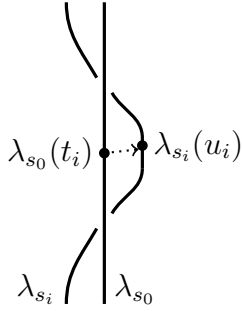

From Lemmas \ref{lem:exist-canonical} and  \ref{lem:Sfinite},  it is possible to cover $\lambda_{s_0}$
by finitely many canonical balls  $B(x_{\alpha}, \e_{\alpha})$, $1\le\alpha\le N$, 
where 
 $x_{\alpha} = \lambda_{s_0}(t_{\alpha})$ and $t_{\alpha}<t_{\alpha+1}$.
Taking smaller $\e_{\alpha}$ if necessary, we may further assume that for any large $i$

\begin{enumerate}
 \item $\lambda_{s_i}\subset \bigcup_{\alpha=1}^N B(x_\alpha, \e_\alpha);$
 \item  
    $\displaystyle{
   B(x_\alpha, \e_\alpha)\cap B(x_{\alpha+1}, \e_{\alpha+1})\cap \lambda_{s_i} \subset U_+(x_{\alpha}, \e_{\alpha})\cap U_-(x_{\alpha+1}, \e_{\alpha+1})}$ \\
for each $\alpha$.
\end{enumerate}
Note that $\lambda_{s_0}\cap \mathcal S_X\subset \{ x_{\alpha}\}_{\alpha=1}^N$, and that 
$ U_+(x_{\alpha}, \e_{\alpha})\cap U_-(x_{\alpha+1}, \e_{\alpha+1})$
is convex in $X$ and 
homeomorphic to a disk. 
Suppose that   $\lambda_{s_i}((0, u_i))$ does not meet  $\lambda_{s_0}$. Take a maximal interval $I_{\alpha}^i$ in $[0,1]$
such that $\lambda_{s_0}(I_{\alpha}^i)\subset B(x_\alpha, \e_\alpha)$,
and set 
\[
      \xi_{\alpha}^i(t) := \uparrow_{x_{\alpha}}^{\lambda_{s_i}(t)} \quad (t\in I_{\alpha}^i).
\]
From the assumption, $\xi_{\alpha}^i(I_{\alpha}^i)$ is in either the negative side
or the positive side of $\Sigma_{x_{\alpha}}(X)$.
Note that 
$ \xi_{1}^i(I_1^i)$ is in the negative side of $\Sigma_{x_1}(X)$.
Let $\alpha_0=\alpha_0(i)$ be such that 
$\lambda_{s_0}(t_i)\in B(x_{\alpha_0}, \e_{\alpha_0})$.
From \eqref{eq:reverse}, 
 $ \xi_{\alpha_0}^i(I_{\alpha_0}^i)$ is in the positive side of 
$\Sigma_{x_{\alpha_0}}(X)$. 
Therefore for some $\alpha\le\alpha_0$,
$\xi_{\alpha-1}^i(I_{\alpha-1}^i)$ is in the negative side of 
$\Sigma_{x_{\alpha-1}}(X)$ and 
$ \xi_{\alpha}^i(I_{\alpha}^i)$ is in the positive side of  
$\Sigma_{x_{\alpha}}(X)$.
Now $\lambda_{s_0}$ divides the disk domain 
$U_+(x_{\alpha-1}, \e_{\alpha-1})\cap U_-(x_{\alpha}, \e_{\alpha})$
into two disk domains $D_-$ and $D_+$, where 
we may assume that $\lambda_{s_i}(t_-)\in D_-$  and 
$\lambda_{s_i}(t_+)\in D_+$ for some $t_-\in I_{\alpha-1}$
and  $t_+\in I_{\alpha}$.
Thus $\lambda_{s_i}([t_-, t_+])$ must meet $\lambda_{s_0}$.

Similarly, we would have another intersection point of   $\lambda_{s_i}((u_i, 1))$ and $\lambda_{s_0}$.
This completes the proof.
\end{proof}

The following lemma is a global version of Lemma \ref{lem:noreturn}.

\begin{lem}  \label{lem:noreturn2}
For arbitrary $s_1<s_2$, there are no $t_1, t_2\in [0,1]$ satisfying that 
 $\uparrow_{\lambda_{s_1}(t_1)}^{\lambda_{s_2}(t_2)}$ 
$($resp.  $\uparrow_{\lambda_{s_2}(t_2)}^{\lambda_{s_1}(t_1)}$ $)$ 
is negatively horizontal
 in $\Sigma_{\lambda_{s_1}(t_1)}(X)$
$($resp. positively horizontal
 in $\Sigma_{\lambda_{s_2}(t_2)}(X))$.
\end{lem}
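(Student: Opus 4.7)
The plan is to argue by contradiction, combining an infimum argument with the local no-return Lemma~\ref{lem:noreturn}. Suppose $\xi := \uparrow_{\lambda_{s_1}(t_1)}^{\lambda_{s_2}(t_2)}$ is negatively horizontal for some $s_1 < s_2$ in $[0,\ell]$ and $t_1, t_2 \in [0,1]$; the other case follows by a symmetric argument exchanging positive/negative and the roles of $s_1, s_2$. Writing $x := \lambda_{s_1}(t_1)$, introduce
\[
A := \bigl\{\, s \in (s_1, s_2] \,:\, \exists\, t', t \in [0,1],\ \uparrow_{\lambda_{s_1}(t')}^{\lambda_s(t)} \text{ is negatively horizontal} \,\bigr\},
\]
which contains $s_2$. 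Lemma~\ref{lem:noreturn} applied at $s_0 = s_1$ (the ``resp.''\ case) yields $\varepsilon>0$ with $A \cap (s_1, s_1+\varepsilon) = \emptyset$, so $s^* := \inf A \in (s_1, s_2]$ is well-defined.

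Next I would show $s^* \in A$ via compactness and continuity. Pick $s_n \in A$ with $s_n \to s^*$ and witnesses $(t'_n, t_n) \in [0,1]^2$. Extract a subsequence with $(t'_n, t_n) \to (t'_*, t_*)$, set $x_* := \lambda_{s_1}(t'_*)$, $y_* := \lambda_{s^*}(t_*)$, and observe that $x_* \neq y_*$ since $s^* > s_1$. Continuity of minimal geodesics in $X$ then gives $\uparrow_{\lambda_{s_1}(t'_n)}^{\lambda_{s_n}(t_n)} \to \uparrow_{x_*}^{y_*}$, and Lemma~\ref{lem:limit} passes the closed condition $\angle(\cdot, -\nabla d_p(x_*)) \le 3\pi/10$ through the limit, so $s^* \in A$ with witnesses $(t'_*, t_*)$.

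The contradiction is derived by applying Lemma~\ref{lem:noreturn} at $s_0 = s^*$ (first case), producing $\varepsilon' > 0$ such that for $\sigma \in (s^*-\varepsilon', s^*)$ and all $\tau_0, \tau \in [0,1]$, the direction $\uparrow_{\lambda_{s^*}(\tau_0)}^{\lambda_{\sigma}(\tau)}$ is not positively horizontal. I would then reverse the direction at $y_*$: along the $X$-geodesic $\eta \colon [0, L] \to X$ from $x_*$ to $y_*$, the first-variation formula together with the estimate $|p, \lambda_s(t)| = s + O(\tau_p(r))$ (implicit in the discussion preceding Lemma~\ref{lem:exist-canonical}) forces $\uparrow_{y_*}^{x_*}$ to be positively horizontal. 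Since $x_* = \lambda_{s_1}(t'_*)$, by a chain of local applications of Lemma~\ref{lem:noreturn} down the interval $[s_1, s^*]$ combined with continuous dependence of $\uparrow_{y_*}^{\lambda_\sigma(\tau)}$ on $(\sigma, \tau)$, one produces some $\sigma \in (s^*-\varepsilon', s^*)$ and $\tau \in [0,1]$ with $\uparrow_{\lambda_{s^*}(t_*)}^{\lambda_\sigma(\tau)}$ positively horizontal, the required contradiction.

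The principal obstacles are the reversal estimate and the chaining step. Lemma~\ref{lem:foot} directly reverses horizontality only under the sharp bound $\angle < \tau_p(\delta, r)$, whereas negative horizontality affords only $\angle \le 3\pi/10$; to bridge this, a refined CAT($\kappa$)-comparison on the thin triangle $\triangle p x_* y_*$ via Lemma~\ref{lem:comparison} is needed, exploiting that the opening angle $\angle x_* p y_*$ is at most $2\delta + O(\tau_p(r))$ by thinness of the ruled surface, so the angle at $y_*$ deviates from the $\kappa$-plane value by a negligible amount. The chaining from $s_1$ to some $\sigma$ in $(s^*-\varepsilon', s^*)$ is handled by subdividing $[s_1, s^*]$ via a Lebesgue-number covering into intervals where the local lemma applies and transporting the positively horizontal direction forward through this chain.
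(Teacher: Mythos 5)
The proposal has a critical error in the step claiming $s^* \in A$. You invoke Lemma~\ref{lem:limit} to ``pass the closed condition $\angle(\cdot, -\nabla d_p(x_*)) \le 3\pi/10$ through the limit,'' but Lemma~\ref{lem:limit} is an \emph{upper} semicontinuity statement, $\limsup_i \angle p_iq_ir_i \le \angle pqr$, which bounds the sequence angles from above by the limit angle --- not the other way around. It says nothing about the limit angle being small when the sequence angles are small; the limit angle can jump \emph{up} at a singular point. Thus the sublevel set $\{\,\angle \le 3\pi/10\,\}$ defining $A$ is not closed, and $s^* \in A$ does not follow. In fact the paper draws the opposite conclusion: setting $u := \sup\{s_0 : (s_1,s_0) \subset I(s_1)\}$ (which equals your $s^*$), it argues that the \emph{complement} $(s_1,s_2]\setminus I(s_1)$ is relatively open, so $u \in I(s_1)$; that is, the boundary value is \emph{good}, not bad.

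Because the boundary value sits in the good set, the paper's contradiction is organized quite differently, and it avoids both obstacles you flag. Instead of reversing horizontality at $\lambda_{s^*}$ and then ``chaining'' the local lemma backward through $[s_1,s^*]$ --- which, as written, presupposes the very global statement you are trying to prove --- the paper uses a two-step foot-of-perpendicular chain. Given bad values $u_i \downarrow u$ with witnesses $y_i \in \lambda_{s_1}$ and $x_i \in \lambda_{u_i}$, let $z_i$ be the foot of perpendicular from $y_i$ onto $\lambda_{u_i}$ and $w_i$ the foot of perpendicular from $z_i$ onto $\lambda_u$. Feet of perpendiculars automatically carry the sharp angle bound $\tau_p(\delta,r)$ (via Lemma~\ref{lem:base} and \eqref{eq:lambda-gradient}), not merely $3\pi/10$, so Lemma~\ref{lem:foot} applies directly and the reversal difficulty never arises. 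Applying Lemma~\ref{lem:noreturn} at $u$ shows $\uparrow_{w_i}^{z_i}$ is positively horizontal; the two sharp bounds at $z_i$ then force $y_i, z_i, w_i$ to be nearly collinear, so $\uparrow_{y_i}^{w_i}$ is negatively horizontal with $w_i \in \lambda_u$, contradicting $u \in I(s_1)$ in a single step with no chaining across the interval.
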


\begin{proof} 
Let $I(s_1)$ be the set of all $s\in (s_1, s_2]$ such 
that there are no $t_1, t\in [0,1]$ satisfying that 
 $\uparrow_{\lambda_{s_1}(t_1)}^{\lambda_{s}(t)}$ is negatively horizontal
 in $\Sigma_{\lambda_{s_1}(t_1)}(X)$.
By Lemma \ref{lem:noreturn}, $(s_1,s_0)\subset I(s_1)$
for some $s_0\in (s_1, s_2)$.
Let $u$ be the supremum of those $s_0$.
From the continuity of the map $\sigma:R\to S$,
$(s_1,s_2]\setminus I(s_1)$ is open in $(s_1,s_2]$.
It follows  that $u\in I(s_1)$.  Suppose that $u<s_2$.
Then we have a sequence of positive numbers $\e_i$ with
$\e_i\to 0$ such that $u_i:=u+\e_i\notin I(s_1)$.
Namely we have sequences $t_i$ and $t_i'$ satisfying that 
  $\uparrow_{\lambda_{s_1}(t_i)}^{\lambda_{u_i}(t_i')}$ is negatively horizontal  in $\Sigma_{\lambda_{s_1}(t_i)}(X)$.
Set
$x_i:=\lambda_{u_i}(t_i')$, and let $y_i:= \lambda_{s_1(t_i)}$.
Take $z_i\in \lambda_{u_i}$ and $w_i\in \lambda_{u}$
such that 
$$
       |y_i, z_i|=|y_i, \lambda_{u_i}|, \qquad 
            |z_i, w_i|=|z_i, \lambda_{u}|.
$$
Since $\uparrow_{y_i}^{x_i}$ is horizontal, we have $y_i\neq z_i$.
By \eqref{eq:lambda-gradient}, we obtain  

\[ 
\text{
$\angle(\uparrow_{z_i}^{y_i}, \nabla d_p)<\tau_p(\delta,r)$
or
$\angle(\uparrow_{z_i}^{y_i}, -\nabla d_p)<\tau_p(\delta,r)$}.
\]
We show that 
\beq \label{eq:angle-zyd}
   \angle(\uparrow_{z_i}^{y_i}, \nabla d_p)<\tau_p(\delta,r).
\eeq
Otherwise, we have $\angle(\uparrow_{z_i}^{y_i}, -\nabla d_p)<\tau_p(\delta,r)$.
In view of Lemma \ref{lem:foot}, it turns out that
\[
               \angle x_i y_i z_i> 2\pi/3-\tau_p(\delta,r),
\]
and hence 
\begin{align*}
     \angle x_i z_i y_i  < \pi - \angle  x_i y_i z_i -
                                 \angle  y_i x_i z_i +\tau_p(r) < \pi/3+\tau_p(\delta,r).
\end{align*}
This is a contradiction to the choice of $z_i$.

Next note that $w_i\neq z_i$. Because if $w_i=z_i$,
then $\uparrow_{y_i}^{w_i}$ must be negatively horizontal
by \eqref{eq:angle-zyd}, 
which contradicts $u\in I(s_1)$.
Now by Lemma \ref{lem:noreturn}, 
$\uparrow_{w_i}^{z_i}$ is positively horizontal.
In view of Lemma \ref{lem:foot},  we have 
\beq \label{eq:angle-zwd}
  \angle(\uparrow_{z_i}^{w_i},- \nabla d_p)<\tau_p(\delta,r),
\eeq
It follows from \eqref{eq:angle-zyd} and \eqref{eq:angle-zwd} that 
$\angle y_iz_i w_i >\pi -\tau_p(\delta,r)$,
which implies 
$\angle(\uparrow_{y_i}^{w_i},- \nabla d_p)<\tau_p(\delta,r)$.
In particular $\uparrow_{y_i}^{w_i}$ is negatively horizontal.
This contradicts $u\in I(s_1)$. Thus we conclude $u=s_2$.

Similarly we see that 
there are no $t_1, t_2$ satisfying that 
 $\uparrow_{\lambda_{s_2}(t_2)}^{\lambda_{s_1}(t_1)}$
is positively horizontal.
This completes the proof.
\end{proof}

\begin{lem} \label{lem:inS}
For every $x\in\inte \,S$, there exists an $\e>0$
such that 
\[
     U_+(x,\e)\subset S, \quad U_-(x,\e)\subset S.
\]  
\end{lem}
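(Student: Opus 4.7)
The plan is to prove $U_+(x,\epsilon)\subset S$ for small $\epsilon>0$ by an intermediate value argument on a signed horizontal distance from $y$ to the nearby ruling geodesics $\lambda_s$; the inclusion $U_-(x,\epsilon)\subset S$ will follow by the symmetric argument after exchanging $\alpha_1$ and $\alpha_2$. Write $x=\lambda_{s_0}(t_0)$ with $s_0\in(0,\ell)$ and $t_0\in(0,1)$, pick $\eta>0$ small enough that Lemma \ref{lem:noreturn} applies on $(s_0-\eta,s_0+\eta)$, and choose $\epsilon>0$ much smaller than $\eta$, $\min\{t_0,1-t_0\}$, and $\min\{|x,\alpha_1(s_0)|,|x,\alpha_2(s_0)|\}$.

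For each $y\in U_+(x,\epsilon)$ and $s\in(s_0-\eta,s_0+\eta)$, I would let $z_y(s)\in\lambda_s$ be the foot of the perpendicular from $y$ to $\lambda_s$. Since $B(p,r)$ is a $\CAT(\kappa)$-domain and $\epsilon$ is small, $z_y(s)$ is unique, lies strictly interior to $\lambda_s$ at some parameter $\tau_y(s)\in(0,1)$, and depends continuously on $s$. The first variation formula in a $\CAT(\kappa)$-ball gives $\uparrow_{z_y(s)}^y\perp\dot\lambda_s(\tau_y(s))$, and \eqref{eq:lambda-gradient} then forces $\uparrow_{z_y(s)}^y$ to be close to $\pm\nabla d_p$, i.e.\ horizontal. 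Define $F(s)=+|y,z_y(s)|$ when $\uparrow_{z_y(s)}^y$ is positively horizontal, $-|y,z_y(s)|$ when negatively horizontal, and $F(s)=0$ when $y=z_y(s)$; since the sign can flip only at $|y,z_y(s)|=0$, the function $F$ is continuous on $(s_0-\eta,s_0+\eta)$, and $F(s^*)=0$ is equivalent to $y\in\lambda_{s^*}\subset S$.

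If $F(s_0)=0$ we are done, so assume without loss of generality $F(s_0)>0$; the opposite sign is handled by mirroring the argument on $(s_0-\eta,s_0)$. By the no-return Lemma \ref{lem:noreturn}, for $s_2\in(s_0,s_0+\eta)$ the geodesic $\lambda_{s_2}$ has no point negatively horizontal from any point of $\lambda_{s_0}$, so $\lambda_{s_2}$ sits on the positive horizontal side of $\lambda_{s_0}$. Passing to the rescaling limit $(\frac{1}{r}X,x)\to(K_x(X),o_x)$, the $s$-variations of the endpoints $\alpha_j(s_0)$ and the ruling direction $\dot\lambda_{s_0}(t_0)$ span linearly independent directions in $K_x(X)$, because $\alpha_1,\alpha_2$ leave $p$ in distinct initial directions $\nu_1\ne\nu_2$ separated by angle $2\delta>0$; consequently the horizontal offset of $\lambda_s$ from $\lambda_{s_0}$ near the height of $x$ grows essentially linearly in $|s-s_0|$. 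Once $|s_2-s_0|$ exceeds a fixed multiple of $\epsilon$, $\lambda_{s_2}$ has swept horizontally past every point of $B(x,\epsilon)$ lying on the positive horizontal side of $\lambda_{s_0}$, in particular past $y$, so $F(s_2)<0$. The intermediate value theorem applied to $F$ on $[s_0,s_2]$ then yields $s^*$ with $F(s^*)=0$, hence $y\in\lambda_{s^*}\subset S$. The hard part will be making this transversality estimate rigorous without smoothness of $\sigma$: the $s$-derivative of $\sigma(s,t_0)$ has to be interpreted as a directional derivative computed in the tangent-cone limit, and one must verify that the sweep of $\lambda_s$ across a horizontal $\epsilon$-neighborhood of $x$ is genuinely achieved within an $s$-interval of length $O(\epsilon)$, a fact that in the end combines no-return monotonicity with the linear independence just described.
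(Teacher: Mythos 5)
Your strategy---an intermediate-value argument on a signed horizontal distance $F(s)$ from $y$ to $\lambda_s$---is close in spirit to the paper's proof, which also runs an intermediate-value argument, but your argument has a genuine gap exactly at the step you flag as ``the hard part,'' and the way you propose to close it would not work.

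The gap is the claim that the horizontal offset of $\lambda_s$ from $\lambda_{s_0}$ near $x$ grows \emph{linearly} in $|s-s_0|$, so that $F$ changes sign after an $s$-increment of order $\epsilon$. First, you do not need the quantitative rate at all to run the IVT---what you actually need is only that $F$ eventually changes sign on $(s_0,s_0+\eta)$---but even this qualitative version is not established in your argument. Second, the quantitative linear-rate claim is false in general: by Lemma~\ref{lem:point-crossing} the set $s(x)$ can be a nondegenerate interval, on which $\lambda_s$ passes through $x$ and the sweep rate is zero; Lemma~\ref{lem:s(x)-diam} bounds the length of this interval by $\tau_p(|p,x|)\cdot|p,x|$, which is small but not zero. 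Third, the route you suggest for making the sweep estimate rigorous---passing to the tangent cone and invoking ``linear independence'' of the $s$-variation and the ruling direction---does not go through, because $K_x(X)$ is a Euclidean cone over a graph (not a vector space), and the directrix $s\mapsto\sigma(s,t_0)$ need not be differentiable, so there is no $s$-derivative to compare. One can differentiate $\alpha_1,\alpha_2$ at their own parameters, but converting this into a lower bound on how fast the \emph{midsection} of $\lambda_s$ passes a fixed $y$ is exactly the nontrivial content.

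The paper's proof closes this gap by a different mechanism that you do not invoke. It introduces barrier geodesics $\gamma_\pm$ from $x$ in medial directions at angle $\pi/4$ to $\dot\lambda_{s_0}(t_0)$; the canonical-ball structure guarantees these lie in the regular set. Sublemma~\ref{slem:mean} then proves the qualitative sweep---that some $\lambda_{s_\pm}$ with $s_-<s_{\min}(x)$, $s_+>s_{\max}(x)$ hits $\gamma_\pm$---by contradiction, using Lemma~\ref{lem:basic} (every extrinsic direction of $S$ at $x$ sits on a shortest arc in $\Sigma_x(X)$ produced from rescaling limits of ruling geodesics) together with Lemma~\ref{lem:pass} (which exploits uniqueness of geodesics in the $\CAT(1)$-graph $\Sigma_x(X)$). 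After that, the IVT is applied to the continuous map $\varphi(s)=\lambda_s\cap\gamma_{\epsilon_1}$ and then to $\psi(s)=\lambda_s\cap\hat\gamma_{\epsilon,\epsilon_2}$ to fill the entire region $U_\pm(x,\epsilon)$ at once, rather than pointwise for each $y$. If you want to salvage your pointwise-$F$ version, you would still need to import something equivalent to Sublemma~\ref{slem:mean} and Lemma~\ref{lem:pass} to show $F$ changes sign; the tangent-cone transversality heuristic cannot replace them.
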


\begin{proof}  
Let $B(x,\e_0)$ be a canonical ball.
Take the positively horizontal $v_+\in \Sigma_x(X)$
(resp. negatively horizontal $v_-\in \Sigma_x(X)$) such that 
$\angle(v_{\pm}, \dot\lambda(x)) =\pi/4$.
Let $\gamma_{\pm}$ be $X$-geodesics starting from $x$ with 
$\dot\gamma_{\pm}(0)=v_{\pm}$.

\begin{slem}\label{slem:mean}
For any $0<\e <\e_0$,
there are $s_-\in (0, s_{\min}(x))$ and $s_+\in (s_{\max}(x), r)$
such that $\lambda_{s_{\pm}}$ pass through 
$\gamma_{\pm}((0,\e])$ respctively.
\end{slem}

\begin{center}
\begin{tikzpicture}
[scale = 1]
\draw [->, dotted, thick] (0,0)--(-1,0);
\draw [-, very thick] (1.8,1.5)--(1.8,-1.5);
\draw [-, very thick] (0,1.5)--(0,-1.5);
\draw [-, very thick] (-1.8,1.5)--(-1.8,-1.5);
\draw [dotted, thick] (0,0) circle [radius=1];
\draw [-, thick] (0,0)--(2.8,1.0);
\draw [-, thick] (0,0)--(-2.8,1.0);
\draw (0.2,-0.2) node[circle] {$x$};
\draw (2.3,-1.3) node[circle] {$\lambda_{s_+}$};
\draw (-2.2,-1.3) node[circle] {$\lambda_{s_-}$};
\draw (-1.3,0) node[circle] {$\uparrow_x^p$};
\draw (2.8,1.3) node[circle] {$\gamma_+$};
\draw (-2.8,1.3) node[circle] {$\gamma_-$};
\end{tikzpicture}
\end{center}

\begin{proof}
Suppose that there is no such $s_-<s_{\min}(x)$. 
Then we have a sequence $s_i<s_{\min}(x)$ converging to
$s_{\min}(x)$ such that 
$\lambda_{s_i}$ does not pass through $\gamma_-((0,\e])$
for some $\e>0$ and all $i$. 
As in the proof of Lemma \ref{lem:basic}
together with Lemma \ref{lem:noreturn}, the curves
$\xi_i(t)=\uparrow_x^{\lambda_{s_i}(t)}$\,$(t\in [0,1])$\,
in $\Sigma_x(X)$
pass through $\dot\gamma_-(0)$ for all $i$.
This shows in particular that $\dot\gamma_-(0)\in\Sigma_x(S)$.
Since $\dot\gamma_-(0)$ is regular, it follows from 
Lemma \ref{lem:pass} that $\lambda_{s_i}$ meets 
$\gamma_-((0,\e])$ for every large enough $i$.
This is a contradiction.

Similarly, we see that $\lambda_s$ meets $\gamma_+((0,\e])$
for any $s>s_+$ sufficiently close to $s_+$.
\end{proof}

Take a sufficiently small $0<\e_1<\e_0$ such that 
\beq \label{eq:span-triangle}
\text{the triangle $\Delta\gamma_+(\e_1)x\gamma_-(\e_1)$
spans a disk domain in $X$.}
\eeq
Let $s_{\pm}$ be as in 
Sublemma \ref{slem:mean} for $\e_1$, and set $I:=[s_-, s_+]$
and $\gamma_{\e_1}=\gamma_+([0, \e_1])\cup \gamma_-([0, \e_1])$.
It follows from the contiuity of $\sigma$, Lemma \ref{lem:noreturn}
and \eqref{eq:span-triangle} that 
$\lambda_s$ meets $\gamma_{\e_1}$ for all $s\in I$.
Now define $\varphi:I\to \gamma_{\e_1}$
by $\varphi(s)=\lambda_s\cap\gamma_{\e_1}$.
Let $\gamma_{\pm}(\e_{\pm}):=\varphi(s_{\pm})$.
Since $\varphi$ is continuous, the intermediate-value theorem
implies that 
$$
              \gamma_{\e_2} \subset {\rm Im}\,\varphi\subset S,
$$
where $\e_2:=\min\{ \e_+,\e_-\}$.

For any $0<\e\le\e_2$, let $\mu_\e$ be the $X$-geodesic
joining $\gamma_-(\e)$ to $\gamma_+(\e)$.
Put
\[
       \hat\gamma_{\e,\e_2} := \gamma_-([\e,\e_2])\cup
\mu_\e\cup\gamma_+([\e,\e_2]).
\]
Similarly, we can define the map $\psi:I\to \hat\gamma_{\e,\e_2}$ by 
 $\psi(s)=\lambda_s\cap\hat\gamma_{\e,\e_2}$.
Again, since $\psi$ is continuous, 
the intermediate-value theorem implies that
$\psi$ is surfjective, and hence  
$\mu_\e\subset S$. 
Now we can take $\e_3^+>0$ such that      
\[
     U_+(x,\e_3^+)\subset \bigcup_{0\le\e\le e_2}\mu_\e \subset S.
\]
Similarly we have
$U_-(x,\e_3^-)\subset S$ for some $\e_3^->0$. 
This completes the proof of Lemma \ref{lem:inS}.
\end{proof} 

\pmed
\subsection{Spaces of directions}
\psmall
 In this subsection, we determine the structure of the space of 
directions of $S$ at each point of $S$.

\begin{lem} \label{lem:regular-dir}
For every $x\in S$, let $\xi\in\Sigma_x(S)$ be regular in $\Sigma_x(X)$, and let $\gamma$ be an 
$X$-geodesic with $\dot\gamma(0)=\xi$. Then $\gamma([0,\e])\subset S$
for a small $\e>0$. Furthermore, $\e$ can be taken locally uniformly for $\xi$.
\end{lem}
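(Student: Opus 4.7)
The plan is to split into two cases based on whether $\xi$ is close to a ruling direction at $x$ or transverse to all rulings through $x$.

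\emph{Case 1 (near a ruling).} Suppose $\angle(\xi, \varepsilon \dot\lambda_{s_0}(t_0)) < \pi/4$ for some $s_0 \in s(x)$ (writing $x = \lambda_{s_0}(t_0)$) and some sign $\varepsilon \in \{+,-\}$. Then for small $\e > 0$ the geodesic segment $\gamma([0, \e])$ lies in the sector $U_\varepsilon(x, \e)$, which is contained in $S$ by Lemma \ref{lem:inS}. When $x \in \partial S$ only one side of the canonical ball is relevant, and the argument of Lemma \ref{lem:inS} via Sublemma \ref{slem:mean} still works on the side into which $\gamma$ enters.

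\emph{Case 2 (transverse).} Otherwise $\angle(\xi, \pm \dot\lambda_{s_0}(t_0)) \ge \pi/4$ for every $s_0 \in s(x)$. Since \eqref{eq:lambda-gradient} makes each $\dot\lambda_{s_0}(t_0)$ almost perpendicular to $\pm\nabla d_p(x)$, this forces $\xi$ to be horizontal. Regularity of $\xi$ in the $\CAT(1)$-graph $\Sigma_x(X)$ allows us to pick a short open arc $(\xi_-, \xi_+) \subset \Sigma_x(X)$ around $\xi$ consisting entirely of regular directions. Let $\gamma_\pm:[0,\e_0] \to X$ be $X$-geodesics with $\dot\gamma_\pm(0)=\xi_\pm$; for $\e_0$ small the triangle $\triangle \gamma_-(\e_0) x \gamma_+(\e_0)$ spans a disk domain $D$ in $X$ homeomorphic to a 2-disk, containing $\gamma([0, \e_0])$.

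To finish Case 2 we show $D \cap B(x, \e') \subset S$ for some $\e' > 0$. Choose $x_i = \lambda_{s_i}(t_i) \in S$ with $\uparrow_x^{x_i} \to \xi$, and after passing to a subsequence assume $s_i \to s_\infty$, so $\lambda_{s_\infty} \ni x$. By Lemma \ref{lem:pass} applied with $\xi_1 \in \{\xi_-, \xi_+\}$ (both regular) and $\xi_2 = \xi$, any ruling $\lambda_s$ with $s$ close to $s_\infty$ that passes through some $x_i$ must also meet both $\gamma_-$ and $\gamma_+$. Mirroring the intermediate-value scheme at the end of the proof of Lemma \ref{lem:inS}, the continuous map $\psi(s) = \lambda_s \cap (\gamma_-([0, \e_0]) \cup \gamma_+([0, \e_0]))$ sweeps out a full neighborhood of $x$ in $\gamma_- \cup \gamma_+$; repeating with the cross-cuts $\mu_\e$ joining $\gamma_-(\e)$ to $\gamma_+(\e)$ then forces $\mu_\e \subset S$. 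Since $\bigcup_{0 < \e \le \e_0'} \mu_\e$ covers $D \cap B(x, \e')$, we obtain $\gamma([0, \e']) \subset S$.

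The main obstacle in Case 2 is verifying that the one-parameter family of rulings near $\lambda_{s_\infty}$ genuinely sweeps a $2$-dimensional region rather than producing only isolated intersections with the sides of $D$. This requires combining the no-return lemmas (Lemmas \ref{lem:noreturn} and \ref{lem:noreturn2}) for monotonicity, continuity of $\sigma$, and the regularity of $\xi$ (to guarantee that $D$ is a bona fide disk and that Lemma \ref{lem:pass} applies). Local uniformity of $\e$ for nearby regular $\xi' \in \Sigma_x(S)$ follows from the continuous dependence of $(\xi_-, \xi_+)$ and $D$ on $\xi$.
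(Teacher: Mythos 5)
Your two-case split is the right idea and Case~1 (directions within $\pi/4$ of $\pm\dot\lambda(x)$) correctly reduces to Lemma~\ref{lem:inS}. However, there are two genuine gaps.

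The case $x=p$ is not addressed, and the machinery you invoke cannot cover it. At $p$ the only generator is the constant geodesic $\lambda_0$, so $\pm\dot\lambda(p)$ is undefined, the horizontal/vertical dichotomy (built on $\pm\nabla d_p$) is meaningless at $p$, and Lemma~\ref{lem:pass} --- which is stated only for $x\in{\rm int}\,S$ and requires a nondegenerate $\lambda_{s_0}\ni x$ --- is unavailable. The paper treats $x=p$ separately: take $\xi_\pm\in\Sigma_p(X)$ flanking $\xi$, let $U(\eta)$ be the disk domain bounded by $\gamma_\pm$ and $S(p,\eta)$; for $x_i=\lambda_{s_i}(t_i)\to p$ with $\uparrow_p^{x_i}\to\xi$ the rulings $\lambda_{s_i}$ lie in $U(\eta)$ and must cross both $\gamma_+$ and $\gamma_-$, and an intermediate-value argument then shows the subdomain bounded by $\gamma_\pm$ and $\lambda_{s_N}$ is contained in $S$. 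You need something like this here.

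In Case~2 for $x\in{\rm int}\,S$ you aim at the much stronger conclusion $D\cap B(x,\e')\subset S$ and try to replay the sweeping argument from the proof of Lemma~\ref{lem:inS}. The geometry there is different: in Lemma~\ref{lem:inS} the two guide curves $\gamma_\pm$ leave $x$ in medial directions on opposite sides of $\dot\lambda(x)$, so the cross-cuts $\mu_\e$ are transverse to the rulings and the intermediate-value map $\psi$ onto $\hat\gamma_{\e,\e_2}$ makes sense; in your set-up both $\gamma_\pm$ are horizontal on the same side of $x$, so $\mu_\e$ is roughly parallel to the rulings and the scheme does not directly produce $\mu_\e\subset S$. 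You flag this yourself as ``the main obstacle'' but leave it open. The paper avoids the issue entirely by proving only what is asserted: for $\xi$ positive and horizontal, Lemmas~\ref{lem:pass} and \ref{lem:noreturn} give an interval $[s_{\max}(x),s_1]$ on which each $\lambda_s$ meets $\gamma$ at a unique point $\gamma(t(s))$; $t(\cdot)$ is continuous with $t(s_{\max}(x))=0$ and $t(s)>0$ for $s>s_{\max}(x)$, hence $\gamma([0,t(s_1)])\subset S$ by the one-dimensional intermediate value theorem. Restricting attention to the intersection with $\gamma$, rather than with a two-dimensional wedge $D$, is precisely what makes the proof close cleanly.
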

\begin{proof}First assume $x\in\inte\,S$, and let $x=\lambda_s(t)$.
From Lemma \ref{lem:inS}, we may assume that $\angle(\xi,\pm\dot\lambda_s(t))\ge\pi/3$. If $\xi$ is positive, Lemmas \ref{lem:pass} and \ref{lem:noreturn} imply that for some $s_1> s_{\max}(x)$,
$\lambda_s$ meets $\gamma$ at, say $\gamma(t(s))$ for every $s\in [s_{\max}(x), s_1]$.
Since $\xi$ is horizontal, $t(s)$ is unique and continuous in $s$,  and $t(s)>0$ if $s>s_{\max}(x)$.
Therefore $\gamma([0, t(s_1)])\subset S$. 
From this argument, the local uniformness of $t(s_1)$ for $\xi$ is clear.
The case when $\xi$ is negative is similar. 
If $x\in\partial S\setminus \{p\}$, the proof is similar.

Finally we consider the case $x=p$.
For small enough $\e>0$, let $\xi_{\pm}\in \Sigma_p(X)$ be such that 
\[
\angle(\xi_+, \xi_-)=\angle(\xi_+,\xi)+\angle(\xi,\xi_-) =2\angle(\xi_+,\xi)=2\e,
\]
and let $\gamma_{\pm}$ be $X$-geodesics with $\dot\gamma_{\pm}(0)=\xi_{\pm}$.
For a small $\eta>0$, let $U(\eta)$ denote the 
domain bounded by $\gamma_{\pm}$ and $S(p,\eta)$.
If $\eta$ is  small enough, then $U(\eta)$
is homeomorphic to a disk.
Take $x_i\in S$ with $x_i\to p$ such that $\uparrow_p^{x_i}\to \xi$.
Then for a large $N$, we have $x_i\in U(\eta)$ for all $i\ge N$.
If $x_i=\lambda_{s_i}(t_i)$, $\lambda_{s_i}$ must meet
$\gamma_{\pm}$. The intermediate-value theorem then yields that 
the subdomain of $U(\eta)$ bounded by 
$\gamma_{\pm}$ and $\lambda_{s_N}$ is contained in $S$.
In particular $\gamma([0,\e_1])\subset S$ for small $\e_1>0$.
\end{proof}

\psmall
\begin{rem} \label{rem:singular-nonconvex} \upshape
If $\xi\in\Sigma_x(S)$ is singular in $\Sigma_x(X)$, Lemma \ref{lem:regular-dir} does not hold in general. See Examples \ref{ex:2} and \ref{ex:3}.
\end{rem}

\psmall

\begin{lem} \label{lem:S-circle}
Let $x\in S$.
\benum
 \item If $x\in {\rm int} S$, then $\Sigma_x(S)$ is a circle of length $<2\pi +\tau_p(r)\,;$ 
\item If $x\in\pa S$, then $\Sigma_x(S)$ is an arc.
\end{enumerate}
\end{lem}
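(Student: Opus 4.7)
The plan is to identify $\Sigma_x(S)$ explicitly as a subgraph of the CAT(1)-graph $\Sigma_x(X)$, which by Lemma~\ref{lem:near-vert} is $\tau_p(r)$-close to a spherical suspension over finitely many points with apices near $\pm\nabla d_p(x)$ and ``meridian'' edges of length close to $\pi$.

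For case (1), I would first assemble a candidate cycle in $\Sigma_x(S)$. The convex neighborhoods $U_{\pm}(x,\epsilon)\subset S$ supplied by Lemma~\ref{lem:inS} furnish two open arcs $A_{\pm}\subset\Sigma_x(S)$ of angular radius $\pi/4$ centered at $\pm\dot\lambda(x)$, and by Corollary~\ref{cor:vert} these avoid the vertex region and hence sit inside single regular meridians. Next, the ruling geodesics $\lambda_s$ with $s$ slightly exceeding $s_{\max}(x)$ (respectively, slightly less than $s_{\min}(x)$) accumulate at $x$, while the no-return lemmas (Lemmas~\ref{lem:noreturn} and \ref{lem:noreturn2}) force the accumulating directions $\uparrow_x^{\lambda_s(t)}$ into the positive (respectively, negative) half of $\Sigma_x(X)$; upgrading any such horizontal direction via Lemma~\ref{lem:basic} produces two shortest arcs $B_{\pm}\subset\Sigma_x(S)$ of length close to $\pi$ joining near $\pm\dot\lambda(x)$ through the two apices. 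The cycle $C:=A_{+}\cup B_{+}\cup A_{-}\cup B_{-}$ has total length within $\tau_p(r)$ of $2\pi$ by the circle-length estimate \eqref{eq:lengthC} extracted from the proof of Lemma~\ref{lem:basic}.

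The main obstacle is ruling out any additional directions, that is, proving $\Sigma_x(S)=C$. If $\xi'\in\Sigma_x(S)\setminus C$ were sufficiently horizontal in the positive half, Lemma~\ref{lem:basic} would yield another shortest arc $\xi'_\infty\subset\Sigma_x(S)$ through $\xi'$ connecting near $\pm\dot\lambda(x)$; its endpoints land in the regular arcs $A_{\pm}$, and the uniqueness of geodesics in the CAT(1) space $\Sigma_x(X)$ combined with the regularity-transport mechanism underlying Lemma~\ref{lem:pass} forces $\xi'_\infty$ to coincide with $B_{+}$ on their common regular portions, contradicting $\xi'\notin B_{+}$. A mildly horizontal or vertical $\xi'\in\Sigma_x(S)$ lying outside $A_{\pm}\cup B_{\pm}$ would sit on a meridian of $\Sigma_x(X)$ distinct from those already used; applying Lemma~\ref{lem:regular-dir} to continue along such a regular direction into $S$, together with the same Lemma~\ref{lem:pass} argument applied to this extension, again produces the contradiction. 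This establishes that $\Sigma_x(S)=C$ is a circle of length $<2\pi+\tau_p(r)$.

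For case (2), the boundary structure of $S$ means that ruling geodesics approach $x$ from only one side of the parameter interval $[0,\ell]$, so exactly one of the two horizontal arcs $B_{\pm}$ and one of the two vertical arcs $A_{\pm}$ from the interior construction appear in $\Sigma_x(S)$. I would treat the cases $x\in\alpha_i\setminus\{p,\alpha_i(\ell)\}$, $x\in\lambda_{\ell}\setminus\{\alpha_1(\ell),\alpha_2(\ell)\}$, $x=\alpha_i(\ell)$ and $x=p$ separately, in each case using the one-sided analogues of Lemmas~\ref{lem:inS} and \ref{lem:basic} to realize $\Sigma_x(S)$ as a connected arc. The special endpoint $x=p$ is handled directly: $\Sigma_p(S)$ is the shortest arc in $\Sigma_p(X)$ from $\nu_1$ to $\nu_2$ through the vertex $v$, which is immediate from the construction of $S$ as the union of minimal geodesics $\lambda_s$ from $\alpha_1(s)$ to $\alpha_2(s)$.
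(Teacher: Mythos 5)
Your overall architecture for part~(1) matches the paper's: build a candidate circle $C\subset\Sigma_x(S)$ out of (i) the regular arcs supplied by Lemma~\ref{lem:inS} around $\pm\dot\lambda(x)$ and (ii) two ``long'' geodesic arcs through the vertex region produced by Lemma~\ref{lem:basic}, and then rule out any $\xi'\in\Sigma_x(S)\setminus C$. (The paper uses arcs of radius $\pi/3$ for $C_0$ where you use $\pi/4$; that is immaterial.) But there are three places where your argument is either misdirected or leaves a genuine gap.

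First, your mechanism for ruling out extra directions is not quite the right one. The paper applies Lemma~\ref{lem:basic} to the hypothetical $w\in\Sigma_x(S)\setminus C$ to get a shortest arc $C_1$ with endpoints $\tau_p(r)$-close to $\pm\dot\lambda_{s_0}(t_0)$, and then makes a single purely \emph{metric-graph} observation: the part $C_1'$ of $C_1$ away from a small neighborhood of $\pm\dot\lambda_{s_0}(t_0)$ must lie in $C$, because in the $\CAT(1)$-graph $\Sigma_x(X)$ any shortest arc with endpoints in the two regular meridian arcs of $C_0$ is forced through the vertex region along those same two meridians. Since $w\in C_1'$, this gives the contradiction at once. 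You instead invoke ``the regularity-transport mechanism underlying Lemma~\ref{lem:pass}''; but Lemma~\ref{lem:pass} is a statement about ruling geodesics in $X$ passing through auxiliary geodesics, not about uniqueness/rigidity of shortest arcs in the space of directions $\Sigma_x(X)$. It does not do the work you want here; the relevant tool is the geodesic-graph geometry of $\Sigma_x(X)$, exactly as encoded by the $C_1'\subset C$ observation.

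Second, the case split into ``sufficiently horizontal'' and ``mildly horizontal or vertical'' is unnecessary and the second case is not resolved. The paper handles any $w$ with $\angle(w,\pm\dot\lambda_{s_0}(t_0))\ge\pi/3$ uniformly with Lemma~\ref{lem:basic}; the hypothesis of that lemma is precisely this angle bound, not horizontality. Your treatment of the ``mildly horizontal or vertical'' case via Lemma~\ref{lem:regular-dir} together with ``the same Lemma~\ref{lem:pass} argument'' is not a proof: it is unclear how continuing $\gamma_{\xi'}$ into $S$ and Lemma~\ref{lem:pass} produce a contradiction with $\xi'\notin C$. You should instead simply apply Lemma~\ref{lem:basic} to $\xi'$ (the only input is the angle bound) and run the same graph-geometry contradiction.

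Third, for part~(2) the claim that ``$\Sigma_p(S)$ is the shortest arc from $\nu_1$ to $\nu_2$ through $v$, immediate from the construction'' is too hasty. The inclusion $[\nu_1,\nu_2]\subset\Sigma_p(S)$ is not tautological: the paper proves it by taking interior directions $\eta_i\in[\nu_i,v]$, geodesics $\sigma_i$ with $\dot\sigma_i(0)=\eta_i$, showing that $\lambda_s$ meets both $\sigma_1$ and $\sigma_2$ for small $s$ (this requires an intermediate-value argument in a disk neighborhood), concluding $[\nu_1,\eta_1]\cup[\eta_2,\nu_2]\subset\Sigma_p(S)$, and then letting $\eta_i\to v$. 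The converse inclusion also needs the observation that any $\xi\in\Sigma_p(S)$ arises from $x_i=\lambda_{s_i}(t_i)$ with $s_i\to 0$ and hence lands in $[\nu_1,\nu_2]$. Related to this, your description of $\Sigma_x(S)$ for $x\in\alpha_i\setminus\{p\}$ as ``one of $B_\pm$ and one of $A_\pm$'' is inaccurate: it is one vertical arc around $\dot\lambda_s(0)$ together with \emph{halves} of each of $B_+$ and $B_-$, since the endpoints $\pm\dot\alpha_i(s)$ sit near the two apices $\pm\nabla d_p(x)$.
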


\begin{proof} $(1)$
First we show that $\Sigma_x(S)$ contains a circle $C$.
Take an  $s_0\in s(x)$ and $t_0$ with $x=\lambda_{s_0}(t_0)$. Obviously
\[
C_0 := 
\{ \xi\in \Sigma_x(X)\,|\, \angle(\pm\dot\lambda_{s_0}(t_0),\xi) \le \pi/3\,\}
\]
consists of two arcs in the regular part of $\Sigma_x(X)$.
It follows  from Lemma \ref{lem:inS} that  
$C_0$ is contained in $\Sigma_x(S)$.
For a  positively horizontal direction $v_+\in \Sigma_x(S)$, 
we apply Lemma \ref{lem:basic} to obtain a minimal arc $C_+$ in $\Sigma_x(S)$ 
joining two points close to 
$\pm\dot\lambda_{s_0}(t_0)$ and containing $v_+$.
%
Similarly, for a  negatively horizontal direction $v_-\in \Sigma_x(S)$, 
we apply Lemma \ref{lem:basic} to obtain a minimal arc $C_-$ in $\Sigma_x(S)$ 
joining two points close to 
$\pm\dot\lambda_{s_0}(t_0)$ and containing $v_-$.
Obviously the union of $C_0$, $C_+$ and $C_-$ forms a circle 
$C$ in $\Sigma_x(S)$. 
It follows from Lemma \ref{lem:s(x)-diam}  and \eqref{eq:infinite-app} that
\[
        |L(C_{\pm})-\pi|<\tau_p(r),\,\, L(C\setminus (C_+\cup C_-))<\tau_p(r),
\]
which implies $|L(C)-2\pi|<\tau_p(r)$.

Suppose next that 
$\Sigma_x(S)\setminus C$ is not empty, and take a $w$ in $\Sigma_x(S)\setminus C$. 
Since $\angle(w, \pm\dot\lambda_{s_0}(t_0))\ge \pi/3$, 
we can apply Lemma \ref{lem:basic} to obtain a minimal arc 
$C_1$ in $\Sigma_x(S)$ 
joining two points close to 
$\pm\dot\lambda_{s_0}(t_0)$ and containing $w$.
Note that the complement $C_1'$ of a small neighborhood of 
$\pm\dot\lambda_{s_0}(t_0)$ in $C_1$ is contained in $C$,
and $w$ must be contained in $C_1'$, which is a contradiction.

$(2)$ If $x=\lambda_s(0)$ with $ 0<s<\ell$ (resp.  $s=\ell$), then 
$\Sigma_x(S)$ is an arc with endpoints $\pm\dot\alpha_1(s)$
(resp. $-\dot\alpha_1(\ell)$ and $\dot\lambda_\ell(0)$) 
through $\dot\lambda_s(0)$ (recall \eqref{eq:bdyS}).
The case $x=\lambda_s(1)$ with $ 0<s\le\ell$
is similar.
Next consider the case $x=p$.
Let $v$ and $\nu_1, \nu_2$ be as in \eqref{eq:delta}.
We show that $\Sigma_p(S)$ coincides with the arc $[\nu _1,\nu_2]$
in $\Sigma_p(X)$. 
Let $\eta_i$ be any interior point of $[\nu_i,v]$,
and let $\sigma_i$ be $X$-geodesics with $\dot\sigma_i(0)=\eta_i$.
If $s>0$ is small enough, then $\lambda_s$ meets both
$\sigma_1$ and $\sigma_2$.
This implies that $[\nu_1,\eta_1]\cup [\eta_2,\nu_2]$ is contained in 
$\Sigma_p(S)$. Letting $\eta_1, \eta_2\to v$, we obtain
that $[\nu_1,\nu_2]\subset \Sigma_p(S)$.
Conversely, for any $\xi\in\Sigma_p(S)$, take $x_i\in S$ with 
$|p,x_i|\to 0$ and $\uparrow_p^{x_i}\to \xi$. Since $x_i$ can be written as
$x_i=\lambda_{s_i}(t_i)$ with $s_i\to 0$, it is obvious that 
$\xi\in [\nu_1,\nu_2]$. Thus we have $\Sigma_p(S)=[\nu_1,\nu_2]$.
\end{proof}

\begin{defn}  \label{defn:intrinsic-direct}\upshape 
For $x\in S$, let  $\Sigma_x(S^{\rm int})$ denote the {\it intrinsic space of directions}
 of $S$ at $x$, which is defined as the completion of 
the set of all equivalence classes of $S$-geodesics starting from $x$
equipped with the upper angle $\angle^S$ for the induced interior metric of $S$.
\end{defn}

\begin{lem} \label{lem:ext=int}
$\Sigma_x(S)$ is isometric to $\Sigma_x(S^{\rm int})$.
\end{lem}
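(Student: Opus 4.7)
The plan is to construct a bijection $\Psi:\Sigma_x(S)\to\Sigma_x(S^{\rm int})$ that preserves the upper angle on a dense subset and then extend it by continuity. For a regular direction $\xi\in\Sigma_x(S)\setminus\ca S(\Sigma_x(X))$, Lemma \ref{lem:regular-dir} supplies an $X$-geodesic $\gamma_\xi:[0,\e]\to X$ whose image lies in $S$ and with $\dot\gamma_\xi(0)=\xi$. Because the $X$-length of $\gamma_\xi$ equals its $d_S$-length (both are arc-length in $X$), and $d_S\ge d_X$, the curve $\gamma_\xi$ also locally realizes $d_S$; hence it defines an element $\Psi(\xi):=[\gamma_\xi]\in\Sigma_x(S^{\rm int})$, well-defined by uniqueness of $X$-geodesics in a given direction inside the $\CAT(\kappa)$-ball $B(p,r)$.

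The central step is the angle identity $\angle^X(\xi_1,\xi_2)=\angle^S([\gamma_{\xi_1}],[\gamma_{\xi_2}])$ for regular $\xi_1,\xi_2\in\Sigma_x(S)$. The inequality $\angle^S\ge\angle^X$ is automatic from $d_S\ge d_X$ together with the monotonicity of comparison angles in the opposite side. For the reverse inequality I would show $d_S(\gamma_{\xi_1}(s),\gamma_{\xi_2}(t))\le(1+\tau_p(r))d_X(\gamma_{\xi_1}(s),\gamma_{\xi_2}(t))$ for small $s,t$ by exploiting the ruled surface structure: the points $\gamma_{\xi_1}(s)$ and $\gamma_{\xi_2}(t)$ lie on or near ruling geodesics $\lambda_{s'},\lambda_{s''}$, and an interpolating curve in $S$ can be built by sweeping through intermediate rulings. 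The no-return Lemma \ref{lem:noreturn2} guarantees that the rulings vary monotonically between the two endpoints; Lemma \ref{lem:basic} provides admissible geodesic segments in $S$; and the bound $\diam\,s(y)/|p,y|<\tau_p(|p,y|)$ from Lemma \ref{lem:s(x)-diam} supplies the length estimate.

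I would then extend $\Psi$ by continuity. Regular directions are dense in $\Sigma_x(S)$ because $\ca S(\Sigma_x(X))$ is finite and $\Sigma_x(S)$ is a circle or arc by Lemma \ref{lem:S-circle}. The angle identity makes $\Psi$ a $1$-Lipschitz isometric embedding on this dense subset, hence it extends uniquely to an isometric embedding into the complete space $\Sigma_x(S^{\rm int})$. Injectivity is inherited from angle preservation. For surjectivity, any $[\gamma]\in\Sigma_x(S^{\rm int})$ is a $\angle^S$-limit of classes $[\gamma_{\xi_n}]$ for regular $\xi_n\in\Sigma_x(S)$: approximate points of $S$ near $x$ along $\gamma$ by points on $X$-geodesics from $x$ in regular directions, and use Step 2 to upgrade $X$-angle convergence of the $\xi_n$ to $\angle^S$-convergence of their images.

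The main obstacle will be the angle-preservation step: since $S$ need not be convex in $X$ (see Example \ref{ex:3} and Remark \ref{rem:ex-convex}), one cannot invoke an $X$-minimizer between $\gamma_{\xi_1}(s)$ and $\gamma_{\xi_2}(t)$, and the interpolation through rulings must be arranged so that the paths stay in $S$ and incur only $\tau_p(r)$-error in length. This is especially delicate when one of $\gamma_{\xi_1}$ or $\gamma_{\xi_2}$ is close to the ruling direction $\pm\dot\lambda(x)$ at $x$, where the geometry of $S$ near $x$ degenerates and the sweeping family of rulings has small transverse width.
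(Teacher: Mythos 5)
The proposal correctly identifies the overall strategy (build the correspondence via $X$-geodesics inside $S$, check angles on regular directions, extend), but the key step — angle preservation — is not carried out correctly, and the gap is real. You propose to establish $d_S(\gamma_{\xi_1}(s),\gamma_{\xi_2}(t))\le(1+\tau_p(r))\,d_X(\gamma_{\xi_1}(s),\gamma_{\xi_2}(t))$ for small $s,t$ and deduce the angle identity from it. But here $r$ is the \emph{fixed} radius of the ambient ball $B(p,r)$, so $\tau_p(r)$ is a fixed positive constant; the bound does not improve as $s,t\to 0$, and the resulting limit only gives $\angle^S(\xi_1,\xi_2)\le\angle^X(\xi_1,\xi_2)+\tau_p(r)$. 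That is an approximate statement, not the exact equality $\angle^S=\angle^X$ that the claimed \emph{isometry} requires. No amount of density-and-continuity extension can repair an inequality that is off by a fixed additive error.

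The paper's proof works because it gets \emph{exact} local equality $d_S=d_X$, via two separate mechanisms that you have not separated. First, when $\xi_1,\xi_2$ lie in the same component of $\Sigma_x(S)\setminus\bigl(\Sigma_x(S)\cap\ca S(\Sigma_x(X))\bigr)$, Lemma~\ref{lem:regular-dir} (and the uniformity of $\e$ it provides) shows not only that $\mu_i\subset S$, but that the $X$-geodesic from $\mu_1(t)$ to $\mu_2(t)$ lies in $S$ for small $t$, so $d_S(\mu_1(t),\mu_2(t))=d_X(\mu_1(t),\mu_2(t))$ exactly and $\angle^S(\xi_1,\xi_2)=\angle^X(\xi_1,\xi_2)$. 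Second, to pass across a singular vertex $v\in\Sigma_x(S)\cap\ca S(\Sigma_x(X))$, you cannot use an $X$-geodesic joining the two sides (this is precisely the non-convexity issue you raise), and you cannot afford a lossy ``sweep.'' Instead the paper invokes Lemma~\ref{lem:pass}: one finds ruling geodesics $\lambda_{s_i}\subset S$ — honest $X$-geodesics inside $S$ — hitting both $\gamma_3$ and $\gamma_4$ with $s_i\to s_0\in s(x)$, and this gives the exact comparison $\angle^S(\xi_3,\xi_4)=\angle^X(\xi_3,\xi_4)$ across the vertex. Without this two-case split and, crucially, without Lemma~\ref{lem:pass} for the singular case, the argument does not close.
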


\begin{proof}  First assume $x\in \inte\, S$.
Let 
\[
    \Omega:=\Sigma_x(S) \cap {\mathcal S}(\Sigma_x(X)).
\]
Note that $|\Omega|<\infty$.
We first show that each component $\Sigma$ of $\Sigma_x(S)\setminus \Omega$
is isometrically embedded in $\Sigma_x(S^{\rm int})$. Take $\xi_1$ and $\xi_2$
from $\Sigma$ with $|\xi_1,\xi_2|<\pi$. Let $\mu_i:[0,\e]\to X$ be an $X$-geodesic with
$\dot\mu_i(0)=\xi_i$. Then for small $\e$, we have  from Lemma \ref{lem:regular-dir} 
\begin{enumerate}
 \item $\mu_i\subset S$
 \item every $X$-geodesic joining $\mu_1(t)$ and $\mu_2(t)$ is contained in $S$
  for every $t\in [0,\e]$. 
\end{enumerate}
Thus we conclude that $\angle^X(\xi_1,\xi_2)= \angle^S(\xi_1,\xi_2)$.

Next, for any $v\in\Omega$, take 
$\xi_3, \xi_4\in \Sigma_x(S)\setminus \Omega$ close to $v$ such that 
$\xi_3,  v$ and $\xi_4$ are in this order on the circle $\Sigma_x(S)$.
Take $X$-geodesics  $\gamma_i$\, $(i=3,4)$\, in the direction $\xi_i$.
By Lemma \ref{lem:pass}, we can find a sequence $s_i$ 
such that $s_i\to s_0\in s(x)$ and  $\lambda_{s_i}$ meets both 
$\gamma_3$ and $\gamma_4$.
This implies that $\angle^X(\xi_3,\xi_4)= \angle^S(\xi_3,\xi_4)$.
This completes the proof.

The case $x\in \pa S$ is similar, and  hence we omit the proof.  
\end{proof}

\pmed
\subsection{Proof of Theorem \ref{thm:ruled}} \label{ssec:monotonicity}

\pmed
 In this subsection, we  first prove Theorem \ref{thm:ruled}.
Then we control the difference between the geometries 
of $S$ and $X$.

\begin{lem} \label{lem:point-crossing}
For every $x\in S$, we have 
\begin{enumerate}
 \item $s(x)$ is either a point or a closed interval $;$
 \item $\sigma^{-1}(x)$ is a strictly monotone arc in $R$.
\end{enumerate}
\end{lem}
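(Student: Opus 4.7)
The plan is to prove (1) by contradiction using the no-return lemma (Lemma \ref{lem:noreturn2}), applied to the foot of the perpendicular from $x$ onto a purportedly intermediate ruling, and to deduce (2) from (1) via continuity of the $t$-parameter along each nonconstant ruling.

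For (1), since $s(x) = p_1(\sigma^{-1}(x))$ is closed in $[0,\ell]$ by continuity of $\sigma$, it suffices to show connectedness. Suppose towards a contradiction that there exist $s_1 < s_0 < s_2$ with $s_1, s_2 \in s(x)$ and $s_0 \notin s(x)$; write $x = \lambda_{s_i}(t_i)$ for $i = 1, 2$, and let $y = \lambda_{s_0}(t_0) \in \lambda_{s_0}$ realize $|x,y| = d(x, \lambda_{s_0}) > 0$. By Lemma \ref{lem:s(x)-diam}(1) and continuity of $\sigma$, $y$ lies in the interior of $\lambda_{s_0}$ (assuming $x \ne p$), so CAT$(\kappa)$ minimality gives $\angle(\uparrow_y^x, \pm\dot\lambda_{s_0}(t_0)) \ge \pi/2$; combined with \eqref{eq:lambda-gradient} and the fact that $\Sigma_y(S)$ is a nearly $2\pi$-circle (Lemma \ref{lem:S-circle}) on which $\pm\dot\lambda_{s_0}(t_0)$ and $\pm\nabla d_p(y)$ form four near-orthogonal points, this forces $\uparrow_y^x$ to be approximately $\pm\nabla d_p(y)$, hence horizontal, and Lemma \ref{lem:foot} then yields that $\uparrow_x^y$ is horizontal of the opposite sign. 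Applying Lemma \ref{lem:noreturn2}: if $\uparrow_y^x$ is negatively horizontal then $\uparrow_{\lambda_{s_0}(t_0)}^{\lambda_{s_2}(t_2)}$ is negatively horizontal with $s_0 < s_2$, which is forbidden; if $\uparrow_y^x$ is positively horizontal then $\uparrow_x^y = \uparrow_{\lambda_{s_1}(t_1)}^{\lambda_{s_0}(t_0)}$ is negatively horizontal with $s_1 < s_0$, again forbidden. Either case gives a contradiction, so $s(x)$ is connected, proving (1).

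For (2), take $s(x) = [s_-, s_+]$ (the single-point case is immediate). For each $s \in s(x)$ with $s > 0$ the minimal geodesic $\lambda_s$ is injective, so there is a unique $t(s) \in [0,1]$ with $\lambda_s(t(s)) = x$. Continuity of $\sigma$ combined with uniqueness makes $s \mapsto t(s)$ continuous: if $s_n \to s_\infty$ in $s(x)$ and $t(s_{n_k}) \to t_\infty$ along a subsequence, then $\lambda_{s_\infty}(t_\infty) = x$, hence $t_\infty = t(s_\infty)$. Parametrizing by $c(s) := (s, t(s))$ gives a continuous injection $[s_-, s_+] \to R$ with image $\sigma^{-1}(x)$ and $p_1 \circ c(s) = s$ strictly monotone, as required. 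The main obstacle is justifying the interior-foot property used in (1), i.e.\ ruling out $y = \alpha_i(s_0)$ for $i = 1, 2$; this follows from continuity of $\sigma$ together with the smallness of $s_2 - s_1$ given by Lemma \ref{lem:s(x)-diam}(1), which forces $t_0$ to be close to $t_1, t_2 \in (0,1)$ whenever $x$ is away from $p$ and from the outer ruling $\lambda_\ell$.
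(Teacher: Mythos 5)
Your proof of part (1) is correct and takes a genuinely different route from the paper's. The paper chooses an $S$-geodesic $\gamma$ starting at $x$ in a regular, positive, horizontal direction, invokes Lemmas \ref{lem:noreturn} and \ref{lem:pass} to show that the rulings $\lambda_s$ for $s$ slightly above $s_{\min}(x)$ already pass through $\gamma\setminus\{x\}$, and then derives the contradiction from $x\in\lambda_{s_{\max}(x)}$ via Lemma \ref{lem:noreturn2}. You instead project $x$ directly onto the intermediate ruling $\lambda_{s_0}$ and feed the resulting foot-direction dichotomy, together with Lemma \ref{lem:foot}, into Lemma \ref{lem:noreturn2}. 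Both arguments funnel through the no-return lemma, but yours is shorter and bypasses Lemmas \ref{lem:noreturn} and \ref{lem:pass} entirely, relying only on facts the paper has already stated. Two remarks. First, your inline justification that $\uparrow_y^x$ must lie near $\pm\nabla d_p(y)$ --- via the first-variation inequality combined with $\Sigma_y(S)$ being a near-$2\pi$ circle --- is not quite right as written: the direction $\uparrow_y^x$ need not lie in $\Sigma_y(S)$, and on the graph $\Sigma_y(X)$ any direction on a branch other than the one(s) carrying $\pm\dot\lambda_{s_0}(t_0)$ is automatically at angle $\ge \pi/2$ from both, so the foot condition alone does not force $\uparrow_y^x$ to sit near a pole. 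The fact you need is exactly the dichotomy the paper asserts in the paragraph immediately preceding Lemma \ref{lem:foot}; you should simply cite that rather than re-derive it, and once you do, the auxiliary worry about the foot lying in the interior of $\lambda_{s_0}$ also becomes unnecessary. Second, part (2) coincides with the paper's one-line deduction from (1) and the injectivity of $\sigma|_{I_s}$, merely written out in full.
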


\begin{proof}
Suppose that the conclusion (1) does not hold. Then we would have $s_-<s_+$ such that 
$s_{\pm}\in s(x)$ and $(s_-, s_+)$ does not meet $s(x)$.
By Lemma \ref{lem:base}, we may assume 
$x\neq p$.
Choose an $S$-geodesic $\gamma:[0,a)\to S$ starting from $x$ 
such that $\dot\gamma(0)$ is  a positive, horizonal and regular direction.
Let us denote 
\begin{align*}
  I & =\{ s\in (s_-, s_+)\, |\,\lambda _s \,\, {\rm passes \,\,through}\,\, \gamma\setminus \{ x\} \,\}. 
 \end{align*}
From Lemmas \ref{lem:noreturn} and \ref{lem:pass}, there is  $\e_0>0$ such that 
 $(s_-, s_- +\e) \subset I$  for every $0<\e\le\e_0$.
 Since $x\in\lambda_{s_+}$, this is a contradiction to 
 Lemma \ref{lem:noreturn2}.  

(2) follows immediately from (1) and the injectivity of $\sigma|_{I_s}$ for each $s\in (0,\ell]$.
\end{proof}

\begin{proof}[Proof of Theorem \ref{thm:ruled}]
By Lemma \ref{lem:point-crossing}, we have \eqref{eq:sigma=uv} for all $u,v\in R$.
Thus $S$ has the induced metric from  $\sigma$.
Theorem \ref{thm:int=ind} then implies that 
$(S, d_S)$ is a $\CAT(\kappa)$-space.

\psmall
We set $S^{\rm int}:=(S, d_S)$.

\begin{lem} \label{lem:g-complete}
${\rm int}\, S^{\rm int}$ is locally geodesically complete.
\end{lem}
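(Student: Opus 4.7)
The plan is to show that any $S^{\rm int}$-geodesic $\gamma:[0,a]\to\inte\,S^{\rm int}$ ending at $x:=\gamma(a)$ can be extended past $x$ as an $S^{\rm int}$-geodesic. Writing $v^-:=\dot\gamma^-(a)\in\Sigma_x(S^{\rm int})$ for the backward direction, I would first set up the target direction. Combining the CAT($\kappa$) property of $S^{\rm int}$ (just established in the proof of Theorem~\ref{thm:ruled} from Lemma~\ref{lem:point-crossing} and Theorem~\ref{thm:int=ind}), which forces $\Sigma_x(S^{\rm int})$ to be CAT$(1)$, with the isometric identification $\Sigma_x(S^{\rm int})\cong\Sigma_x(S)$ of Lemma~\ref{lem:ext=int} and the circle description of Lemma~\ref{lem:S-circle}(1), one sees that $\Sigma_x(S^{\rm int})$ is a circle of length $L\in[2\pi,2\pi+\tau_p(r))$. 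In particular there exists $v^+\in\Sigma_x(S^{\rm int})$ at arclength exactly $\pi$ from $v^-$, so $\angle^{S^{\rm int}}(v^-,v^+)=\pi$.

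Next I would construct an $S^{\rm int}$-geodesic $\bar\gamma:[0,\epsilon]\to S$ starting at $x$ with $\dot{\bar\gamma}(0)=v^+$, splitting into two cases. If $v^+$ is regular in $\Sigma_x(X)$, Lemma~\ref{lem:regular-dir} gives an $X$-geodesic $\bar\gamma$ in direction $v^+$ with image in $S$; since $d_X\le d_S$ while $L(\bar\gamma)=|x,\bar\gamma(\epsilon)|_X$, this curve realizes the $S$-distance and is automatically an $S^{\rm int}$-geodesic. If $v^+$ is singular in $\Sigma_x(X)$, it is horizontal by Corollary~\ref{cor:vert}; since $\Omega:=\Sigma_x(S)\cap\mathcal S(\Sigma_x(X))$ is finite, I pick regular horizontal $\xi_1,\xi_2\in\Sigma_x(S)$ close to $v^+$ on opposite sides of $v^+$ on the circle. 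Applying Lemma~\ref{lem:regular-dir} to $\xi_1,\xi_2$ produces $X$-geodesics $\gamma_1,\gamma_2:[0,\epsilon']\to S$, and (for $\epsilon'$ small enough) the triangle with these two sides and a short transverse $X$-geodesic spans a disk domain $D\subset X$. Mimicking the sweeping argument in the proof of Lemma~\ref{lem:inS}, I would use Lemma~\ref{lem:pass} together with the continuity of $s\mapsto\lambda_s$ and the intermediate-value theorem to show the rulings $\lambda_s$, for $s$ in a suitable subinterval of $(0,\ell)$, cross $D$ from $\gamma_1$ to $\gamma_2$ and sweep it out, so $D\subset S$. The $X$-geodesic $\bar\gamma$ in direction $v^+$ (guaranteed by geodesic completeness of $X$) lies inside this wedge for a positive time, hence in $S$, and is an $S^{\rm int}$-geodesic as in the regular case.

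To conclude, the incoming and outgoing directions $v^-,v^+$ at $x$ satisfy $\angle^{S^{\rm int}}(v^-,v^+)=\pi$ and $S^{\rm int}$ is CAT($\kappa$); the usual comparison $\tilde\angle_x^\kappa\ge\angle$ is saturated, so the comparison triangles $\tilde\triangle\,x\,\gamma(a-t)\,\bar\gamma(s)$ in $M^2_\kappa$ degenerate and $d_{S^{\rm int}}(\gamma(a-t),\bar\gamma(s))=t+s$ for small $t,s$, which is exactly the statement that $\gamma\cup\bar\gamma$ is an $S^{\rm int}$-geodesic extending $\gamma$ past $x$. The main obstacle is the singular subcase in the second paragraph: when $v^+$ is a vertex of the graph $\Sigma_x(X)$ lying on the circle $\Sigma_x(S)$, Lemma~\ref{lem:regular-dir} does not apply directly, and one has to sandwich $\bar\gamma$ inside a small disk region proved to lie in $S$ by a one-parameter family of ruling geodesics, in the spirit of the proof of Lemma~\ref{lem:inS}.
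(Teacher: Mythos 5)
Your overall strategy---reduce to showing that the antipode $v^+$ of the incoming direction $v^-$ in the circle $\Sigma_x(S^{\rm int})$ is realized by an $S^{\rm int}$-geodesic, then glue using degenerate comparison triangles---is sound, and your Case~1 (regular $v^+$) is correct. The paper itself does not argue this way: its proof is a one-line reduction to Lemma~\ref{lem:S-circle} plus a citation to general CAT$(\kappa)$ theory (Bridson--Haefliger, Lytchak--Schroeder, Kramer), so your attempt is a genuinely different, more hands-on route. It is worth saying that your derivation of the crucial lower bound $L(\Sigma_x(S^{\rm int}))\ge 2\pi$ (from $S^{\rm int}$ being CAT$(\kappa)$, which you correctly observe is already available at this point of the argument) is exactly the ingredient the paper's citation is hiding; Lemma~\ref{lem:S-circle}(1) alone only gives the upper bound $2\pi+\tau_p(r)$.

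However, Case~2 has a genuine gap. You argue that the wedge $D$ between $\gamma_1$ and $\gamma_2$ lies in $S$, and then conclude that the $X$-geodesic $\bar\gamma$ in direction $v^+$ ``lies inside this wedge for a positive time.'' That last step is not justified: $v^+$ is a vertex of the graph $\Sigma_x(X)$, and $\Sigma_x(D)=[\xi_1,v^+]\cup[v^+,\xi_2]$ uses only two of the $\ge 3$ edges incident to $v^+$. An $X$-geodesic emanating in direction $v^+$ may immediately enter one of the other branches of $\Sigma_x(X)$ and leave $D$ (and $S$) at once---this is exactly the failure mode that Remark~\ref{rem:singular-nonconvex} warns about for singular directions. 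Showing $D\subset S$ does not by itself produce an $S$-curve with tangent $v^+$. A cleaner way to close the gap, which avoids the regular/singular case split entirely and is closer in spirit to the cited references, is the following limit argument: pick regular directions $v^+_n\to v^+$ on the circle; Lemma~\ref{lem:regular-dir} gives $X$-geodesics $\bar\gamma_n\subset S$ in directions $v^+_n$; by local compactness of $X$ (and closedness of $S$) a subsequence converges to an $S^{\rm int}$-geodesic $\bar\gamma$ issuing from $x$; then by upper semi-continuity of angles (Lemma~\ref{lem:limit}), $\angle^{S^{\rm int}}(v^-,\dot{\bar\gamma}(0))\ge\limsup_n\angle(v^-,v^+_n)=\pi$, so the angle is exactly $\pi$ and the concatenation $\gamma\cup\bar\gamma$ is a local geodesic. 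The one point that still needs care in this route (and which the general references handle) is a uniform positive lower bound on the lengths on which $\bar\gamma_n$ can be defined inside $S$, since Lemma~\ref{lem:regular-dir} only asserts local uniformity away from the singular direction.
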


\begin{proof}  
This is immediate from Lemma \ref{lem:S-circle}
in a straightforward way.
  See \cite[Proposition II.5.12]{bridson-haefliger},   \cite[Theorem 1.5]{LSr} together with  \cite[Theorem A]{Kr:local}
 for general considerations.
\end{proof}

We prove that $S^{\rm int}$ is a topological 
two-manifold with boundary.
In view of Lemmas \ref{lem:disk}, 
\ref{lem:S-circle},  \ref{lem:ext=int}  and \ref{lem:g-complete}, it suffices to show that a 
small $S$-ball around any point $x\in\pa S$ is 
homeomorphic to a half disk. Suppose $x=p$. 
The other cases are similar. 
The argument is standard. Logically,
we proceed as follows.
For a positive integer $m$ with $m\ge [\pi/2\delta]+1$, 
gluing $m$ copies of $S$ in order around $p$, we have a sector $T$ with sector angle $\ge \pi$ at $p$,
which is a $\CAT(\kappa)$-space by Theorem \ref{thm:glue}. Glue two copies of $T$ along their edges
to obtain a $\CAT(\kappa)$-space $W$ for which 
$L(\Sigma_p(W))\ge 2\pi$. Then Lemma \ref{lem:disk}
shows that $p$ has an open disk neighborhood,
which implies that $p$ has a half-disk neighborhood
in $S$.

Finally, from the $\CAT(\kappa)$-property of $S$, 
the contractibility of $S$ is immediate since we may assume
that the diameter of $S$ for the metric $d_S$ is 
less than  $\pi/\sqrt{\kappa}$ when $\kappa>0$. 
This completes the proof of Theorem \ref{thm:ruled}.
\end{proof}

\medskip
In the rest of this section, we present a few results
that control the difference between the geometries 
of $X$ and $S$. These will be needed in Sections \ref{sec:fill} and \ref{sec:approx}.

\begin{lem} \label{lem:S-grad}
For arbitrary distinct $x,y\in S$, let $(-\nabla^S d_x)(y)$ denote 
$\dot \gamma_{y,x}^S(0)$, 
where $\gamma_{y,x}^S$ is the $S$-geodesic from $y$ to $x$.  Then we have
\begin{enumerate}
  \item $\angle(\dot\gamma^S_{x,y}(0), \dot\gamma^X_{x,y}(0)) <  \tau_x(|x,y|_X)\,;$
  \item $\angle((-\nabla^S d_x)(y), (-\nabla d_x)(y)) < \tau_x(|x,y|_X)$.
\end{enumerate}
\end{lem}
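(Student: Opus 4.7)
The plan is to reduce (2) to (1) by symmetry and then prove (1) by contradiction via rescaling. For (2), since $(-\nabla^S d_x)(y)=\dot\gamma^S_{y,x}(0)$ and $(-\nabla d_x)(y)=\uparrow_y^x$, applying (1) with the roles of $x$ and $y$ interchanged gives $\angle\bigl((-\nabla^S d_x)(y),(-\nabla d_x)(y)\bigr)<\tau_y(|x,y|_X)$, and this is majorized by $\tau_x(|x,y|_X)$ since $y$ stays in the fixed ball $B(p,r)$ where the moduli $\tau_q(\cdot)$ can be bounded uniformly in $q$. So I focus on~(1).

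For (1), suppose for contradiction that there exist $c>0$ and a sequence $y_n\in S$ with $y_n\to x$ satisfying $\angle(\xi_{S,n},\xi_{X,n})\ge c$, where $\xi_{S,n}:=\dot\gamma^S_{x,y_n}(0)$ and $\xi_{X,n}:=\uparrow_x^{y_n}$; both are viewed in $\Sigma_x(X)$ via the inclusion $\Sigma_x(S)\subset\Sigma_x(X)$ together with Lemma \ref{lem:ext=int}. Set $\e_n:=|x,y_n|_X$ and look at the pointed Gromov--Hausdorff convergence $(\e_n^{-1}X,x)\to(K_xX,o_x)$. Passing to a subsequence, $y_n\to y_\infty\in K_xX$ with $|o_x,y_\infty|=1$, so $\xi_{X,n}\to\eta:=y_\infty\in\Sigma_x(X)$; since $y_n\in S$, we have $\eta\in\Sigma_x(S)$. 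I split into two cases according to whether $\eta$ is regular or singular in $\Sigma_x(X)$.

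If $\eta$ is regular in $\Sigma_x(X)$, then since the vertex set of $\Sigma_x(X)$ is finite and hence discrete, I choose a neighborhood $N$ of $\eta$ containing no vertices. The locally uniform part of Lemma \ref{lem:regular-dir} yields $\epsilon_0>0$ such that $\gamma_\xi^X([0,\epsilon_0])\subset S$ for every $\xi\in N\cap\Sigma_x(S)$. For large $n$, $\xi_{X,n}\in N$ and $\e_n<\epsilon_0$, so $\gamma^X_{x,y_n}([0,\e_n])\subset S$. This forces $|x,y_n|_S=\e_n$ and, by uniqueness of short geodesics in the $\CAT(\kappa)$-space $S^{\mathrm{int}}$, identifies $\gamma^X_{x,y_n}$ with $\gamma^S_{x,y_n}$, whence $\xi_{S,n}=\xi_{X,n}$, contradicting the angle lower bound.

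The main obstacle is the singular case: by Corollary \ref{cor:vert}, $\eta$ must lie within $\tau_p(|p,x|)$ of $\pm\nabla d_p(x)$, and the $X$-geodesic to $y_n$ need not stay in $S$. I plan to exploit the $\CAT(\kappa)$ property of $S^{\mathrm{int}}$ from Theorem \ref{thm:ruled}: the rescaled spaces $(S^{\mathrm{int}},d_S/\e_n,x)$ converge in pGH to the Euclidean cone $K(\Sigma_x(S^{\mathrm{int}}))$, which by Lemma \ref{lem:ext=int} is isometric to the subset $K(\Sigma_x(S))\subset K_xX$ endowed with the restricted metric. The identity $(S,d_S)\to(S,d_X)$ is a $1$-Lipschitz homeomorphism---both topologies coincide with the topological disk structure furnished by Theorems \ref{thm:ruled} and \ref{thm:int=ind}---so the induced $1$-Lipschitz limit map $K(\Sigma_x(S^{\mathrm{int}}))\to K(\Sigma_x(S))\subset K_xX$ must actually be an isometry onto its image. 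Consequently $|x,y_n|_S/\e_n\to 1$ and $y_n\to y_\infty$ in $(S^{\mathrm{int}},d_S/\e_n)$ as well. The rescaled $S$-geodesics from $x$ to $y_n$ then converge to the unique cone-geodesic from $o_x$ to $y_\infty$, namely the ray in direction $\eta$; hence $\xi_{S,n}\to\eta=\lim\xi_{X,n}$ and $\angle(\xi_{S,n},\xi_{X,n})\to 0$, delivering the desired contradiction.
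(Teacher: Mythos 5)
Your proof of part (1) in the regular case is essentially the paper's argument (Lemma \ref{lem:regular-dir} plus uniqueness of short geodesics), and the singular case of (1) via blow-up of $S^{\mathrm{int}}$ is plausible in spirit, though the step ``the induced $1$-Lipschitz limit map $K(\Sigma_x(S^{\rm int}))\to K(\Sigma_x(S))$ must be an isometry onto its image'' needs real justification (you need that the limit map respects the cone structure, that the induced map between links is surjective onto $\Sigma_x(S)$, and that convergence of the rescaled geodesics forces convergence of the initial directions $\xi_{S,n}$ in $\Sigma_x(S^{\rm int})$ — none of these is automatic). The paper avoids this machinery by instead showing that both $\xi^X$ and $\xi^S$ must lie in $\Sigma_x(S)\cap V(\Sigma_x(X))$ and are therefore equal.

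The serious gap, however, is the reduction of (2) to (1). Swapping the roles of $x$ and $y$ in (1) gives a bound by a modulus $\tau_y$, i.e.\ a quantity controlled in terms of the geometry near the \emph{varying} point $y$, and you then assert that ``the moduli $\tau_q(\cdot)$ can be bounded uniformly in $q$'' over $B(p,r)$. This uniformity is precisely what is at stake: $\tau_q$ depends on the structure of $\Sigma_q(X)$ and $\Sigma_q(S)$, which change discontinuously as $q$ crosses the singular set (and singular points can accumulate at $x$, as in Examples \ref{ex:2}, \ref{ex:3}, \ref{ex:uncountable}). Statement (2) is exactly the assertion that a certain angle at the varying base point $y$ tends to $0$ as $y\to x$ with modulus depending only on $x$; claiming the moduli are uniform is in effect assuming (2). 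The paper treats (2) as a genuinely separate statement: after using (1) to identify $\xi:=\xi^X=\xi^S\in\Sigma_x(S)\cap V(\Sigma_x(X))$, it runs a rescaling argument centered at $x_i$ (not at $x$), compares the $X$- and $S$-angles at $x_i$ against the two auxiliary directions $\mu_1,\mu_2$, and uses Lemma \ref{lem:limit}, Lemma \ref{lem:S-circle} and Lemma \ref{lem:near-vert} to control the gap between $\dot\gamma^X_{x_i,x}(0)$ and $\dot\gamma^S_{x_i,x}(0)$. You should either carry out such an argument or prove the uniformity you invoke — as written it is a missing step, not a shortcut.
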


For the proof, we need a sublemma.

\begin{slem}   \label{slem:ratio-SX}
For every $x\in S$, we have 
\[
       \sup_{y\in B^S(x,s)\setminus\{ x\}} \,\frac{|x,y|_S}{|x,y|_X} < 1+\tau_x(s).
\]
\end{slem}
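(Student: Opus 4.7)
The inequality $|x,y|_X \le |x,y|_S$ holds trivially since $d_S$ is the length metric on $S$ induced from $d_X$, so the content is to show $|x,y|_S/|x,y|_X \to 1$ as $|x,y|_S \to 0$. I would argue by contradiction, rescaling and passing to tangent cones at $x$. Assume for contradiction that there exist $c > 0$ and a sequence $y_n \in S \setminus \{x\}$ with $s_n := |x,y_n|_S \to 0$ and $|x,y_n|_X \le (1+c)^{-1} s_n$. Rescaling by $1/s_n$, the pointed ambient spaces $(X, d_X/s_n, x)$ converge in pointed Gromov--Hausdorff topology to the tangent cone $(K_x(X), o_x)$, and since $S^{\rm int}$ is a $\CAT(\kappa)$-space by Theorem \ref{thm:ruled}, the subspaces $(S, d_S/s_n, x)$ likewise converge to $(K_x(S^{\rm int}), o_x)$. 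Along a common subsequence, $y_n$ converges to limits $y_\infty^X \in K_x(X)$ and $y_\infty^S \in K_x(S^{\rm int})$ satisfying
\[
   |o_x, y_\infty^S|_{K_x(S^{\rm int})} = 1, \qquad |o_x, y_\infty^X|_{K_x(X)} \le (1+c)^{-1}.
\]

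The $1$-Lipschitz inclusions $(S, d_S/s_n) \hookrightarrow (X, d_X/s_n)$, which send $y_n$ to $y_n$ on the nose, pass to a $1$-Lipschitz limit map $\iota \colon K_x(S^{\rm int}) \to K_x(X)$ fixing $o_x$ and with $\iota(y_\infty^S) = y_\infty^X$. To derive a contradiction it suffices to upgrade $\iota$ to an \emph{isometric} embedding, since then $|o_x, y_\infty^X|_{K_x(X)} = |o_x, y_\infty^S|_{K_x(S^{\rm int})} = 1$, contradicting the strict bound $(1+c)^{-1} < 1$.

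Because $K_x(X)$ and $K_x(S^{\rm int})$ are the Euclidean cones over $\Sigma_x(X)$ and $\Sigma_x(S^{\rm int})$ respectively, and cone distance depends only on the angle metric on the base, the isometric-embedding property of $\iota$ reduces to that of the induced map $\Sigma_x(S^{\rm int}) \to \Sigma_x(X)$. Under the identification $\Sigma_x(S^{\rm int}) = \Sigma_x(S) \subset \Sigma_x(X)$ this is precisely Lemma \ref{lem:ext=int}, which yields $\angle^S(\xi_1,\xi_2) = \angle^X(\xi_1,\xi_2)$ for $\xi_1, \xi_2 \in \Sigma_x(S)$. The main technical point, once the scheme is set up, is to verify that the two pointed Gromov--Hausdorff convergences can be realized with compatible $\epsilon_n$-approximations so that $\iota$ indeed sends $y_\infty^S$ to $y_\infty^X$; this is a routine diagram chase using the fact that restricting the $X$-approximations to $S$ provides $S$-approximations whose comparison maps fit together in the limit.
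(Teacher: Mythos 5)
Your blow-up scheme is a genuinely different route from the paper's, which is a direct construction: assuming a bad sequence $x_n$ with $\uparrow_x^{x_n}\to v$, the paper notes $v$ must be a vertex, picks $y_n\in\lambda_{s_n}$ with $|x_n,y_n|=\epsilon|x,x_n|_X$ so that $\uparrow_x^{y_n}$ lands in the regular part of $\Sigma_x(S)$, invokes Lemma~\ref{lem:S-circle} (really Lemma~\ref{lem:regular-dir}) to get the $X$-geodesic $x y_n\subset S$, and concludes $|x,x_n|_S\le(1+2\epsilon)|x,x_n|_X$ by the triangle inequality along $x\to y_n\to x_n$ inside $S$. No Gromov--Hausdorff machinery is needed.

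Your argument has a real gap precisely at the sentence ``the isometric-embedding property of $\iota$ reduces to that of the induced map $\Sigma_x(S^{\rm int})\to\Sigma_x(X)$.'' The limit map $\iota$ is a priori just a $1$-Lipschitz map between Euclidean cones fixing the tip; such a map need not be a cone map, so it does not automatically induce a map of links, and knowing the \emph{natural} inclusion $\Sigma_x(S)\hookrightarrow\Sigma_x(X)$ is isometric (Lemma~\ref{lem:ext=int}) says nothing directly about $\iota$. Indeed, the statement ``$\iota$ preserves radial distances'' is essentially the assertion you are trying to prove, so the reduction as written is circular. The fix is to identify $\iota$ with the natural cone embedding: for a \emph{regular} direction $v\in\Sigma_x(S)$, Lemma~\ref{lem:regular-dir} shows the $S$-geodesic and $X$-geodesic in direction $v$ coincide near $x$, so $\iota$ restricted to the ray through $v$ is the identity; regular directions are dense in $\Sigma_x(S)$ (vertices of $\Sigma_x(X)$ are finite), and $\iota$ is continuous, so $\iota$ equals the natural cone map and is therefore isometric by Lemma~\ref{lem:ext=int}, yielding your contradiction. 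Note that this density-of-regular-directions step is also the geometric heart of the paper's short proof; your paragraph about compatibility of $\epsilon_n$-approximations is a secondary bookkeeping issue, whereas the cone-map identification is where the content lies and must be made explicit.
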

\pmed
When $x\in S\setminus \ca S(X)$, 
Sublemma \ref{slem:ratio-SX} and Lemma \ref{lem:S-grad}
are clear. 

\begin{proof}[Proof of Sublemma \ref{slem:ratio-SX}]
If the sublemma does not hold, there would exist a sequence $x_n$
in $S$ converging to $x$ such that 
\beq
         \frac{|x,x_n|_S}{|x,x_n|_X} >1+c >1,  \label{eq:S-irr}
\eeq
for some constant $c>0$ independent of $n$.
Passing to a subsequence, we may assume that 
$\uparrow_x^{x_n}$ converges to a direction $v\in \Sigma_x(X)$. It is easily seen 
from \eqref{eq:S-irr} that 
$v$ is a vertex of $\Sigma_x(X)$.
Take a small enough $\e>0$ compared with $c$ and an $s_n\in s(x_n)$.
Let $y_n$ be an element of $\lambda_{s_n}$ with $|x_n, y_n|=\e|x,x_n|_X$.
From Lemma \ref{lem:S-circle}, the $X$-geodesic joining $x$ and $y_n$ 
is contained in $S$ for any large $n$.
It follows from triangle inequality that 
\[
 \frac{|x,x_n|_S}{|x,x_n|_X} \le  \frac{|x,y_n|_S +|y_n, x_n|_S}{|x,x_n|_X} 
   \le \frac{|x,x_n|_X + 2|y_n, x_n|_X}{|x,x_n|_X}   
    = 1+2\e <1+c,
\]
which is a contradiction.
\end{proof}

\begin{proof}[Proof of Lemma \ref{lem:S-grad}]
If Lemma \ref{lem:S-grad} does not hold, there would be a sequence $x_i$ 
of $S$ converging to $x$ such that 
\begin{align}  
 &\angle(\dot\gamma^S_{x,x_i}(0), \dot\gamma^X_{x,x_i}(0))  > c>0, \,\,\text{or} \label{eq:XS-gradp} \\
 &\angle((-\nabla^S d_x)(x_i), (-\nabla d_x) (x_i)) > c>0,
               \label{eq:XS-gradx}
\end{align}
where $c$ is a uniform positive constant. 
From now, we assume $x\in {\rm int}\,S$.
The other case is similar.
We may assume that 
$\xi^X_i:=\dot\gamma^X_{x,x_i}(0)$  and 
$\xi^S_i:=\dot\gamma^S_{x,x_i}(0)$ converge to 
$\xi^X\in \Sigma_x(X)$ and 
$\xi^S\in \Sigma_x(S)\subset \Sigma_x(X)$ respectively.
Note that $\xi^X\in \Sigma_x(S)$.

(1)\, First we assume \eqref{eq:XS-gradp}.
Then we have $\angle(\xi^X,\xi^S)\ge c$.
We show 
$\xi^X\in \Sigma_x(S)\cap V(\Sigma_x(X))$.
Actually, by Lemma \ref{lem:regular-dir}, if  $\xi^X\in \Sigma_x(S)\setminus V(\Sigma_x(X))$,
we have $\e>0$ such that 
$\gamma^X_{\xi^X_i}([0,\e]) \subset S$ for any large $i$.
It turns out $\gamma^X_{x,x_i}\subset S$, which is a contradiction to \eqref{eq:XS-gradp}.
Similarly, we have $\xi^S\in \Sigma_x(S)\cap V(\Sigma_x(X))$.
Since $\Sigma_x(S)\cap V(\Sigma_x(X))$ is a point, it follows 
that $\xi^X=\xi^S$. This is a contradiction.


(2)\, Next assume \eqref{eq:XS-gradx}.
We set $\xi:=\xi^X=\xi^S$, and $\Sigma:=\Sigma_x(S)$.
From the above argument of (1) and
\eqref{eq:XS-gradx}, we have 
$\xi\in \Sigma\cap V(\Sigma_x(X))$.
Letting $t_i^X:=|x_i,x|_X$, consider the convergence
$(\frac{1}{t_i^X} X,x_i)\to (K_x(X),\xi^X)$. 
Similarly, letting $t_i^S:=|x_i,x|_S$, from Lemma \ref{lem:ext=int},
we have the convergence
$(\frac{1}{t_i^S}S,x_i)\to (K(\Sigma),\xi^S)$.
Let $\mu_1, \mu_2$ be elements of $\Sigma\setminus  V(\Sigma_x(X))$ near $\xi$
such that $\xi$ is in the interior of the shortest 
arc between $\mu_1$ and $\mu_2$.
Take any $s_i\in s(x_i)$, and let  
$y_i$, $z_i$ be the intersections of $\lambda_{s_i}$ and
$\gamma_{\mu_1}$, $\gamma_{\mu_2}$ respectively.
Let $y_\infty\in K(\Sigma)$ and $z_\infty\in K(\Sigma)$ be the limit of $y_i$ and 
$z_i$ under the above rescaling limit respectively.
By Lemma \ref{lem:limit}, we have 
\begin{align*}
    &\limsup_{i\to\infty}\angle^X y_ix_i x\le \angle y_\infty\xi o_x, \,\, 
                         \limsup_{i\to\infty}\angle^X z_ix_i x \le \angle z_\infty\xi o_x,\\
   &\limsup_{i\to\infty}\angle^S y_ix_i x \le \angle y_\infty\xi o_x, \,\, 
                         \limsup_{i\to\infty}\angle^S z_ix_i x \le \angle z_\infty\xi o_x.
\end{align*}
It follows from 
\begin{align*}
& \angle^X y_ix_i x + \angle^X z_ix_i x\ge \pi, \,\, \angle^S y_ix_i x + \angle^S z_ix_i x\ge\pi, \\
    &\hspace{2cm}\angle y_\infty\xi o_x + \angle z_\infty\xi o_x = \pi 
\end{align*}
that
\beq \label{eq:XSyxp}
\begin{aligned}
  &\lim_{i\to\infty}\angle^X y_ix_i x=\angle y_\infty\xi o_x=                                       
                   \lim_{i\to\infty}\angle^S y_ix_i x, \\
 &\lim_{i\to\infty}\angle^X z_ix_i x=\angle z_\infty\xi o_x=                                       
                   \lim_{i\to\infty}\angle^S z_ix_i x.
\end{aligned}
\eeq
Now let $w_i\in\Sigma_{x_i}(S)$ be the  nearest point of $\Sigma_{x_i}(S)$ from $\dot\gamma^X_{x_i,x}(0)$.
If $\dot\gamma^X_{x_i,x}(0)\in \Sigma_{x_i}(S)$, then  \eqref{eq:XSyxp} implies that 
$\angle(\dot\gamma^X_{x_i,x}(0), \dot\gamma^S_{x_i,x}(0))\to 0$
as $i\to\infty$.
This is a contradiction to \eqref{eq:XS-gradx}.
Suppose $\dot\gamma^X_{x_i,x}(0)\notin \Sigma_{x_i}(S)$.
Then $w_i\in V(\Sigma_{x_i}(X))$. It follows from 
Lemma \ref{lem:near-vert} that 
$\angle(\dot\gamma^X_{x_i,x}(0), w_i)<\tau_x(|x,x_i|_X)$.
In what follows, we may assume that 
$\angle(\dot\gamma_{x_i,y_i}(0), w_i) \ge \angle(\dot\gamma_{x_i,y_i}(0), \dot\gamma^S_{x_i,x}(0))$ without loss of generality by replacing $y_i$ by $z_i$ if necessary.
Then using Lemma \ref{lem:S-circle} and \eqref{eq:XSyxp},  we obtain
\begin{align*}
& \angle(\dot\gamma^X_{x_i,x}(0), \dot\gamma^S_{x_i,x}(0)) = 
     \angle(\dot\gamma^X_{x_i,x}(0), w_i) +  \angle(w_i, \dot\gamma^S_{x_i,x}(0)) \\
& =   \angle(\dot\gamma^X_{x_i,x}(0), w_i)  + 
    \angle(w_i, \dot\gamma_{x_i,y_i}(0)) -
\angle(\dot\gamma^S_{x_i,x}(0), \dot\gamma_{x_i,y_i}(0)) \\
& \le 2\angle(\dot\gamma^X_{x_i,x}(0), w_i)  + 
    \angle(\dot\gamma^X_{x_i,x}(0), \dot\gamma_{x_i,y_i}(0)) -
\angle(\dot\gamma^S_{x_i,x}(0), \dot\gamma_{x_i,y_i}(0)) \\
& <  \tau_x(|x,x_i|_X) +o_i,
\end{align*}
where $\lim_{i\to\infty}o_i=0$.
This is a contradiction to \eqref{eq:XS-gradx}.
\end{proof}

\begin{lem} \label{lem:verticalS}
 For $x,y\in S$, suppose that the $S$-geodesic 
 $\gamma_{x,y}^S:[0,1]\to S$ from $x$ to $y$ is vertical. Then 
 $\gamma_{x,y}^S$ is an $X$-geodesic.
 \end{lem}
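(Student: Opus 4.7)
The plan is to show that $\gamma := \gamma_{x,y}^S$ is locally an $X$-geodesic at every $t\in[0,1]$, and then to upgrade this to a global conclusion via the local-to-global principle in the small $\CAT(\kappa)$-domain $B(p,r)$ containing $x$ and $y$.

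For each $t$ with $z=\gamma(t)\neq p$, set $\xi:=\dot\gamma^S_+(t)\in\Sigma_z(S)\subset \Sigma_z(X)$. First I would verify that $\xi$ is a regular point of $\Sigma_z(X)$: by Lemma \ref{lem:top-vert1} and Corollary \ref{cor:vert}, the vertex set of $\Sigma_z(X)$ is contained in a $\tau_p(r)$-neighborhood of $\{\pm\nabla d_p(z)\}$, while the verticality of $\xi$ forces $\angle(\xi,\pm\nabla d_p(z))\ge\pi/5\gg\tau_p(r)$; together with \eqref{eq:lambda-gradient}, this places $\xi$ inside the $C_0$ arcs constructed in the proof of Lemma \ref{lem:S-circle}. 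Lemma \ref{lem:regular-dir} then produces, with a locally uniform $\epsilon>0$, an $X$-geodesic $\mu_t:[0,\epsilon]\to X$ satisfying $\dot\mu_t(0)=\xi$ and $\mu_t([0,\epsilon])\subset S$.

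By Sublemma \ref{slem:ratio-SX}, $d_S$ and $d_X$ induce the same arclength along curves in $S$, so along $\mu_t$ we have
\[
 h \;=\; d_X(\mu_t(0),\mu_t(h)) \;\le\; d_S(\mu_t(0),\mu_t(h)) \;\le\; L^S(\mu_t|_{[0,h]}) \;=\; h,
\]
which makes $\mu_t|_{[0,h]}$ a local $S$-geodesic; combined with Lemma \ref{lem:S-grad}(1), its initial $S$-direction equals $\xi$. The $\CAT(\kappa)$-uniqueness of $S$-geodesics with prescribed initial direction, legitimate in $(S,d_S)$ by Theorem \ref{thm:ruled} and Lemma \ref{lem:ext=int}, then gives $\gamma|_{[t,t+\eta]}=\mu_t|_{[0,\eta]}$ for some $\eta>0$, so $\gamma$ is locally an $X$-geodesic, and hence a global $X$-geodesic in $B(p,r)$.

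I expect the main obstacle to be the careful identification of directions at $z$: $\xi$ must be read consistently as an element of $\Sigma_z(X)$, of $\Sigma_z(S)$ and of $\Sigma_z(S^{\rm int})$, and the initial $S$-direction of $\mu_t$ at parameter $0$ must be matched with $\xi$ in all three pictures. This is where Lemma \ref{lem:ext=int} and Lemma \ref{lem:S-grad}(1), applied along $\mu_t$ as the parameter tends to zero, enter essentially.
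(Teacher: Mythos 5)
Your reduction to a local statement, the observation that verticality plus Lemma \ref{lem:near-vert} makes $\xi := \dot\gamma^S_+(t)$ a regular direction of $\Sigma_z(X)$, and the use of Lemma \ref{lem:regular-dir} to extract an $X$-geodesic $\mu_t\subset S$ in that direction are all sound, and the subsequent computation showing $\mu_t$ is locally $S$-shortest is correct (arclengths with respect to $d_S$ and $d_X$ agree for curves inside $S$, so $d_S(\mu_t(0),\mu_t(h)) \le L^S(\mu_t|_{[0,h]}) = h = d_X(\mu_t(0),\mu_t(h)) \le d_S(\mu_t(0),\mu_t(h))$). This is in spirit similar to the paper's use of Lemma \ref{lem:inS}, which places a convex cone $U_\pm(z,\e)\subset S$ around the ruling direction at $z$.

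The gap is in your final step. You appeal to ``the $\CAT(\kappa)$-uniqueness of $S$-geodesics with prescribed initial direction,'' citing Theorem \ref{thm:ruled} and Lemma \ref{lem:ext=int}. Neither of these asserts that a geodesic in a $\CAT(\kappa)$-space is determined by its starting point and initial direction, and this is in fact \emph{not} a consequence of $\CAT(\kappa)$ alone: in a convex plane domain with a cuspidal boundary point $z$ (for example the region between $y=0$ and $y=\sqrt x$ near the origin), the space of directions at $z$ is a single point, yet the two boundary arcs are distinct geodesics emanating from $z$; the Alexandrov angle between two geodesics from $z$ is zero exactly when $d(\gamma_1(s),\gamma_2(s))=o(s)$, which does not force coincidence. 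Since your argument is applied at every $z=\gamma(t)\neq p$, including points of $\pa S$ where $S^{\rm int}$ is not locally geodesically complete, the assertion cannot be used as a black box. The paper avoids this entirely by using the standard \emph{endpoint}-uniqueness of geodesics in a $\CAT(\kappa)$-domain: it takes $z_n=\gamma(t_n)\to z$ on the curve itself, produces the $X$-geodesic $\gamma_n^X$ from $z$ to $z_n$, shows $\gamma_n^X\subset U_\pm(z,\e)\subset S$, concludes that $\gamma_n^X$ is therefore also an $S$-geodesic from $z$ to $z_n$, and identifies it with $\gamma|_{[t,t_n]}$ by uniqueness of the $S$-geodesic between $z$ and $z_n$. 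Your outline can be repaired along the same lines without changing its spirit: instead of comparing the directions of $\mu_t$ and $\gamma$, use the locally uniform $\e$ from Lemma \ref{lem:regular-dir} together with Lemma \ref{lem:S-grad}(1) to see that for $h$ small the direction $\uparrow_z^{\gamma(t+h)}$ is regular and close to $\xi$, so that the $X$-geodesic $\gamma^X_{z,\gamma(t+h)}$ lies in $S$; then it is an $S$-geodesic from $z$ to $\gamma(t+h)$, and endpoint-uniqueness forces $\gamma^X_{z,\gamma(t+h)}=\gamma|_{[t,t+h]}$.
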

 \begin{proof} For any $t\in [0,1]$, let $\e>0$ be chosen as in 
 Lemma \ref{lem:inS} for $z:=\gamma_{x,y}^S(t)$.
 Choose $t_n\to t$, and set $z_n:=\gamma_{x,y}^S(t_n)$.
 Let $\gamma_n^X$ be the $X$-geodesic from $z$ to $z_n$.
 In view of Lemmas \ref{lem:S-circle} and \ref{lem:inS}, we have
 $$
         \gamma_n^X\subset U_{\pm}(z,\e)\subset S.
 $$
 Thus  $\gamma_n^X$ must be a subarc of $\gamma_{x,y}^S$, and hence
 $\gamma_{x,y}^S$ is an $X$-geodesic.
 \end{proof}

\begin{lem}  \label{lem:x-geod-general}
For  $x, y\in S$ with  $x\in\alpha_1$ and $y\in\alpha_2$ satisfying 
\[
    | |p,x|_X- |p,y|_X| < |x,y|_X/100,
\]
the $S$-geodesic joining $x$ and $y$ is an $X$-geodesic.
\end{lem}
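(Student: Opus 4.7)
The plan is to show that the $X$-geodesic $\gamma^X := \gamma^X_{x,y}$ lies entirely in $S$; by Theorem~\ref{thm:ruled} and uniqueness of geodesics in CAT$(\kappa)$-spaces, this forces $\gamma^X$ to coincide with the $S$-geodesic. Set $a = |p,x|_X$, $b = |p,y|_X$, $c = |x,y|_X$, and note that $\angle xpy = 2\delta$ exactly. A first angle computation combines $\tilde\angle xpy < 2\delta + \tau_p(r)$ (Lemma~\ref{lem:comparison}) with the law of cosines in $M^2_\kappa$ and the hypothesis $|a-b| < c/100$ to deduce $a \approx b$, $c \approx 2\sqrt{ab}\sin\delta$, and
\[
|\cos\angle pxy|,\ |\cos\angle pyx| \le \sin\delta + 1/100 + \tau_p(r).
\]
Hence both $\uparrow_x^y$ and $\uparrow_y^x$ are vertical, and a further comparison with the isosceles triangles $\triangle p\,x\,\alpha_2(a)$ and $\triangle p\,\alpha_1(b)\,y$ shows that $\uparrow_x^y$ is close in $\Sigma_x(X)$ to $\dot\lambda_a(0)$, while $\uparrow_y^x$ is close to $-\dot\lambda_b(1)$.

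By Lemma~\ref{lem:near-vert} the vertices of $\Sigma_x(X)$ lie in a $\tau_p(r)$-neighborhood of $\{\pm\nabla d_p(x)\}$, so $\uparrow_x^y$ is a regular direction in $\Sigma_x(X)$; since $\dot\lambda_a(0)$ is an interior regular point of the arc $\Sigma_x(S)$ (Lemma~\ref{lem:S-circle}(2)) and $\Sigma_x(X)$ is locally a $1$-manifold near it, this proximity forces $\uparrow_x^y \in \Sigma_x(S)$, and analogously $\uparrow_y^x \in \Sigma_y(S)$. Lemma~\ref{lem:regular-dir} then yields $\epsilon > 0$ with $\gamma^X([0,\epsilon]) \subset S$. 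I will show that $\gamma^X$ cannot exit $S$ through $\partial S = \alpha_1 \cup \alpha_2 \cup \lambda_\ell$: any secondary intersection of $\gamma^X$ with $\alpha_1$ (respectively $\alpha_2$) would produce two geodesics in the CAT$(\kappa)$-domain $B(p,r_0)$ sharing $x$ (respectively $y$) with distinct initial directions and meeting again, violating uniqueness of geodesics in a CAT$(\kappa)$-domain; an intersection with $\lambda_\ell$ is excluded by the triangle-inequality estimate $|p,z|_X \ge \ell(1-\sin\delta) - \tau_p(r) > r$ for $z \in \lambda_\ell$ versus the CAT$(\kappa)$-convexity bound $|p, \gamma^X(t)|_X \le \max(a,b) + \tau_p(r) \le r + \tau_p(r)$ along $\gamma^X$. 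By connectedness of $(0,c)$ it follows that $\gamma^X([0,c]) \subset S$, and then $c = L_X(\gamma^X) \le |x,y|_S \le c$ identifies $\gamma^X$ with the $S$-geodesic.

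The main technical point is the first step, extracting quantitative control on $\angle pxy$ and $\angle pyx$ from the single hypothesis $|a-b| < c/100$; once those bounds are in hand, the confinement of $\gamma^X$ to $S$ follows routinely from Lemma~\ref{lem:regular-dir} combined with geodesic uniqueness in CAT$(\kappa)$-domains.
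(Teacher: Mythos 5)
Your approach is genuinely different from the paper's. You aim to show that the $X$-geodesic $\gamma^X := \gamma^X_{x,y}$ lies entirely in $S$ and then invoke uniqueness of geodesics, whereas the paper works directly with the $S$-geodesic $\gamma^S_{x,y}$: using $|p,\cdot|_S = |p,\cdot|_X$ on $\alpha_1\cup\alpha_2$, it applies the Jack Lemma (Lemma~\ref{lem:jack}) \emph{inside} the $\CAT(\kappa)$-space $S$ to see that $\gamma^S_{x,y}$ is vertical, and then Lemma~\ref{lem:verticalS} (vertical $S$-geodesics are $X$-geodesics) finishes the argument. The paper's route has the advantage that the curve under discussion is contained in $S$ by construction, so the only thing to prove is that it is locally $X$-minimizing, and the canonical-ball mechanism behind Lemma~\ref{lem:verticalS} does exactly that. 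Your route requires keeping an $X$-geodesic confined to $S$, which is structurally harder because $S$ is not open in $X$.

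That is precisely where your proof has a gap. You argue that $\gamma^X$ cannot leave $S$ through $\partial S = \alpha_1\cup\alpha_2\cup\lambda_\ell$ and then conclude by connectedness that $\gamma^X([0,c])\subset S$. This tacitly assumes that a curve can exit $S$ only by crossing $\partial S$, which would hold if $\inte S$ were open in $X$; but it is not. Along the singular locus $\ca S(X)\cap\inte S$ the ambient space branches into several sheets, so a geodesic reaching such a point $q$ from inside $S$ may in principle leave $S$ there without ever touching $\partial S$. To rule this out you would have to show that whenever $-\dot\gamma^X(t_0)\in\Sigma_q(S)$, the outgoing direction $\dot\gamma^X(t_0)$ also lies in $\Sigma_q(S)$ and is regular, so that Lemma~\ref{lem:regular-dir} applies; since $\Sigma_q(X)$ is a graph and may contain several directions at angle $\pi$ from $-\dot\gamma^X(t_0)$, this is not automatic and needs an argument of comparable weight to the one you are trying to avoid. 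The paper in fact treats the claim ``$\gamma^X_{x,y}$ is an $S$-geodesic'' for general $x,y\in S$ as a separate, harder statement (Lemma~\ref{lem:x-geod-general2}), proved later with additional tools and explicitly flagged as not a direct consequence of Lemma~\ref{lem:x-geod-general'}. (Your opening angle estimates are sound, and the deduction $\uparrow_x^y\in\Sigma_x(S)$ at the boundary endpoints works — note only that $\angle(\uparrow_x^y,\dot\lambda_a(0))$ is bounded by a small absolute constant rather than $1/100$, which is still well under the separation from the vertices of $\Sigma_x(X)$.)
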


\begin{proof}
It follows from the assumption that
\[
    | |p,x|_S- |p,y|_S| < |x,y|_X/100\le |x,y|_S/100.
\]
Using Lemma \ref{lem:jack} in $S$, we have
\[
    |\angle p\gamma_{x,y}^S(t)x -\pi/2|<\pi/3, \,\,|\angle p\gamma_{x,y}^S(t)y -\pi/2|<\pi/3,
\]
for all $t\in (0,1)$, where $\gamma_{x,y}^S:[0,1]\to S$ is the $S$-geodesic joining $x$ to $y$.
This implies that $\gamma_{x,y}^S$ is vertical.
The lemma then follows from Lemma \ref{lem:verticalS}.
\end{proof}

In a way similar to Lemma \ref{lem:x-geod-general}, we have the following.

\begin{lem}  \label{lem:x-geod-general'}
For arbitrary  $x, y\in S$ such that
\[
    | |p,x|_S- |p,y|_S| < |x,y|_S/100,
\]
the $S$-geodesic joining $x$ and $y$ is an $X$-geodesic.
\end{lem}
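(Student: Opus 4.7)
The plan is to mirror the proof of Lemma \ref{lem:x-geod-general}. In that earlier argument, the boundary-geodesic hypothesis $x\in\alpha_1$, $y\in\alpha_2$ together with $||p,x|_X - |p,y|_X| < |x,y|_X/100$ was used only to derive $||p,x|_S - |p,y|_S| < |x,y|_S/100$, which here is precisely the stated hypothesis. Hence the remainder of the argument carries over with essentially no change.

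First I will invoke Theorem \ref{thm:ruled} to treat $(S, d_S)$ as a $\CAT(\kappa)$-space containing $p$. The decisive geometric input is Lemma \ref{lem:S-circle}(2): $\Sigma_p(S)$ is an arc of diameter at most $2\delta$, so every $S$-triangle $pxy$ has apex angle at $p$ bounded by $2\delta$. Combined with the hypothesis $||p,x|_S - |p,y|_S| < |x,y|_S/100$, this forces the comparison triangle $\tilde p\tilde x\tilde y \subset M_\kappa^2$ to be nearly isoceles and thin at $\tilde p$, so a direct planimetric computation yields $|\tilde\angle p\tilde z\tilde x - \pi/2| < \pi/3$ (and the same with $\tilde y$ in place of $\tilde x$) at every comparison intermediate point $\tilde z \in [\tilde x, \tilde y]$. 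Applying Lemma \ref{lem:jack} in the $\CAT(\kappa)$-space $S$ then promotes these to actual $S$-angle bounds:
\[
  |\angle^S\, p\, \gamma^S_{x,y}(t)\, x - \pi/2| < \pi/3, \quad
  |\angle^S\, p\, \gamma^S_{x,y}(t)\, y - \pi/2| < \pi/3
\]
for every $t \in (0,1)$, so that $\dot\gamma^S_{x,y}(t)$ is $S$-vertical.

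Finally, Lemma \ref{lem:S-grad} equates the $S$- and $X$-gradients of $d_p$ and the $S$- and $X$-tangent directions of $\gamma^S_{x,y}$ up to $\tau$-errors, so $\dot\gamma^S_{x,y}(t)$ is also vertical as an element of $\Sigma_{\gamma^S_{x,y}(t)}(X)$. Lemma \ref{lem:verticalS} then upgrades the vertical $S$-geodesic $\gamma^S_{x,y}$ to an honest $X$-geodesic. The one point requiring genuine verification beyond a mechanical transfer from Lemma \ref{lem:x-geod-general} is the comparison-angle estimate; this rests on the uniform thinness of $\Sigma_p(S)$ provided by Lemma \ref{lem:S-circle}(2), rather than on any boundary placement of $x$ and $y$ on $\alpha_1\cup\alpha_2$.
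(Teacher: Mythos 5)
Your overall plan---reduce to verticality of $\gamma^S_{x,y}$ and apply Lemma \ref{lem:verticalS}---is the paper's route; the paper proves Lemma \ref{lem:x-geod-general} exactly this way via Jack's Lemma in $S$ and declares Lemma \ref{lem:x-geod-general'} to follow ``in a way similar.'' Citing Theorem \ref{thm:ruled}, Lemma \ref{lem:S-circle}(2), and Lemma \ref{lem:S-grad} to make the implicit steps explicit is the right instinct.

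The gap is in how you get the comparison triangle $\tilde p\tilde x\tilde y\subset M_\kappa^2$ to be ``thin at $\tilde p$.'' You infer this from $\angle^S xpy\le 2\delta$. But $\CAT(\kappa)$ gives only $\angle^S xpy\le\tilde\angle xpy$: a small $S$-angle is a \emph{lower} bound on the comparison angle, not an upper one, and so by itself does not keep $\tilde\angle xpy$ away from $\pi$, i.e.\ does not keep $|p,x|_S+|p,y|_S$ away from $|x,y|_S$. ``Nearly isosceles and nearly degenerate'' is fully consistent with $||p,x|_S-|p,y|_S|<|x,y|_S/100$, and in that regime the intermediate comparison angles on $[\tilde x,\tilde y]$ are nowhere near $\pi/2$, so the planimetric step would break. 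What you actually need is the matching reverse bound $\tilde\angle xpy\le\angle^S xpy+\tau_p(r)$, and this is not free: it comes either from Lemma \ref{lem:comparison} applied to a geodesically complete $\CAT(\kappa)$ extension of $S$ (glue copies around $p$ and along $\partial S$ as in the final paragraph of the proof of Theorem \ref{thm:ruled}), or equivalently from the convergence $(\frac{1}{r}S,p)\to(K_p(S),o_p)$ to the flat wedge $K_p(S)$ of opening $2\delta$, from which one reads off $|x,y|_S \le ||p,x|_S-|p,y|_S| + (2\delta+\tau_p(r))\min(|p,x|_S,|p,y|_S)$. Only after supplying one of these does the hypothesis force $|x,y|_S$ to be of order $\delta\min(|p,x|_S,|p,y|_S)$, hence the comparison triangle to be thin; as written, the inference from the $S$-angle bound to the comparison-triangle shape does not go through.
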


\setcounter{equation}{0}

\section{Filling via  ${\rm CAT}(\kappa)$-disks} \label{sec:fill}

Let $v$ be a vertex of $\Sigma_p$ of order $N$, and 
let $\nu_1,\ldots,\nu_N$ be 
the set of all points of $\Sigma_p$ with $d(\nu_i,v)=\delta$ for 
a sufficiently small positive number $\delta$. 
Take a small enough  $r>0$ and 
points $a_1,\ldots, a_N$ of $S(p,2r)$ with 
$\dot\gamma_{p,a_i}(0)=\nu_i$ and $r\le r_p$.
For simplicity, we denote by $S(a_i,a_j)$ the ruled surface 
$S(\gamma_{p,a_i},\gamma_{p,a_j})$ spanned by $\gamma_{p,a_i}$ and $\gamma_{p,a_j}$.
Let $V(\Sigma_p)$ be the set of all vertices of the 
graph $\Sigma_p$.
Since  $V(\Sigma_p)$ is finite,  
we have a positive number $r_p$ such that for any
$0<r \le r_p$, all the $S(a_i,a_j)$, when $v$ runs over 
$V(\Sigma_p)$, 
satisfy the conclusion of Theorem \ref{thm:ruled}. 
Then obviously $\mathcal S(X)\cap B(p,r)$ is contained in 
the union of all $S(a_i,a_j)$ when $v$ runs over 
$V(\Sigma_p)$.
\pmed
\n
{\bf Sector correspondence}. 
We fix $S:=S(a_i,a_j)$ for a moment, and set  
\[
       \Omega(S,r)^X :=B^X(p,r)\cap S, \,\,\,\,
        C^X:=S^X(p,r)\cap S.
\] 
From here on,  we use the symbols $B^X(p,r)$ and $S^X(p,r)$ to emphasize the metric ball and the metric circle {\it in $X$}.
Note that $\Omega(S,r)^X$ is bounded by the two geodesics
 $\gamma_{p,a_i}$, $\gamma_{p,a_j}$ and $C^X$.

\psmall 
To show Theorem \ref{thm:main}(3), we need the following lemma.

\begin{lem} \label{lem:bilip-sector}
For any small enough $r\le r_p$, 
the sector  $\Omega(S,r)^X$ is $\tau_p(r)$-almost isometric to a Euclidean sector.
\end{lem}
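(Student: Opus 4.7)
The plan is to construct a $(1+\tau_p(r))$-bi-Lipschitz map from $(\Omega(S,r)^X, d_S)$ onto the closed $r$-ball about the vertex of a Euclidean sector of opening angle $\angle(\nu_i, \nu_j)$. Since by Sublemma \ref{slem:ratio-SX} the metrics $d_S$ and $d_X$ differ by a factor $1+\tau_p(r)$ on $\Omega(S,r)^X$, the conclusion for $d_S$ immediately yields the same for the ambient metric $d_X$. To identify the target: by Theorem \ref{thm:ruled}, $(S, d_S)$ is a two-dimensional CAT$(\kappa)$-space homeomorphic to a disk; by Lemma \ref{lem:ext=int} and the proof of Lemma \ref{lem:S-circle}(2), $\Sigma_p(S^{\rm int}) = \Sigma_p(S)$ equals the closed arc $[\nu_i, \nu_j]$ in $\Sigma_p(X)$ of length $\angle(\nu_i, \nu_j) \le 2\delta$, so the tangent cone $K_p(S^{\rm int})$ is the Euclidean cone over $[\nu_i, \nu_j]$, namely the target Euclidean sector.

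Next I would apply Theorem \ref{thm:conv}. To fit its geodesic-completeness hypothesis, first double $(S, d_S)$ along $\partial S$ via Theorem \ref{thm:glue}; the resulting CAT$(\kappa)$-space $\tilde S$ has $L(\Sigma_x(\tilde S)) < 2\pi + \tau_p(r)$ at every interior point near $p$ by Lemma \ref{lem:S-circle}(1), so the hypothesis $\mathcal{H}^1(\Sigma_x) < 2\pi + \epsilon$ holds uniformly. The rescaled convergence $(\tfrac{1}{r}\tilde S, p) \to K_p(\tilde S)$ then yields, by Theorem \ref{thm:conv}, a $\tau_p(r)$-almost isometry on a large compact interior domain of $\tilde S$; restricting to one of the two halves recovers an almost isometry $\Phi$ from the interior of $\Omega(S,r)^X$ onto the interior of the target Euclidean sector.

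I would then extend $\Phi$ to the closed sector by mapping the radial boundaries $\gamma_{p,a_i}$ and $\gamma_{p,a_j}$ isometrically onto the straight edges (they are $X$-geodesics of length $r$, hence $S$-geodesics by Lemma \ref{lem:verticalS}), and by mapping the arc $C^X = S^X(p,r) \cap S$ onto the circular arc of the sector, using that $C^X$ lies within a $\tau_p(r)\,r$-tube of $S^S(p,r) \cap S$ by Sublemma \ref{slem:ratio-SX}. The resulting $\Phi$ is globally $(1+\tau_p(r))$-bi-Lipschitz on all of $\Omega(S,r)^X$.

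The main obstacle will be reconciling the extrinsic boundary $C^X$ with the intrinsic almost isometry provided by Theorem \ref{thm:conv}, which naturally operates with $d_S$: one must absorb the $|p, \cdot|_X$ versus $|p, \cdot|_S$ discrepancy into the $\tau_p(r)$-error (again via Sublemma \ref{slem:ratio-SX}) without degrading the final bi-Lipschitz constant, and simultaneously ensure that the extension of $\Phi$ to the boundary remains continuous across the topological singular set $\mathcal{S}(X) \cap \Omega(S,r)^X$.
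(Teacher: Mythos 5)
Your setup (identifying the target cone via $\Sigma_p(S^{\rm int})=[\nu_i,\nu_j]$, doubling to meet the geodesic completeness hypothesis, and appealing to Sublemma \ref{slem:ratio-SX} to transfer between $d_S$ and $d_X$) is reasonable. But there is a genuine gap in the central step, and it is precisely the part of the argument that the paper's proof works hardest to supply.

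Theorem \ref{thm:conv} does not give an almost isometry from $\Omega(S,r)^X$ \emph{onto} a Euclidean sector. It produces, for a \emph{chosen} compact domain $V$ in the tangent cone, some compact domain $V_i$ in the rescaled space and an almost isometry $\varphi_i:V_i\to V$; one has no control over $V_i$. If you instead fix the domain (take the restriction of $\varphi_i$ to $\frac{1}{r}\Omega(S,r)^X$, after doubling), then you get an almost isometry $\varphi$ of $\Omega(S,r)^X$ onto $\varphi(\Omega(S,r)^X)\subset\mathbb R^2$, but this image is \emph{not} a Euclidean sector: $\varphi(\gamma_{p,a_k})$ is only $\tau_p(r)$-close to a radial segment, and $\varphi(C^X)$ is only $\tau_p(r)$-close to a circular arc. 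Your phrase ``restricting to one of the two halves recovers an almost isometry $\Phi$ ... onto the interior of the target Euclidean sector'' conflates these two outputs; neither version of Theorem \ref{thm:conv} delivers such a $\Phi$. Moreover, the doubling involution on $\tilde S$ is not intertwined by $\varphi_i$ with the reflection of the doubled cone, so ``one of the two halves'' of $V_i$ has no reason to be $\frac1r\Omega(S,r)^X$.

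The third paragraph does not repair this. Once the interior map is defined, its boundary values are determined by continuity; you cannot independently decree that $\gamma_{p,a_k}$ is sent isometrically onto a straight edge and $C^X$ onto the circular arc, because the continuous extension sends them to $\varphi(\gamma_{p,a_k})$ and $\varphi(C^X)$, which are merely close to, not equal to, the desired curves. Establishing that the wiggly region $\varphi(\Omega(S,r)^X)$ is $\tau_p(r)$-almost isometric to a genuine Euclidean sector — straightening the two radial sides (the paper's Step 1, via the maps $\psi_k$ built from the foliation $\zeta_k$ and the estimates \eqref{eq:zeta-partial}--\eqref{eq:partial-s}) and then straightening the outer boundary to a circular arc (Step 2, the radial rescaling $\phi$) — is the substantive content of the lemma, and your proposal omits it. Your closing paragraph acknowledges a ``main obstacle'' but identifies it as continuity across $\mathcal S(X)$, which is not the issue; the issue is that the image of the almost isometry is not a sector, and no continuity concern arises once $\varphi$ is defined on all of $\Omega(S,r)^X$.
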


\begin{proof}
By Lemma \ref{lem:S-grad}, for every $x\in C^X$, we have 
$\angle(\dot\gamma^X_{x,p}(0),\dot\gamma^S_{x,p}(0))<\tau_p(r)$.
Since $\angle(\dot\gamma^X_{x,p}(0), \dot C^X)=\pi/2$, 
it follows that 
\beq \label{eq:Sgeode-C}
   |\angle(\dot\gamma^S_{x,p}(0), \dot C^X)-\pi/2|<\tau_p(r).
\eeq 
Consider the rescaling limit of the $\CAT(\kappa)$-space:
 $(\frac{1}{r} S, p)\to (K_p(S), o_p)$ as 
$r\to 0$. By Theorem \ref{thm:conv}, we have a $\tau_p(r)$-almost isometry 
$\varphi:\Omega(S,r)^X \to {\rm image}(\varphi)\subset\mathbb R^2$. 
It suffices to show that ${\rm image}(\varphi)$ is  $\tau_p(r)$-almost isometric to a Euclidean sector.
Although the argument below is elementary and standard, 
we present a proof for completeness since we do  
not find a reference.

We may assume $\varphi(p)=(0,0)=O$.
Let $L_k$ be the line segment from $O$ to
$\varphi(\gamma_k(r))$ \,$(k=1,2)$.
We express $L_k$ 
in the polar coordinates as  
\[
     L_k(x)=(x, \theta_k)\, (0\le x\le x_k(r)), \,\, 
\]
where $\theta_k$ is a constant and 
$x_k(r):=|\varphi(\gamma_k(r)), O|$.
Let $\theta_0$ be the direction representing the midpoint of 
$\dot L_1(0)$ and $\dot L_2(0)$.
We may assume that 
$\theta_1 <\theta_0=0<\theta_2$, and let $L_0$ be the line 
segment from $O$ in the direction $\theta_0$ : $L_0(x)=(x,0)$.
Let $\varphi(C^X)$ intersect $L_0$ with $(r_0, 0)$.
Set $q_k:=L_k(x_k(r))$.

Let $U_k$ (resp. $D_k$) be the domain bounded by 
$L_0$, $\varphi(\gamma_k)$ and $\varphi(C^X)$
 (resp. by $L_0$, $L_k$ and $\varphi(C^X)$ ).
Let $\Omega(L_1, L_2;r)$ denote the Euclidean  
sector bounded by the rays in the derections $L_1$, $L_2$  and the circle of radius $r$. 
In the first step, we deform  ${\rm image}(\varphi)=U_1\cup U_2$ to
$D_1\cup D_2$ via a $\tau_p(r)$-almost isometry.
In the second step,  we deform  $D_1\cup D_2$ to
$\Omega(L_1, L_2;r)$ via a $\tau_p(r)$-almost isometry.
\pmed\n
Step 1).\,
Choose a point $q_0\in L_0$  such that 
$\angle \pi/4 \le Oq_k q_0\le \pi/3$ for $k=1,2$.
Note that $[q_k, q_0]\subset U_k$.
Let $J_k$ denote the union $[O, q_0]\cup [q_0,q_k]$.
Let $\hat U_k$ (resp. $\hat D_k$) be the domain bounded by 
$L_0$, $\varphi(\gamma_k)$ and $[q_k, q_0]$
 (resp. by $L_0$, $L_k$ and $[q_k, q_0]$).
We first show that $\hat U_k$ is $\tau_p(r)$-almost isometric
to $\hat D_k$.

Let $J_k(x)$\,$(0\le x\le L(J_k))$ be the arc-length parameter 
of $J_k$ with $J_k(0)=O$.
For every $x\in [0,L(J_k)]$, let $\zeta_k(x, s)$\, $(0\le s\le 2)$ be
the segment such that 
\begin{itemize}
 \item $\zeta_k(x,0)=J_k(x)$, $\zeta_k(x,1)\in L_k\,;$
 \item $|O,\zeta_k(x,s)|=|O,\zeta_k(x,0)|$ for all 
 $s\in [0,2]\,;$ 
 \item $s\mapsto \zeta_k(x,s)$ is proportional to arc-length.
\end{itemize}
Then $\zeta_k(x, s)$\, $(0\le x\le L(J_k), 0\le s\le 1)$
defines a parametrization of $\hat D_k$, and differentiable except at 
$x=x_0$, where $J_k(x_0)=q_0$.
Take a unique $t_k(x)\in (0,2)$ such that 
\[
      \zeta_k(x, t_k(x))\in {\rm Im} \,(\varphi_k\circ\gamma_k).
\]
Now, we define  $\psi_k: \hat U_k \to  \hat D_k$\, $(k=1,2)$ by 
\beqq
   \psi_k(\zeta_k(x,s)):= \zeta_k\left( x, \frac{s}{t_k(x)}\right).
\eeqq
Obviously  $t_k(x)$ is locally Lipschitz, and hence differentiable 
on a set $\Omega\subset [0, L(J_k)]$ with full measure since 
$\zeta_k(x,s)$ defines a locally bi-Lipschitz embedding.
\pmed

\begin{frame}

\begin{center}
\begin{tikzpicture}
[scale = 1]
\draw [->, very thick] (0,0) -- (9,0);
\coordinate (O) at (0,0);
\coordinate (P1) at (7.5,-2);
\coordinate (P2) at (7.5,2);
\draw [thick] (O) -- (P1);
\draw [thick] (O) -- (P2);
\fill (0,0) coordinate (O) circle (2pt) node [left] {$O=\varphi(p)$};
\draw [-, thick] (0,0) to [out=30, in=180] (4,1);
\draw [-, thick] (4,1) to [out=0, in=210] (7.5,2);
\filldraw[fill=gray, opacity=.1] 
(0,0)  to [out=30, in=180] (4,1) to [out=0, in=210] (7.5,2)
-- (6.5,0) -- cycle;
\draw [-, thick] (0,0) to [out=330, in=180] (4,-1);
\draw [-, thick] (4,-1) to [out=0, in=150] (7.5,-2);
\filldraw[fill=gray, opacity=.1] 
(0,0)  to [out=330, in=180] (4,-1) to [out=0, in=150] (7.5,-2)
-- (6.5,0) -- cycle;
\draw (2,0.5) node[circle] [above] {$\varphi \circ \gamma_2$};
\draw (2,-0.5) node[circle] [below] {$\varphi \circ \gamma_1$};
\draw (9,0) node[circle] [right] {$L_0$};
\draw (4.4,-1.7) node[circle] [right] {$L_1$};
\draw (4.4,1.7) node[circle] [right] {$L_2$};
\draw (7.8,1) node[circle] [right] {$\varphi(C^X)$};
\draw (3.1,0.5) node[circle] [right] {$\zeta_2$};
\draw (3.1,-0.5) node[circle] [right] {$\zeta_1$};
\fill (7.5,-2) coordinate (P1) circle (2pt) node [right] {$q_1 = \varphi(\gamma_1(r))$};
\fill (7.5,2) coordinate (P2) circle (2pt) node [right] {$q_2 = \varphi(\gamma_2(r))$};
\fill (8,0) circle (2pt); 
\draw (8.3,0.1) node[circle] [below] {$r_0$};
\fill (6.5,0) circle (2pt);
\draw (6.2,0.1) node[circle] [below] {$q_0$};
\draw (6.2,0.7) node[circle] [below] {$q_0$};
\draw (5,0.1) node[circle] [below] {$\hat{D}_1$};
\draw (5,1) node[circle] [below] {$\hat{D}_2$};
\draw [-, thick] (6.5,0) -- (7.5,2);
\draw [-, thick] (6.5,0) -- (7.5,-2);
\draw [-] (6.5,0) -- (6.3,1.67);
\draw [-] (6.5,0) -- (6.3,-1.67);
\draw [-] (5.3,1.4) -- (5.5,0);
\draw [-] (5.3,-1.4) -- (5.5,0);
\draw [-] (6.8,1.8) -- (6.9,0.78);
\draw [-] (6.8,-1.8) -- (6.9,-0.78);
\draw [-] (1.2,0) -- (1.1,0.55);
\draw [-] (1.2,0) -- (1.1,-0.55);
\draw [-] (2.25,0) -- (2.1,0.85);
\draw [-] (2.25,0) -- (2.1,-0.85);
\draw [-] (3.9,0) -- (3.7,1);
\draw [-] (3.9,0) -- (3.7,-1);
\draw [thick] (8,0) arc (6:20:8.2cm);
\draw [thick] (8,0) arc (-6:-20:8.2cm);
\filldraw [fill=gray, opacity=.2] 
(0,0) -- (6.5,0) -- (7.5,2) -- cycle;
\filldraw [fill=gray, opacity=.2] 
(0,0) -- (6.5,0) -- (7.5,-2) -- cycle;
\filldraw [fill=gray, opacity=.1] 
(0,0) -- (8,0) arc (6:20:8.2cm) (7.5,2) -- cycle;
\filldraw [fill=gray, opacity=.1] 
(0,0) -- (8,0) arc (-6:-20:8.2cm) (7.5,-2) -- cycle;
\end{tikzpicture}
\end{center}

\end{frame}



\begin{slem} \label{slem:psi:UtoD}
Each $\psi_k:\hat U_k \to \hat D_k$ is a $\tau_p(r)$-almost isometry.
\end{slem}
\begin{proof}
In the expression  
$\psi_k(x,s):=\psi_k\circ\zeta_k(x,s)=\zeta_k(x,s/t_k(x))$,
we have on $\Omega\times [0,1]$
\begin{align}  \label{eq:zeta-partial}
\frac{\pa\psi_k}{\pa s}=\frac{1}{t_k(x)}\frac{\pa\zeta_k}{\pa s},\quad
\frac{\pa\psi_k}{\pa x}=\frac{\pa\zeta_k}{\pa x} +
     \left(\frac{-s t'_k(x)}{t_k(x)^2}\right)\frac{\pa\zeta_k}{\pa s}.
\end{align}
It is easily checked that 
\beq\label{eq:angle-zeta-st}
\left\{
\begin{aligned} 
& 0<c_1<\left|\frac{\pa\zeta_k}{\pa x} \right|<c_2,\\
& 0<c_3<\angle\left( \frac{\pa\zeta_k}{\pa s}, \frac{\pa\zeta_k}{\pa x}\right) <\pi -c_4,
\end{aligned}
\right.
\eeq
for some uniform positive constants $c_1,\ldots,  c_4$.
Note also that 
\beq  \label{eq:partial-xs}
\text{$|t_k(x)-1|<\tau_p(r)$.}
\eeq
 By the property of $\varphi$, we see that
any tangent vector to $\varphi\circ \gamma_k$ is 
$\tau_p(r)$-almost parallel to the radial direction.
Now consider the curve 
$\eta_k(x)=\zeta_k(x, t_k(x))$ 
parametrizing $\varphi\circ \gamma_k$.
It follows from the expression 
\[
\frac{d\eta_k}{dx}(x)=\frac{\pa\zeta_k}{\pa x}(x, t_k(x))+
                      \frac{\pa\zeta_k}{\pa s}(x, t_k(x))  t_k'(x),
\]
that
\beq  \label{eq:partial-s}
\text{$\left|\frac{\pa\zeta_k}{\pa s}(x,s)  t_k'(x) \right|<\tau_p(r)$}.
\eeq
Let 
\begin{align*}
       & v:=\frac{\pa\zeta_k}{\pa x}, \quad  V:=d\psi_k(v), \\
       & w:=\frac{\pa\zeta_k}{\pa s}/\left|\frac{\pa\zeta_k}{\pa s}\right|,  \quad W:=d\psi_k(w).
\end{align*}
Combining \eqref{eq:zeta-partial}, \eqref{eq:angle-zeta-st}, \eqref{eq:partial-xs} and \eqref{eq:partial-s}, 
we have 
\begin{align*}
    &  ||V|-|v||<\tau_p(r), \quad ||W|-|w||<\tau_p(r), \\
    & |\langle V, W\rangle-\langle v,w\rangle|<\tau_p(r).
\end{align*}
Together with \eqref{eq:angle-zeta-st}, this implies 
$||d\psi_k(u)| -1|<\tau_p(r)$ for each unit tangent vector $u$
on $\Omega\times [0,1]$.
This completes the proof of Sublemma \ref{slem:psi:UtoD}.
\end{proof}

Obviously, the $\tau_p(r)$-almost isometry $\psi_k:\hat U_k \to \hat D_k$ extends to a $\tau_p(r)$-almost isometry
$\psi_k: U_k \to D_k$.
 Combining $\psi_1$ and $\psi_2$, 
we obtain  
a $\tau_p(r)$-almost isometry $\psi$ between the image of 
$\varphi$ and  $D_1\cup D_2$:
\[
   \psi:{\rm Im}(\varphi)\to  D_1\cup D_2 \subset\mathbb R^2.
\] 

\pmed\n
Step 2).\,
Finally we deform $D_1\cup D_2$ to the Euclidean  
sector $\Omega(L_1, L_2;r)$. 
Let $\varphi(C^X)$ be parametrized as 
$\varphi(C^X)=(r(t), \theta(t))$
\, $0\le t\le 1$. For every $0\le r'\le r(t)$,
let us define
\[
       \phi(r', \theta(t))= \left( \frac{r}{r(t)} r', \theta(t)\right),
\]
which defines a $\tau_p(r)$-almost isometry
\[
          \phi:  D_1\cup D_2 \to\Omega(L_1, L_2;r).
\]
Thus the composition $\phi\circ\psi\circ\varphi:\Omega(S,r)^X\to\Omega(L_1, L_2;r)$
is a $\tau_p(r)$-almost isometry. This completes the proof of Lemma \ref{lem:bilip-sector}.
\end{proof}


\pmed
\begin{lem} \label{lem:intSBp}
$S\cap B(p,r)$ is a $\CAT(\kappa)$-space with respect to the interior 
metric.
\end{lem}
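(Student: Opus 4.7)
I plan to show that $\Omega := S \cap B(p, r)$ is convex in the $\CAT(\kappa)$-space $(S, d_S)$ of Theorem \ref{thm:ruled}, so that, equipped with the induced length metric, $\Omega$ is itself $\CAT(\kappa)$. As a first reduction, the interior metric on $\Omega$ in the sense of the lemma (the infimum of $d_X$-lengths of curves in $\Omega$) coincides with the length metric inherited from $(S, d_S)$: by Theorem \ref{thm:int=ind}, $d_S$ already equals the interior metric on $S$ with respect to $d_X$, so $d_X$- and $d_S$-lengths of rectifiable curves in $\Omega \subset S$ agree. If $\Omega$ is convex in $(S, d_S)$, both further agree with the restriction $d_S|_{\Omega}$.

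The key step is then to prove this convexity. Given $x, y \in \Omega$, let $\sigma \colon [0,1] \to S$ denote the $S$-geodesic from $x$ to $y$; one must check $d_X(p, \sigma(t)) \le r$ for every $t$. I plan to apply Lemma \ref{lem:bilip-sector} with a slightly larger radius $r' > r$ to obtain a $\tau_p(r)$-almost isometry $\Phi \colon (\Omega(S, r')^X, d_S) \to \Omega(L_1, L_2; r')$ onto a Euclidean sector that contains $\Phi(\Omega)$ in a proper sub-sector. Because $\sigma$ is an $S$-geodesic, the image $\Phi \circ \sigma$ is a curve in the enlarged Euclidean sector whose length is within a factor $1 + \tau_p(r)$ of the Euclidean chord from $\Phi(x)$ to $\Phi(y)$; standard near-Euclidean estimates then force $\Phi \circ \sigma$ to remain uniformly close to this chord, which itself lies inside the convex sub-sector $\Phi(\Omega)$. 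For $r$ sufficiently small the cushion between the two sectors absorbs the deviation, so $\Phi \circ \sigma \subset \Phi(\Omega)$ and hence $\sigma \subset \Omega$. The angular control needed at potential tangency points along $S \cap S^X(p, r)$ will come from Lemmas \ref{lem:S-grad} and \ref{lem:verticalS}.

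With convexity established, the standard fact that a convex subspace of a $\CAT(\kappa)$-space, endowed with the induced length metric, is itself $\CAT(\kappa)$ (cf.\ \cite{bridson-haefliger}, Chapter II.1) completes the argument. The main obstacle is the convexity step: $S$-geodesics are in general not $X$-geodesics, so the $\CAT(\kappa)$-convexity of $B(p, r)$ in $X$ does not directly transfer to convexity of $\Omega$ in $(S, d_S)$, and the required estimate must be extracted from the quantitative Euclidean approximation provided by Lemma \ref{lem:bilip-sector} together with careful bookkeeping of the $\tau_p(r)$ error terms near the boundary arc $S \cap S^X(p, r)$.
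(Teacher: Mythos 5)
The reduction to $d_S$-convexity of $\Omega := S \cap B(p,r)$ inside $(S, d_S)$ is sound, and the observation that Theorem \ref{thm:int=ind} reconciles the two interior metrics is correct; but the convexity step has a gap that the proposed tools cannot close. The almost-isometry $\Phi$ of Lemma \ref{lem:bilip-sector} has multiplicative error $1 + \tau_p(r)$, and this error does not vanish on $B(p,r)$. Tracking the $S$-geodesic $\sigma$ by the chord $[\Phi(x),\Phi(y)]$ controls $d_p \circ \sigma$ only up to an additive error of order $\tau_p(r)\cdot r$, which shows $\sigma$ remains in $B(p,(1+C\tau_p(r))r)$ but not in $B(p,r)$. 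Enlarging the comparison sector to $r' > r$ supplies room for the almost-isometry to be defined, but the cushion $r'-r$ separates the comparison domain from $\partial S$, not the geodesic from $\partial B(p,r)$: since $x, y$ may themselves lie on $S(p,r)$, even the Euclidean chord $[\Phi(x),\Phi(y)]$ is only guaranteed to stay within the disk of radius $(1+\tau_p(r))r$, and there is no cushion between $B(p,r)$ and itself. Convexity is a threshold property and cannot be obtained from a bi-Lipschitz approximation with nonvanishing error.

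The paper's proof is exact and bypasses the approximation entirely: it reduces to \emph{boundary} convexity (it suffices that $S$-geodesics between points of $S \cap S(p,r)$ stay in $B(p,r)$), observes that such an $S$-geodesic is vertical because its endpoints are equidistant from $p$, applies Lemma \ref{lem:verticalS} to conclude that the vertical $S$-geodesic is in fact an $X$-geodesic, and then invokes the $X$-convexity of the metric ball $B(p,r)$. You cite Lemma \ref{lem:verticalS} only in passing, for ``angular control at tangency points,'' but it is the engine of the argument: identifying the $S$-geodesic with an $X$-geodesic lets the exact $X$-convexity of $B(p,r)$ do the work directly, with no $\tau_p(r)$ error to worry about. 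Reorganize the proof around that identification rather than around the error-bearing almost-isometry.
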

\begin{proof} It suffices to show that every point $q\in S\cap S(p,r)$
has a neighborhood $U$ in $S\cap B(p,r)$ such that any $S$-geodesic triangle
region whose vertices are in $U$ is contained in $S\cap B(p,r)$.
To achieve this, we only have to show that $S\cap B(p,r)$ is boundary convex,
in the sense that  for arbitrary $x,y\in S\cap S(p,r)$,
any $S$-minimal geodesic $\gamma_{x,y}^S$ joining them is contained in 
$S\cap B(p,r)$. We may assume that $\gamma_{x,y}^S$ is vertical, and therefore
it is an $X$-geodesic (see also Lemma \ref{lem:x-geod-general2}).
Hence the conclusion follows from the
$X$-convexity of $B(p,r)$.
\end{proof}

\pbig\n
{\bf Filling ball}.\, 
Now we fill the ball $B(p,r)$ via properly embedded/branched immersed $\CAT(\kappa)$-disks.
For a vertex $v$ of $\Sigma_p$ of order $N$, let $\nu_1,\ldots,\nu_N$  and 
$a_1,\ldots, a_N$  be as in the beginning of Section  \ref{sec:fill}.
%
For every pair $(i,j)$ with $1\le i< j\le N$, 
we want to take a simple loop in $\Sigma_p(X)$ passing through 
$\nu_i$, $v$ and $\nu_j$.
Since this is not possible in general,  we consider the two cases.

\par\bigskip\n
Case I.\quad  There is a simple loop $\zeta$ in $\Sigma_p(X)$ through $\nu_i$, $v$ and $\nu_j$.
\par\bigskip

Consider the ruled surface $S(a_i,a_j)$ as well as the 
other ruled surfaces defined around other points of $\zeta$ which are
vertices of $\Sigma_p(X)$ (if they exist). By Lemma \ref{lem:bilip-sector},
considering the regular part of $\zeta$ as well,  we can define
a proper Lipschitz embedding $f_{ij}^v:D^2(\ell;r)\to B(p,r)$ with $f_{ij}^v(O)=p$ satisfying 
$\Sigma_p({\rm Im}(f_{ij}^v))=\zeta$, where $\ell$ is the length of $\zeta$.

\begin{proof}[Proof of Theorem \ref{thm:main}(1)
for embedded disks]
Lemma \ref{lem:intSBp} together with the gluing procedure 
as discussed after Lemma \ref{lem:g-complete}
implies 
that ${\rm Im} (f^v_{ij})$ is a 
$\CAT(\kappa)$-space. Note that $f_{ij}^v$ has bi-Lipschitz constant $<1+\tau_p(r)$.
\end{proof}

\par\bigskip\n
Case II.\quad  There are no simple loops in $\Sigma_p(X)$ containing $\nu_i$, $v$ and $\nu_j$.
\par\bigskip

\begin{claim}
There is an immersion $g:S^1\to \Sigma_p(X)$ such that
\benum
 \item if $W$ is the set of multiple points of $g$, then 
 $g^{-1}(W)$ consists of two arcs $W_1$, $W_2$
  (they may be points), and each 
 restriction $g|_{W_a}:W_a\to W$ \,$(a=1,2)$ is injective $;$
 \item there is an arc $I$ of $S^1$ such that $g(I)$ coincides with the arc between $\nu_i$ and $\nu_j$ containing $v$.
\eenum
\end{claim}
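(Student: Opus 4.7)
The plan is to build $g$ by constructing, on each side of $v$, a closed immersed walk at $v$ that traverses $e_i$ (respectively $e_j$), and then concatenating them at $v$ so that the junction realizes the arc $A := [\nu_i, v, \nu_j]$. Let $C_a, C_b$ denote the components of $\Sigma_p(X) \setminus \{v\}$ containing the interiors of $e_i, e_j$, respectively, with $w_i \in C_a$, $w_j \in C_b$ the other endpoints of $e_i, e_j$. By the hypothesis of Case II, $C_a \neq C_b$. Let $E_a$ (resp.\ $E_b$) denote the set of edges of $\Sigma_p(X)$ from $v$ into $C_a$ (resp.\ $C_b$), and set $\Sigma_p^a := C_a \cup \{v\} \cup \bigcup_{e \in E_a} e$, and analogously $\Sigma_p^b$.

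First I build the walk $\omega_a$ at $v$ in $\Sigma_p^a$, distinguishing two cases. If $|E_a| \geq 2$, then $e_i$ is not a cut edge of $\Sigma_p^a$ (removing $e_i$ keeps $v$ joined to $C_a$ through the remaining edges of $E_a$), so $e_i$ lies on a simple cycle $L_a$ in $\Sigma_p^a$ through $v$, necessarily using $e_i$ and another edge $e_i^* \in E_a$ with the remaining arc inside $C_a$; set $\omega_a := L_a$. If $|E_a| = 1$, then $C_a$ must contain a simple cycle: otherwise $C_a$ would be a tree, and since a tree with more than one vertex has at least two leaves while the only candidate leaf in $C_a$ is $w_i$ (the unique $E_a$-endpoint), this would force $C_a = \{w_i\}$ and hence make $w_i$ a degree-$1$ vertex of $\Sigma_p(X)$, contradicting the absence of endpoints. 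Pick a simple cycle $L_a \subset C_a$ together with a simple path $P_a$ in $C_a$ from $w_i$ to $L_a$ meeting $L_a$ only at its terminal vertex $q_a$ (a first-hitting path works), and set $\omega_a := e_i \cdot P_a \cdot L_a \cdot P_a^{-1} \cdot e_i^{-1}$. Construct $\omega_b$ analogously.

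Define $g := \omega_a \star \omega_b \colon S^1 \to \Sigma_p(X)$ by concatenating $\omega_a$ and $\omega_b$ at $v$, orienting them so that $\omega_a$ ends by entering $v$ via $e_i$ and $\omega_b$ starts by leaving $v$ via $e_j$; take $I$ to be a small arc in $S^1$ around this junction, so that $g(I) = A$. I verify that $g$ is an immersion by checking at each vertex visit that consecutive edges used by $g$ are distinct: at interior vertices of cycles or paths this is immediate; at $q_a$ (resp.\ $q_b$) the condition $P_a \cap L_a = \{q_a\}$ ensures the adjacent edges from $P_a$ and $L_a$ differ; at $v$ the junction uses the distinct edges $e_i, e_j$, while the closing of $S^1$ uses edges from the two different components $C_a, C_b$. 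Tracing the walk, the edges of $L_a, L_b$ are each traversed exactly once, while $e_i, e_j$, and (when present) $P_a, P_b$, are each traversed twice in opposite directions. Consequently $W$ is either the single point $\{v\}$ or a connected arc $P_a \cup e_i \cup \{v\} \cup e_j \cup P_b$ through $v$, and $g^{-1}(W)$ splits into two subsets $W_1, W_2$ (two arcs, possibly reduced to points), each mapped bijectively onto $W$ by $g$.

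The main obstacle is the subcase $|E_a| = 1$ (or symmetrically $|E_b| = 1$), where $e_i$ is a bridge of $\Sigma_p(X)$ that must be doubled. The choice $P_a \cap L_a = \{q_a\}$ is essential both to avoid a U-turn at $q_a$ and to keep the doubled region $W$ a single connected arc rather than a branched subgraph; this is the reason for taking $P_a$ to be a first-hitting path from $w_i$ to $L_a$.
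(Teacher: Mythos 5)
Your proof is correct and follows essentially the same construction as the paper's: on each side of $v$ you build a closed walk that is either a simple cycle through $v$ (when $e_i$ is non-separating there) or a ``lollipop'' $e_i\cdot P_a\cdot L_a\cdot P_a^{-1}\cdot e_i^{-1}$ (when $e_i$ is a bridge), and concatenate the two at $v$ -- this matches the paper's dichotomy between $C_i$ simple and $C_i=\tilde C_i\cup[v,u_i]$. Your version is more explicit than the paper's: you derive, rather than assert, that the Case~II hypothesis forces the interiors of $e_i$ and $e_j$ into distinct components of $\Sigma_p(X)\setminus\{v\}$, you justify the existence of the required simple cycles from the no-endpoint condition on $\Sigma_p(X)$, and you verify the immersion and multiple-point conditions pointwise.
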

\begin{proof} In view of the present case, there are non-contractible loops $C_i$ and $C_j$  at $v$, freely homotopic to a circle, such that  
$\nu_i\in C_i$, $\nu_j\in C_j$, $\nu_j\notin C_i$, $\nu_i\notin C_j$.
If both $C_i$ and $C_j$ are simple, we can define
a desired immersion $g:S^1\to \Sigma_p(X)$ with $W=\{ v\}$.
Suppose $C_i$ is not simple. Then $C_i$ contains a 
simple loop $\tilde C_i$ at a point $u_i$ such that 
 $C_i$ is the union of $\tilde C_i$ and the arc $[v,u_i]$.
 If $C_j$ is also not simple, then we consider the union of 
 simple loops $\tilde C_i$, $\tilde C_j$ and the arc $[u_i, u_j]$.
 If only $C_i$ is not simple, then we consider the union of 
 simple loops $\tilde C_i$, $C_j$ and the arc $[u_i, v]$.
 This observation provides a desired immersion $g:S^1\to \Sigma_p(X)$ with $W=[u_i,u_j]$ or $W=[u_i, v]$. 
\end{proof}  
First suppose $W=\{ v\}$ and find $\nu_k\in C_i$ and 
$\nu_{\ell} \in C_j$, $1\le k, \ell\le N$, $k, \ell \neq i,j$.
Chasing on $g(I)$ in the order 
$$
\nu_i \to v \to \nu_j \to \nu_\ell \to v \to \nu_{k} \to \nu_i,
$$
we consider  the ruled surfaces $S(a_i,a_j)$, $S(a_k,a_{\ell})$ 
as well as the other ruled surfaces defined around other points of $g(I)$ which are vertices of $\Sigma_p(X)$ (if they exist).
 By Lemma \ref{lem:bilip-sector}, considering the regular part of $g(I)$ as well, we can define
a proper Lipschitz immersion $f_{ij}^v:D^2(\ell;r)\to B(p,r)$ with
branched point $(f_{ij}^v)^{-1}(p)=\{ O\}$
satisfying 
$\Sigma_p({\rm Im}(f_{ij}^v))=g(S^1)$,
 in a way similar to Case I.
Note that any multiple point $q\in {\rm Im} f_{ij}^v$
lies in a direction close to $v$.

Next suppose $W=[u_i, v]$ and find 
$\nu_{\ell} \in C_j$ with $1\le \ell\le N$, $\ell \neq i,j$.
Chasing on $g(I)$ in the order 
$$
\nu_i \to v \to \nu_j  \to \nu_\ell  \to v \to \nu_i
\to u_i \to \tilde C_i \to \nu_i,
$$ 
we similarly consider  the ruled surfaces $S(a_i,a_j)$, $S(a_j,a_{\ell})$ 
as well as the other ruled surfaces defined around other points of $g(I)$ which are vertices of $\Sigma_p(X)$ (if they exist).
In a way similar to the previous case, we can define
a desired proper Lipschitz immersion $f_{ij}^v:D^2(\ell;r)\to B(p,r)$ branched at the point $(f_{ij}^v)^{-1}(p)=\{ O\}$
 satisfying 
$\Sigma_p({\rm Im}(f_{ij}^v))=g(S^1)$.

The other case is similar, and hence omitted. 

\pmed
Note that $f_{ij}^v$ has bi-Lipschitz constant (resp. local bi-Lipschitz constant
except the origin) $<1+\tau_p(r)$ in Case I (resp. in Case II).
\pmed

\begin{lem} \label{lem:coincide}
\[
    B(p,r) = \bigcup_{v\in V(\Sigma_p(X))}\, \left( \bigcup_{1\le i<j\le N} {\rm Im}\, f_{ij}^v      \right).
\]
\end{lem}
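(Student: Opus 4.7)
The plan would be to establish the two inclusions separately. The inclusion $\bigcup_{v,(i,j)}{\rm Im}\,f_{ij}^v\subseteq B(p,r)$ is immediate from the codomains of the maps. For the reverse inclusion, I would take $x\in B(p,r)$ and reduce to the case $x\ne p$, since $p=f_{ij}^v(O)$ for every $v,(i,j)$. Set $\xi:=\dot\gamma_{p,x}(0)\in\Sigma_p(X)$. By construction, each $f_{ij}^v:D^2(\ell_{ij}^v;r)\to B(p,r)$ is a proper Lipschitz embedding (Case I) or branched immersion (Case II) with $f_{ij}^v(O)=p$ and $\Sigma_p({\rm Im}\,f_{ij}^v)$ equal to the loop $\zeta$ or the immersed circle $g(S^1)$; consequently ${\rm Im}\,f_{ij}^v$ is the cone of radius $r$ over $\Sigma_p({\rm Im}\,f_{ij}^v)$. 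Hence once I know $\xi\in\Sigma_p({\rm Im}\,f_{ij}^v)$ for some $v,(i,j)$, the entire segment $\gamma_{p,x}([0,|p,x|])$ lies in ${\rm Im}\,f_{ij}^v$, and in particular $x\in{\rm Im}\,f_{ij}^v$.

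The heart of the argument is therefore the following combinatorial claim: for every $\xi\in\Sigma_p(X)$ there exist $v\in V(\Sigma_p(X))$, $1\le i<j\le N$ and a choice of loop $\zeta$ (Case I) or immersion $g$ (Case II) whose image contains $\xi$. If $\xi$ is itself a vertex $v$, any pair $(i,j)$ works. Otherwise $\xi$ lies in the interior of some edge $e$ of the finite connected graph $\Sigma_p(X)$, and since $\Sigma_p(X)$ has no endpoints every vertex has degree at least $2$. If $e$ lies on some simple cycle $C$ of $\Sigma_p(X)$, I would pick any vertex $v$ of $C$, let $\nu_i,\nu_j$ be the two neighbors of $v$ along $C$, and apply Case I with $\zeta:=C$; then $\xi\in C\subset\Sigma_p({\rm Im}\,f_{ij}^v)$. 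If $e$ is a bridge of $\Sigma_p(X)$, no such simple cycle exists and I would invoke Case II.

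The main obstacle is this last sub-case: realising $g:S^1\to\Sigma_p(X)$ so that its image passes through a prescribed point $\xi$ in the interior of a bridge $e$. Here I would exploit the fact that the endpoints of $e$ have degree $\ge 2$, so each of the two components of $\Sigma_p(X)\setminus{\rm int}\,e$ contains a non-trivial cycle $\tilde C_i$, $\tilde C_j$. Choosing $v$ on $\tilde C_i$, $\nu_i\in\tilde C_i$ adjacent to $v$, and $\nu_j$ the neighbor of $v$ along the path toward $e$, I would take $g$ to be the closed walk $\nu_i\to v\to\nu_j\to$(through $e$)$\to$(around $\tilde C_j$)$\to$(back through $e$)$\to v\to$(around $\tilde C_i$)$\to\nu_i$; the doubly-traced bridge path is the arc $W$ of multiple points required in the definition of $g$, and the image of $g$ manifestly contains $\xi$. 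Verifying the two listed conditions of the Claim preceding Case II for this $g$ is then a routine check, and completes the proof.
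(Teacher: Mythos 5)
The first inclusion and the combinatorial reduction (every $\xi\in\Sigma_p(X)$ lies in $\Sigma_p({\rm Im}\,f_{ij}^v)$ for some $v,(i,j)$, by distinguishing cyclic edges from bridges) are fine and match what the paper asserts from construction. The gap is the geometric step that you treat as immediate: the claim that ${\rm Im}\,f_{ij}^v$ ``is the cone of radius $r$ over $\Sigma_p({\rm Im}\,f_{ij}^v)$,'' i.e.\ that $\xi\in\Sigma_p({\rm Im}\,f_{ij}^v)$ forces the $X$-geodesic $\gamma_\xi$ to lie entirely inside ${\rm Im}\,f_{ij}^v$. This is precisely what is \emph{not} known and is in fact the content of the lemma. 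The domain $D^2(\ell;r)$ is a metric cone, but $f_{ij}^v$ is only a Lipschitz immersion, not an isometry, and ${\rm Im}\,f_{ij}^v$ need not be star-shaped with respect to $X$-geodesics from $p$. Indeed Lemma \ref{lem:regular-dir} only guarantees that $X$-geodesics in \emph{regular} directions stay inside a ruled surface for a short time, and Remark \ref{rem:singular-nonconvex} explicitly warns that this fails for singular directions (vertices of the space of directions). So for $\xi$ near the vertex $v$, the $X$-geodesic $\gamma_\xi$ can exit ${\rm Im}\,f_{ij}^v$; the same can happen at any singular point $\gamma_\xi$ meets away from $p$.

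The paper's proof acknowledges exactly this and takes a different route for $\xi$ near a vertex. It replaces $\gamma_\xi$ (an $X$-geodesic, which may leave the set) by the geodesic $\gamma$ \emph{inside} the $\CAT(\kappa)$-disk ${\rm Im}\,f_{ij}^v$ issuing from $p$ in direction $\xi$; $\gamma$ stays in ${\rm Im}\,f_{ij}^v$ by construction but is generally not $\gamma_\xi$. Taking $x'$ on $\gamma$ with $|p,x'|_X=|p,x|_X$, the two points $x$ and $x'$ are at nearly equal distance from $p$, so the $X$-geodesic $\gamma^X_{x,x'}$ is nearly horizontal; extending it in both directions it hits $\gamma_{p,a_k}$ and $\gamma_{p,a_\ell}$, and Lemma \ref{lem:x-geod-general} then places the whole extended geodesic, in particular $x$, inside $S(a_k,a_\ell)$, contradicting the assumption that $x$ is in no image. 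Your proposal would need a comparable replacement for the cone/star-shapedness assertion; as written, that step does not hold.
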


\begin{proof} 
First note that from construction, $\Sigma_p(X)$ coincides with all the union of 
$\Sigma_p({\rm Im}\, f_{ij}^v)$.     
Suppose there is a point 
$x \in  B(p,r)$ which is not contained in any image ${\rm Im}\, f_{ij}^v$.
Let $\xi:=\uparrow_p^x$. Take some ${\rm Im}\, f_{ij}^v$
such that $\xi\in \Sigma_p({\rm Im}\, f_{ij}^v)$.
We may assume that $\xi$ is close to the vertex $v$, since
if $\xi$ is far from any vertex of $\Sigma_p(X)$, then 
$x=\gamma_\xi(|p,x|_X)$ is certainly contained in the union of all 
the images ${\rm Im}\, f_{ij}^v$, which is a contradiction.

Let $\gamma$ be a geodesic in ${\rm Im}\, f_{ij}^v$ starting from $p$ 
in the direction $\xi$.
Note that $\gamma$ reaches the metric sphere $S(p, r)$
(see also Sublemma \ref{slem:ratio-SX}). 
Let $x'$ be the point of $\gamma_{\xi}$
such that $|p,x'|_X=|p,x|_X$.
Consider the geodesic $\gamma_{x,x'}^X$.
If we extend $\gamma_{x,x'}^X$ through $x'$, it meets 
$\gamma_{p,a_k}$ for some $k$. Similarly,
if we extend $\gamma_{x,x'}^X$ through $x$, it meets 
$\gamma_{p,a_{\ell}}$ for some $\ell$.
Lemma \ref{lem:x-geod-general} yields that $x\in S(a_k, a_j)$.
This is a contradiction.
\end{proof}

Combining Lemma \ref{lem:coincide} and the above discussion, we complete the proof of Theorem \ref{thm:main}(1), (2), (3) except (1) for the branched 
immersed disks that occur from the above Case II.

The proof of Theorem \ref{thm:main}(1) 
for the branched 
immersed disks is deferred to Section \ref{sec:approx}.


\setcounter{equation}{0}

\section{Graph structure of singular set} \label{sec:graph}

Our next step is to characterize $\mathcal S(X)\cap B(p,r)$ 
as a union of finitely many Lipschitz curves.

For a subset $A$ of $X$, we denote by 
$\partial A$ the complement in $\bar A$ of the set of all points 
$a$ of $A$ such that there is a neighborhood of $a$ 
homeomorphic to an open disk and contained in $A$.
%
%

For distinct $1\le i, j, k\le N$, we set 
\begin{equation*}
   C_{ij;k} := (\partial (S(a_i,a_j) - S(a_j,a_k))-\partial S(a_i,a_j))\cap
                 B(p,r).
\end{equation*}

\begin{lem} \label{lem:sing-arc}
$C_{ij;k}$ is a simple Lipschitz arc in $\mathcal S(X)$
such that 
\begin{enumerate}
 \item it starts from $p$ and reaches a point of $\partial B(p,r)$;
 \item its length is less than $(1+\tau_p(r))r$;
 \item each point of $\Sigma_x(C_{ij;k})$ is a vertex of 
 $\Sigma_x(X)$ for every $x\in C_{ij;k}$.  In particular, $C_{ij;k}$ has definite directions everywhere, and $\frac{|d_p(x)-d_p(y)|}{|x,y|_X}\ge 1-\tau_p(r)$ for all $x,y\in C_{ij;k}$.

\end{enumerate}
\end{lem}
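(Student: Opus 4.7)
The plan is to exploit two facts about the interface set $C_{ij;k}$: locally it lies in $\mathcal{S}(X)$ because the three ruled surfaces $S(a_i,a_j)$, $S(a_j,a_k)$ glue together along $\gamma_{p,a_j}$ to produce a non-disk neighborhood; and at each such point the only admissible singular tangent directions are essentially $\pm(\nabla d_p)$, by Lemma \ref{lem:sing-nabla}. This second fact forces $C_{ij;k}$ to be monotone in $d_p$, which yields simplicity, the Lipschitz estimate and the length bound all at once.

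First I would verify that $C_{ij;k}\subset \mathcal{S}(X)$. For $x\in C_{ij;k}\setminus\{p\}$, $x$ belongs to $S(a_i,a_j)\cap S(a_j,a_k)$ but is the limit of points of $S(a_i,a_j)\setminus S(a_j,a_k)$, so near $x$ the surface $S(a_i,a_j)$ contributes a two-dimensional sheet sticking out of $S(a_j,a_k)$. Using Lemmas \ref{lem:S-circle}, \ref{lem:inS} and the sector correspondence filling $B(p,r)$ by the various $S(a_{i'},a_{j'})$, one checks that on the other side of $x$ inside $X$ there is at least one additional sheet corresponding to the ruled surfaces adjacent to $S(a_j,a_k)$, so that no neighborhood of $x$ is a topological disk, i.e.\ $x\in\mathcal{S}(X)$. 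The endpoint $p$ is obviously in $\mathcal{S}(X)$ since $\Sigma_p(X)$ is not a circle.

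Next, by Corollary \ref{cor:vert} and Lemma \ref{lem:sing-nabla}, for every $x\in\mathcal{S}(X)\cap B(p,r)\setminus\{p\}$ every element of $\Sigma_x(\mathcal{S}(X))$ is within $\tau_p(r)$ of $\pm(\nabla d_p)(x)$. Since $\Sigma_x(C_{ij;k})\subset\Sigma_x(\mathcal{S}(X))$, the same holds for $C_{ij;k}$, which proves item (3) once we know $C_{ij;k}$ has definite right/left directions; this definiteness itself follows because, at the scale $|p,x|$, Lemma \ref{lem:near-vert} says $V(\Sigma_x(X))$ is almost the antipodal pair $\{\pm(\nabla d_p)(x)\}$, so each one-sided limit of directions along $C_{ij;k}$ is forced to land on one of these two vertices.

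The curve structure then falls out by projecting onto $d_p$. Because each one-sided tangent makes angle $<\tau_p(r)$ with $\pm(\nabla d_p)$, for any $x,y$ on a small subarc of $C_{ij;k}$ the first variation of $d_p$ gives $|d_p(x)-d_p(y)|\ge(1-\tau_p(r))|x,y|_X$, which is the last inequality in (3) and implies that $d_p$ restricted to $C_{ij;k}$ is strictly monotone locally. Using this together with the CAT$(\kappa)$ disk structure of $S(a_i,a_j)$ (so that the relatively closed set $\overline{S(a_i,a_j)\setminus S(a_j,a_k)}\cap S(a_i,a_j)$ has a one-dimensional interface), the set $C_{ij;k}$ is parametrized bi-Lipschitz by $d_p:C_{ij;k}\to[0,r]$ with Lipschitz constant at most $(1+\tau_p(r))$. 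This gives simplicity, the Lipschitz property (1), and the length bound (2). For the initial point, $p\in\overline{C_{ij;k}}$ because the common geodesic $\gamma_{p,a_j}$ ends at $p$ where the three surfaces meet; monotonicity of $d_p$ then forces the other endpoint to lie on $S(p,r)=\partial B(p,r)$, since the arc cannot terminate in the interior without violating the non-manifold structure at an interior endpoint.

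The main obstacle will be Step 1 above, namely the precise verification that $C_{ij;k}$ is a \emph{single} simple arc rather than, a priori, a more complicated closed subset (a tree, or several arcs). The monotonicity of $d_p$ on $C_{ij;k}$ rules out branching once one shows that at every $x\in C_{ij;k}\setminus\{p\}$ exactly two one-sided directions appear in $\Sigma_x(C_{ij;k})$, so the delicate point is to exclude a third ``branch'' direction; this requires combining Lemma \ref{lem:near-vert} (so candidate directions cluster near the two vertices $\pm(\nabla d_p)(x)$) with the local sector description around $x$ provided by Lemma \ref{lem:bilip-sector}, which allows at most one sheet of $\mathcal{S}(X)$ through $x$ in each of the $\pm(\nabla d_p)$ directions in the present configuration.
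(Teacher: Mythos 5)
Your outline identifies the correct consequences (monotonicity of $d_p$ along $C_{ij;k}$, the $\tau_p(r)$-angle control near $\pm\nabla d_p$, and the resulting Lipschitz/length estimate) and these agree with the latter half of the paper's argument. However, you yourself flag the decisive issue and then do not resolve it: nothing in your proposal actually shows that $C_{ij;k}$ is a single connected simple arc rather than a disjoint union of arcs, a tree, or something worse. The angle estimates from Corollary \ref{cor:vert} and Lemma \ref{lem:sing-nabla} constrain the possible tangent directions of $\mathcal{S}(X)$ at each point, but they are pointwise statements and say nothing about connectivity of $C_{ij;k}$, and Lemma \ref{lem:bilip-sector} only controls the geometry of a single sector $\Omega(S,r)^X$ inside one ruled surface — it does not rule out several sheets of $\overline{S(a_j,a_k)\setminus S(a_i,a_j)}$ meeting $S(a_i,a_j)$ along separate interface arcs at comparable radii.

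The paper's proof closes exactly this gap with a structural mechanism your proposal omits. It parametrizes $C_{ij;k}$ explicitly by the ruling parameter $s$ of $S(a_k,a_j)$: for each $s$ it takes $z_s = \lambda_s(t_0)$, the \emph{first} point at which the ruling geodesic $\lambda_s$ of $S(a_k,a_j)$ meets $S(a_i,a_j)$, and then proves the coherence property $\lambda_s([t_0,1])\subset S(a_i,a_j)$ (displayed as \eqref{eq:ruled-coincide}). The proof of this containment is where the real work is: one produces, at the singular point $z_s$, the opposite direction $\xi_0\in\Sigma_{z_s}(S(a_i,a_j))$ with $\angle(\xi_0,\dot\lambda_s(t_0))=\pi$, follows the $S(a_i,a_j)$-geodesic $\gamma_{\xi_0}$ until it hits $\gamma_{p,a_i}$, uses Lemma \ref{lem:x-geod-general'} to conclude that the concatenation $\gamma_{\xi_0}\cup\gamma_{\xi_1}$ is a genuine $X$-geodesic, and then applies Lemma \ref{lem:x-geod-general} to force the whole tail $\lambda_s([t_0,1])$ into $S(a_i,a_j)$. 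This is what makes $z_s$ well-defined and continuous in $s$, hence exhibits $C_{ij;k}$ as the image of a continuous curve; only then do the angle estimates enter to promote continuity to simplicity, Lipschitz regularity, and the length bound via the parametrization $\varphi(s)=C_{ij;k}\cap S(p,s)$. Your proposal does not invoke Lemmas \ref{lem:x-geod-general} and \ref{lem:x-geod-general'} at all, and without the coherence of ruling geodesics across the interface there is no route to connectivity, so the argument as written has a genuine hole precisely at the step you flagged as the main obstacle.
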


\begin{proof}
For each $s\in [0, 2r]$, consider the ruling geodesic
$\lambda_s(t)$\,$(0\le t\le 1)$ of $S(a_k, a_j)$
joining $\gamma_{p,a_k}(s)$ to $\gamma_{p,a_j}(s)$ in $X$.
Let $t_0\in (0,1)$ be the first parameter at which $\lambda_s$ meet 
$S(a_i,a_j)$.

We claim that 
\begin{align} \label{eq:ruled-coincide}
   \lambda_s([t_0,1])\subset S(a_i,a_j).
\end{align}
Since $z_s:=\lambda_s(t_0)$ is a topological singular point of $X$,
by Lemma \ref{lem:S-circle}, we can take a direction $\xi_0\in\Sigma_{z_s}(S(a_i,a_j))$
with $\angle(\xi_0, \lambda_s'(t_0))=\pi$. A geodesic $\gamma_{\xi_0}$ in 
$S(a_i,a_j)$ with direction $\xi_0$ reaches $\gamma_{p,a_i}$. 
Take $\xi_1\in\Sigma_{z_s}(S(a_i,a_j))$ with 
$\angle(\xi_0,\xi_1)=\pi$. Similarly a geodesic $\gamma_{\xi_1}$ in 
$S(a_i,a_j)$ with direction $\xi_1$ reaches $\gamma_{p,a_j}$. 
It follows from Lemma \ref{lem:x-geod-general'}
that $\gamma_{\xi_0}$ and $\gamma_{\xi_1}$ form a geodesic
in $X$. In particular, $\gamma_{\xi_0}$  is a geodesic
in $X$, and therefore $\gamma_{\xi_0}$ and
$\lambda_s([t_0,1])$ form  a geodesic, say $\gamma$, in $X$, 
 Lemma \ref{lem:x-geod-general} implies that $\gamma$ is  contained in $S(a_i,a_j)$,
and so is $\lambda_s([t_0,1])$.

Since the curve $c(s):=z_s$ is continuous, its image coincides with $C_{ij;k}$.
By Corollary \ref{cor:vert}, we have 
\[
      \angle((\nabla d_p)(c(s)), \Sigma_{c(s),+}(C_{ij;k})) < \tau_p(r), \,\,
      \angle((-\nabla d_p)(c(s)), \Sigma_{c(s),-}(C_{ij;k})) < \tau_p(r),
\]
where $\Sigma_{c(s),+}(C_{ij;k})$ (resp. $\Sigma_{c(s),-}(C_{ij;k})$) denote 
the space of directions 
of $C_{ij;k}$ at $c(s)$ in the positive direction (resp. negative direction).

Now we take another parametrization $\varphi(s)$ of $C_{ij;k}$ defined as 
$\varphi(s)=C_{ij;k}\cap S(p, s)$, where $S(p,s)$ denotes the metric circle
of radius $s$ with respect to $d_X$.
If $s'$ is close enough to $s$, then we have 
$\angle(\uparrow_{\varphi(s)}^{\varphi(s')}, \nabla d_p(\varphi(s)))< \tau_p(r)$,
which implies that 
\begin{align}\label{eq:C-lipschitz}\displaystyle{\lim_{s'\to s}\,\frac{|\varphi(s), \varphi(s')|_X}{|s-s'|} \le 1+\tau_p(r)}.
\end{align}
Thus $\varphi$ is  Lipschitz 
with  Lipschitz constant $\le 1+\tau_p(r)$,
and therefore having length $L(\varphi)=L(C_{ij;k})\le (1+\tau_p(r))r$. \eqref{eq:C-lipschitz} also implies 
the inequality in (3).
\end{proof}

Lemma \ref{lem:sing-arc} claims that the closure of 
$S(a_j,a_k)-S(a_i,a_j)$ ``transversally'' intersects $S(a_i,a_j)$
with the Lipschitz curve $C_{ij;k}$.
In particular we have

\begin{lem} \label{lem:Cijk}
$C_{ij;k} = C_{ji;k} = C_{jk;i}$.
\end{lem}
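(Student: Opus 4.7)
The plan is to show that each of $C_{ij;k}$, $C_{ji;k}$, and $C_{jk;i}$ admits the same symmetric characterization as the set of points in $B(p,r)$ that lie simultaneously in the relative interiors of all three ruled surfaces $S(a_i,a_j)$, $S(a_j,a_k)$, $S(a_k,a_i)$. The equality $C_{ij;k} = C_{ji;k}$ is then essentially immediate, since $S(a_j,a_i) = S(a_i,a_j)$ and both sets describe the transverse intersection locus inside this common ambient surface; the substantive step is the identification with $C_{jk;i}$, which is where a point intrinsically belongs to all three surfaces is used to swap the ``ambient'' surface.

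I would begin by revisiting the construction at a point $z = z_s = \lambda_s(t_0) \in C_{ij;k}$, where $\lambda_s$ is the ruling of $S(a_k,a_j)$. The proof of Lemma \ref{lem:sing-arc} produced a direction $\xi_0 \in \Sigma_z(S(a_i,a_j))$ opposite to $\dot\lambda_s(t_0)$ such that the $S(a_i,a_j)$-geodesic $\gamma_{\xi_0}$ reaches some point $\gamma_{p,a_i}(s^*) \in \gamma_{p,a_i}$, and such that the concatenation $\eta_s := \lambda_s([0,t_0]) \cup \gamma_{\xi_0}$ is an $X$-geodesic from $\gamma_{p,a_k}(s)$ through $z$ to $\gamma_{p,a_i}(s^*)$.

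The main step is to show $z \in S(a_k,a_i)$. I would consider the one-parameter family of ruling geodesics $\mu_u$ of $S(a_k,a_i)$, each joining $\gamma_{p,a_k}(u)$ to $\gamma_{p,a_i}(u)$, which depend continuously on $u$ in the Hausdorff sense by uniqueness of $X$-minimal geodesics in $B(p,r)$. Since the endpoints of $\eta_s$ lie on the boundary geodesics $\gamma_{p,a_k}$ and $\gamma_{p,a_i}$, a transversality argument---combining the vertex-direction property from Lemma \ref{lem:sing-arc}(3), Corollary \ref{cor:vert}, and continuity of $u \mapsto \mu_u$---ensures that some ruling $\mu_{u^*}$ actually passes through $z$. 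Hence $z \in S(a_k,a_i)$, and $z$ lies in its relative interior since $\eta_s$ meets $\gamma_{p,a_i}$ and $\gamma_{p,a_k}$ only at its endpoints.

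Once $z \in S(a_i,a_j) \cap S(a_j,a_k) \cap S(a_k,a_i)$ with all interior conditions verified, the local singular structure at $z$ described by Corollary \ref{cor:vert}---$\Sigma_z(\mathcal S(X))$ is contained in a $\tau_p(r)$-neighborhood of $\{\pm(\nabla d_p)(z)\}$---makes the defining set-difference conditions for $C_{ij;k}$, $C_{ji;k}$, $C_{jk;i}$ equivalent at $z$; in particular $z$ belongs to all three curves. The reverse inclusions are proved by running the same argument starting from arbitrary points of $C_{jk;i}$ or $C_{ji;k}$. The main obstacle will be the transversality argument in the middle step: the rulings $\mu_u$ of $S(a_k,a_i)$ use equal-parameter endpoints, whereas $\eta_s$ joins $\gamma_{p,a_k}(s)$ to $\gamma_{p,a_i}(s^*)$ with $s \ne s^*$ in general, so a careful intermediate value argument on a signed distance function (or on a direction comparison in $\Sigma_z(X)$) is required to pin down an actual ruling through $z$ rather than merely rulings passing arbitrarily close.
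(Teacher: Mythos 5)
The overall strategy is sound and close to the paper's intent: the paper treats this lemma as immediate from the proof of Lemma \ref{lem:sing-arc}, because the argument there is already symmetric in $i,j,k$ once one sees that $z_s$ lies on an $X$-geodesic between a point of $\gamma_{p,a_k}$ and a point of $\gamma_{p,a_i}$ that is contained in $S(a_k,a_i)$. You correctly identify the key curve $\eta_s = \lambda_s([0,t_0]) \cup \gamma_{\xi_0}$ and the correct target, namely $z \in S(a_i,a_j)\cap S(a_j,a_k)\cap S(a_k,a_i)$ with all inclusions interior.

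However, your middle step has a genuine gap, and you flag it yourself without closing it. You propose to establish $z\in S(a_k,a_i)$ by an intermediate-value argument over the family of rulings $\mu_u$ of $S(a_k,a_i)$, but that route is circular: saying ``some ruling $\mu_{u^*}$ passes through $z$'' is essentially the statement $z\in S(a_k,a_i)$ that you are trying to prove, and the signed-distance or direction-comparison function you would need to run the intermediate-value theorem is not constructed. The paper's own machinery closes this gap in one step: $\eta_s$ is an $X$-geodesic joining $\gamma_{p,a_k}(s)$ to $\gamma_{p,a_i}(s^*)$ with $\big||p,\gamma_{p,a_k}(s)|-|p,\gamma_{p,a_i}(s^*)|\big|$ small compared to $|\gamma_{p,a_k}(s),\gamma_{p,a_i}(s^*)|$, so Lemma \ref{lem:x-geod-general}, applied to $S(a_k,a_i)$, says the $S(a_k,a_i)$-geodesic between those two endpoints is an $X$-geodesic; by uniqueness of $X$-geodesics in the $\CAT(\kappa)$-domain $B(p,r)$ it must coincide with $\eta_s$, so $\eta_s\subset S(a_k,a_i)$ and in particular $z\in S(a_k,a_i)$. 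This is precisely the same mechanism the paper used in the proof of Lemma \ref{lem:sing-arc} to conclude $\lambda_s([t_0,1])\subset S(a_i,a_j)$.

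One more point that deserves care and is glossed over in your proposal: the assertion that $\eta_s=\lambda_s([0,t_0])\cup\gamma_{\xi_0}$ is an $X$-geodesic requires $\angle\big(-\dot\lambda_s(t_0),\xi_0\big)=\pi$ in $\Sigma_{z_s}(X)$, whereas the proof of Lemma \ref{lem:sing-arc} only supplies $\angle\big(\dot\lambda_s(t_0),\xi_0\big)=\pi$. Since $\Sigma_{z_s}(X)$ is a graph rather than a circle, these are \emph{a priori} different statements. The missing piece is supplied by Lemma \ref{lem:near-vert}/Corollary \ref{cor:vert}: $\Sigma_{z_s}(X)$ is close to a spherical suspension over finitely many points, with its vertices concentrated near $\pm\nabla d_p(z_s)$, and $\pm\dot\lambda_s(t_0),\xi_0,\xi_1$ are all vertical directions; this forces the mutual angles between distinct vertical edge points to be $\pi$, hence the concatenation is a local geodesic at $z_s$. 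The paper invokes an equivalent step implicitly in the proof of Corollary \ref{cor:sing-total}. So the structure of your argument is salvageable, but as written it has two holes: the unresolved ``transversality'' step (replaceable by a direct appeal to Lemma \ref{lem:x-geod-general}) and the unstated justification that $\eta_s$ is an $X$-geodesic at $z_s$.
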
 

In view of Lemma \ref{lem:Cijk}, we use the notation
\[
      C_{ijk}:=C_{ij;k}.
\]

Using the discussion in the proof of Lemma \ref{lem:sing-arc}, we show the following refined version of Lemma \ref{lem:x-geod-general},  which is not a direct consequence of Lemma 
\ref{lem:x-geod-general'}.

\begin{lem}  \label{lem:x-geod-general2}
For arbitrary  $x, y\in S=S(a_i,a_j)$ such that
\[
    | |p,x|_X- |p,y|_X| < |x,y|_X/1000,
\]
the $X$-geodesic joining $x$ and $y$ is an $S$-geodesic.
\end{lem}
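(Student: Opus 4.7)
My aim is to show $\gamma^X := \gamma^X_{x,y}\subset S$; once this is established, $\gamma^X$ is a curve in $S$ of length $|x,y|_X$, so $|x,y|_S\le |x,y|_X$, and since $|x,y|_S\ge |x,y|_X$ always, $\gamma^X$ is automatically an $S$-geodesic. The strategy is to show that the $S$-geodesic $\gamma^S := \gamma^S_{x,y}$ is also an $X$-geodesic; then $\gamma^X=\gamma^S$ by uniqueness of $X$-geodesics in the $\CAT(\kappa)$-ball $B^X(p,r)$, and in particular $\gamma^X\subset S$.

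The first step is to check that the endpoint directions of $\gamma^X$ are vertical. Since $\uparrow_p^x$ and $\uparrow_p^y$ both lie in $\Sigma_p(S)=[\nu_1,\nu_2]$, of angular length at most $2\delta$, Lemma \ref{lem:comparison} gives $\tilde\angle xpy\le 2\delta+\tau_p(r)$. Inserting this together with the hypothesis $\bigl|\,|p,x|_X-|p,y|_X\,\bigr|<|x,y|_X/1000$ into the $M_\kappa^2$-cosine law for $\triangle pxy$, one obtains a bound of the form $|x,y|_X\le O(\delta)\sqrt{|p,x|_X\,|p,y|_X}$; in turn $\tilde\angle pxy,\tilde\angle pyx$ both lie within $\tau_p(\delta,r)$ of $\pi/2$, and Lemma \ref{lem:comparison} transfers this to $\angle pxy,\angle pyx$. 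Hence $\uparrow_x^y,\uparrow_y^x$ are vertical, and they are regular in $\Sigma_x(X),\Sigma_y(X)$ by Corollary \ref{cor:vert} (singular directions cluster within $\tau_p(r)$ of $\pm\nabla d_p$). Lemma \ref{lem:S-grad} then yields verticality of $\dot\gamma^S(0)$ and $-\dot\gamma^S(|x,y|_S)$ as well.

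The second step is to translate the hypothesis from $X$-distances to $S$-distances so as to apply Lemma \ref{lem:x-geod-general'}. Sublemma \ref{slem:ratio-SX} (with base point $p$ and scale $O(r)$) gives $|p,z|_S\le(1+\tau_p(r))|p,z|_X$ for $z\in S\cap B(p,r)$, together with $|x,y|_S\le(1+\tau_p(r))|x,y|_X$. Hence
\[
\bigl|\,|p,x|_S-|p,y|_S\,\bigr|\le \bigl|\,|p,x|_X-|p,y|_X\,\bigr|+2\tau_p(r)\,r<\frac{|x,y|_X}{1000}+2\tau_p(r)\,r.
\]
Combined with $|x,y|_S\ge |x,y|_X$, and using the factor-$10$ slack between the $1/1000$ in our hypothesis and the $1/100$ in Lemma \ref{lem:x-geod-general'}, this gives $\bigl|\,|p,x|_S-|p,y|_S\,\bigr|<|x,y|_S/100$; Lemma \ref{lem:x-geod-general'} then delivers that $\gamma^S$ is an $X$-geodesic, completing the argument.

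The hard part will be the small-scale regime $|x,y|_X\ll \tau_p(r)\,r$, where the additive error $2\tau_p(r)\,r$ dominates $|x,y|_X/1000$ and the preceding estimate is insufficient. I would handle this by invoking Sublemma \ref{slem:ratio-SX} at the sharper scale $s=|x,y|_S$ (where the microscopic error $\tau_x(s)$ vanishes as $s\to 0$), or equivalently by a tangent-cone analysis at $x$ in which $X$- and $S$-distances agree asymptotically; the factor $1000$ in the hypothesis is precisely what leaves room for this refinement.
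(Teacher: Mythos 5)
Your strategy reduces the lemma to Lemma~\ref{lem:x-geod-general'}, but the paper itself remarks, in the sentence immediately preceding the statement, that this lemma ``is not a direct consequence of Lemma~\ref{lem:x-geod-general'}.'' The gap is exactly where you flag it, and the repair you sketch does not close it. You need to pass from the $X$-distance hypothesis $\bigl||p,x|_X-|p,y|_X\bigr|<|x,y|_X/1000$ to the $S$-distance hypothesis $\bigl||p,x|_S-|p,y|_S\bigr|<|x,y|_S/100$. Writing $f(z):=|p,z|_S-|p,z|_X\ge 0$, your ``additive error'' is $|f(x)-f(y)|$, and the only a priori bound is $|f(x)-f(y)|\le\tau_p(r)\,r$, which is useless once $|x,y|_X\lesssim\tau_p(r)\,r$. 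Sublemma~\ref{slem:ratio-SX} at scale $s=|x,y|_S$ controls the \emph{local ratio} $|x,y|_S/|x,y|_X$, not the \emph{radial increments} $f(x)-f(y)$; these are logically independent quantities. A tangent-cone analysis at $x$ does not help either, because the relevant comparison $\angle\bigl((-\nabla^S d_p)(x),(-\nabla d_p)(x)\bigr)<\tau_p(|p,x|_X)$ from Lemma~\ref{lem:S-grad}(2) has an error depending on $|p,x|_X\sim r$, which does not shrink as $|x,y|\to 0$. To make your first-variation idea work you would need the $S$-geodesic $\gamma^S_{x,y}$ to be nearly perpendicular to $\nabla^S d_p^S$ \emph{uniformly along its length}, and nothing you cite delivers that.

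The paper avoids the issue entirely by never estimating $|p,\cdot|_S$ at $x$ or $y$. Instead it extends $\gamma^X_{x,y}$ in both directions until it hits two of the boundary geodesics $\gamma_{p,a_k}$ and $\gamma_{p,a_\ell}$, then, invoking the key observation \eqref{eq:ruled-coincide} from the proof of Lemma~\ref{lem:sing-arc}, trades the outer segments for $S$-geodesics ending on $\gamma_{p,a_i}$ and $\gamma_{p,a_j}$, obtaining points $w_i\in\gamma_{p,a_i}$, $w_j\in\gamma_{p,a_j}$ with $[x,y]_X\subset[w_i,w_j]_X$. On the boundary geodesics the $S$- and $X$-distances to $p$ coincide, and the factor-of-ten slack between $1/1000$ and $1/100$ absorbs the enlargement of the endpoints, so Lemma~\ref{lem:x-geod-general} (the boundary version, not the interior version~\ref{lem:x-geod-general'}) applies directly to $[w_i,w_j]_X$ and pushes down to the subarc $[x,y]_X$. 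This ``stretch-to-the-boundary'' step is the genuine content you are missing: it converts a two-interior-points problem into a two-boundary-points problem, which is precisely why the factor was relaxed from $100$ to $1000$.

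Your first step (verticality and regularity of the endpoint directions of $\gamma^X_{x,y}$) is fine, and your reduction of the conclusion to ``$\gamma^S_{x,y}$ is an $X$-geodesic'' via uniqueness of $X$-geodesics in $B^X(p,r)$ is correct; the failure is confined to the passage from $X$-radial estimates to $S$-radial estimates.
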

\begin{proof}
Consider the geodesic $\gamma_{x,y}^X$ and extend it in the both directions
until it reaches $\gamma_{p, a_k}$ and $\gamma_{p, a_{\ell}}$
for some $k, \ell$ at $w_k\in \gamma_{p, a_k}$ and $w_{\ell}\in \gamma_{p, a_{\ell}}$ 
respectively. That is, 
\[
 [w_k, w_{\ell}]_X=[w_k, x]_X\cup [x,y]_X\cup [y, w_{\ell}]_X.
\]
Let $z$ (resp. $u$) be the first point at which $[w_k,x]$ (resp. $[w_{\ell}, y]$)
 meets $R(a_i, a_j)$.
As in the proof of Lemma \ref{lem:sing-arc}, we have points $w_i\in\gamma_{p,a_i}$
and $w_j\in\gamma_{p,a_j}$
such that 
\[
     [w_i, w_{j}]_X=[w_i, z]_X\cup [z, x]_X\cup [x,y]_X\cup [y, w_{j}]_X.
\]
From the hypothesis, we have $||p,w_i|-|p,w_j||<|w_i,w_j|_X/100$.
Lemma \ref{lem:x-geod-general} then implies that $[w_i, w_{j}]_X$ is an $S$-geodesic.
Thus we conclude that $[x,y]_X$ is an $S$-geodesic as required.
\end{proof}

For a vertex $v$ of $\Sigma_p(X)$, 
let $a_1,\ldots,a_N\in S(p,2r)$ be as in Section \ref{sec:fill},
where $N=N_v$.
Let $S(a_1,\ldots,a_N;r)$ be the closed domain of $B(p, r)$ bounded by
$\gamma_{p,a_i}$\,$(1\le i\le N)$, and $S(p,r)$.
Note that $S(a_1,\ldots,a_N;r)$ is 
the union of all the ruled surfaces $S(a_i,a_j)$ and $B(p, r)$.

\begin{cor} \label{cor:sing-total}
For a vertex $v$ of $\Sigma_p(X)$, the union of all 
$C_{ijk}$ as above coincides with $\mathcal S(X)\cap S(a_1,\ldots,a_N;r)$.
\end{cor}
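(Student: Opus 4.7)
The inclusion $\bigcup_{i,j,k} C_{ijk} \subseteq \mathcal{S}(X) \cap S(a_1,\ldots,a_N;r)$ is immediate from Lemma \ref{lem:sing-arc}, since each $C_{ij;k}$ lies in $\mathcal{S}(X) \cap S(a_i,a_j) \subseteq \mathcal{S}(X) \cap S(a_1,\ldots,a_N;r)$. So the content is the reverse inclusion, which I plan to prove by taking $x \in \mathcal{S}(X) \cap S(a_1,\ldots,a_N;r)$ with $x \neq p$ (the case $x=p$ is covered by Lemma \ref{lem:sing-arc}(1)) and exhibiting a triple $(i,j,k)$ with $x \in C_{ij;k}$.

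The first step is to show $x$ belongs to at least two distinct ruled surfaces among the family $\{S(a_\alpha,a_\beta)\}$. Since the sector $S(a_1,\ldots,a_N;r)$ is covered by these ruled surfaces, we have $x \in S(a_i,a_j)$ for some pair. If $x$ were an interior point of $S(a_i,a_j)$ and lay on no other ruled surface, then by Theorem \ref{thm:ruled} it would admit a neighborhood in $S(a_i,a_j)$ homeomorphic to a disk, and this would furnish an $X$-neighborhood of $x$, contradicting $x \in \mathcal{S}(X)$. Boundary cases where $x \in \gamma_{p,a_i}$ already give $x \in S(a_i,a_m)$ for all $m \neq i$ (using $N \ge 3$); so in any case $x \in S(a_i,a_j) \cap S(a_{i'},a_{j'})$ with $\{i,j\}\ne\{i',j'\}$.

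With a second surface $S(a_{i'},a_{j'})$ through $x$ in hand, I would pick the ruling geodesic $\lambda_s$ of $S(a_{i'},a_{j'})$ passing through $x$ and reproduce the argument of the proof of Lemma \ref{lem:sing-arc}: once $\lambda_s$ meets $S(a_i,a_j)$ at some first parameter $t_0$, property \eqref{eq:ruled-coincide} shows $\lambda_s([t_0,1]) \subseteq S(a_i,a_j)$, so the unique transition point $z_s = \lambda_s(t_0)$ lies on $C_{ij;i'}$ (relabeling via Lemma \ref{lem:Cijk} if needed). What remains is to check that $x$ itself is this transition point, rather than an interior point of $S(a_i,a_j)$ encountered beyond $z_s$. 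For this I would use Corollary \ref{cor:vert} to force $\Sigma_x(\mathcal{S}(X))$ to lie in the $\tau_p(r)$-neighborhood of $\pm\nabla d_p(x)$, combined with the monotonicity lemmas \ref{lem:noreturn} and \ref{lem:noreturn2} and the refined geodesic statement of Lemma \ref{lem:x-geod-general2}, to rule out $x$ being interior to $\lambda_s \cap S(a_i,a_j)$ past the first intersection.

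The main obstacle I anticipate is the case when the two pairs $(i,j)$ and $(i',j')$ share no common index, so that the symmetries $C_{ij;k} = C_{jk;i}$ from Lemma \ref{lem:Cijk} do not directly produce a usable triple. In that case I plan to find a third ruled surface through $x$ that shares an index with both, by following the graph structure of $\Sigma_p(X)$ around the direction $\uparrow_p^x$ and invoking Lemma \ref{lem:pass} to transfer ruling geodesics between surfaces. Alternatively, one can reorganize the local analysis by examining the circle $\Sigma_x(S(a_i,a_j))$ inside the graph $\Sigma_x(X)$ and identifying which sheet the second surface $S(a_{i'},a_{j'})$ contributes; this forces the existence of the required triple. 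This bookkeeping is where the bulk of the effort will go, while the initial step of finding two distinct surfaces through $x$ is essentially automatic from Theorem \ref{thm:ruled}.
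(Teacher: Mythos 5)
Your plan follows the right general outline (two ruled surfaces through $x$, \eqref{eq:ruled-coincide}, transition point of a ruling geodesic), but it has a genuine gap at exactly the step you flag as ``where the bulk of the effort will go,'' and the bookkeeping you propose is not enough to close it. Two intertwined problems. First, once you write $x\in S(a_i,a_j)\cap S(a_{i'},a_{j'})$ and let $\lambda_s$ be the ruling geodesic of $S(a_{i'},a_{j'})$ through $x$, the first-meeting argument only shows $x\in \lambda_s([t_0,1])\subset S(a_i,a_j)$; nothing in Corollary \ref{cor:vert}, Lemma \ref{lem:noreturn}, Lemma \ref{lem:noreturn2}, or Lemma \ref{lem:x-geod-general2} forces $x=\lambda_s(t_0)$, because $x$ could sit in the interior of $\lambda_s([t_0,1])$ with the singularity produced by yet a third surface you haven't located. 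Those lemmas control the monotone behavior of the ruling family within a fixed thin ruled surface; they don't by themselves identify which pair of surfaces $x$ is a boundary point of. Second, when $\{i,j\}$ and $\{i',j'\}$ are disjoint, the first-meeting point of a ruling geodesic of $S(a_{i'},a_{j'})$ with $S(a_i,a_j)$ is not by definition one of the $C_{\alpha\beta\gamma}$ (those are defined only for triples, i.e.\ for surfaces sharing an index), so you need an explicit mechanism to produce the triple; ``following the graph structure of $\Sigma_p(X)$'' or ``examining $\Sigma_x(S(a_i,a_j))$ inside $\Sigma_x(X)$'' are not arguments.

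The paper's proof resolves both issues with one concrete construction that you did not find. Start instead from $x\in\partial\bigl(S(a_i,a_j)\cap S(a_k,a_\ell)\bigr)$, take the ruling geodesic $\lambda_s$ of $S(a_i,a_j)$ through $x=\lambda_s(t_0)$, and use Lemma \ref{lem:base}(2), Theorem \ref{thm:ruled} and Corollary \ref{cor:vert} to produce a direction $\xi\in\Sigma_x\bigl(S(a_k,a_\ell)\bigr)$ with $\angle(\xi,\lambda_s'(t_0))=\pi$. The geodesic $\gamma_\xi$ inside $S(a_k,a_\ell)$ is vertical and reaches, say, $\gamma_{p,a_k}$. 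Concatenating $\gamma_\xi$ with $\lambda_s|_{[t_0,1]}$ yields (as in the proof of Lemma \ref{lem:sing-arc}, via Lemmas \ref{lem:x-geod-general} and \ref{lem:x-geod-general'}) a ruling geodesic of the third surface $S(a_k,a_j)$, which shares an index with each of the original two even if $\{i,j\}\cap\{k,\ell\}=\emptyset$; and because $\gamma_\xi$ leaves $S(a_i,a_j)$ immediately, $x$ is automatically the first point at which this ruling geodesic meets $S(a_i,a_j)$, giving $x\in C_{ijk}$ directly. Note also that your preliminary step only establishes that $x$ lies on two distinct surfaces, which is weaker than the statement the paper actually uses (that $x\in\partial(S(a_i,a_j)\cap S(a_k,a_\ell))$ for some pair); filling that gap requires a short finiteness/iteration argument over the finitely many ruled surfaces that you should make explicit.
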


\begin{proof}
Since every element of $\mathcal S(X)\cap S(a_1,\ldots,a_N;r)$ comes 
from the intersection of some $S(a_i,a_j)$ and $S(a_k,a_{\ell})$, 
it suffices to show that 
$\partial (S(a_i,a_j)\cap S(a_k,a_{\ell}))\setminus S(p,r)$ is contained in 
$C_{ijk}\cup C_{ij\ell}$.
For every $x\in \partial(S(a_i,a_j)\cap S(a_k,a_{\ell}))$, 
take an $s$ such that the ruling geodesic $\lambda_s$ joining $\gamma_{p,a_i}(s)$ to 
$\gamma_{p,a_j}(s)$ goes through $x$, say at $\lambda_s(t_0)=x$.
Since $x\in \mathcal S(X)$, Lemma \ref{lem:base}(2),  
Theorem \ref{thm:ruled} and Corollary \ref{cor:vert} imply 
the existence of a direction 
$\xi\in\Sigma_x(S(a_k,a_{\ell}))$ such that 
$\angle(\xi,\lambda_s'(t_0))=\pi$.
Then a geodesic $\gamma_{\xi}$ in $S(a_k,a_{\ell})$ with direction $\xi$ 
must reach $\gamma_{p,a_k}$ or $\gamma_{p,a_{\ell}}$. 
Suppose it reaches $\gamma_{p,a_k}$ for instance:   
$\gamma_{\xi}(t_1)\in \gamma_{p,a_k}$ for some $t_1>0$.
An argument similar to that in the proof of Lemma \ref{lem:sing-arc}
then implies that $\gamma_{\xi}([0,t_0])$ does not meet $S(a_i,a_j)$
except for $x$, and that the union  $\gamma_{\xi}([0,t_0])\cup\lambda_s([t_0,1])$
forms a geodesic in $S(a_k,a_j)$. This shows $x\in C_{ijk}$.
\end{proof}

\begin{proof}[Proof of the second half of Theorem \ref{thm:main}]
It is now an immediate consequence of Lemma \ref{lem:sing-arc} and 
Corollary \ref{cor:sing-total}.
\end{proof}

We call a curve $C$ in $\ca S(X)$ {\it a singular curve}. 

\begin{rem} \upshape
Each singular curve $C$ contained in $S(a_1,\ldots,a_N;r)$ has the direction $v$ at $p$. 
From now on, we always consider the case when 
$d_p$ is strictly increasing along $C$. In that case,
for each interior point $q$ of $C$, $C$ has definite directions $\Sigma_q(C)$ consisting of  
two vertices of $\Sigma_q(X)$.
\end{rem}

\pmed\n
{\bf Structure of metric circles.}
Next we discuss the structure of $S(p,r)$.
\psmall
Let $b_i:=\gamma_{p,a_i}(r)$. For $0<t\le r$, set 
\[
    S(v;t):=\left(  \bigcup_{1\le i<j\le N} S(a_i, a_j)    \right)  \cap S^X(p,t).
\]
\psmall

\begin{lem}\label{lem:circle-tree}
For each $0<t\le r$, $S(v;t)$ is a tree with endpoints 
$\gamma_{p,a_i}(t)$\, $(1\le i\le N)$.
\end{lem}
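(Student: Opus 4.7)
The plan is to show that $\bigcup_{1\le i<j\le N}S(a_i,a_j)\cap B^X(p,r)$ is, up to $\tau_p(r)$-almost isometry, the Euclidean cone over the $\delta$-star $T_\delta\subset\Sigma_p$ at $v$ (the union of the $N$ $\delta$-segments from $v$ to $\nu_i$); its cross-section by the sphere $S^X(p,t)$ will then be homeomorphic to $t\cdot T_\delta$, a star with $N$ leaves. As a first step, for each pair $i<j$ Lemma \ref{lem:bilip-sector} provides a $\tau_p(r)$-almost isometry from the sector $S(a_i,a_j)\cap B^X(p,r)$ onto a Euclidean sector of angle close to $2\delta$; the ``circular'' boundary of its sub-sector at radius $t$ is a topological arc, so $\alpha_{ij}:=S(a_i,a_j)\cap S^X(p,t)$ is a topological arc from $\gamma_{p,a_i}(t)$ to $\gamma_{p,a_j}(t)$, and $S(v;t)=\bigcup_{i<j}\alpha_{ij}$ is visibly connected.

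Next I would analyze how these arcs overlap. By the proof of Lemma \ref{lem:sing-arc}, for distinct $i,j,k$, every ruling geodesic of $S(a_j,a_k)$ which first enters $S(a_i,a_j)$ at the singular curve $C_{ij;k}$ remains in $S(a_i,a_j)$ all the way to its endpoint on $\gamma_{p,a_j}$. Combined with the identity $C_{ij;k}=C_{jk;i}$ from Lemma \ref{lem:Cijk}, this shows that the component of $S(a_i,a_j)\setminus C_{ij;k}$ adjacent to $\gamma_{p,a_j}$ coincides with the corresponding component of $S(a_j,a_k)\setminus C_{jk;i}$. Collecting these identifications for all indices other than a fixed $i$, a neighborhood of $\gamma_{p,a_i}$ inside $\bigcup_{j\ne i}S(a_i,a_j)$ is a single ``radial strip'' $R_i$ bounded by $\gamma_{p,a_i}$ together with (pieces of) the singular curves $C_{ij;k}$ emanating from $p$.

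Finally, by Lemma \ref{lem:sing-arc}(3), $d_p$ is strictly monotone along each $C_{ij;k}$, so each singular curve crosses $S^X(p,t)$ at exactly one point. The resulting decomposition $\bigcup_{i<j}S(a_i,a_j)\cap B^X(p,r)=\bigcup_{i=1}^N R_i$, with the $R_i$ meeting only along the finite ``spine'' of singular curves through $p$, is $\tau_p(r)$-almost isometric to the Euclidean cone over $T_\delta$, and intersecting with $S^X(p,t)$ yields $S(v;t)$ homeomorphic to $t\cdot T_\delta$---a star with $N$ leaves $\gamma_{p,a_i}(t)$. The main obstacle is the second step: for $N\ge 4$ the singular curves $C_{ij;k}$ associated to different triples $\{i,j,k\}$ are a priori distinct, and one must verify that they fit together into a single branching spine that partitions $\bigcup_{i<j}S(a_i,a_j)$ into exactly $N$ radial strips, with no extra $2$-dimensional region unaccounted for. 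This combinatorial compatibility rests on the gluing-of-rulings argument in the proof of Lemma \ref{lem:sing-arc} together with the triple-symmetry of Lemma \ref{lem:Cijk}.
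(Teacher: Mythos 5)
Your proposal claims that $S(v;t)$ is homeomorphic to $t\cdot T_\delta$, a star with a single center and $N$ leaves, but this is strictly stronger than what the lemma asserts and is false in general. The lemma only claims $S(v;t)$ is a tree with $N$ endpoints; a tree with $N$ endpoints can have up to $N-2$ internal branch points. For $N\ge 4$ the singular curves $C_{ijk}$ for distinct triples $\{i,j,k\}$ are a priori distinct Lipschitz curves, each crossing $S^X(p,t)$ at exactly one point by Lemma \ref{lem:sing-arc}(3), and these points are in general distinct---so $S(v;t)$ typically has several branch points and is not a star. You acknowledge this combinatorial difficulty yourself as ``the main obstacle'' in the final paragraph, but you do not resolve it; indeed, the claim that the $N$ radial strips $R_i$ meet along a ``single branching spine'' is in tension with your earlier star claim, and your appeal to a $\tau_p(r)$-almost isometry with the cone over $T_\delta$ (which would have to be justified as a statement about the overlapping union, not about individual sectors) has not been established.

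The paper sidesteps the entire combinatorial analysis by arguing inductively: it sets $S_k(v;t)=(\bigcup_{1\le i<j\le k}S(a_i,a_j))\cap S^X(p,t)$, takes $S_3(v;t)$ as the base case, and for the inductive step shows that adding $S(a_i,a_k)(t)$ to the tree $S_{k-1}(v;t)$ only adjoins a single arc $[p_k(t),q(t)]$, where $q(t)$ is the first point at which the arc from $p_k(t)$ meets $S_{k-1}(v;t)$. The crucial ingredient is \eqref{eq:ruled-coincide} from the proof of Lemma \ref{lem:sing-arc}: once a ruling geodesic of $S(a_j,a_k)$ enters $S(a_i,a_j)$ at the singular curve, it stays inside $S(a_i,a_j)$ for the rest of the way, so $S(a_i,a_k)(t)\setminus[p_k(t),q(t)]\subset S(a_i,a_j)\subset S_{k-1}$. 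Adjoining a single hair to a tree preserves the tree property, independently of where previous singular curves fall. This incremental argument produces the tree statement without ever needing to describe the global branching pattern, which is exactly the combinatorial step your proposal leaves open.
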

\begin{proof}
For $3\le k\le N$, put 
\[
    S_k(v;t):=\left(  \bigcup_{1\le i<j\le k} S(a_i, a_j)    \right)  \cap S^X(p,t).
\]
Inductively we show that $S_k(v;t)$ is a tree for every $3\le k \le N$.  This is certainly true for $k=3$.
Assume that $S_{k-1}(v;t)$ is a tree. 
Set 
\[
      S(a_i,a_k)(t):= S(a_i,a_k)\cap S^X(p,t).
\]
Let $p_k(t):=\gamma_{p,a_k}(t)$.
Let $q(t)$ be the point of $S_{k-1}(v,t)$ where 
the arc starting from $p_k(t)$ in $S_k(v,t)$ first meets
$S_{k-1}(v,t)$. For every $1\le i \neq j\le k-1$ with 
$q(t)\in S(a_i, a_j)(t)$, \eqref{eq:ruled-coincide} implies that 
\begin{align}  \label{eq:circ-tree}
       S(a_i, a_k)(t) \setminus [p_k(t),q(t)] \subset S(a_i, a_j),
\end{align}
where $[p_k(t),q(t)]$ denotes the arc between $p_k(t)$ and 
$q(t)$ in $S_k(v,t)$. \eqref{eq:circ-tree} implies that 
$S_k(v,t)=S_{k-1}(v,t)\cup  [p_k(t),q(t)]$. Thus $S_k(v,t)$ is a tree.
\end{proof}

\pmed
\begin{proof}[Proof of Corollary \ref{cor:circle}]
Let $r_p\ge r >0$ be as in Theorem \ref{thm:main}.
By Lemma \ref{lem:circle-tree}, for every vertex $v$ of $\Sigma_p(X)$, $S(v;r)$ 
is a tree with endpoints $b_i$\, $(1\le i\le N)$.
Therefore $S(p,r)$ has the same homotopy type as $\Sigma_p(X)$.\par

Let $\alpha$ be any non-contractible simple closed loop in 
$S(p,r)$. From the discussion  in Case I and the proof of Theorem \ref{thm:main}(1) in Section \ref{sec:fill},
there is a non-contractible simple closed loop $\zeta$ in $\Sigma_p(X)$
of length, say $\ell\ge 2\pi$, 
and a properly embedded $\CAT(\kappa)$-disk
$f:D^2(\ell;r)\to B(p,r)$ associated with $\zeta$ such that
$\Sigma_p({\rm Im}(f))=\zeta$ and 
$f(\pa D^2(\ell;r))=\alpha$. 
For $v\in \zeta\cap V(\Sigma_p(X))$, let $\xi_i, \xi_j\in\zeta$
be points  nearby $v$ such that $v$ is the midpoint of the arc $[\xi_i,\xi_j]$.
Let $S_{ij}$ be the ruled surface defined by $\xi_i,\xi_j$.
Note that $B^{S_{ij}}(p,r)\subset S_{ij}\cap B^X(p,r)$.
Since we may assume $r<\pi/2\sqrt{\kappa}$,
the nearest point map 
$S_{ij}\cap S^X(p,r) \to S^{S_{ij}}(p,r)$
is distance non-increasing.
Let $\tilde S_{ij}$ be a sector in the model $M^2_\kappa$ with vertex $\tilde p$
bounded by two geodesics of length $r$ and $S(\tilde p,r)$
such that the sector angle at $\tilde p$ is equal to 
$\angle(\xi_1,\xi_2)$. From the curvature condition, we have
\[
  L(S^{S_{ij}}(p,r))\ge L(\tilde S_{ij}\cap S(\tilde p,r))
          = \angle(\xi_1,\xi_2)/\sqrt{\mu(\kappa,r)},
\]
yielding $L(S_{ij}\cap S^X(p,r))\ge  \angle(\xi_1,\xi_2)/\sqrt{\mu(\kappa,r)}$.
Applying a similar argument to the other parts of 
$\alpha$ and $\zeta$, we conclude that 
\[
   L(\alpha) \ge L(\zeta)/\sqrt{\mu(\kappa,r)}
        \ge 2\pi/\sqrt{\mu(\kappa,r)}.
\]
This completes the proof.
\end{proof} 

\medskip
Now we define a metric graph structure of $\ca S(X)$ in a generalized sense as follows.
\begin{defn} \label{defn:metric-graph}\upshape
We consider the relative topology of $\ca S(X)$ with length metric. Let $I$ be an open set of $\ca S(X)$.
We call $I$ an {\it open arc} in $\ca S(X)$ if it is open in $\ca S(X)$ and is isometric to 
an open interval.
A maximal open arc $I$ with respect to the inclusion is called an {\it open edge} of $\ca S(X)$.
We denote by $E(\ca S(X))$ (resp. $|E(\ca S(X))|$) the set 
(resp. the union) of all open edges in $\ca S(X)$. 
We call each element of $\ca S(X)\setminus |E(\ca S(X))|$
a {\it vertex} of $\ca S(X)$.
We denote by $V(\ca S(X))$  the set of all vertices of $\ca S(X)$. 
Let us denote by $V_*(\ca S(X))\subset V(\ca S(X))$  the set of all accumulation points of $V(\ca S(X))$.
The case $V_*(\ca S(X))=V(\ca S(X))$ or $\ca H^1(V_*(\ca S(X)))>0$ may happen (see Example \ref{ex:uncountable}).
As usual, two vertices $v_1$ and $v_2$ of $\ca S(X)$ are 
{\it adjacent} if there is at least one open edge joining them.
The {\it order} of a vertex $v$ is defined as the limit of the 
number of components of 
$B^{\ca S(X)}(v,\epsilon)\setminus \{ v\}$ as $\epsilon\to 0$.
\end{defn}

\begin{proof}[Proof of Corollary \ref{cor:graph}]
First note that by Theorem \ref{thm:main}, 
$\ca S(X)$ is locally path-connected.
For a given point $p\in \ca S(X)$ and $v\in V(\Sigma_p(X))$, 
let $N=N_v$ be the 
branching number of $v$ in $\Sigma_p(X)$, and
take $r=r_p$ as in Theorem \ref{thm:main}.
For $\delta>0$ with $\delta\ll \min\{ \angle (v,v')\,|v\neq v'\in V(\Sigma_p(X))\}$, let  
$\gamma_1, \ldots, \gamma_{N}$ be geodesics
from $p$ with $\angle(\dot\gamma_i(0), v) = \delta$ and
$\angle(\dot\gamma_i(0), \dot\gamma_j(0)) = 2\delta$ for 
$1\le i\neq j\le N_v$.
By Corollary \ref{cor:sing-total}, we have 
\[
       \ca S(X)\cap U(v)=\bigcup_{1\le i<j<k\le N} C_{ijk},
\]
where $U(v):=C(v,\delta,r)$ is the cone neighborhood  around $v$ (see \eqref{eq:cone-neighborhood}).
By Lemma \ref{lem:sing-arc} (3), the distance 
function $d_p$ is strictly monotone on each $C_{ijk}$. It follows from Corollary \ref{cor:sing-total} 
that  $V(\ca S(X))$ has locally finite order.

In what follows, we give an explicit sharp bound on the orders
at the vertices in $\ca S(X)\cap B(p,r)$. 

\begin{slem} \label{slem:number-C}
$\ca S(X)\cap U(v)$ can be written as the union of
at most $N_v-2$ singular curves $C$
starting 
from $p$ in the direction $v$, and reaching $S(p,r)$
such that $d_p$ is strictly increasing along $C$.
\end{slem}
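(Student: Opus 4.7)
The plan is to identify each distinct singular curve in $U(v)$ with an interior branch point of the cross-sectional tree $S(v;r)$, and then apply a standard combinatorial bound for trees.

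First, by Lemma \ref{lem:circle-tree}, $S(v;r)$ is a finite tree with $L = N_v$ leaves $b_1, \ldots, b_N$, each lying on the regular curve $\gamma_{p,a_i}$. I will identify its interior vertices of degree at least $3$ (call these the \emph{branch points}) precisely with the elements of $\ca S(X)\cap S(v;r)$. Indeed, at a degree-$\ge 3$ vertex of $S(v;r)$ three or more ruled-surface sheets $S(a_i,a_j)$ meet, which forces a topological singularity; conversely, Lemma \ref{lem:near-vert} ensures that at any $q\in\ca S(X)\cap U(v)\cap S^X(p,r)$ the space $\Sigma_q(X)$ is close to a spherical suspension over $m\ge 3$ points, so $q$ has degree $m$ in the tree $S(v;r)$.

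Next I bound the number of branch points combinatorially. After suppressing any degree-$2$ interior vertices, the tree $S(v;r)$ has $L=N$ leaves and some number $I$ of interior vertices of degrees $d_1,\ldots,d_I\ge 3$. The tree identity (sum of degrees equals twice the number of edges) reads $L+\sum_{i=1}^I d_i=2(L+I-1)$, and combined with $d_i\ge 3$ it yields $L+3I\le 2L+2I-2$, hence $I\le N-2$.

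For each branch point $q$ of $S(v;r)$, Corollary \ref{cor:vert} says $\Sigma_q(\ca S(X))$ is concentrated in a $\tau_p(r)$-neighborhood of $\{\pm\nabla d_p(q)\}$, so $\ca S(X)$ is locally a single simple Lipschitz arc through $q$ in the direction $-\nabla d_p$. Following this arc back toward $p$ in the direction of decreasing $d_p$ produces a singular curve $C(q)$ from $p$ to $q$, strictly $d_p$-monotone by Lemma \ref{lem:sing-arc}(3); by Corollary \ref{cor:sing-total} this $C(q)$ coincides with one of the curves $C_{ij;k}$. These at most $N-2$ curves exhaust $\ca S(X)\cap U(v)$: any singular point lies on some $C_{ijk}$ by Corollary \ref{cor:sing-total}, and $C_{ijk}$ terminates at a branch point of $S(v;r)$ by Lemma \ref{lem:sing-arc}(1), so it is contained in the corresponding $C(q)$.

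The main technical obstacle is the uniqueness asserted in the previous paragraph: I must verify that two singular curves $C_{ij;k}$ and $C_{i'j';k'}$ terminating at the same branch point $q$ of $S(v;r)$ coincide as arcs from $p$ to $q$, so that "at most $N-2$ branch points" really translates into "at most $N-2$ distinct singular curves". This follows from the fact that $\ca S(X)$ is locally a single simple arc near every singular point (Corollary \ref{cor:vert}) together with the strict $d_p$-monotonicity of singular curves (Lemma \ref{lem:sing-arc}(3)): the two curves must agree in a neighborhood of $q$, and uniqueness then propagates backward along the unique $-\nabla d_p$-direction in $\ca S(X)$ all the way to $p$.
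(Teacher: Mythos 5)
Your approach is genuinely different from the paper's: the paper defines $E_k:=\bigcup_{1\le i<j\le k}S_{ij}$ and then builds $C_j$, for $2\le j\le N-1$, directly as the locus of first-intersection points of almost-perpendicular geodesics issuing from $\gamma_{j+1}$ with $E_j$, yielding $N-2$ curves by construction. You instead count branch points of the cross-sectional tree $S(v;r)$ and try to trace back. The tree identity you invoke is correct and does give $I\le N-2$ branch points; in fact the sharper identity $\sum_i(m_i-2)=N-2$, with $m_i$ the degrees of the interior vertices, is what underlies the sublemma's count.

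But the coverage step has a genuine gap, and the identity above is exactly where it shows. You assert that near each branch point $q\in S(p,r)$ the singular set $\ca S(X)$ is ``locally a single simple Lipschitz arc through $q$ in the direction $-\nabla d_p$,'' and you build a single curve $C(q)$ by tracing back. This is not what Corollary~\ref{cor:vert} gives: it says that all directions of $\Sigma_q(\ca S(X))$ cluster near $\pm\nabla d_p(q)$, which is perfectly compatible with $m_i-2>1$ distinct branches of $\ca S(X)$ converging at $q$ from the $p$-side, all with the same limiting direction (their pairwise angles at $q$ are zero but the branches are distinct — this is precisely the phenomenon in Example~\ref{ex:2}, where $\iota(\partial_+\Omega)$ and $\iota(\partial_-\Omega)$ pass through a common point with angle $0$). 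If a single branch point $q$ has degree $m=N$ (so $I=1$), you produce only one curve, while $\ca S(X)\cap U(v)$ may consist of $N-2$ distinct curves all terminating at $q$; your set of $I$ curves then fails to exhaust $\ca S(X)\cap U(v)$. For the same reason the final sentence — ``uniqueness then propagates backward along the unique $-\nabla d_p$-direction'' — does not hold. To repair the argument you would need to produce $m_i-2$ curves through each degree-$m_i$ branch point and show they cover, at which point you are effectively reconstructing the paper's inductive definition of $C_j$.
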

\begin{proof}
By Lemma \ref{cor:sing-total}, $\ca S(X)\cap U(v)$
coincides with the set of all topological singular
points resulting from the intersections of distinct
ruled surfaces $S_{ij}$ and $S_{i'j'}$ for all
$1\le i<j\le N$, $1\le i'<j'\le N$ with 
$(i,j)\neq (i',j')$. 
For $2\le k\le N$, let $E_k$ be the union of all $S_{ij}$ 
with $1\le i<j\le k$.
We inductively define singular curves $C_j$\, $(2\le j\le N-1)$ as the set of all points of $E_j$ where   
geodesics almost perpendicularly starting from points of 
$\gamma_{j+1}$ intersect $E_j$ for the first time. 
Then it is obvious to see that  
$\ca S(X)\cap U(v)=C_2\cup\cdots\cup C_{N-1}$.
From Lemma \ref{lem:sing-arc}, $d_p$ is strictly increasing along $C$.
\end{proof}

Let $\Gamma:=\ca S(X)\cap B(p,r)$.
It follows from Sublemma \ref{slem:number-C}
that 
\begin{itemize}
   \item the order at the vertex $p$ of the graph $\Gamma\cap U(v)$ is at most $N_v-2\,;$
   \item the order at any vertex $y$ in $\Gamma\cap U(v)\setminus \{ p\}$
    is at most $2(N_v-2)$.
\end{itemize}
Therefore the maximum of orders of vertices contained in 
$\Gamma$ is at most
\[
    \max\left\{   \sum_{v\in V(\Sigma_p(X))} (N_v-2),
           \max_{v\in V(\Sigma_p(X))} 2(N_v-2)\right\}.
\]
This completes the proof of Corollary \ref{cor:graph}.
\end{proof} 

\medskip
We exhibit the following example, which is  another version of Example \ref{ex:2}.   Here  we use the notion of $\e$-Cantor set (cf. \cite{AlBu})
to produce a two-dimensional $\CAT(0)$-space $X$
such that $V_*(\ca S(X))$ is one-dimensional.
Similar construction for a boundary singular set of a limit space
of manifolds with boundary was made in \cite{YZ:conv}.

\begin{ex}  \label{ex:uncountable}\upshape
For any $0<\e<1$, set $\delta:=1-\e$.
We define the so-called $\e$-Cantor set of $[0,1]$ inductively 
as follows: 
We start with $I_0 := [0, 1]$, 
and remove from $I_0$ the open interval of length $\delta/2$ around the center of $I_0$ .
We denote by $I_1$ be the rusult of this removing.
Note that $I_1$ consists of $2^1$ disjoint closed intervals $I_{1,j}$ \, $(j=1,2)$ having the same length 
and that $L(I_1) =1 -\delta/2$.
Suppose that we have constructed $I_k$ consisting of 
$2^k$ disjoint closed intervals $I_{k,j}$ \,$(1\le j\le 2^k)$  of the same length such that 
$L(I_k) = 1-\delta/2-\cdots -\delta/2^k$.
Remove from each $I_{k,j}$ the open interval of length $\delta/2^{2k+1}$ around the center of $I_{k,j}$.
We denote by $I_{k+1}$ the result of this removing.
Thus, inductively we have constructed $I_{n}$ for every $n$.
Finally we set
\[
         I_{\infty}:=\bigcap_{n=0}^{\infty}\, I_n, \quad 
          J_{n}:=[0,1]\setminus I_n,\quad
         J_{\infty}:=\bigcup_{n=0}^{\infty}\, J_n=[0,1]\setminus I_\infty.
\]
Note that $\ca H^1(I_{\infty}) =\lim_{n\to\infty} L(I_n)=1-\delta=\e$. The set $I_{\infty}$ is called an $\e$-Cantor set.

Next, inductively we define smooth functions $f_n:\mathbb R\to [0,1]$ \,$(n\in\mathbb N)$ such that 
\begin{itemize}
\item $\supp (f_n) = J_n;$
\item $f_n=f_{n-1}$ on $J_{n-1}\,;$
\item if we set $\hat J_n^\pm:=\{ (x, \pm f_n(x))\,|\, x\in J_n\}$, then the length $\ell_n$ 
and the maximum $\kappa_n$ of absolute geodesic curvature of $\hat J_n^\pm$
satisfy \eqref{eq:sum-ell-kappa}.
\end{itemize}
Now we define the limit $f:=\lim_{n\to\infty} f_n:\mathbb R\to [0,1]$, which satisfies
$\supp (f) = J_\infty$.
Using $f$, we define the closed subset $\Omega$ of $\mathbb R^2$ by 
$$
    \Omega:= \{ (x,y)\,|\,|y|\le f(x), x\in\R\},
$$
equipped with the length metric. Set
$$
    \pa_\pm \Omega:= \{ (x,y)\,|\,y=\pm f(x), x\in\R\}.
$$
Take closed concave domains $H_\pm$ in $\R^2$ homeomorphic to the half plane
such that for certain isometries $g_\pm:\pa_\pm \Omega\to \pa H_\pm$
the absolute geodesic curvature of $g_\pm(J_n^\pm)$ is greater than
$\kappa_n$.
Take two copies $H_\pm^1, H_\pm^2$ of $H_\pm$, and make a gluing 
of $H_+^1, H_+^2, H_-^1, H_-^2$ and $\Omega$ along their boundaries via $g_\pm$
as in Example \ref{ex:2} to get a 
two-dimensional locally compact, geodesically complete $\CAT(0)$-space $X$.
Note that $V_*(\ca S(X))=V(\ca S(X))=I_{\infty}$ and therefore
$\ca H^1(V_*(\ca S(X)))=\e>0$.
\end{ex}


\pmed

\setcounter{equation}{0}
\section{Approximations by polyhedral spaces} \label{sec:approx}

In this section, we give the proof of 
Theorem \ref{thm:main}(1) for branched immersed 
disks. We need to recall the notion of turn, which was 
first defined in the context of surfaces with 
bounded curvature in \cite{AZ:bddcurv}
 (see also \cite{Rsh:2mfd}).

\begin{defn} \label{defn:turn} \upshape
For a moment, let $X$ be a surface with bounded curvature.
 In X, we have the notion of angles between geodesics
starting from a point, and use the same 
notations for  spaces of directions, 
etc  (\cite[Theorem II.10]{AZ:bddcurv}).

Let $F$ be a domain in $X$ with boundary $C$.
For an open arc $e$ of $C$, we assume that
$e$ has definite directions at the endpoints $a, b$ and  
the spaces of directions of $F$ at $a$ and $b$ have  positive lengths.
Then  the {\it turn} (rotation) $\tau_F(e)$ of $e$ (\cite[Chapter VI]{AZ:bddcurv}) from the side of $F$ is 
defined as follows:
Let $\gamma_n$ be a broken geodesic in 
$F\setminus e$ except the endpoints joining $a$ and $b$ and converging to $e$
as $n\to\infty$. Let $\Gamma_n$ be the domain 
bounded by $e$ and $\gamma_n$.
We denote by $\alpha_n$ and $\beta_n$ the sector angle
of $\Gamma_n$ at $a$ and $b$ respectively.
Let $\theta_{ni}$, $(1\le i\le N_n)$, denote
the sector angle at the break points of $\gamma_n$, viewed from 
$F\setminus \Gamma_n$. Let 
\[
    \tilde \tau_F(\gamma_n) := \sum_{i=1}^{N_n} (\pi -\theta_{ni}) +\alpha_n+\beta_n.
\]
Then the turn $\tau_F(e)$ is defined as 
\[
     \tau_F(e) := \lim_{n\to\infty}\tilde \tau_F(\gamma_n), 
\]
where the existence of the above limit is shown in \cite[Theorem VI.2]{AZ:bddcurv}.

For an interior point $c$ of $e$ having definite two directions  $\Sigma_c(e)$, the turn of $e$ 
at $c$ from the side $F$ is defined as 
\[
      \tau_F(c):= \pi-L(\Sigma_c(F)).
\]
 
 We now assume the following additional conditions
 for all $c\in e$:
 \begin{enumerate}
  \item $L(\Sigma_c(F))>0\,;$
  \item $e$ has definite two directions
   $\Sigma_c(e)$.
 \end{enumerate}
Consider the constant $\mu_F(e)\in [0,\infty]$ defined
by
  \begin{align} \label{eq:bdd-turn}
  \mu_F(e):=\sup_{\{ a_i\}} \sum_{i=1}^{n-1}|\tau_{F}((a_i,a_{i+1}))|
   +\sum_{i=2}^{n-1}|\tau_{F}(a_i)|,
 \end{align}
where $\{ a_i\}=\{ a_i\}_{i=1,\ldots,n}$  runs over all the consecutive points on $e$.
The constant $\mu_F(e)$ is called the {\it turn variation} of $e$ from the side $F$, and $e$ has {\it finite turn variation} when $\mu_F(e)<\infty$.
For general treatments of curves with 
finite turn variation in $\CAT(\kappa)$-spaces, see \cite{ABG} 
and the references therein.

 Let $e$ be a simple arc on $X$. 
 One can define the notion of sides $F_+$ and $F_-$ of $e$.
Under the corresponding assumptions, 
 we define the turns $\tau_{F_+}(e)$, $\tau_{F_-}(e)$
 of $e$ from $F_+$ and $F_-$ respectively, as above.
Similarly, we define the turn variations $\mu_{F_+}(e)$, $\mu_{F_-}(e)$ from $F_+$ and $F_-$.    
We call $e$ to have finite turn variation
if $\mu_{F_+}(e)<\infty$ and $\mu_{F_-}(e)<\infty$
(actually both are finite if one is so
(\cite[Lemma IX.1]{AZ:bddcurv})).
 When $e$ has finite turn variation, $\tau_{F_+}$ and $\tau_{F_-}$ provide signed Borel measures on $e$
 (\cite[Theorem IX.1]{AZ:bddcurv}).
\end{defn}
\pmed\n
{\bf  The structure of the union of ruled surfaces}.\,
Let $p\in \ca S(X)$, and $r=r_p>0$ be as in Theorem \ref{thm:main}. From now, we work on $B(p,r)$. Fix any $v\in V(\Sigma_p(X))$, and let $N=N_v$ be the branching number of $\Sigma_p(X)$ at $v$. 
For small enough $\delta_p>0$, let  
$\gamma_1, \ldots, \gamma_N$ be the geodesics
from $p$ with $\angle(\dot\gamma_i(0), v) = \delta_p$ and
$\angle(\dot\gamma_i(0), \dot\gamma_j(0)) = 2\delta_p$ for 
$1\le i\neq j\le N$.
For $2\le k\le N$, 
we define $E_k$ as the union of  ruled surfaces $S_{ij}$ determined 
by  $\gamma_i$ and $\gamma_j$ for all $1\le i\neq j \le k$. 

Let $C$ denote the union of all singular curves 
$C_{ij\ell}$ \,$(1\le i<j<\ell\le k)$.
By Lemma \ref{cor:sing-total},
$C$ coincides with the set of all topological singular
points resulting from the intersections of distinct
ruled surfaces $S_{ij}$ and $S_{i'j'}$ for all
$1\le i<j\le k$, $1\le i'<j'\le k$ with 
$(i,j)\neq (i',j')$. 

Note that a singular curve in the direction $v$ not 
included in $C$ might meet $E_k$.
We consider the graph structure of $C$ inherited from 
that of $\ca S(X)$, which is not the one of $C$
itself introduced as in Definition \ref{defn:metric-graph}.  Thus we set
\[
  E(C):=C\cap E(\ca S(X)),\quad
  V(C):=C\cap V(\ca S(X)),\quad
  V_*(C):=C\cap V_*(\ca S(X)),\quad
\]
and call $E(C)$ and $V(C)$ the set of egdes and the set of vertices of $C$ respectively.
Remember that all edges are assumed to be open
(Definition \ref{defn:metric-graph}).
\begin{defn} \label{defn:sing-vertex} \upshape
We say that a vertex point $x\in V(C)$ is {\it singular}
if either $x\in V_*(C)$ or there are two singular curves
$C_1$, $C_2$ in $C$ starting from $x$ such that 
\begin{enumerate}
\item $\angle_x(C_1,C_2)=0\,;$
\item $C_1$ and $C_2$ have no intersections near $x$ 
other than $x$.
\end{enumerate}
	The direction $v\in\Sigma_x(C)$ determined by the above $C_1$ and $C_2$ as well as  $v=\lim_{i\to\infty} \uparrow_x^{x_i}$ with $V(C)\ni x_i\to x$ is also called {\it singular}.
The set of singular vertices of $C$ is denoted by 
$V_{\rm sing}(C)$. 
The set of singular directions at $x\in V_{\rm sing}(C)$ is   denoted by 
$\Sigma_x^{\rm sing}(C)$.
\end{defn}

We set $r(x):=d_p(x)$ for simplicity.

For $x\in C\setminus \{ p\}$, let 
$\Sigma_{x,+}(C):=\Sigma_x(C\setminus {\rm int} B(p, r(x)))$ and 
$\Sigma_{x,-}(C):=\Sigma_x(C\cap B(p, r(x)))$.
%
By Lemma \ref{lem:near-vert}, we may assume that
for all $x\in C\setminus \{ p\}$,

\begin{align} \label{eq:10-10}
\begin{cases}
  & \angle(\nabla d_p(x),  \Sigma_{x,+}(C))<10^{-10}, \quad
  {\rm diam}(\Sigma_{x,+}(C))<10^{-10}, \\
&\angle(-\nabla d_p(x),  \Sigma_{x,-}(C))<10^{-10},\quad
   {\rm diam}(\Sigma_{x,-}(C))<10^{-10}.
\end{cases}
\end{align}

The following lemma is clear.

\begin{lem}\label{lem:E-Sigma}
For every $x\in C\setminus \{ p\}$, $\Sigma_x(E_k) (\subset \Sigma_x(X))$ coincides with the union
of all circles $\Sigma_x(S_{ij})$ such that 
$x\in S_{ij}\subset E_k$,
where the circles $\Sigma_x(S_{ij})$ are attached at the points of
$\Sigma_{x,\pm}(C)$.
\end{lem}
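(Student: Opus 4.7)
The plan is to verify the two set-theoretic assertions separately: first the equality
\[
\Sigma_x(E_k)=\bigcup_{x\in S_{ij}\subset E_k}\Sigma_x(S_{ij}),
\]
and second the description of where the circles $\Sigma_x(S_{ij})$ meet.

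For the first equality, the inclusion $\supset$ is immediate from the definition of $\Sigma_x(\cdot)$. For $\subset$, take any $\xi\in\Sigma_x(E_k)$ and a sequence $y_n\in E_k$ with $y_n\to x$ and $\uparrow_x^{y_n}\to\xi$. Since $E_k$ is the \emph{finite} union of the ruled surfaces $S_{ij}$ with $1\le i<j\le k$, by the pigeonhole principle a subsequence of $\{y_n\}$ lies in a single $S_{ij}$, and closedness of $S_{ij}$ forces $x\in S_{ij}$, giving $\xi\in\Sigma_x(S_{ij})$. That each $\Sigma_x(S_{ij})$ appearing in the union is a circle follows from Lemma~\ref{lem:S-circle}(1): indeed, since $x\in C\subset\ca S(X)$ and $x\ne p$, the bounds \eqref{eq:10-10} together with the fact that $\gamma_i$ is regular away from $p$ place $x$ in $\mathrm{int}\,S_{ij}$.

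For the attaching description, suppose $\xi\in\Sigma_x(S_{ij})\cap\Sigma_x(S_{i'j'})$ with $(i,j)\ne(i',j')$. If $\xi$ were regular in $\Sigma_x(X)$, Lemma~\ref{lem:regular-dir} would produce a small $X$-geodesic arc $\gamma$ starting at $x$ in the direction $\xi$ with $\gamma\subset S_{ij}\cap S_{i'j'}$; by Corollary~\ref{cor:sing-total} this intersection lies in $\ca S(X)\cap E_k=C$, contradicting Lemma~\ref{lem:top-vert1} which says every direction of $\Sigma_x(\ca S(X))$ is a vertex of $\Sigma_x(X)$. Hence $\xi$ is singular, and as it is realized by points of $E_k$ we obtain $\xi\in\Sigma_x(C)$. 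Finally \eqref{eq:10-10} forces $\Sigma_x(C)\subset\Sigma_{x,+}(C)\cup\Sigma_{x,-}(C)$, so $\xi$ lies at one of the listed attaching points.

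The only mildly delicate step is ensuring that distinct ruled-surface circles cannot share a regular direction: this is handled cleanly by combining Corollary~\ref{cor:sing-total} (intersections of distinct $S_{ij}$ sit inside $C$) with Lemma~\ref{lem:regular-dir} (regular directions of $\Sigma_x(S_{ij})$ admit small $X$-geodesic arcs in $S_{ij}$), together with Lemma~\ref{lem:top-vert1}. The remaining contents are essentially definitions, which is why the author marks the lemma as clear.
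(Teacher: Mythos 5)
Your first two points are sound: the pigeonhole argument for $\Sigma_x(E_k)=\bigcup\Sigma_x(S_{ij})$ works because $E_k$ is a finite union of compact (hence closed) images of $R$, and $x\in\mathrm{int}\,S_{ij}$ follows because the $\gamma_i$ are drawn in regular directions of $\Sigma_p(X)$ and hence avoid $\ca S(X)$ away from $p$, while $x\in C\setminus\{p\}\subset\ca S(X)$ and $|p,x|\le r<2r=\ell$ keeps $x$ off $\lambda_\ell$. So Lemma~\ref{lem:S-circle}(1) applies.

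However, the third paragraph contains a genuine gap, and the claim you are trying to prove there is actually false. You assert that if $\xi\in\Sigma_x(S_{ij})\cap\Sigma_x(S_{i'j'})$ is regular then a short arc $\gamma$ in direction $\xi$ satisfies $\gamma\subset S_{ij}\cap S_{i'j'}\subset\ca S(X)\cap E_k=C$, invoking Corollary~\ref{cor:sing-total}. That corollary does not say $S_{ij}\cap S_{i'j'}\subset C$; it identifies $C$ with the \emph{topological singular points} produced by those intersections. By \eqref{eq:ruled-coincide}, two such ruled surfaces overlap in a genuine two-dimensional region (roughly the part of $S(a_k,a_j)$ on the $\gamma_j$ side of $C_{ijk}$ lies entirely inside $S(a_i,a_j)$), and $C$ is only the frontier of that overlap. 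Consequently, for $x$ in the interior of a $C$-edge the circles $\Sigma_x(S_{ij})$ and $\Sigma_x(S_{jk})$ share a whole half-circle of (regular!) directions, not merely the two points of $\Sigma_{x,\pm}(C)$. Your argument would derive a contradiction from this, which shows something has gone wrong.

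What the lemma's ``attached at the points of $\Sigma_{x,\pm}(C)$'' is really asserting is that each circle $\Sigma_x(S_{ij})$ passes through $\Sigma_{x,\pm}(C)$, so that the resulting graph $\Sigma_x(E_k)$ — a union of half-circles joining $\Sigma_{x,-}(C)$ to $\Sigma_{x,+}(C)$ — branches precisely at those points; it is not claiming the circles meet nowhere else. To justify this, instead of arguing that shared directions are singular, you should use Lemma~\ref{lem:near-vert} and Lemma~\ref{lem:top-vert1}: $V(\Sigma_x(X))$ sits in a tiny neighborhood of $\pm\nabla d_p(x)$ and equals $\Sigma_x(\ca S(X))\supset\Sigma_x(C)$, and any circle in the graph $\Sigma_x(X)$ — in particular $\Sigma_x(S_{ij})$, whose structure near $\pm\dot\lambda$ is controlled by \eqref{eq:lambda-gradient} and the construction in Lemma~\ref{lem:S-circle} — must pass through those vertices, i.e. through $\Sigma_{x,\pm}(C)$.
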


The following is a main result of this section.

\begin{thm} \label{thm:union-CAT}
 $E_k$ is a $\CAT(\kappa)$-space.
\end{thm}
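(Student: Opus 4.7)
The plan is to prove $E_k \in \CAT(\kappa)$ by approximating it with a sequence of polyhedral $\CAT(\kappa)$-spaces $E_k^{(n)}$ that converges to $E_k$ in the pointed Gromov-Hausdorff topology, and then invoking the stability of the $\CAT(\kappa)$ condition under such limits. This is the natural continuation of Theorem \ref{thm:ruled} (which covers a single ruled surface) and aligns with the section title; indeed the uniform control needed to carry out the approximation is precisely what should yield Theorem \ref{thm:bdd-turn-C} as a byproduct.

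First I would establish that each singular curve $C_{ij\ell}$ is an $S_{ij}$-geodesic. By Lemma \ref{lem:sing-arc}(3) and estimate \eqref{eq:10-10}, at every interior point $x$ the directions of $C_{ij\ell}$ are vertices of $\Sigma_x(X)$ lying within $\tau_p(r)$ of $\pm\nabla d_p(x)$, so applying Lemma \ref{lem:verticalS} on short subarcs gives that $C_{ij\ell}$ is an $X$-geodesic, hence an $S_{ij}$-geodesic in the interior metric. For each $n$ I would then pick a finite subset $V_n \subset C$ that grows dense as $n \to \infty$ and contains all singular vertices of $C$ at scale $\ge 1/n$, and on each $S_{ij}$ approximate the $\CAT(\kappa)$-disk $(S_{ij}, d_{S_{ij}})$ by a polyhedral $\CAT(\kappa)$-disk $S_{ij}^{(n)}$ whose triangular facets are $M_\kappa^2$-triangles and whose $1$-skeleton contains the broken-geodesic approximations of the singular curves at the partition points $V_n \cap S_{ij}$. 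Each $S_{ij}^{(n)}$ is polyhedral $\CAT(\kappa)$ by iterated application of Theorem \ref{thm:glue} to its facets, and converges to $(S_{ij}, d_{S_{ij}})$ in Gromov-Hausdorff sense. Gluing the $S_{ij}^{(n)}$'s along their matching broken-geodesic approximations of the singular curves then yields a polyhedral $2$-complex $E_k^{(n)}$ with $E_k^{(n)} \to E_k$.

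The main obstacle will be to verify that each $E_k^{(n)}$ is $\CAT(\kappa)$. By Burago-Buyalo's characterization of $2$-dimensional polyhedra with curvature bounded above, this reduces to showing that at every polyhedral vertex $w$ of $E_k^{(n)}$ the link $\Sigma_w(E_k^{(n)})$ is a $\CAT(1)$-graph, i.e.\ every simple closed loop has length at least $2\pi$. By the polyhedral analog of Lemma \ref{lem:E-Sigma}, this link is a union of circles $\Sigma_w(S_{ij}^{(n)})$ attached at the two clusters $\Sigma_{w,\pm}(C)$, each contained in a $\tau_p(r)$-neighborhood of $\pm \nabla d_p$ by \eqref{eq:10-10}. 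The delicate point is that each attaching cluster has positive diameter, so a loop formed by two near-half-circle arcs has length only close to, not manifestly at least, $2\pi$; the sharp bound should be extracted by matching each simple loop in $\Sigma_w(E_k^{(n)})$ with a loop of no greater length in the ambient $\CAT(1)$-graph $\Sigma_w(X)$, using Lemma \ref{lem:S-circle} (each $\Sigma_w(S_{ij})$ has length $\le 2\pi + \tau_p(r)$ and is arbitrarily close to a true $\Sigma_w$-circle of the ambient space) together with the upper semicontinuity of angles from Lemma \ref{lem:limit}. Once the link condition is established, Burago-Buyalo gives $E_k^{(n)} \in \CAT(\kappa)$, and the Gromov-Hausdorff limit yields $E_k \in \CAT(\kappa)$. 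As a byproduct, the uniform control on the polyhedral models yields uniform bounds on the turns of the broken-geodesic approximations of singular curves from each side, which should give Theorem \ref{thm:bdd-turn-C} in the limit.
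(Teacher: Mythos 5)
Your overall strategy---polyhedral approximation plus Gromov--Hausdorff limit, with Burago--Buyalo's characterization as the engine---is in the same spirit as the paper's proof, but the actual execution is quite different, and your version has a gap precisely at the step the paper goes to considerable lengths to avoid.

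The paper does \emph{not} replace the whole of $E_k$ by a polyhedral model. It leaves the ruled surfaces $S_{ij}$ (each already $\CAT(\kappa)$ by Theorem \ref{thm:ruled}) intact and performs surgery only on thin cone-like neighborhoods $K(x,v)$ of the \emph{singular vertices} $V_{\rm sing}(C)$ of the singular locus---the points where the graph of $C$ has an accumulation of vertices or two singular arcs branching off at angle~$0$. After surgery the singular locus $\tilde C$ has a finite graph structure with no singular vertices, and then Theorem \ref{thm:BB-gluing} is applied to a triangulation of the (mostly non-polyhedral) $\tilde E$. The essential point is that the turn condition~(A) of Burago--Buyalo along the regular edges of $\tilde C$ is inherited \emph{directly from the ambient curvature of~$X$}: around any edge $e$, the union $\bigcup_i D_i$ of half-disks is an open neighborhood of $e$ in $X$ (see \eqref{eq:topological-suspen}), and Theorem \ref{thm:BB-character} applied to $X$ itself gives \eqref{eq:turn+turn}, with locally finite turn variation. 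This is what makes the singular-vertex surgery the \emph{only} thing that has to be done.

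Your approach, by contrast, globally polyhedralizes each $S_{ij}$ and glues along broken-geodesic approximations of $C$. Here is the gap: you reduce Burago--Buyalo to ``the link condition'' (B) alone, but the nontrivial content at the polyhedral vertices along the singular curve is condition (A)---the two-sided turn inequality---and your polyhedral model does not inherit it automatically from~$X$. At a vertex $w$ of the broken geodesic, the arcs of $\Sigma_w(E_k^{(n)})$ span the sectors between the \emph{kink directions} of the polygonal approximation, not between the true directions of $C$ at $w$. Replacing triangles by $M_\kappa^2$-comparison triangles makes angles larger (which helps), but the kink at $w$ can shift the sector boundaries either way; the turn of an inscribed polygon at a single vertex is roughly the total turn of $C$ over the whole adjacent interval, and making ``roughly'' into the exact inequality $\tau_{F_i}(\{w\})+\tau_{F_j}(\{w\})\le 0$ needed for (A) is precisely a (discrete) turn estimate that is not provided. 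Your appeal to Lemma \ref{lem:S-circle} and Lemma \ref{lem:limit} does not supply it: $L(\Sigma_w(S_{ij}))$ being \emph{close to} $2\pi$ is the wrong direction (you need $\ge 2\pi$ for the glued loop), and upper semicontinuity of angles goes the wrong way for bounding polyhedral angles from below. The paper's proof of Lemma \ref{lem:E-CAT} handles the analogous comparison only in the surgered cones $\tilde K(x,v)$, where the claim reduces to Claim \ref{clm:expanding} (an expanding map into a polyhedral cone built directly from comparison triangles of $X$-geodesics), a much more constrained situation. A secondary, unaddressed issue in your construction is that distinct ruled surfaces overlap on open sets (see \eqref{eq:ruled-coincide}), so ``glue the $S_{ij}^{(n)}$ along their singular curves'' under-specifies the polyhedral model: one needs to triangulate the set-theoretic union $E_k$ directly, keeping $C$ in the $1$-skeleton, rather than to glue abstract copies.
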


\begin{rem}  \label{rem:LS-dim2} \upshape
Recently, we learned that Theorem \ref{thm:union-CAT}
is a direct consequence of the main result of 
Lytchak and Stadler \cite{LS:dim2}. 
However, in what follows, we present our original proof,
which provides deep insights on the local geometry  of $X$,
and will also be used in \cite{KNSYII} as one of key methods.
\end{rem}

 The basic strategy of the proof of Theorem \ref{thm:union-CAT} is to use 
the results \cite[Theorems 0.5 and 0.6]{BurBuy:upperII} 
on the characterizations for polyhedral spaces to be 
$\CAT(\kappa)$-spaces.


Let $\ca F_\kappa$ be the family of two-dimensional  polyhedral 
locally $\CAT(\kappa)$-spaces $F$ possibly with boundary
$\pa F$ such that any edge of $\pa F$ has finite turn variation. For a collection $\{ F_i\}$ of 
$\ca F_\kappa$, let $X$ be the polyhedron resulting from certain gluing of $\{ F_i\}$ along their edges. We always consider the 
intrinsic metric of $X$ induced from those of $F_i$.
We consider the following two conditions:
\pmed\n
(A)\, For any Borel subset $B$ of an arbitrary edge $e$ of
$X$, and arbitrary faces $F_i$, $F_j$ adjacent to $e$, we have
\[
    \tau_{F_i}(B)+\tau_{F_j}(B)\le 0\,;
\]
(B)\, For any vertex $x$ of $X$, $\Sigma_x(X)$ is $\CAT(1)$.
\psmall
\begin{thm} $($\cite[Theorem 0.5]{BurBuy:upperII}$)$ 
\label{thm:BB-gluing}
A polyhedron $X$ resulting from certain gluing of 
$\{ F_i\}\subset\ca F_\kappa$ along their edges belongs to $\ca F_\kappa$ if and only if the conditions $(A), (B)$
are satisfied.
\end{thm}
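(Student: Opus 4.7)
I would prove the two implications separately.

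For necessity, assume $X \in \ca F_\kappa$. Condition (B) is immediate from the general fact that at every point of a CAT($\kappa$)-space the space of directions is CAT(1), obtained by the standard rescaling of infinitesimal triangles. For condition (A), fix an edge $e$ with adjacent faces $F_i, F_j$. At an interior point $x \in e$ that is not a vertex of $X$, the definition of turn at a point gives $\tau_{F_i}(\{x\}) + \tau_{F_j}(\{x\}) = 2\pi - L(\Sigma_x(X))$; the CAT(1) condition on $\Sigma_x(X)$ forces $L(\Sigma_x(X)) \ge 2\pi$, so the sum is $\le 0$. On an open subarc $(a,b) \subset e$ avoiding vertices of $X$, the turn from each side is defined as the limit of $\tilde\tau_F(\gamma_n)$ for broken-geodesic approximations $\gamma_n$ in $F$, and the sum of the angle defects $\pi - \theta_{n,i}$ from $F_i$ and $F_j$ at each break point is, in the limit, controlled by $2\pi - L(\Sigma_\cdot(X)) \le 0$ (applied at the accumulation points on $e$). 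Summing and taking the supremum over finite subdivisions in \eqref{eq:bdd-turn} yields (A) on every Borel subset.

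For sufficiency, assume (A) and (B). The plan is polygonal approximation. I would choose a vertex set $V_n \subset \bigcup e$ on the edges that becomes dense as $n \to \infty$, and construct a polyhedral space $X_n$ by refining each face $F_i$ into triangles whose sector angles at the points $a \in V_n$ are prescribed to absorb the atomic and nearby continuous parts of the turn measure $\tau_{F_i}$. Condition (A) then forces $L(\Sigma_a(X_n)) \ge 2\pi$ at every new vertex, and condition (B) delivers the CAT(1) property at the original vertices of $X$. Hence every link of $X_n$ is a CAT(1) graph, and an iterative application of the gluing theorem \ref{thm:glue}---valid because each subedge of the refinement is a geodesic, hence a convex subset of each adjacent refined triangle---shows that $X_n \in \ca F_\kappa$. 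Since $X_n \to X$ in the Gromov-Hausdorff topology as the mesh refines, and the CAT($\kappa$) property passes to Gromov-Hausdorff limits of geodesic spaces, we conclude $X \in \ca F_\kappa$.

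The main obstacle is the continuous (non-atomic) component of the signed measure $\tau_{F_i} + \tau_{F_j}$ on an edge, which a purely discrete approximation cannot reproduce exactly; errors accumulate both in the link lengths at $a \in V_n$ and in the angle defects of small triangles of $X_n$ crossing $e$. The resolution is the Stieltjes-integral interpretation of turn along curves of finite turn variation: condition (A) becomes a pointwise inequality $d(\tau_{F_i} + \tau_{F_j}) \le 0$ of signed measures on $e$, and the Reshetnyak-type majorization theorem for curves of finite turn variation produces a direct majorization of triangles of $X$ crossing $e$ by triangles in $M^2_\kappa$ with curvature defect equal to the cumulative signed turn. This majorization verifies the CAT($\kappa$)-comparison for $X$ without requiring pointwise angular control on the approximations, and can equally be used to justify the passage to the limit in the polygonal scheme or to replace it altogether.
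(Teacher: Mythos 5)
You should first be aware that the paper does not prove this statement at all: Theorem \ref{thm:BB-gluing} is imported verbatim as \cite[Theorem 0.5]{BurBuy:upperII}, so there is no internal proof to compare with, and a blind reproof has to reproduce a substantial part of Burago--Buyalo's paper. Measured against that, your necessity direction is morally right but imprecise, and your sufficiency direction has real gaps. On necessity: the atomic identity you use is wrong as stated when more than two faces are glued along $e$; the correct statement is pairwise, $\tau_{F_i}(\{x\})+\tau_{F_j}(\{x\})=2\pi-L(\Sigma_x(F_i))-L(\Sigma_x(F_j))$, and what the CAT(1) link condition gives is exactly $L(\Sigma_x(F_i))+L(\Sigma_x(F_j))\ge 2\pi$ for every pair of adjacent faces, since a simple loop in the link passes through exactly two faces. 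More seriously, the non-atomic part of (A) cannot be obtained by ``applying the link inequality at accumulation points'': the break points of the approximating broken geodesics $\gamma_n$ lie in the interior of $F_i$, not on $e$, so their angle defects are not link lengths along $e$; one needs a genuine limiting comparison of the turns $\tilde\tau_{F_i}(\gamma_n)$ and $\tilde\tau_{F_j}(\gamma_n')$ from the two sides, which is where the actual work of this direction sits.

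On sufficiency, which is the substantive half, your polygonal scheme is not specified at a level where it can be checked. The whole difficulty is the step you describe in one clause: how to modify each face near $e$ so that the continuous part of $\tau_{F_i}$ is concentrated at the chosen vertices of $V_n$ while the modified face remains locally $\CAT(\kappa)$, has geodesic subedges matching in length across the gluing, and stays Gromov--Hausdorff close to $F_i$; this is essentially the content of Burago--Buyalo's proof, not a routine refinement. Your fallback, a ``Reshetnyak-type majorization theorem for curves of finite turn variation'' giving $\CAT(\kappa)$-comparison directly across the edge, invokes machinery at least as strong as the theorem being proved. Finally, membership in $\ca F_\kappa$ requires more than the $\CAT(\kappa)$ property: the boundary edges of the glued polyhedron must again have finite turn variation, and this is not delivered by a Gromov--Hausdorff limit argument (turn variation is not continuous, nor even automatically finite, under such limits), so the last step of your plan does not close the argument even granting the approximation.
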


\begin{thm} $($\cite[Theorem 0.6]{BurBuy:upperII}$)$\label{thm:BB-character}
Each polyhedron $X$ in $\ca F_\kappa$ can be glued from the faces $\{ F_i\}$ contained in $\ca F_\kappa$ along their edges in such a way that the conditions $(A), (B)$ are satisfied.

In particular, each edge of $\ca S(X)$ has finite turn
variation.
\end{thm}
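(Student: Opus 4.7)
The plan is to take as the decomposition the natural one coming from the topological singular set $\ca S(X)$ of $X$, and then verify the conditions $(A)$ and $(B)$ of Theorem \ref{thm:BB-gluing} against this decomposition. Since $X\in\ca F_\kappa$ is a two-dimensional polyhedral locally $\CAT(\kappa)$-space, $\ca S(X)$ is a locally finite graph in $X$, and the complement $X\setminus \ca S(X)$ is an open 2-manifold. I would take as faces $F_i$ the closures of the connected components of $X\setminus \ca S(X)$ equipped with their intrinsic length metrics, and take the edges and vertices of the decomposition to be the open edges and vertices of $\ca S(X)$.

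The first step is to verify that each face $F_i$ is itself in $\ca F_\kappa$. Since $F_i$ is a closed subset of the locally $\CAT(\kappa)$-polyhedron $X$ whose interior is a 2-manifold and whose boundary lies inside $\ca S(X)$, the inherited metric on $F_i$ is polyhedral and locally $\CAT(\kappa)$. The substantive content is proving that each boundary edge $e$ of $F_i$ has finite turn variation, which I would establish by a local Gauss-Bonnet estimate: approximating $e$ by inscribed broken geodesics in $F_i$ and controlling the sum of exterior angles by the total curvature of the region they cut off, where the $\CAT(\kappa)$-condition gives an area-bounded upper bound of the form $\kappa\cdot\text{area}$ on the curvature.

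Verifying $(B)$ is immediate, since $\Sigma_x(X)$ is already $\CAT(1)$ at every point $x\in X$ by the local $\CAT(\kappa)$-assumption on $X$. For $(A)$, I would first argue pointwise. At any point $x$ of an edge $e$ with adjacent faces $F_i,F_j$, the local $\CAT(\kappa)$-condition forces $L(\Sigma_x(X))\ge 2\pi$; decomposing $\Sigma_x(X)$ into the spaces of directions of the faces adjacent to $x$ and using that the atomic contribution of $\tau_{F_i}$ at $x$ equals $\pi-L(\Sigma_x(F_i))$, one gets
\[
\tau_{F_i}(\{x\})+\tau_{F_j}(\{x\})\;=\;2\pi-L(\Sigma_x(F_i))-L(\Sigma_x(F_j))\;\le\; L(\Sigma_x(X))-L(\Sigma_x(F_i))-L(\Sigma_x(F_j))-0\;\le\;0.
\]
For points on the smooth locus of $e$ interior to the edge, the analogous computation with one-sided geodesic curvatures gives the correct sign infinitesimally.

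The main obstacle will be upgrading these pointwise and infinitesimal inequalities to the signed Borel measure estimate $\tau_{F_i}(B)+\tau_{F_j}(B)\le 0$ for every Borel subset $B\subset e$ required by $(A)$. This demands a careful decomposition of the turn measure into its atomic part and its absolutely continuous part (the geodesic-curvature part), together with a simultaneous approximation argument via inscribed polygonal broken geodesics to exchange the pointwise bound for the measure bound; this is the technical heart of the Burago-Buyalo argument. The ``in particular'' statement that each edge of $\ca S(X)$ has finite turn variation then follows immediately from the first conclusion, since the edges of $\ca S(X)$ are precisely the boundary edges of the faces $F_i$, and membership of each $F_i$ in $\ca F_\kappa$ forces finite turn variation of those edges.
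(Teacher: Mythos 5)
First, note that the paper itself does not prove this statement: it is quoted from Burago--Buyalo (\cite[Theorem 0.6]{BurBuy:upperII}) and used as a black box, so there is no internal argument to compare yours with; the only faithful ``proof'' in the context of this paper is the citation. Judged on its own, your proposal correctly identifies the canonical decomposition (faces $=$ closures of the components of $X\setminus\ca S(X)$ with their intrinsic metrics, edges and vertices from $\ca S(X)$), which is indeed how Burago--Buyalo proceed, but it does not amount to a proof. The essential content of the theorem is precisely the claim you leave unproved: that each face $F_i$ belongs to $\ca F_\kappa$, i.e.\ that every boundary edge of $F_i$ has finite turn variation (this is what the ``in particular'' records). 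Your Gauss--Bonnet sketch does not deliver it: an upper curvature bound controls the curvature measure only from above (by $\kappa\cdot\mathrm{area}$), so Gauss--Bonnet applied to the region cut off by an inscribed broken geodesic bounds the turn of a subarc from \emph{below}; it gives no upper bound on the turn, hence no control of the total variation $\mu_{F_i}(e)=\tau^{+}(e)+\tau^{-}(e)$. Bounding the positive part requires playing the faces on the two sides of $e$ against each other, which is exactly where condition $(A)$ enters --- but $(A)$, as you state it for Borel sets, already presupposes that $\tau_{F_i},\tau_{F_j}$ are signed Borel measures, i.e.\ presupposes finite turn variation. Your outline is therefore circular at its core, and you yourself defer the passage from pointwise estimates to the measure inequality as ``the technical heart of the Burago--Buyalo argument'' without supplying it.

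There is also a concrete error in the pointwise step: the final inequality $L(\Sigma_x(X))-L(\Sigma_x(F_i))-L(\Sigma_x(F_j))\le 0$ is false whenever more than two faces meet along $e$ at $x$, since $\Sigma_x(X)$ then contains further arcs beyond $\Sigma_x(F_i)\cup\Sigma_x(F_j)$ and its total length strictly exceeds the sum of the two face-links. What you actually need is $L(\Sigma_x(F_i))+L(\Sigma_x(F_j))\ge 2\pi$, and this follows not from comparing with the total length of the link but from the fact that $\Sigma_x(F_i)$ and $\Sigma_x(F_j)$ are arcs meeting exactly in the two directions of $e$ at $x$, so their union is an embedded circle in the $\CAT(1)$ graph $\Sigma_x(X)$ and hence has length at least $2\pi$. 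In short: for the purposes of this paper the statement should simply be cited; a self-contained proof would require carrying out the finite-turn-variation estimate and the measure-level version of $(A)$, neither of which your sketch provides.
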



Note that $E_k$ is not a polyhedral space in general.
Even in that case, we have some difficulty 
mentioned below. From these reasons, we shall do 
surgeries to get a polyhedral space $\tilde E_k$
which approximates $E_k$ in the Gromov-Hausdorff sense.
The point is, we can apply Theorems \ref{thm:BB-gluing}
and \ref{thm:BB-character} 
to $\tilde E_k$ to conclude that it is 
$\CAT(\kappa)$. Finally taking the limit, we will obtain 
the conclusion.  

%
 From now on, we set $E:=E_k$ for simplicity.
 We need some preliminary argument on the local geometry of $E$.

\begin{lem} \label{lem:E-int=ext}
For every $x\in E$, $\Sigma_x(E)$ is isometric to the intrinsic space of 
directions $\Sigma_x(E^{\rm int})$
 in the sense of Definition \ref{defn:intrinsic-direct}.
\end{lem}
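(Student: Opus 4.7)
The strategy is to adapt the proof of Lemma~\ref{lem:ext=int} (which handled a single ruled surface $S$) to the union $E = E_k$, by exploiting the structural description provided by Lemma~\ref{lem:E-Sigma}: $\Sigma_x(E)$ is a bouquet of circles $\Sigma_x(S_{ij})$ attached at the finitely many points of $\Sigma_{x,\pm}(C)$. Since an intrinsic geodesic in $\Sigma_x(E^{\rm int})$ between two directions lying in different circles must factor through one of these attachment points, the problem reduces to (i) showing that on each circle the extrinsic and intrinsic angles agree, and (ii) verifying angle-additivity at the attachment points.

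First I would handle item (i). For any direction $\xi \in \Sigma_x(E)$ lying in the regular part of a single circle $\Sigma_x(S_{ij})$, I would invoke Lemma~\ref{lem:ext=int} applied to $S_{ij}$ itself to equate $\angle^X(\xi,\eta) = \angle^{S_{ij}^{\rm int}}(\xi,\eta)$ for any nearby $\eta$. To upgrade $\angle^{S_{ij}^{\rm int}}$ to $\angle^{E^{\rm int}}$, one uses the regularity of $\xi$ in $\Sigma_x(X)$ together with Lemma~\ref{lem:regular-dir}: a short $X$-geodesic in the direction $\xi$ lies in $S_{ij}$, and Lemma~\ref{lem:x-geod-general2} shows that short $X$-geodesics between nearby points of $S_{ij}$ remain $S_{ij}$-geodesics and hence $E^{\rm int}$-geodesics, so no shorter route through neighbouring ruled surfaces can appear in $E$.

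Next, for item (ii), suppose $\xi_1 \in \Sigma_x(S_{ij})$ and $\xi_2 \in \Sigma_x(S_{i'j'})$ lie on distinct circles meeting at $w \in \Sigma_{x,\pm}(C)$. The intrinsic shortest path in $\Sigma_x(E^{\rm int})$ joining $\xi_1$ to $\xi_2$ must pass through some such attachment point, so it suffices to prove
\[
\angle^X(\xi_1,\xi_2) = \angle^X(\xi_1,w) + \angle^X(w,\xi_2).
\]
This additivity is the content of \eqref{eq:10-10}: because $w$ lies within a $10^{-10}$-neighborhood of $\pm\nabla d_p(x)$ and the circles $\Sigma_x(S_{ij})$, $\Sigma_x(S_{i'j'})$ each have almost length $2\pi$ with $w$ sitting at a near-antipodal pair, a direction in one circle reaches $w$ only through the near-antipodal arc through $w$. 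Concretely, I would apply Lemma~\ref{lem:pass} and the no-return Lemma~\ref{lem:noreturn2} to produce sequences $y_n \to x$ inside $S_{ij}$ and $z_n \to x$ inside $S_{i'j'}$ with $\uparrow_x^{y_n}\to \xi_1$, $\uparrow_x^{z_n}\to \xi_2$, such that the ruling geodesic through $y_n$ (resp.\ $z_n$) passes through a common point $w_n$ with $\uparrow_x^{w_n}\to w$; the concatenated geodesic pieces then realize the additive angle on the right.

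The main obstacle I anticipate is step~(ii): one must rule out the possibility that the upper angle $\angle^X(\xi_1,\xi_2)$ is strictly smaller than the sum through $w$, which would force a hidden shortcut outside $E$ in $\Sigma_x(X)$. This is handled by the fact that all such shortcuts are obstructed by the singular structure of $\Sigma_x(X)$ at $w$ (vertices cannot be bypassed), combined with the CAT$(1)$ property of $\Sigma_x(X)$ and uniqueness of short geodesics between regular directions, as already exploited in Lemma~\ref{lem:pass}. Once these two items are settled, the isometry follows by extending the natural identity map continuously (the completion in Definition~\ref{defn:intrinsic-direct} causes no trouble since both spaces are finite graph-like objects by Lemma~\ref{lem:E-Sigma}). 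The case $x = p$ is handled identically, with $\nabla d_p$ replaced by the direction $v$ and the ``circles'' replaced by arcs bounded by the $\nu_i$'s.
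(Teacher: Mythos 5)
The decomposition you use — reduce to (i) agreement of angles inside a single circle $\Sigma_x(S_{ij})$ and (ii) additivity through the attachment points $w\in\Sigma_{x,\pm}(C)$ — is the natural one, and step (i) is in line with the paper's first step. The problem is in step (ii): the additivity claim
\[
\angle^X(\xi_1,\xi_2)=\angle^X(\xi_1,w)+\angle^X(w,\xi_2)
\]
is precisely the heart of the matter, and your justification for it is not sound. Relation \eqref{eq:10-10} only constrains the positions of the singular directions near $\pm\nabla d_p(x)$; it says nothing about whether a geodesic in the ambient graph $\Sigma_x(X)$ from $\xi_1$ to $\xi_2$ must pass through $w$ rather than leaving $\Sigma_x(E)$ along some other branch at $w$ and re-entering elsewhere. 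Your fallback argument (``vertices cannot be bypassed, CAT$(1)$-property, uniqueness of geodesics between regular directions'') is what you would have to prove, not a reason the claim holds; as written it is circular.

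The ingredient you are missing, and which the paper's proof supplies, is the following: for $\xi_3,\xi_4$ chosen on opposite sides of $v\in\Sigma_x(C)$ and close to $v$, the $X$-geodesic $\alpha$ from $\alpha_3(t)$ to $\alpha_4(t)$ is vertical (by \eqref{eq:10-10}), and when extended it reaches the boundary geodesics $\gamma_\ell$ and $\gamma_{\ell'}$ with $\ell\in\{i,j\}$, $\ell'\in\{i',j'\}$, hence (via Lemma~\ref{lem:pass}) $\alpha$ is a ruling of the \emph{third} ruled surface $S_{\ell\ell'}\subset E$. Consequently $\xi_3,\xi_4\in\Sigma_x(S_{\ell\ell'})$, and one may apply Lemma~\ref{lem:ext=int} directly to the single surface $S_{\ell\ell'}$ to get $\angle^X(\xi_3,\xi_4)=\angle^{S_{\ell\ell'}}(\xi_3,\xi_4)$; since $\angle^X\le\angle^E\le\angle^{S_{\ell\ell'}}$ this forces $\angle^X=\angle^E$. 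This one observation establishes the additivity \emph{and} produces, in one stroke, a path in $E$ realizing the $X$-distance (namely the ruling $\alpha$), making the two-ruling concatenation through $w_n$ unnecessary — a construction which, as you present it, would still require control of the angle at $w_n$ to yield a tight length estimate. Without identifying the role of $S_{\ell\ell'}$ your step (ii) does not close.
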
 

\begin{proof}
The basic idea of the proof is the same as that of 
Lemma \ref{lem:ext=int}.
Obviously, we may assume $x\in C$. We only consider the 
case $x\neq p$.
We first show that each component $\Sigma$ of 
$\Sigma_x(E)\setminus \Sigma_x(C)$
is isometrically embedded in $\Sigma_x(E^{\rm int})$.
For $\xi_1, \xi_2\in \Sigma$ with $|\xi_1,\xi_2|<\pi$,
let $\mu_n$ be an $X$-geodesic with
$\dot\mu_n(0)=\xi_n$ \,$(n=1,2)$. Then for small $\e$, we have  
 $\mu_1([0,\e])\subset S_{ij}$ and $\mu_2([0,\e])\subset S_{k\ell}$ for some $S_{ij}, S_{k\ell}$ in $E$.
 Note that the $X$-geodesic $\gamma^X_{\mu_1(t),\mu_2(t)}$ joining $\mu_1(t)$ and $\mu_2(t)$ does not meet $C$,
 and hence $\gamma^X_{\mu_1(t),\mu_2(t)}$
  is contained in the same ruled surface 
  $S_{ij}=S_{k\ell}\subset E$.
 This implies that $\angle^X(\xi_1,\xi_2)= \angle^{S_{ij}}(\xi_1,\xi_2)$. From $\angle^X\le\angle^E\le\angle^{S_{ij}}$, 
 we conclude that $\angle^X(\xi_1,\xi_2)= \angle^E(\xi_1,\xi_2)$ and the existence of an isometric 
 embedding $\iota:\Sigma\to\Sigma_x(E^{\rm int})$. 
 
 Next, for any $v\in\Sigma_x(C)$, take 
$\xi_3, \xi_4\in \Sigma_x(E)\setminus \Sigma_x(C)$ close to $v$ such that the segment $[\xi_3,\xi_4]$ in $\Sigma_x(E)$ meets 
$\Sigma_x(C)$ only at $v$.
Take $X$-geodesics  $\alpha_3,\alpha_4$ in the direction $\xi_3,\xi_4$, and choose $S_{ij}, S_{i'j'}$ in $E$
such that $\alpha_3(t)\subset S_{ij}$ and 
$\alpha_4(t)\subset S_{i'j'}$.
Let $\alpha:[0,1]\to X$ be the $X$-geodesic from $\alpha_3(t)$ to $\alpha_4(t)$.
By  \eqref{eq:10-10},  $\alpha$ is vertical.
We extend  $\alpha$ until it reaches $\pa E$. 
We can choose such an extension that 
$\alpha(t_-)\in\gamma_\ell$ and 
$\alpha(t_+)\in\gamma_\ell'$ with 
$\ell\in\{ i, j\}$ and $\ell'\in\{ i', j'\}$ for some 
$t_-<0<1<t_+$.
Thus, we have $\alpha([t_-,t_+])\subset S_{\ell\ell'}$.

By Lemma \ref{lem:pass}, we can find a sequence $s_n$ 
such that  the ruling geodesics $\lambda_{s_n}$ of 
$S_{\ell \ell'}$ meets both $\alpha_3$ and $\alpha_4$
and $\lambda_{s_n}$ converges to a ruling 
geodesic through $x$ as $n\to\infty$.
This implies that $\angle^X(\xi_3,\xi_4)= \angle^{S_{ij}}(\xi_3,\xi_4)$, and hence $\angle^X(\xi_3,\xi_4)= \angle^{E}(\xi_3,\xi_4)$.
This completes the proof.
\end{proof}

\begin{lem}\label{lem:E-geodesic}
 For arbitrary $x,y\in E$, let 
 $\gamma:=\gamma^E_{x,y}:[0, |x,y|_E]\to E$
 be an $E$-shortest curve between $x$ and $y$.
 Suppose that the set of accumulation points of $\gamma\cap \ca S(X)$ is finite. 
  Then $\gamma$ is an $X$-geodesic.
\end{lem}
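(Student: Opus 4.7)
The plan is to establish that $\gamma$ is a local $X$-geodesic at every one of its points; this is exactly the definition of a global $X$-geodesic. I would analyze a general point $z\in\gamma$ in three cases according to how $z$ interacts with $\ca S(X)$.

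First, if $z\notin\ca S(X)$ (a topologically regular point of $X$), then $z$ lies in the interior of a single ruled surface $S_{ij}$, or on a regular point of the ``book of pages'' part of $E$ sitting above some initial edge $\gamma_i$. In either situation, Lemma \ref{lem:inS} supplies a small $X$-neighborhood $U$ of $z$ with $U\subset E$, on which $d_X$ and $d_E$ coincide. Since $\gamma$ is $E$-shortest, its restriction to $U$ is an $X$-geodesic.

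Second, if $z\in\gamma\cap C$ is an isolated intersection of $\gamma$ with $\ca S(X)\cap E=C$, then the two one-sided directions $\xi_\pm\in\Sigma_z(E)\subset\Sigma_z(X)$ of $\gamma$ at $z$ satisfy $\angle^{E}(\xi_-,\xi_+)=\pi$ because $\gamma$ is an $E$-geodesic. Lemma \ref{lem:E-int=ext} identifies $\Sigma_z(E)$ isometrically with the intrinsic $\Sigma_z(E^{\rm int})$, so the extrinsic $X$-angle between $\xi_-$ and $\xi_+$ also equals $\pi$; combined with the first step applied on each side of $z$, $\gamma$ is locally an $X$-geodesic through $z$.

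Third, at each of the finitely many accumulation points $z_k$ of $\gamma\cap\ca S(X)$, the same recipe applies: the two one-sided directions of $\gamma$ at $z_k$ lie in $\Sigma_{z_k}(E)$, meet at intrinsic $E$-angle $\pi$, and hence at $X$-angle $\pi$ by Lemma \ref{lem:E-int=ext}. On a deleted neighborhood of $z_k$ the finiteness assumption ensures that $\gamma$ meets no further accumulation point, so Steps 1 and 2 already cover all of $\gamma$ there. I expect the main obstacle to be precisely this third step, where infinitely many isolated crossings may accumulate at $z_k$ and one must verify that the resulting countable cascade of local $X$-geodesic pieces really pastes together to a genuine $X$-geodesic across $z_k$. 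This reduces to a limiting argument controlled by the uniform near-verticality of all singular curves near $z_k$ provided by Corollary \ref{cor:vert} and \eqref{eq:10-10}, which forces the two-sided $\gamma$-directions at $z_k$ to be the authentic limits of the $\dot\gamma$-directions on the approaching subarcs and makes the isometry of Lemma \ref{lem:E-int=ext} pass to the limit.
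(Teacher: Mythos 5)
Your approach is essentially the paper's own: decompose $\gamma\cap\ca S(X)$ into isolated crossings accumulating at finitely many points, show the subarcs between consecutive crossings are $X$-geodesics, invoke Lemma \ref{lem:E-int=ext} at every crossing and every accumulation point to transfer the intrinsic $E$-angle $\pi$ into the ambient $X$-angle $\pi$, and conclude that $\gamma$ is locally (hence, in the small ball $B(p,r)$, globally) an $X$-geodesic. The one place you make things harder than they are is your third step: once you know $\gamma|_{[t_{i-1},t_{i+1}]}$ is an $X$-geodesic for each pair of consecutive isolated crossings, the two subarcs of $\gamma$ terminating at an accumulation parameter $u$ are uniform limits of $X$-geodesics and hence $X$-geodesics themselves, so a single further application of Lemma \ref{lem:E-int=ext} at $\gamma(u)$ finishes the argument --- no extra limiting argument via Corollary \ref{cor:vert} or \eqref{eq:10-10} is required.
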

\begin{proof} 
Set $\Gamma:=\gamma\cap\ca S(X)$.
We only have to consider the case when $\Gamma$ has a unique 
accumulation point $\gamma(u)$ with 
$\Gamma=\{ \gamma(t_i),\gamma(s_j), \gamma(u)\,|\,
i,j=1,2,\ldots \}$ with 
$0\le t_1<t_2<\cdots<t_i<\cdots <u< \cdots< s_j<\cdots<s_2<s_1\le |x,y|_E$ and 
$\lim_{i\to\infty} t_i=\lim_{j\to\infty} s_j=u$.

Note that 
for each $i$ and 
 any small enough  $\e>0$, $\gamma([t_{i-1}+\e, t_i-\e])$ 
 is contained in the surface $X\setminus \ca S(X)$. Therefore $\gamma|_{[t_{i-1}+\e, t_i-\e]}$ is locally $X$-minimizing, and hence 
 $X$-minimizing. Thus 
 $\gamma|_{[t_{i-1}, t_i]}$ is $X$-minimizing.
 
 By Lemma \ref{lem:E-int=ext}, we have  
 \[
 \angle^X(\dot\gamma^E_{\gamma(t_{i}),\gamma(t_{i-1})},  
 \dot\gamma^E_{\gamma(t_{i}), \gamma(t_{i+1})})=
 \angle^E(\dot\gamma^E_{\gamma(t_{i}),\gamma(t_{i-1})},  
 \dot\gamma^E_{\gamma(t_{i}), \gamma(t_{i+1})})=
 \pi.
 \] 
 Therefore $\gamma|_{[t_{i-1},t_{i+1}]}$ is an $X$-geodesic,
 which implies that $\gamma|_{[0,u]}$ is an $X$-geodesic. Similarly, $\gamma|_{[u,|x,y|_E]}$ is an $X$-geodesic.
 In a way similar to the above, we have
  $\angle^X(\dot\gamma^E_{\gamma(u),\gamma(0)}(0),  
 \dot\gamma^E_{\gamma(u), \gamma(|x,y|_E)}(0))=
 \pi$. 
  It follows that $\gamma$ is an $X$-geodesic.
\end{proof} 

\begin{rem} \label{rem:accumlation-EX}\upshape
Lemma \ref{lem:E-geodesic} does not hold 
in case a subarc of $\gamma$ is contained in 
$\ca S(X)$ (see Example \ref{ex:2}).
\end{rem}
 
\begin{lem} \label{lem:SE-geom}
For a fixed  $x\in E$, we have for every $y\, (\neq x)\in E$ 
\[
 \frac{|x,y|_{E}}{|x,y|_X} <1+\tau_x(|x,y|_X).
\]
\end{lem}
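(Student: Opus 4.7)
The plan is to argue by contradiction and reduce the estimate to the single-surface case already handled in Sublemma \ref{slem:ratio-SX}. Suppose the bound fails: then there exist $c>0$ and a sequence $y_n \in E\setminus\{x\}$ with $|x,y_n|_X\to 0$ and $|x,y_n|_E/|x,y_n|_X\ge 1+c$ for all $n$. Since $E=E_k$ is by construction the finite union $\bigcup_{1\le i<j\le k}S_{ij}$, the pigeonhole principle lets me pass to a subsequence for which all $y_n$ lie in a single ruled surface $S=S_{ij}$.

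Each $S_{ij}$ is the continuous image of the compact rectangle $R$, hence closed in $X$, so the limit $x$ lies in $S$ as well. By Theorem \ref{thm:ruled}, $(S,d_S)$ is a $\CAT(\kappa)$-space homeomorphic to a closed disk and therefore compact in its intrinsic topology. Because $d_S\ge d_X$, the identity map $(S,d_S)\to (S,d_X|_S)$ is a continuous bijection between compact Hausdorff spaces, hence a homeomorphism; so the two topologies on $S$ agree, and $y_n\to x$ in $X$ forces $|x,y_n|_S\to 0$.

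Now I would invoke Sublemma \ref{slem:ratio-SX} applied to $S$: for each fixed $s>0$, once $n$ is large enough that $|x,y_n|_S<s$, the sublemma yields $|x,y_n|_S\le (1+\tau_x(s))\,|x,y_n|_X$. Since every curve in $S$ is a curve in $E$, one has $|x,y_n|_E\le |x,y_n|_S$, and combining these gives $|x,y_n|_E/|x,y_n|_X\le 1+\tau_x(s)$ for all sufficiently large $n$. Letting $s\to 0$ produces $\limsup_n |x,y_n|_E/|x,y_n|_X\le 1$, which contradicts $\ge 1+c$. The quantitative function $\tau_x$ demanded by the statement is then recovered by setting $\tau_x(r):=\sup\{\,|x,y|_E/|x,y|_X - 1 : y\in E,\ 0<|x,y|_X\le r\,\}$ for small $r$, and the preceding contradiction argument shows $\tau_x(r)\to 0$ as $r\to 0$.

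The argument is essentially a packaging of earlier results, so I do not foresee a serious obstacle. The one point that needs care is the topology equivalence in the second paragraph: Sublemma \ref{slem:ratio-SX} only controls the ratio for $y$ in a small \emph{intrinsic} ball around $x$, so we need $|x,y_n|_S\to 0$ (and not merely $|x,y_n|_X\to 0$) before we can apply it, and the compactness argument is exactly what promotes the convergence from the extrinsic to the intrinsic metric on the single surface $S$.
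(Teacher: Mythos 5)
Correct, and essentially the same as the paper's proof: argue by contradiction, pigeonhole the sequence into a single ruled surface $S_{ij}$, and invoke Sublemma \ref{slem:ratio-SX} together with $|x,y_n|_E\le|x,y_n|_{S_{ij}}$. Your second paragraph, which promotes $|x,y_n|_X\to 0$ to $|x,y_n|_{S_{ij}}\to 0$ by observing that the identity $(S_{ij},d_{S_{ij}})\to(S_{ij},d_X|_{S_{ij}})$ is a continuous bijection of compact Hausdorff spaces and hence a homeomorphism, correctly supplies the intrinsic-ball hypothesis of Sublemma \ref{slem:ratio-SX}, a step the paper's terse proof leaves implicit.
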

\begin{proof}
We may assume $x\in E\cap \ca S(X)$.
Suppose the conclusion does not hold. Then we have 
a sequence $y_n\in E$ converging to $x$ 
such that 
$\frac{|x,y_n|_{E}}{|x,y_n|_X} >1+c$ for some 
positive constant $c$. Passing to a subsequence, we may 
assume that all $y_n\in S_{ij}$ for some $S_{ij}$.
This is a contradiction to Lemma \ref{slem:ratio-SX}
since $|x,y_n|_{E}\le |x,y_n|_{S_{ij}}$.
\end{proof}


\pmed
\begin{proof}[Proof of Theorem \ref{thm:union-CAT}]
For each edge $e\in E(C)$, let 
$D_i$\,$(1\le i\le m(e))$ be open half-disks in $X$ with
$\pa D_i=e$ such that 
\begin{align} \label{eq:topological-suspen}
\text{$\textstyle{\bigcup_{i=1}^{m(e)} D_i}$ is an open neighborhood of $e$ in $X$.}
\end{align}
Let $\tau_{D_i}$ be the turn of $e$ from the side 
$D_i$. 
We want to apply Theorem \ref{thm:BB-character}
to the completion of the components
of $E\setminus C$. Let $A$ be such a completion 
containing some $D_i$. However here are some difficulties: The domain $A$ might be too thin to define 
$\tau_{D_i}(e)$ because of the presence of singular vertices. In particular, we do not know if $e$ has finite turn variation in $A$. We also have to care about $V_*(C)$.
To overcome these difficulties, we do surgeries
around points of $V_{\sing}(C)$.
At this moment, we can apply Theorem \ref{thm:BB-character} to $e$ locally.
 Each point of $e$ has a convex neighborhood $P$ in 
$\textstyle{\bigcup_{i=1}^{m(e)} D_i}$ 
such that $\pa P$ consists of broken geodesics
joining the endpoints of $e\cap P$.
It follows from   Theorem \ref{thm:BB-character} that 
we have for all $1\le i\neq j\le m(e)$,
\begin{align} \label{eq:turn+turn}
   \tau_{D_i}(e\cap P) + 
      \tau_{D_j}(e\cap P)\le 0 
\end{align} 
and $e$ has locally finite turn variation in $D_i$.


Let $\e_0$ be any positive number.
For $x\in V_{\sing}(C)$, we assume that the singularity of $x$ occurs from 
the positive direction. Namely, there is $v\in \Sigma_{x,+}^{\rm sing}(C)$.
The other case $v\in \Sigma_{x,-}^{\rm sing}(C)$ is similarly discussed.
Let $C(v)$ denote the union of singular curves in $C$  starting at $x$ in the direction $v$.

Choose $\delta=\delta_x>0$  and 
$\e=\e_x>0$ with $\delta, \e\le \e_0$ and $\e\ll \delta$  satisfying 
\begin{align} \label{eq:non-meeting-VC}
 &\text{$\{ B^{\Sigma_x(X)}(v,2\delta)\}$\, 
 $(v\in\Sigma_x^{\sing}(C))$ is mutually disjoint\,$;$} \\
 &\text{$C(v,\delta, 2\e)$
 (see \eqref{eq:cone-neighborhood}) covers 
 $C(v)\cap B_+(x,2\e)\,;$} \label{eq:non-meeting-VC2}\\
 &\text{$E\cap S(p,r(x)+\e)$ does not meet $V(C)$,}     
      \label{eq:non-meeting-VC3} \hspace{3.5cm}
\end{align}
where $B_+(x,\e):=B(x,\e)\setminus {\rm int}\,B(p,r(x))$.
By Lemma \ref{lem:circle-tree},  $C(v,\delta,\e)\cap E\cap S(p, \delta(x)+\e)$ is a tree, say $\hat T(x,v)$. 
Replacing each edge of $\hat T(x,v)$ by the $X$-geodesic between the endpoints,  we obtain a geodesic tree $T(x,v)$. 
By Lemma \ref{lem:verticalS}, we have 
$T(x,v)\subset E$.
Let $K(x,v)$ be a closed domain of $E$  bounded by 
$T(x,v)$ and the $X$-geodesic segments
between $x$ and the endpoints of the tree $T(x,v)$.
Note that such $X$-geodesics between $x$ and the endpoints of $T(x,v)$ are contained in $E$. 
Taking smaller $\delta, \e$ if necessary, we may assume 
\begin{align} \label{eq:tildeXyxy'}  
 \text{$\tilde\angle^X yxy' <\e_0$ for all $y,y'\in
    T(x,v)$.}  
\end{align}

For each vertex  $y\in V(T(x,v))$, take the $X$-geodesic
$\gamma^X_{x,y}$ between $x$ and $y$.
For each edge $e\in E(T(x,v))$ with endpoints $y,y'$, let
 $\triangle^X_e$ denote the $X$-geodesic triangle 
 consisting of $\gamma^X_{x,y}\cup \gamma^X_{x,y'} 
 \cup \gamma^X_{y,y'}$.


%

Let $\tilde\blacktriangle^X_{e}$ be the triangular  region
bounded by $\tilde\triangle^X_{e}$.
Gluing 
$\{ \tilde\blacktriangle^X_{e}\,|\,e\in E(T(x,v))\}$ properly, we obtain 
a polyhedral space $\tilde K(x,v)$ corresponding to 
$K(x,v)$. 

 
\begin{center}
\begin{tikzpicture}
[scale = 1]
\fill (0,0) coordinate (A) circle (1.2pt) node [below] {$x$};
\fill (4.5,0.4) coordinate (A) circle (1.2pt) node [right] {$y$};
\draw (4.3,2.2) node[circle] {$T(x,v)$};
\draw (1.6,-1.6) node[circle] {$K(x,v)$};
\draw [-, very thick] (0,0)--(0.1,0);
\draw [-, very thick] (4.5,0.4)--(4.8,1.6);
\draw [-, very thick] (4.5,-0.4)--(4.8,-1.6);
\draw [-, very thick] (0,0)--(4.8,1.6);
\draw [-, very thick] (0,0)--(4.8,-1.6);
\draw [dotted, very thick] (3.95,1.315)--(4.5,0.4);
\draw [-, very thick] (3.95,1.315)--(3.6,1.9);
\draw [-, very thick] (0,0)--(3.6,1.9);
\draw [-, very thick] (0,0)--(3.6,-1.9);
\draw [dotted, very thick] (3.95,-1.315)--(4.5,-0.4);
\draw [-, very thick] (3.95,-1.315)--(3.6,-1.9);
\coordinate (P1) at (3.7,0);
\coordinate (P2) at (2,0);
\coordinate (P3) at (1,0);
\coordinate (P4) at (0.3,0);
\coordinate (P5) at (0.1,0);
\coordinate (P6) at (4.5,0.4);
\coordinate (P7) at (4.5,-0.4);
\draw [very thick]
(P1) .. controls +(30:1cm) and +(180:0cm) ..
(P6) .. controls +(270:0.5cm) and +(90:0.5cm) ..
(P7) .. controls +(180:0cm) and +(330:1cm) .. (P1); 
\filldraw [fill=gray, opacity=.1]
(P1) .. controls +(30:1cm) and +(180:0cm) ..
(P6) .. controls +(270:0.5cm) and +(90:0.5cm) ..
(P7) .. controls +(180:0cm) and +(330:1cm) .. (P1); 
\draw [very thick]
(P1) .. controls +(160:1cm) and +(20:1cm) ..
(P2) .. controls +(340:1cm) and +(200:1cm) .. (P1); 
\filldraw [fill=gray, opacity=.1]
(P1) .. controls +(160:1cm) and +(20:1cm) ..
(P2) .. controls +(340:1cm) and +(200:1cm) .. (P1); 
\draw [very thick]
(P2) .. controls +(160:0.6cm) and +(20:0.6cm) ..
(P3) .. controls +(340:0.6cm) and +(200:0.6cm) .. (P2); 
\filldraw [fill=gray, opacity=.1]
(P2) .. controls +(160:0.6cm) and +(20:0.6cm) ..
(P3) .. controls +(340:0.6cm) and +(200:0.6cm) .. (P2); 
\draw [very thick]
(P3) .. controls +(160:0.4cm) and +(20:0.4cm) ..
(P4) .. controls +(340:0.4cm) and +(200:0.4cm) .. (P3); 
\filldraw [fill=gray, opacity=.1]
(P3) .. controls +(160:0.4cm) and +(20:0.4cm) ..
(P4) .. controls +(340:0.4cm) and +(200:0.4cm) .. (P3); 
\draw [very thick]
(P4) .. controls +(160:0.15cm) and +(20:0.15cm) ..
(P5) .. controls +(340:0.15cm) and +(200:0.15cm) .. (P4); 
\filldraw [fill=gray, opacity=.1]
(P4) .. controls +(160:0.15cm) and +(20:0.15cm) ..
(P5) .. controls +(340:0.15cm) and +(200:0.15cm) .. (P4); 
\draw [-, very thick] (5.5,0)--(5.6,0);
\fill (5.5,0) coordinate (A) circle (1.2pt) node [below] {$\tilde{x}$};
\fill (10,0.4) coordinate (A) circle (1.2pt) node [right] {$\tilde{y}$};
\draw (9.8,2.2) node[circle] {$\tilde{T}(x,v)$};
\draw (7.1,-1.6) node[circle] {$\tilde{K}(x,v)$};
\draw [-, very thick] (5.5,0)--(10,0.4);
\draw [-, very thick] (5.5,0)--(10,-0.4);
\draw [-, very thick] (10,0.4)--(10,-0.4);
\draw [-, very thick] (10,0.4)--(10.3,1.6);
\draw [-, very thick] (10,-0.4)--(10.3,-1.6);
\draw [-, very thick] (5.5,0)--(10.3,1.6);
\draw [-, very thick] (5.5,0)--(10.3,-1.6);
\draw [dotted, very thick] (9.45,1.315)--(10,0.4);
\draw [-, very thick] (9.45,1.315)--(9.1,1.9);
\draw [-, very thick] (5.5,0)--(9.1,1.9);
\draw [-, very thick] (5.5,0)--(9.1,-1.9);
\draw [dotted, very thick] (9.45,-1.315)--(10,-0.4);
\draw [-, very thick] (9.45,-1.315)--(9.1,-1.9);
\end{tikzpicture}
\end{center}


We provide a relation between $K(x,v)$ and $\tilde K(x,v)$.

\begin{lem} \label{lem:thin-est}
$(1)$\,Let $y, y'$ be arbitrary endpoints of $T(x,v)$.
For arbitrary $z\in\gamma^X_{x,y}=\gamma^E_{x,y}$ and $z' \in\gamma^X_{x,y'}=\gamma^E_{x,y'}$,
assuming $|x,z|_X\le |x,z'|_X$, we have 
   \[
     ||z,z'|_{E}-|\tilde z, \tilde z'||< \tau(\e_0)|x,z|_X,
   \]
where $\tilde z, \tilde z'\in\pa\tilde K(x,v)$ are the points corresponding to $z,z'$ respectively. 
\par\n
$(2)$\, For arbitrary $y\in T(x,v)$
and $z\in \pa K(x,v)$, we have 
   \[
     ||y,z|_{E}-|\tilde y, \tilde z||< \tau(\e_0)|x,y|_X.
   \]
where $\tilde y, \tilde z$ are the points of 
$\pa\tilde K(x,v)$ corresponding to $y, z$.
\end{lem}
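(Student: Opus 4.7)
The plan is to exploit the uniform smallness \eqref{eq:tildeXyxy'}, namely $\tilde\angle^X yxy' < \e_0$ for all $y,y' \in T(x,v)$, which makes the comparison region $\tilde K(x,v)$ a thin polyhedral fan in $M^2_\kappa$. First, for part (1) in the case that $(y,y')$ is a single edge of $T(x,v)$, the $X$-triangle $\triangle^X_e$ lies inside one ruled surface $S_{ij} \subset E$, which is $\CAT(\kappa)$ by Theorem \ref{thm:ruled}. The $\CAT(\kappa)$ comparison in $S_{ij}$ gives $|z, z'|_{S_{ij}} \le |\tilde z, \tilde z'|$, and combined with $|z,z'|_E \le |z,z'|_{S_{ij}}$ and Sublemma \ref{slem:ratio-SX} this controls one direction. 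For the reverse direction, the law of cosines in $\tilde\blacktriangle^X_e \subset M^2_\kappa$, together with $\tilde\angle^X yxy' < \e_0$ and the hypothesis $|x,z|_X \le |x,z'|_X$, yields $|\tilde z, \tilde z'| - |z,z'|_X < \tau(\e_0)|x,z|_X$; combined with $|z,z'|_X \le |z,z'|_E$ this closes the estimate.

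For the general case of part (1), I would connect the endpoints $y, y'$ by the unique path $y = y_0, y_1, \ldots, y_n = y'$ in the tree $T(x,v)$, take intermediate points $z_i \in \gamma^X_{x, y_i}$ at radius $|x,z|_X$, and iterate the single-edge estimate. For the $\tilde K(x,v)$ side, one develops the sequence of comparison triangles $\tilde\blacktriangle^X_{e_i}$ (glued along the shared geodesics $\gamma^X_{x, y_i}$) into $M^2_\kappa$ as a fan based at $\tilde x$. The key point is that since $\tilde\angle^X y_0 x y_n < \e_0$ by \eqref{eq:tildeXyxy'}, the developed figure has total angle $O(\e_0)$ at $\tilde x$, so the law of cosines applied in this thin fan bounds $|\tilde z, \tilde z'|$ tightly in terms of $|x,z|_X$ and $|x,z'|_X$. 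For the $E$ side, the triangle inequality together with the single-edge ruled-surface bounds sums to an error of the required form $\tau(\e_0)|x,z|_X$.

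Part (2) splits according to whether $z$ lies on one of the outermost geodesics $\gamma^X_{x, y_j}$ bounding $K(x,v)$, in which case it reduces to part (1), or on the tree $T(x,v)$ itself, in which case the same iterative argument connects $y$ and $z$ through the appropriate triangle chain (invoking Lemma \ref{lem:SE-geom} or Lemma \ref{lem:verticalS} to transfer between ruled-surface distances and $E$-distances). The main technical obstacle is preventing the errors from the single-edge estimates from accumulating superlinearly along the tree path; this is resolved by the observation that the bound $\tilde\angle^X yxy' < \e_0$ holds uniformly for \emph{all} pairs $y, y' \in T(x,v)$, not just adjacent ones, so the developed fan in $M^2_\kappa$ remains globally thin and all radial-distance deviations stay of size $\tau(\e_0)|x,y|_X$ rather than compounding with the path length.
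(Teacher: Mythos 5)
Your approach is genuinely different from the paper's, and it has a couple of real gaps.

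The paper avoids the tree-path iteration entirely. It introduces the auxiliary point $\hat z\in\gamma^X_{x,z'}$ chosen so that $|x,\hat z|_X=|x,z|_X$. The geodesic $\gamma^X_{z,\hat z}$ joins two points at essentially the same radius from $p$, so it is vertical and therefore contained in $E$; hence $|z,\hat z|_E=|z,\hat z|_X$, and this is $<\tau(\e_0)|x,z|_X$ because $\tilde\angle^X z x\hat z\le\tilde\angle^X yxy'<\e_0$ by monotonicity of comparison angles. Meanwhile $|z',\hat z|_E=|x,z'|_X-|x,z|_X$ since both lie on the single $E$-geodesic $\gamma^E_{x,y'}$. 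One triangle inequality then gives the $E$-side estimate directly; the model side is treated in parallel using the small angle at $\tilde x$ in $\tilde K(x,v)$. No edge-by-edge decomposition and no development of a fan is needed, and in particular the $\CAT(\kappa)$ property of the ruled surfaces is not invoked.

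Your proposal has two genuine gaps. First, the claim that the $X$-triangle $\triangle^X_e$ lies inside a single ruled surface $S_{ij}$ is not established, and is unclear in general: for an interior vertex $y$ of $T(x,v)$, the geodesic $\gamma^X_{x,y}$ is not asserted to lie in $E$ (the paper only states this for endpoints), and even when it does, the spanned triangular region need not lie in one $S_{ij}$. Moreover, you then mix the comparison triangle built from $X$-side lengths (which is how $\tilde K(x,v)$ is defined) with the $\CAT(\kappa)$ comparison for the interior metric of $S_{ij}$, whose side lengths differ from the $X$-side lengths; equating the two requires an additional estimate that is not supplied. Second, your justification that the developed fan at $\tilde x$ has total angle $O(\e_0)$ is incorrect as stated: the total cone angle of the fan is $\sum_i\tilde\angle^X y_{i-1}xy_i$, a sum over the edges of the tree path, and this is not controlled by the end-to-end comparison angle $\tilde\angle^X y_0 x y_n$. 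The condition $\tilde\angle^X yxy'<\e_0$ gives only pairwise smallness, which is compatible with a large sum when the number of edges is not controlled. The sum \emph{is} controlled here, but for a different reason — the number of edges of $T(x,v)$ is bounded by a quantity depending only on the branching number $N_v$, independently of $\delta$ and $\e$ — and you would need to invoke that explicitly rather than the pairwise angle bound.
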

\begin{proof}
(1)\, 
Let $\hat z\in\gamma^{X}_{x,z'}$ be the point such 
that $|x,\hat z|_{X}=|x,z|_{X}$. Note that 
$\gamma^X_{z,\hat z}$ is vertical, and therefore
contained in $E$.
By triangle inequality, we have
$||z,z'|_{E}-|z',\hat z|_{E}|\le |\hat z,z|_{E}=|\hat z,z|_{X}$.
\eqref{eq:tildeXyxy'} implies 
$\tilde\angle^X zx\hat z<\e_0$, and hence
$|z,\hat z|_X<\tau(\e_0)|x,z|_X$.
In view of $\gamma_{x,y}^X=\gamma_{x,y}^E$ and
 $\gamma_{x,y'}^X=\gamma_{x,y'}^E$, we have 
$$
||z,z'|_{E}-(|x,z'|_{X}-|x,z|_{X})|\le \tau(\e_0)|x,z|_{X}.
$$
Since $\angle \tilde z\tilde x \tilde z'<\tau(\e_0)$, 
similarly we have 
$||\tilde z,\tilde z'|-(|x,z'|_{X}-|x,z|_{X})|\le \tau(\e_0)|x,z|_{X}$.
Combining the last two inequalities, we obtain the 
required inequality.
\par\n
(2)\, 
Choose $w\in\pa T(x,v)$ such that 
$z\in\gamma^X_{x,w}$.
From \eqref{eq:tildeXyxy'}, 
we have $|y,w|_E=|y,w|_X<\tau(\e_0)|x,y|_X$.
Therefore by triangle inequality, we obtain 
$||y,z|_E -|z,w|_E|\le |y,w|_E<\tau(\e_0)|x,y|_X$.
Since $\angle\tilde y\tilde x\tilde z<\e_0$, 
by a similar consideration on the triangle
$\triangle \tilde y\tilde x\tilde z$ in $M^2_\kappa$, we have 
$||\tilde y,\tilde z|-|\tilde z,\tilde w|| <\tau(\e_0)|x,y|_{X}$,
where $\tilde w$ is the point of $\pa\tilde T(x,v)$ corresponding to $w$. Since $|z,w|_E=|z,w|_X=|\tilde z,\tilde w|$, combining the last two inequalities, we obtain the 
required inequality.
\end{proof}

In $E$, we do surgeries by removing 
$K(x,v)$ from $E$, and gluing 
$E\setminus K(x,v)$ and $\tilde K(x,v)$ along their isometric boundaries to get a new space, say 
$\tilde E_{x,v}$.

\begin{prop}\label{prop:CAT(1)-vertexK}
For each vertex $\tilde y$ of $\tilde K(x,v)$,
$\Sigma_{\tilde y}(\tilde E_{x,v})$ is $\CAT(1)$.
\end{prop}

We begin with 

\begin{lem} \label{lem:E-CAT}
For each vertex $y\in V(T(x,v))$,
$\Sigma_{\tilde y}(\tilde E_{x,v})$ is $\CAT(1)$,
where $\tilde y\in V(\tilde T(x,v))$ is the vertex corresponding to $y$.
\end{lem}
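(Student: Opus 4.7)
The strategy is to realize $\Sigma_{\tilde y}(\tilde E_{x,v})$ as a metric graph obtained from $\Sigma_y(E)$ by replacing certain arcs by longer ones, and then to invoke the fact that the $\CAT(1)$ condition for a metric graph is preserved under lengthening of edges.

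First, I will verify that $\Sigma_y(E)$ is itself a $\CAT(1)$ metric graph. By Lemma \ref{lem:E-Sigma}, $\Sigma_y(E)$ is the union of the circles $\Sigma_y(S_{ij})$ attached along the points of $\Sigma_{y,\pm}(C)$, and by Lemma \ref{lem:E-int=ext} its intrinsic angular metric agrees with the restriction of the metric of $\Sigma_y(X)$. Since $X$ is two-dimensional locally $\CAT(\kappa)$, $\Sigma_y(X)$ is a $\CAT(1)$-graph, i.e.\ every simple closed curve in it has length at least $2\pi$. Any simple closed curve in $\Sigma_y(E) \subset \Sigma_y(X)$ is also a simple closed curve in $\Sigma_y(X)$ of the same length, so has length $\ge 2\pi$; hence $\Sigma_y(E)$ is $\CAT(1)$.

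Next, I will identify the local structure of $\Sigma_y(K(x,v))$ and $\Sigma_{\tilde y}(\tilde K(x,v))$. Let $e_1,\ldots,e_d$ denote the edges of $T(x,v)$ incident to $y$, with $e_i = [y, y_i]$. Since $K(x,v)$ is the union of the planar triangular regions bounded by the $X$-triangles $\triangle^X_{e_i}$, and the radial segments $\gamma^X_{x,y_i}$ lie in $E$ (using Lemma \ref{lem:x-geod-general2}), I will argue that these triangular regions have pairwise disjoint interiors near $y$ and together fill a neighborhood of $y$ in $K(x,v)$, arranged in the cyclic order of the edges of $T(x,v)$ at $y$. It follows that $\Sigma_y(K(x,v))$ is the \emph{star} graph with central vertex $\uparrow_y^x$ and $d$ rays of lengths $\angle xyy_i$ ending at $\uparrow_y^{y_i}$. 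The analogous construction in $M^2_\kappa$ defining $\tilde K(x,v)$ then gives $\Sigma_{\tilde y}(\tilde K(x,v))$ as a star of the same combinatorial type with center $\uparrow_{\tilde y}^{\tilde x}$ and rays of lengths $\tilde\angle xyy_i$ ending at $\uparrow_{\tilde y}^{\tilde y_i}$.

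The surgery forming $\tilde E_{x,v}$ replaces $K(x,v)$ by $\tilde K(x,v)$ along their isometric boundaries, so at $\tilde y$ the complementary piece $\Sigma_y(\overline{E \setminus K(x,v)})$ and the gluing points $\uparrow_y^{y_i}$ are preserved. Thus $\Sigma_{\tilde y}(\tilde E_{x,v})$ is obtained from $\Sigma_y(E)$ by replacing each star arc of length $\angle xyy_i$ by one of length $\tilde\angle xyy_i$, with every other edge unchanged. Because $X$ is locally $\CAT(\kappa)$, $\angle xyy_i \le \tilde\angle xyy_i$ for each $i$; so the combinatorial type of the graph is preserved while the replaced edges only grow. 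Every simple closed curve in $\Sigma_{\tilde y}(\tilde E_{x,v})$ therefore has length at least that of the corresponding simple closed curve in $\Sigma_y(E)$, which is $\ge 2\pi$, establishing the $\CAT(1)$ condition. The main obstacle lies in the second step: one must verify that the triangular regions bounded by $\triangle^X_{e_i}$ do not overlap in $E$ in a neighborhood of $y$ except along the common radial segment $\gamma^X_{x,y}$, and that they together locally fill $K(x,v)$ in the cyclic order dictated by the planar embedding of $T(x,v)$ in the surface-like region $E$ (which the gluing defining $\tilde K(x,v)$ then has to respect). Granting this, the star structures at $y$ and $\tilde y$ correspond exactly, and the $\CAT(1)$ conclusion follows.
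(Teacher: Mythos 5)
Your overall strategy — show that $\Sigma_y(E)$ is a $\CAT(1)$-graph because it is a subgraph without endpoints of the $\CAT(1)$-graph $\Sigma_y(X)$, and then deduce the $\CAT(1)$ property of $\Sigma_{\tilde y}(\tilde E_{x,v})$ by arguing that the surgery only lengthens arcs — is exactly the paper's strategy, but the gap lies in your identification of $\Sigma_y(K(x,v))$ as a star with hub $\uparrow_y^x$ and rays of lengths $\angle xyy_i$, which is the ``main obstacle'' you acknowledge and then grant. By Lemma \ref{lem:E-Sigma}, the arcs $\Sigma_y(K(x,v))\cap\Sigma_y(S_{ij})$ are attached at a point of $\Sigma_{y,-}(C)$, so the hub of $\Sigma_y(K(x,v))$ is forced to be the direction $w$ of the singular curve $C$ at $y$ towards $p$, not $\uparrow_y^x$, and the ray to $\uparrow_y^{y_i}$ has length $\angle(w,\uparrow_y^{y_i})$, not $\angle xyy_i$. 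These two directions are both within $\tau_p(r)$ of $-\nabla d_p(y)$ but need not coincide; indeed, as the paper explicitly handles in Case~ii of Claim \ref{clm:expanding}, the geodesic $\gamma^X_{y,x}$ may leave $E$ for a short time after $y$, so that $\uparrow_y^x$ does not lie in $\Sigma_y(K(x,v))$ at all. The inequality $\angle xyy_i\le\tilde\angle xyy_i$ you invoke is therefore not the comparison that is needed.

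What is actually required is an expanding map from $\Sigma_y(K(x,v))$ (hub $w$, ray lengths $\angle(w,\uparrow_y^{y_i})$) to $\Sigma_{\tilde y}(\tilde K(x,v))$ (hub $\uparrow_{\tilde y}^{\tilde x}$, ray lengths $\tilde\angle xyy_i$) fixing the gluing points $\uparrow_y^{y_i}$, i.e.\ the inequality $\angle(w,\uparrow_y^{y_i})\le\tilde\angle xyy_i$. Since $\tilde\angle xyy_i\ge\angle(\uparrow_y^x,\uparrow_y^{y_i})$ by the $\CAT(\kappa)$-comparison, the missing step is $\angle(w,\uparrow_y^{y_i})\le\angle(\uparrow_y^x,\uparrow_y^{y_i})$. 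The paper gets this by observing that, for small $t>0$, the $X$-geodesic from $\gamma^X_{y,x}(t)$ to the corresponding point of the geodesic in direction $\uparrow_y^{y_i}$ must cross $C$ (using the choice \eqref{eq:non-meeting-VC3}, which places $y$ on an open edge of $E(C)$, together with the local suspension structure \eqref{eq:topological-suspen}), so that $w$ lies between $\uparrow_y^x$ and $\uparrow_y^{y_i}$ in $\Sigma_y(X)$. Two further remarks: your citation of Lemma \ref{lem:x-geod-general2} for the containment $\gamma^X_{x,y_i}\subset E$ is misplaced — that lemma concerns essentially horizontal geodesics, while $\gamma^X_{x,y_i}$ is vertical, and the paper asserts containment in $E$ only for geodesics from $x$ to the \emph{endpoints} of $T(x,v)$; and the attempted realization of $K(x,v)$ as a union of triangular regions bounded by the $\triangle^X_{e_i}$ is not how the paper proceeds — $K(x,v)$ is defined directly as a domain of $E$, while $\tilde K(x,v)$ (which \emph{is} a union of comparison triangles) shares its boundary isometrically but may have quite different interior geometry.
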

\begin{proof}
The lemma is clear when $y$ is an endpoint 
of $T(x,v)$. From now, we assume that $y$ is an interior vertex of $T(x,v)$.
Let us consider 
\begin{align*}
&\Sigma_y^-:=\Sigma_y(K(x,v))\subset \Sigma_y(X), \\
&\Sigma_y^+:=\Sigma_y(E\setminus {\rm int}\,K(x,v))\subset \Sigma_y(X),\\
& \tilde \Sigma_y^-:=\Sigma_{\tilde y}(\tilde K(x,v)).
\end{align*}
Note that $\Sigma_y(E)=\Sigma_y^-\cup\Sigma_y^+$
is a subgraph of $\Sigma_y(X)$ without endpoints,
and hence it is $\CAT(1)$. 
Since 
$\Sigma_{\tilde y}(\tilde E_{x,v})=\tilde\Sigma_y^-\cup\Sigma_y^+$, it suffices to show 
\begin{claim} \label{clm:expanding}
There is an expanding map 
$\Sigma_y^-\to \tilde\Sigma_{y}^-$.
\end{claim}
\begin{proof}
Let $\gamma:=\gamma_{y,x}^X:[0,|y,x|_X]\to X$, and set 
$u:=\dot\gamma(0)$. Choose any
$\xi\in \Sigma_y(T(x,v))\subset\Sigma_y^-$,
and let $w\in\Sigma_y^-$ be the direction of $C$.
Let $\tilde\xi,\tilde u$ be the directions in 
$\tilde\Sigma_y^-$ corresponding to $\xi, u$ respectively.

\pmed\n
Case i)\, $u\in \Sigma_y^-$.
\psmall
Since $X$ is locally $\CAT(\kappa)$, we have
$\angle^X(\xi,u)\le \angle(\tilde\xi,\tilde u)$.
Therefore  the correspondence
$\xi\to\tilde\xi$, $u\to \tilde u$ gives rise to 
the desired expanding map
$\Sigma_y^-\to \tilde\Sigma_{\tilde y}^-$.
\pmed\n
Case ii)\, $u\notin \Sigma_y^-$.
\psmall
This is the case when $\gamma$ leaves 
$E$ after $y=\gamma(0)$ at least for a short time.
From  \eqref{eq:non-meeting-VC3}, $y$ is contained in an open edge in $E(C)$. Therefore, for small enough $t>0$, the $X$-geodesic starting from $\gamma(t)$ to $\gamma_{\xi}(t)$ must meet $C$.
This implies
\begin{align} \label{eq:angle-expanding}
\angle^X(\xi,w)\le\angle^X(\xi,u)\le \angle(\tilde\xi,\tilde u).
\end{align}
Therefore the correspondence
$\xi\to\tilde\xi$, $w\to \tilde u$ gives rise to 
the desired expanding map
$\Sigma_y^-\to \tilde\Sigma_{\tilde y}^-$.
 Note that by  \eqref{eq:topological-suspen},
 $\Sigma_y(X)$ is homeomorphic to 
the suspension  with vertices $\Sigma_x(C)$,
from which \eqref{eq:angle-expanding} also follows.
\end{proof}
This completes the proof of Lemma \ref{lem:E-CAT}.
\end{proof}

For the proof of Proposition \ref{prop:CAT(1)-vertexK},
it suffices to show the following.

\begin{lem}       \label{lem:CATx}
$\Sigma_{\tilde x}(\tilde E_{x,v})$ is $\CAT(1)$.
\end{lem}

%
%

\begin{proof}
Let 
$\sigma_i$ \,$(1\le i\le m)$  be the $X$-geodesics
joining $x$ to the points of $\pa T(x,v)$, and set
$\nu_i:=\dot\sigma_i(0)$\,$(1\le i\le m)$.
Remember that $\Sigma_x(K(x,v))$ consists of
$m$ segments from the vertex $v$ to $\nu_i$
of length $\delta$.
Since $\Sigma_x(X)$ is $\CAT(1)$, it suffices to show
\begin{align}\label{eq:angle(nui,nuj)}
\text{$\angle(\tilde\nu_i,\tilde \nu_j)\ge 2\delta$\, for all\, $1\le i\neq j\le m$,}
\end{align}
where $\tilde \nu_i$ denotes the direction at $\tilde x$
corresponding to $\nu_i$.

For arbitrary $y,y'\in V_{\rm int}(T(x,v))$ adjacent to $\pa T(x,v)$,
let us assume that $z_1,\ldots, z_\ell\in \pa T(x,v)$ 
(resp.  $z_{1'},\ldots, z_{n'} \in \pa T(x,v)$
with $1'<\cdots <n'$)
are the set of 
$\pa T(x,v)$ adjacent to $y$ (resp. to $y'$) with
$z_i  \in\sigma_i$ (resp. $z_{i'} \in \sigma_{i'}$).
Set $v_y:=\dot\gamma^{X}_{x,y}(0)$ and 
$\tilde v_y:=\dot\gamma_{\tilde x,\tilde y}(0)
\in V(\Sigma_{\tilde x}(\tilde K(x,v)))$.
Using the angle comparison for  
$\triangle_e^X$, we have for any  $1\le i \neq  j\le \ell$
\begin{align*}
\angle(\tilde \nu_i,\tilde \nu_j)&=
     \angle(\tilde \nu_i,\tilde v_y)+
       \angle(\tilde v_y, \tilde \nu_j)\\
  &\ge 
  \angle^X(\nu_i, v_y)+
       \angle^X(v_y, \nu_j)=2\delta.
\end{align*}
Let $\bigcup_{\alpha=1}^k [y_{\alpha-1},y_\alpha]$ be the 
shortest path from $y$ to $y'$ in $T(x,v)$ with 
$y=y_0$, $y'=y_k$, $y_\alpha\in V_{\rm int}(T(x,v))$
and $[y_{\alpha-1},y_\alpha]\in E(T(x,v))$.
Then for arbitrary $1\le i\le \ell$ and $1'\le j'\le n'$,
we have 
\begin{align*}
\angle(\tilde \nu_i,\tilde \nu_{j'})&=
     \angle(\tilde \nu_i,\tilde v_y)+
     \sum_{\alpha=1}^k\angle
     (\tilde v_{y_{\alpha-1}},\tilde v_{y_\alpha}) +     
       \angle(\tilde v_{y'}, \tilde \nu_{j'})\\
  &\ge 
  \angle^X(\nu_i, v_y)+
   \sum_{\alpha=1}^k\angle^X
     (v_{y_{\alpha-1}}, v_{y_\alpha}) +  
       \angle^X(v_{y'}, \nu_{j'})\ge 2\delta.
\end{align*}
This completes the proof of Lemma \ref{lem:CATx}.  
\end{proof}

Note that in  $\tilde E_{x,v}$,  the subarc $[x,y]$
of $C$ is replaced by the geodesic 
$[\tilde x,\tilde y]:=\gamma_{\tilde x,\tilde y}$.
On the singular locus $\tilde C(x,v)$ 
of $\tilde E_{x,v}$, we consider the graph structure
inherited from $C$ (and hence from $\ca S(X)$),
except that $\tilde x, \tilde y\in V(\tilde C(x,v))$
and $(\tilde x,\tilde y)\in E(\tilde C(x,v))$.

After all the surgeries at $x$ possibly in the both 
positive and negative singular directions, 
we obtain a new space, denoted by $\tilde E_x$.
Note that the point $\tilde x\in\tilde E_x$
replacing $x$ is no longer singular in the graph
structure of the new singular locus
$\tilde C(x)\subset \tilde E_x$.

In what follows, we shall perform such surgeries finitely many times consistently in the directions of 
$\Sigma_x^{\rm sing}(X)$ at points $x\in V_{\sing}(C)$ so that the surgery parts
cover $V_{\sing}(C)$.  

First take $\e=\e_{p}>0$ satisfying 
\eqref{eq:non-meeting-VC},
\eqref{eq:non-meeting-VC2},
\eqref{eq:non-meeting-VC3}
and 
\eqref{eq:tildeXyxy'}
%
for $x=p$, and set $\delta_0=\e_{p}$.
Remember that $S(p,\delta_0)$ does not meet $V(C)$.
We enlarge the radius of the ball $B(p, \delta_0)$, and choose $r_1>\delta_0$ such that during the enlarging,
$S(p,r_1)$ first meets $V_{\sing}(C)$, say at $x$, after
$S(p, \delta_0)$. 
We call $r_1$ a {\it critical radius} in the surgeries.
Now we do the above surgery at $x$,
either in the negative direction $-\nabla d_p(x)$, where the surgeries should be carried out inside the annulus 
$A(p,\delta_0,r_1)=B(p,r_1)\setminus {\rm int}\,B(p,\delta_0)$, or in the positive direction $\nabla d_p(x)$
to resolve the singularity at $x$.

We again perform such surgeries at all points 
$x\in S(p,r_1)\cap V_{\sing}(C)$. 
Here, taking the smallest constant $\e=\e_x$
among all $x$ and all singular directions there, 
we may assume that  those surgeries are carried out 
based on a common metric sphere around $p$.
More precisely, for some $0<\delta_1<r_1-\delta_0$, 
we have 
$V(T(x,v))\subset S(p, r_1+\delta_1)$
(resp. $V(T(x,v))\subset S(p, r_1-\delta_1$))
for all $x\in S(p,r_1)\cap V_{\sing}(C)$
and $v\in \Sigma^{\sing}_{x,+} (C)$ 
(resp. $v\in \Sigma^{\sing}_{x,-} (C)$).
We call $\delta_1$ (resp. $\delta_0$)  the 
{\it surgery radius} at $S(p,r_1)$ (resp. at $p$).

Then we again enlarge the radius of 
$B(p, r_1+\delta_1)$ until the next critical radius 
$r_2$.
Repeating this procedure, we have a possibly infinite 
sequence of critical radii $r_i$,
$$
0<r_1< r_2< \cdots <r_i<\cdots,
$$
and surgery radii $\delta_i$ at $S(p,r_i)$ 
with  $r_i+\delta_i<r_{i+1}-\delta_{i+1}$
such that the $X$-annulus 
$A^X(p, r_i+\delta_i, r_{i+1}-\delta_{i+1})$
does not meet $V_{\sing}(C)$.
Note also that the number of surgeries at 
points of $S(p, \delta_i)$ is bounded by 
the uniform constant $N_v-2$.

We show that one can cover $V_{\sing}(C)$
after performing surgeries as above finitely many times. 
Suppose $r_*=\lim_{i\to\infty} r_i<r$. 
From construction, $S(p,r_*)$ meets $V_{\sing}(C)$.
We again do surgeries at points of $S(p,r_*)\cap V_{\sing}(C)$. 
For the surgeries in the negative direction at those points,
we can make them consistent with the previous surgeries
since our procedure is done based on metric spheres
around $p$.
This shows that after finitely many such surgeries, 
we can resolve all singular vertices in $V(C)$.
Let $0<r_1< r_2< \cdots <r_J<r$ be critical radii, and 
$\delta_i$\, $(0\le i\le J)$  surgery radii, where we may assume that $A(p,r_J+\delta_J,r)$
does not meet $V_{\sing}(C)$
by taking slightly larger $r$ if necessary.

Let $\ca K:=\{ K_{n,i}:=K(x_n,v_{n,i})\,|\, 1\le n\le M, 1\le i\le L_n\}$ be the set of all cone-like domains in $E$ constructed as above for 
$x_n\in V_{\sing}(C)$ and $v_{n,i}\in\Sigma_x^{\sing}(C)$
which arise in the course of 
the surgeries.
Set $I_0:=[0, \delta_0]$,
$I_j:=[r_j-\delta_j, r_j+\delta_j]$ and
$A_j:=d_p^{-1}(I_j)$ \,$(1\le j\le J)$.
From construction, we have the following for 
 every $K_n\in\ca K$. 
\begin{itemize}
\item  $K_n\in\ca K$ is convex in $E\,;$
\item  $K_n$ is contained in some $A_j\,;$
\item $K_n$ and $K_{n'}$ do not have intersection in their interiors for all $n\neq n'\,;$
\item the number of $K_n$ contained in $A_j$ 
is at most $N_v-2$ for each $1\le j\le J$.
\end{itemize}

Let $\tilde E$ be the result of those surjeries,
and let $\tilde C$ be the singular locus of 
$\tilde E$, with graph structure $V(\tilde C)$,
$E(\tilde C)$ defined as above.
Note that $V(\tilde C)$ is finite and $V_{\sing}(\tilde C)$ is empty.

\begin{lem} \label{lem:tildeE-CAT}
$\tilde E$ is a $\CAT(\kappa)$-space.
\end{lem}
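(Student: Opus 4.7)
The plan is to apply Burago--Buyalo's gluing Theorem \ref{thm:BB-gluing} to $\tilde E$, after realizing it as (a Gromov--Hausdorff limit of) polyhedral spaces in $\ca F_\kappa$. First I would decompose $\tilde E$ into faces along the edge set of $\tilde C$: outside the surgery domains $\ca K$, the faces are the closures of the components of $E\setminus C$, each being a subdomain of some ruled surface $S_{ij}$; inside each $\tilde K(x_n,v_{n,i})\in\ca K$, the faces are the individual comparison triangles $\tilde\blacktriangle^X_e$ from which $\tilde K(x_n,v_{n,i})$ is built. The ruled-surface faces are $\CAT(\kappa)$ by Theorem \ref{thm:ruled}, and the comparison triangles are trivially so. To land in $\ca F_\kappa$, I would then inscribe in each ruled-surface face a sequence of polyhedral $\CAT(\kappa)$-surfaces (via the pullback-metric machinery of Section \ref{sec:ruled-alex} and Alexandrov's Theorem \ref{thm:alex-ruled2}), producing $\tilde E_n \to \tilde E$ in the pointed Gromov--Hausdorff topology while preserving the combinatorial gluing along $\tilde C$.

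Next I would verify condition (A) on each edge of $\tilde C$. Edges come in three types: (a) edges inherited unchanged from $E(C)$; (b) the geodesic segments $\gamma^X_{\tilde x,\tilde y}$ produced by surgery, which form sides of the triangles $\tilde\blacktriangle^X_e$; and (c) the tree edges inside $\tilde K(x,v)$. For (a) the finite turn variation and the sign condition follow from a local application of Theorem \ref{thm:BB-character} on the unsurged parts of $E$, combined with the local inequality \eqref{eq:turn+turn}, since surgery does not disturb the faces adjacent to these edges. For (b) and (c), both adjacent faces are comparison triangles in $M^2_\kappa$, so the turn from either side is the explicit polyhedral sector-angle defect, and the sign condition follows directly from the construction of $\tilde K(x,v)$ via gluing of triangles with matching side lengths.

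Condition (B) at unchanged vertices $y\in V(\tilde C)\cap(V(C)\setminus V_{\sing}(C))$ holds because $\Sigma_y(E)=\Sigma_y^+\cup\Sigma_y^-$ is a finite subgraph of the $\CAT(1)$-graph $\Sigma_y(X)$ with no endpoints. At an interior vertex $\tilde y\in V(T(x,v))$, condition (B) is exactly the content of Lemma \ref{lem:E-CAT} via the expanding map of Claim \ref{clm:expanding}. The main obstacle I expect is to verify (B) at the root vertex $\tilde x$ replacing a singular vertex $x\in V_{\sing}(C)$: there $\Sigma_{\tilde x}(\tilde E)$ is obtained from $\Sigma_x(E)$ by removing, for each $v\in\Sigma_x^{\sing}(C)$, the $\delta$-arc around $v$ and re-gluing the finitely many directions $\dot\gamma^X_{\tilde x,\tilde y}(0)$ to the adjacent tree vertices. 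I would handle this by constructing, in the spirit of Claim \ref{clm:expanding}, an expanding map $\Sigma_x(E)\to\Sigma_{\tilde x}(\tilde E)$ using hinge comparison in $M^2_\kappa$ together with the disjointness \eqref{eq:non-meeting-VC} of the balls $B^{\Sigma_x(X)}(v,2\delta)$; the estimate \eqref{eq:tildeXyxy'} then ensures that the inserted angles stay close to the originals, so no short non-contractible loops arise in $\Sigma_{\tilde x}(\tilde E)$.

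Once (A) and (B) are verified for each $\tilde E_n$, Theorem \ref{thm:BB-gluing} gives $\tilde E_n\in\ca F_\kappa$, and passing to the pointed Gromov--Hausdorff limit together with stability of the $\CAT(\kappa)$ condition yields $\tilde E\in\CAT(\kappa)$. The most delicate technical point throughout is the matching of the intrinsic turn of $\pa K(x,v)$ inside $E$ with the polyhedral turn of $\pa\tilde K(x,v)$ inside $\tilde K(x,v)$, for which Lemma \ref{lem:thin-est} (controlling the distortion between $|\cdot,\cdot|_E$ and the model metric on $\tilde K(x,v)$ by $\tau(\e_0)$) is essential.
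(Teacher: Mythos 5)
Your overall strategy is the same as the paper's: verify conditions (A) and (B) and invoke Burago--Buyalo's gluing Theorem \ref{thm:BB-gluing}. Your attention to condition (B) at the apex $\tilde x$ (which the paper's Lemma \ref{lem:E-CAT} treats only for vertices $\tilde y\in V(T(x,v))$, leaving the apex implicit) is a worthwhile observation, and the expanding-map argument you sketch works; an even shorter route is to note that $\Sigma_{\tilde x}(\tilde E)$ is obtained from $\Sigma_x(E)$ by replacing the subtree $\Sigma_x(K(x,v))$ by a combinatorially identical tree whose edge lengths are the comparison angles $\tilde\angle^X_{\tilde x}$, which dominate the original angles since $X$ is $\CAT(\kappa)$, so loops in the graph can only lengthen.

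The genuine gap is in the polyhedral approximation step $\tilde E_n\to\tilde E$, and in fact this step is superfluous. In the Burago--Buyalo framework, the faces $F_i\in\ca F_\kappa$ of a polyhedron need not be piecewise $M^2_\kappa$; they are $2$-manifolds of bounded curvature whose boundary edges have finite turn variation. The ruled-surface pieces of $\tilde E$ outside $\ca K$ are already such faces: they are $\CAT(\kappa)$-disks whose interior link lengths are uniformly close to $2\pi$ (Lemma \ref{lem:S-circle}), hence of bounded curvature, and their edges along $\tilde C$ have finite turn variation by the local application of Theorem \ref{thm:BB-character} you already cite. The paper therefore applies Theorem \ref{thm:BB-gluing} directly to a geodesic triangulation of $\tilde E$ extending $V(\tilde C)$ and $E(\tilde C)$, with no further limiting argument. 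As you have set it up, the inscription step would also need real repair: the pullback-metric machinery of Section \ref{sec:ruled-alex} and Theorem \ref{thm:alex-ruled2} prove that ruled surfaces are $\CAT(\kappa)$ but do not produce polyhedral approximations (the broken $\Delta$-minimizing chains of Appendix \ref{sec:append} are the relevant tool, and are a separate construction), and independently inscribing polyhedra in each face gives no reason for the approximations to match isometrically along the shared edges of $\tilde C$, so the combinatorial gluing would not be preserved as claimed without an additional compatibility argument.
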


\begin{proof}
From the construction  and Proposition \ref{prop:CAT(1)-vertexK}, we have 
\begin{itemize}
 \item for every edge $e$ of 
 $E(\tilde C)$, the condition (A) holds and $e$ has finite turn variation\,$;$
  \item $\Sigma_{\tilde y}(\tilde E)$ is $\CAT(1)$ for 
    every $\tilde y\in V(\tilde C)$ . 
\end{itemize}
Consider any triangulation of $\tilde E$ extending
$V(\tilde C)$ and $E(\tilde C)$ by adding 
geodesic edges if necessary.
Now, we are ready to apply  Theorem \ref{thm:BB-gluing} to this triangulation to conclude that  $\tilde E$ is $\CAT(\kappa)$.
\end{proof}

Now we are going to show the Gromov-Hausdorff convergence $\tilde E\to E$ as $\e_0\to 0$.

For each $K(x_n,v_{n,i})\in \ca K$, we fix any element 
$y_{n,i} \in V(T(x_n, v_{n,i}))$, and let 
$\gamma_{n,i}:[0,|x_n,y_{n,i}|_E]\to E$ be an $E$-geodesic
from $x_n$ to $y_{n,i}$. 

Define $\varphi:\tilde E\to E$ as follows.
Let $\varphi$ be identical outside the surgery part.
For every $\tilde z\in {\rm int}\,\tilde K(x_n,v_{n,i})$,
we let 
$$
\varphi(\tilde z):=\gamma_{n,i}(|\tilde x_n, \tilde z|).
$$
 Since ${\rm diam}(K(x_n,v_{n,i}))<\tau(\e_0)$,
the image of $\varphi$ is $\tau(\e_0)$-dense in 
$E$.

For arbitrary $\tilde z,\tilde z'\in \tilde E$, set
$z=\varphi(\tilde z)$, $z'=\varphi(\tilde z')$, and 
choose an $E$-shortest
curve $\gamma:[0, |z, z'|_E]\to E$ between $z$ and $z'$.
Suppose first that $d_p(\gamma(t))$ takes a local 
minimum or local maximum. Then we see that 
$\gamma$ is vertical, and hence an $X$-geodesic.
Moreover, $\gamma$ intersects $C$ almost perpendicularly with at most $N_v-2$ points
(Sublemma \ref{slem:number-C}). 
This implies that $\gamma$ meets at most
$N_v-2$ elements of $\ca K$.
Therefore from Lemma \ref{lem:thin-est}, we have 
 $$
 ||z,z'|_{E}-|\tilde z, \tilde z'||<\tau(\e_0)(r +N_v-2).
 $$

Now we assume that $d_p(\gamma(t))$ is strictly monotone.
Let $\ca K_\gamma$
be set of all $K(x_n, v_{n,i})\in \ca K$  meeting $\gamma$.
For simplicity, we renumber elements of $\ca K_\gamma$ as 
$\ca K_\gamma=\{ K_i\,|\, 1\le i\le I\}$.
%
Let $\ca K_j$ be the set of all $K_n\in\ca K_\gamma$ contained in 
$A_j$. If $\gamma$ meets $K_n\in \ca K_j$
with $\{ z_n,z_n'\}=\gamma\cap\pa K_n$, then 
from Lemma \ref{lem:thin-est} we have 
$$
|\varphi^{-1}(z_n),\varphi^{-1}(z_n')|_{\tilde E}
-|z_n,z_n'|_E|<2\tau(\e_0)\delta_j.
$$
%
%
It follows  that 
$$
 ||z,z'|_{E}-|\tilde z, \tilde z'||<2r(N_v-1)\tau(\e_0).
 $$
In this way, we conclude that 
$\tilde E$ converges to $E$ as 
$\e_0\to 0$ with respect to the Gromov-Hausdorff
distance, which yields that 
$E$ is a $\CAT(\kappa)$-space.
This completes the proof of Theorem \ref{thm:union-CAT}.
\end{proof}

\begin{proof}[Proof of Theorem \ref{thm:main}(1) 
in Case II]
We consider Case II in the subsection of filling ball of  Section \ref{sec:fill}.
We only have to apply Theorem \ref{thm:union-CAT} for $k=4$ to Case II.
The rest of the argument is similar to that in Case I
given in Section \ref{sec:fill}, and hence omitted.
\end{proof}

 The proof of Corollary \ref{cor:main-zeta} is similar to  
 that of  Theorem \ref{thm:main}(1) in Case II, and hence omitted.

Using Theorem \ref{thm:union-CAT}, we also have 
the following. 

\begin{thm} \label{thm-rem:Thm1.1(1)} 
In Theorem \ref{thm:main}, every union 
${\rm Im} f_{i_1}\cup\cdots\cup {\rm Im} f_{i_k}$ is 
a $\CAT(\kappa)$-space.
\end{thm}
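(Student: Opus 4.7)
\bigskip\noindent\textbf{Proof plan.} The plan is to run the surgery/approximation argument of Theorem \ref{thm:union-CAT} in a slightly enlarged setting. First I would recall from Section \ref{sec:fill} that each image ${\rm Im}\,f_i$ is by construction the union of finitely many thin ruled surfaces $S(a_{k},a_{\ell})$, one family per vertex $v$ of $\Sigma_p(X)$ appearing in the trace $\Sigma_p({\rm Im}\,f_i)$, together with regular sector pieces lying in $B^{\rm reg}(p,r)\subset X\setminus\ca S(X)$. Hence
\[
F\,:=\,{\rm Im}\,f_{i_1}\cup\cdots\cup{\rm Im}\,f_{i_k}
\]
is itself a finite union $\bigcup_\alpha S_\alpha$ of thin ruled surfaces (with the $S_\alpha$ possibly grouped around several different vertices of $\Sigma_p(X)$), glued to finitely many Lipschitz regular sectors. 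By Corollary \ref{cor:sing-total} and Sublemma \ref{slem:number-C}, the topological singular locus $C_F:=F\cap\ca S(X)$ is a finite union of the Lipschitz curves $C_{ijk}$, forming a (generalized) metric graph whose vertex set $V(C_F)$ we equip with the induced graph structure as in Section \ref{sec:graph}.

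Next I would extend the proof of Theorem \ref{thm:union-CAT} verbatim to $F$. Concretely, define $V_{\sing}(C_F)$ and singular directions as in Definition \ref{defn:sing-vertex}. For each pair $(x,v)$ with $x\in V_{\sing}(C_F)$ and $v\in\Sigma_{x}^{\sing}(C_F)$, construct the cone-like domain $K(x,v)\subset F$ bounded by the tree $T(x,v)\subset F\cap S^X(p,r(x)\pm\e)$ and by $X$-geodesics from $x$ to the endpoints of $T(x,v)$; such geodesics stay in $F$ by Lemma \ref{lem:verticalS} applied inside each ruled surface $S_\alpha$ containing them. Replace each $K(x,v)$ by its comparison polyhedral cone $\tilde K(x,v)$ built from the $M^2_\kappa$-triangles $\tilde\blacktriangle^X_e$, performing these surgeries simultaneously on thin annuli $A_j$ around metric spheres $S^X(p,r_j)$ as in the proof of Theorem \ref{thm:union-CAT}; the bounds from Sublemma \ref{slem:number-C} still limit the number of surgeries in each $A_j$ by a uniform constant. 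Call the resulting space $\tilde F$, with singular graph $\tilde C_F$ having no singular vertices.

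Then I would verify the two Burago--Buyalo conditions (A), (B) for $\tilde F$. Condition (B) at each $\tilde y\in V(\tilde C_F)$ is the exact analog of Lemma \ref{lem:E-CAT}: $\Sigma_{\tilde y}(\tilde F)=\tilde\Sigma_y^-\cup\Sigma_y^+$, and Claim \ref{clm:expanding} carries over because $\Sigma_y(F)$ is a subgraph of $\Sigma_y(X)$ without endpoints (being a union of circles $\Sigma_y(S_\alpha)$ attached at $\Sigma_{y,\pm}(C_F)$) and the ambient $\CAT(\kappa)$ structure of $X$ gives the expanding comparison $\angle^X(\xi,w)\le \angle(\tilde\xi,\tilde u)$. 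Condition (A) along each edge $e\in E(\tilde C_F)$ follows from Theorem \ref{thm:BB-character} applied locally inside small disk neighborhoods in $F$, exactly as in \eqref{eq:turn+turn}. Theorem \ref{thm:BB-gluing} then gives $\tilde F\in\ca F_\kappa$.

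Finally, the map $\varphi:\tilde F\to F$ defined on each $\tilde K(x,v)$ by $\varphi(\tilde z):=\gamma_{x,y}^F(|\tilde x,\tilde z|)$ (and identically outside the surgery parts) is a $\tau(\e_0)$-Gromov--Hausdorff approximation: the estimates of Lemma \ref{lem:thin-est} use only that $x$ is a singular vertex of $F$ and that $T(x,v)\subset F$ consists of $X$-geodesics, so they hold here without change; the total distortion along an $F$-shortest curve between two points is again $O(\tau(\e_0))$ because the curve either is vertical (hence an $X$-geodesic meeting at most $N_v-2$ elements of the surgery family per critical level by Sublemma \ref{slem:number-C}) or has strictly monotone $d_p$ along it. Letting $\e_0\to 0$, $\tilde F\to F$ in the pointed Gromov--Hausdorff topology, and since $\CAT(\kappa)$ is preserved under this limit, $F$ is $\CAT(\kappa)$.

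The main obstacle I expect is the bookkeeping of singular vertices in Case II of Section \ref{sec:fill}: at a branched immersion, several sheets of the various ${\rm Im}\,f_i$'s meet at $p$ in the same direction $v$, so $V_{\sing}(C_F)$ genuinely uses the full generality of Definition \ref{defn:sing-vertex}, and one must verify that performing the cone surgeries consistently across sheets (and across different vertices $v\in V(\Sigma_p(X))$ simultaneously) produces a genuine polyhedral space $\tilde F$ to which Theorem \ref{thm:BB-gluing} applies. Once this bookkeeping — essentially identical to that already carried out in Theorem \ref{thm:union-CAT} — is in place, the rest of the argument is mechanical.
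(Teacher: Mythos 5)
Your proposal is correct and, at bottom, takes the same approach as the paper: both reduce $F$ to a finite union of thin ruled surfaces near each relevant vertex of $\Sigma_p(X)$ together with regular sectors, and both reach the $\CAT(\kappa)$ conclusion via the surgery/Burago--Buyalo machinery of Theorem \ref{thm:union-CAT} followed by a Gromov--Hausdorff limit. The only organizational difference is that the paper applies Theorem \ref{thm:union-CAT} vertex-by-vertex to the unions $S(v)$ and then glues the $S(v)$'s to the regular sectors as in the proof of Theorem \ref{thm:main}(1), whereas you run the surgery and the Burago--Buyalo verification once on the full union $F$; this globalization is sound (the regular sectors carry no singular vertices and contribute trivially to conditions (A), (B)), but it quietly requires extending the preliminary Lemmas \ref{lem:E-int=ext}, \ref{lem:E-geodesic} and \ref{lem:SE-geom} from $E_k$ to a space containing regular-sector pieces, a routine but nonzero amount of extra verification.
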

\begin{proof}
The basic idea of the proof of Theorem  
\ref{thm-rem:Thm1.1(1)} is the same as that of Theorem \ref{thm:main}(1) for branched immersed disks.
Set $\Sigma_{p,i_j}:=\Sigma_p({\rm Im}(f_{i_j}))$ $(1\le j\le k)$,
and consider 
$\Sigma:=\Sigma_{p,i_1}\cup\cdots\cup\Sigma_{p,i_k}$.
For each $v\in V(\Sigma_p(X))$ contained in $\Sigma$,
we construct a ruled surface $S$ for which we may assume 
$\CAT(\kappa)$ by taking smaller $r$.
Let $S(v)$ denote the union of all such ruled surfaces $S$.
By Theorem \ref{thm:union-CAT}, $S(v)$ is $\CAT(\kappa)$.
The rest of the argument is the same as before,
and hence omitted.
\end{proof}
%
%
%
%
%

\appendix

\setcounter{equation}{0}

\section{Alexandrov's result on ruled surfaces} \label{sec:append}

Following the ideas of Alexandrov in \cite{alexandrov-ruled},
we prove Theorem \ref{thm:alex-ruled2}.
 As mentioned in Section \ref{sec:intro}, it also follows from 
\cite{PS} in the CAT(0)-setting.

We denote by $D_{\kappa}$
the diameter of $M_{\kappa}^2$.
Recall that a $\CAT(\kappa)$-space
is defined as a $D_{\kappa}$-geodesic space
in which every triangle with perimeter $< 2D_{\kappa}$
is not thicker than its comparison triangle in $M_{\kappa}^2$
with the same side lengths,
where a $D_{\kappa}$-geodesic space
means a metric space in which 
any two points with distance $< D_{\kappa}$
can be joined by a minimal geodesic.
Throughout this appendix, let $X$ be a $\CAT(\kappa)$-space.
\subsection{Finite sequences of ruling geodesics}

Let $S$ be a ruled surface in $X$ 
with parametrization $\sigma \colon R \to X$,
where $R = [0,\ell] \times [0,1]$.
 Let $\pi:R\to R_*$ and $p_1 \colon R \to [0,\ell]$ 
be as in Section \ref{sec:ruled-alex}.

We give an explicite formulation of the pullback metric $e_\sigma$.
%
For $u=(s_0,t_0)$ and $u'=(s_0',t_0')$ with $s_0 <s_0'$ in $R$, let 
$\Delta: s_0\le s_1\le \cdots\le s_n=s_0'$ be a decomposition of $[s_0,s_0']$, 
and set 
$|\Delta|=\max \{|s_i-s_{i-1}|\,|\, 1\le i\le n\}$.
We consider 
\begin{align*}    \label{eq:e-sigma-Delta}
e_\sigma^\Delta(\pi(u), &\pi(u')) \\
     & := \inf\left\{ \sum_{i=1}^n |x_{i-1}, x_i|\,|\,x_0=\sigma(u), x_n=\sigma(u'), x_i\in\lambda_{s_i}\right\}.
\end{align*}
Choose a sequence $\{ x_i \}_{i = 0, 1, \dots, n}$ in $X$
such that $x_0 = \sigma(u)$, $x_n = \sigma(u')$, 
$x_i \in \lambda_{s_i}$ for all $i \in \{ 1, \dots, n-1 \}$,
and 
$$
e_{\sigma}^{\Delta}(\pi(u),\pi(u')) = \sum_{i=1}^n | x_{i-1}, x_i |.
$$
We call such a sequence $\{ x_i \}_{i = 0, 1, \dots, n}$
a \emph{$\Delta$-minimizing chain along $S$ from $\sigma(u)$ to $\sigma(u')$}.
Notice that
possibly we have $x_{i-1} = x_i$
for some $i \in \{ 1, \dots, n \}$.
We set $\gamma^\Delta:=\bigcup x_{i-1}x_i$, and call it a
{\it $\Delta$-minimizing broken geodesic} 
in $X$ from $\sigma(u)$ to $\sigma(u')$, which  realizes 
$L(\gamma^\Delta)=e_\sigma^\Delta(\pi(u),\pi(u'))$.


\begin{lem} \label{lem:esigma-decomp}
Under the above situation, we have the following $:$
\begin{enumerate}
 \item \[         e_\sigma(\pi(u), \pi(u'))=\sup_{\Delta} e_\sigma^\Delta(\pi(u), \pi(u')),
           \]
 where $\Delta$ runs over all decompositions of $[s_0,s_0']\,;$
\item For any sequence $\Delta_n$ of decompositions of
 $[s_0,s_0']$ satisfying 
$\lim_{n\to\infty}|\Delta_n|=0$, we have
\[
       e_\sigma(\pi(u), \pi(u'))=\lim_{n\to\infty} e_\sigma^{\Delta_n}(\pi(u), \pi(u'))\,;
\]
\end{enumerate}
\end{lem}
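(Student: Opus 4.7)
I will prove (1) and (2) together by combining the trivial upper bound $e_\sigma^\Delta \le d$ (where $d := e_\sigma(\pi(u),\pi(u'))$) with a constructive lower bound coming from lifting minimizing chains to quasicontinuous curves. For the upper bound, fix a quasicontinuous $c : [a,b] \to R$ from $u$ to $u'$. Monotonicity of $p_1 \circ c$ together with the fact that $p_1 \circ c([a,b]) = [s_0,s_0']$ lets me choose $a = t_0 \le t_1 \le \cdots \le t_n = b$ with $p_1(c(t_i)) = s_i$. Setting $x_i := \sigma(c(t_i)) \in \lambda_{s_i}$, triangle inequality and additivity of length along $\sigma \circ c$ give
\[
\sum_{i=1}^n |x_{i-1}, x_i| \;\le\; L(\sigma \circ c),
\]
so $e_\sigma^\Delta \le L(\sigma \circ c)$. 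Taking the infimum over $c$ yields $e_\sigma^\Delta \le d$, from which I immediately get $\sup_\Delta e_\sigma^\Delta \le d$ and $\limsup_{|\Delta_n|\to 0} e_\sigma^{\Delta_n} \le d$.

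\textbf{Lower bound from minimizing chains.} For the reverse inequality I will build, from each $\Delta_n$-minimizing chain, a quasicontinuous curve in $R$ whose $\sigma$-length approximates $e_\sigma^{\Delta_n}$. Given a $\Delta_n$-minimizing chain $\{x_i^n\}$ with lifts $x_i^n = \sigma(s_i^n, t_i^n)$, define
\[
c_n(s) \;:=\; \Bigl(s,\; t_{i-1}^n + \tfrac{t_i^n - t_{i-1}^n}{s_i^n - s_{i-1}^n}(s - s_{i-1}^n)\Bigr), \qquad s \in [s_{i-1}^n, s_i^n],
\]
which is continuous and monotone in $p_1$, hence a quasicontinuous curve from $u$ to $u'$. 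The crux is the uniform diagonal-length estimate
\[
L\bigl(\sigma \circ c_n|_{[s_{i-1}^n, s_i^n]}\bigr) \;=\; |x_{i-1}^n, x_i^n| \;+\; o(|s_i^n - s_{i-1}^n|),
\]
which, summed over $i$, yields $L(\sigma \circ c_n) = e_\sigma^{\Delta_n} + o(1)$ as $|\Delta_n| \to 0$. Since $d \le L(\sigma \circ c_n)$, this gives $\liminf_{|\Delta_n|\to 0} e_\sigma^{\Delta_n} \ge d$, which together with the upper bound proves (2); part (1) is then immediate because $\sup_\Delta e_\sigma^\Delta \ge \lim_n e_\sigma^{\Delta_n} = d$.

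\textbf{Main obstacle.} The technical heart of the argument is the uniform diagonal-length estimate. The geometric intuition is that by continuity of $\sigma$ the two bounding ruling geodesics $\lambda_{s_{i-1}^n}$ and $\lambda_{s_i^n}$ converge uniformly as $|s_i^n - s_{i-1}^n| \to 0$, so the thin strip between them degenerates and any curve inside it from $x_{i-1}^n$ to $x_i^n$ is forced to approximate the ambient $X$-geodesic between those endpoints. Two complications require care. First, singular ruling geodesics in $\mathrm{Sing}(\sigma)$ may render the lift $(s_i^n, t_i^n)$ of a chain point non-unique, so the $t_i^n$ must be chosen consistently along the chain. Second, a minimizing chain can have a large $t$-jump within a single strip, which tilts the piecewise-linear $c_n$ steeply; in such strips the naive linear interpolation may violate the $o(|s_i^n - s_{i-1}^n|)$ tolerance and must be replaced by an L-shaped path in $R$ that first moves along a ruling geodesic and then across the strip, with the transverse excess bounded via the rectifiable directrix $\Sigma(s) = \sigma(s,\xi(s))$ guaranteed by the definition of ruled surface. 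Once the estimate is established, the extension to arbitrary non-refinement sequences $\Delta_n$ is routine.
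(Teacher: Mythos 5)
Your upper bound $\sup_\Delta e_\sigma^\Delta \le e_\sigma(\pi(u),\pi(u'))$ is correct and is in substance the paper's own argument (the paper happens to use the particular shortest quasicontinuous curve supplied by Proposition~\ref{prop:e-sigma}, but taking the infimum over all quasicontinuous $c$, as you do, works the same way). The gap is in the lower bound. You propose to interpolate linearly between the lifted chain points and claim $L(\sigma\circ c_n)=e_\sigma^{\Delta_n}+o(1)$, but the $\sigma$-length of a curve in $R$ is simply not controlled by the chain sum $\sum_i|x_{i-1},x_i|$: $\sigma$ need not be rectifiable transverse to the ruling direction. In Example~\ref{ex:Koch}, take $u=(0,1/2)$ and $u'=(\ell,1/2)$. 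Any continuous curve in $R$ parametrized by $s$ (as your $c_n$ is) and starting at $t$-level $1/2$ stays at $t\ge 1/4$ over a nondegenerate $s$-interval, so its $\sigma$-image contains a rescaled Koch arc and has infinite length; thus $L(\sigma\circ c_n)=\infty$ for every $n$, while $e_\sigma^{\Delta_n}\le e_\sigma(\pi(u),\pi(u'))<\infty$. Your L-shaped repair (descend along a ruling geodesic to the rectifiable directrix $\Sigma$, traverse, ascend) yields a finite bound, but one governed by $L(\Sigma)$ and by the excursions $|t_i-\xi(s_i)|$, which is in general of order $1$ and not $e_\sigma^{\Delta_n}+o(1)$. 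So the ``uniform diagonal-length estimate'' you posit as the crux is false as stated.

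The paper circumvents this by never lifting a chain. It forms the $\Delta_n$-minimizing \emph{broken geodesics} $\gamma_n$ in the ambient $X$, which by construction have $L(\gamma_n)=e_\sigma^{\Delta_n}$, passes to a uniformly convergent subsequence $\gamma_n\to\gamma$ with $L(\gamma)\le\liminf_n L(\gamma_n)$ by lower semicontinuity of length, observes that $|\Delta_n|\to 0$ forces $\gamma\subset S$, and only then lifts the \emph{limit curve} $\gamma$ to a quasicontinuous curve in $R$ via Sublemma~\ref{slem:lift-appendix}, which yields $e_\sigma\le L(\gamma)\le\liminf_n e_\sigma^{\Delta_n}$; together with your upper bound this gives both (1) and (2). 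The genuinely hard content of the lemma is the existence of that lift of the limit curve, i.e.\ Sublemma~\ref{slem:lift-appendix}, and any correct proof of the lower bound needs it (or an equivalent compactness-and-lifting argument), not a per-strip length estimate for curves through the chain points.
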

\begin{proof}
By Proposition \ref{prop:e-sigma}, there is a shortest curve
$c_{0*}:[0,1]\to (R_*,e_\sigma)$ from $\pi(u)$ to $\pi(u')$ together with its lift $c_0$.
Set $\gamma_0(t):=\sigma_*\circ c_{0*}(t)$.
For any decompositon $\Delta=\{ s_i\}_{i=1}^N$ of $[s_0,s_0']$,  
take $t_i\in [0,1]$ such that $\gamma_0(t_i)\in \lambda_{s_i}$.
Then in view of Proposition \ref{prop:e-sigma-shortest}, 
we have 
$$
 e_\sigma^\Delta(\pi(u),\pi(u'))\le\sum_{i=1}^N 
    |\gamma_0(t_{i-1}), \gamma_0(t_i)|\le L(\gamma_0)=L(c_{0*})= e_\sigma(\pi(u),\pi(u')).
$$
Thus we have $\sup_{\Delta} e_\sigma^\Delta(\pi(u),\pi(u'))\le e_\sigma(\pi(u),\pi(u'))$.
\psmall\n

Let  $\{ \Delta_n\} $ be a sequence of decompositions
 of $[s_0,s_0']$ with 
$\lim_{n\to\infty}|\Delta_n|=0$ such that
\[
\lim_{n\to\infty} e_\sigma^{\Delta_{n}}(\pi(u),\pi(u'))=
\liminf_{|\Delta|\to 0} e_\sigma^\Delta(\pi(u), \pi(u')).
\]
Let $\gamma_n:[0,1]\to X$ be a $\Delta_n$-minimizing 
broken geodesic in $X$. 
Passing to a subseqeunce, we may assume that 
$\gamma_{n}$ converges to a curve $\gamma:[0,1]\to X$.
From $|\Delta_n|\to 0$, it follows that 
$\gamma([0,1])\subset S$.

\begin{slem} \label{slem:lift-appendix}
$\gamma$ has a lift in $R$ from $u$ to $u'$.
\end{slem}
\begin{proof}
We may assume ${\rm Sing}(\sigma)$ is empty.
Let 
\[
\Delta_n: s_0=s_{n,0}\le s_{n,1}\le\cdots\le s_{n,k_n}=s_0',
\]
and  $\gamma_n=\gamma^{\Delta_n}:=\bigcup_{i=1}^{k_n} x_{n,i-1}x_{n,i}$  with $x_{n,i}\in\lambda_{s_{n,i}}$. 
Choose $a_{n,i}\in I_{s_{n,i}}$ with $\sigma(a_{n,i})=x_{n,i}$
\,$(1\le i\le k_n-1)$, and 
consider the Euclidean broken geodesic 
$c_n:=\bigcup_{i=1}^{k_n} a_{n,i-1} a_{n,i}$.
Note that $c_n$ is monotone, and $\sigma\circ c_n$ also
converges to $\gamma$ as $n\to\infty$.
We show that a subsequence of $c_n$ converges to a curve $c$, which is a lift of $\gamma$.
We do not know if $L(\sigma\circ c_n)$ is uniformly bounded or even if it is finite, which is the only difference
from Proposition \ref{prop:e-sigma}.

Since the basic strategy is the same as the proof of Proposition \ref{prop:e-sigma}, we present only an 
outline.
Let $J_0$ be a countable dense subset of $J=[s_0,s_0']$. 
For each $s\in J$, choose a point $c_n(t_n(s))$ of 
$c_n$ with $c_n(t_n(s))\in I_s$.
Now we have a subsequence $\{ m\}$ of $\{ n\}$
such that $c_m(t_m(s))$ converges to a point
$x(s)\in I_s$ for every $s\in J_0$.
We consider the limit set, say $LS(\{ c_m\})$, 
 of the sequence $\{ {\rm Im}(c_m)\}_m$,
and set
\[
    E_s:=LS(\{ c_m\})\cap I_s,
\]
as in the proof of Proposition \ref{prop:e-sigma}.
Then we have the decomposition 
$J=J_1\cup J_2$, 
where 
\[
    J_1= \{ s\in J\,| \text{$E_s$ is a single point} \}, \quad  
    J_{2}=J\setminus  J_{1}.
\] 
 In the same way, we have the conclusions $(1)\sim (4)$ 
in the proof of Proposition \ref{prop:e-sigma}.
Here it should be remarked that the following 
holds as well,
\[
     \sum_{s\in J_2} \sigma(E_s) \le L(\gamma).
\]
Thus as before, we obtain a monotone continuous parametrization 
on the union of points and segments
$\{ E_s\,|\,s\in J\}$, which provides a lift of $\gamma$
from $u$ to $u'$. 
\end{proof}

By Sublemma \ref{slem:lift-appendix}, we conclude
\[
e_\sigma(\pi(u),\pi(u'))\le L(\gamma)\le \lim_{n\to\infty} L(\gamma_{n})
=\liminf_{n\to\infty} e^{\Delta_n}_\sigma(\pi(u),\pi(u')).
\]
This completes the proof.
\end{proof}  

From the choice of a $\Delta$-minimizing chain along $S$,
we derive the following:

\begin{lem}\label{lem:overpi}
In the setting discussed above,
let $\{ x_i \}_{i = 0, 1, \dots, n}$ be a 
$\Delta$-minimizing chain along $S$ from $\sigma(u)$ to $\sigma(u')$.
Then
for each $i \in \{ 1, \dots, n-1 \}$ and
for each $t \in \{ 0, 1 \}$
we have
\[
\angle x_{i-1}x_i\lambda_{s_i}(t) + \angle \lambda_{s_i}(t)x_ix_{i+1} \ge \pi,
\]
whenever
$| x_{i-1}, x_i |, | x_i, x_{i+1} | < D_{\kappa}$,
and the angles
$\angle x_{i-1}x_i\lambda_{s_i}(t)$ and $\angle \lambda_{s_i}(t)x_ix_{i+1}$
can be defined.
\end{lem}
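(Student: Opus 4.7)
The plan is to argue by first variation at the middle vertex $x_i$: the chain $\{x_j\}$ being $\Delta$-minimizing means that, with all other $x_j$ fixed, the point $x_i$ minimizes $y \mapsto |x_{i-1},y| + |y,x_{i+1}|$ over $y \in \lambda_{s_i}$. Since $x_i \ne \lambda_{s_i}(t)$ (otherwise the angle $\angle x_{i-1}x_i\lambda_{s_i}(t)$ is not defined), we may move $x_i$ along $\lambda_{s_i}$ toward $\lambda_{s_i}(t)$ with unit speed, parametrizing this sub-geodesic as $\eta(r)$, $r \in [0, r_0)$, with $\eta(0) = x_i$. Let $f(r) := |x_{i-1}, \eta(r)| + |\eta(r), x_{i+1}|$; the minimality of the chain forces the one-sided derivative $f'(0^+) \ge 0$.

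Next I would apply the standard first-variation formula in a $\CAT(\kappa)$-space (valid here since $|x_{i-1},x_i|, |x_i,x_{i+1}| < D_\kappa$, so the minimal geodesics from $x_i$ to the two neighbors are unique, and the direction of $\eta$ at $r = 0$ is precisely $\uparrow_{x_i}^{\lambda_{s_i}(t)}$). This yields
\[
f'(0^+) = -\cos \angle x_{i-1}x_i\lambda_{s_i}(t) \;-\; \cos \angle \lambda_{s_i}(t)x_ix_{i+1}.
\]
Setting $\alpha := \angle x_{i-1}x_i\lambda_{s_i}(t)$ and $\beta := \angle \lambda_{s_i}(t)x_ix_{i+1}$, the inequality $f'(0^+) \ge 0$ becomes $\cos\alpha + \cos\beta \le 0$.

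To finish, I would use the identity $\cos\alpha + \cos\beta = 2\cos\frac{\alpha+\beta}{2}\cos\frac{\alpha-\beta}{2}$. Since $\alpha, \beta \in [0,\pi]$ we have $\frac{|\alpha-\beta|}{2} \in [0,\pi/2]$ and thus $\cos\frac{\alpha-\beta}{2} \ge 0$; so the inequality forces $\cos\frac{\alpha+\beta}{2} \le 0$, i.e.\ $\alpha+\beta \ge \pi$, as claimed.

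There is no serious obstacle here; the only subtlety is confirming that the first-variation formula applies at $x_i$ in its full strength (i.e.\ that $\eta$ gives a genuine geodesic direction $\dot\eta(0) \in \Sigma_{x_i}(X)$, and that both $x_{i-1}, x_{i+1}$ are within $D_\kappa$ so the distance functions are semi-concave along $\eta$). These are precisely the hypotheses assumed in the statement, so the argument goes through unconditionally.
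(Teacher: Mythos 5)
Your proof is correct and follows essentially the same line of reasoning as the paper: both argue by the first-variation formula applied to the function $|x_{i-1},\cdot| + |\cdot,x_{i+1}|$ restricted to the ruling geodesic $\lambda_{s_i}$, using the $\Delta$-minimizing property of the chain to conclude $\cos\alpha + \cos\beta \le 0$. You simply make the final step (from $\cos\alpha + \cos\beta \le 0$ to $\alpha + \beta \ge \pi$) more explicit via the sum-to-product identity, whereas the paper leaves it as an immediate consequence.
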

\begin{proof}
First we show the conclusion in the case $t=0$.
Set
$\theta_i^{-} := \angle x_{i-1}x_i\lambda_{s_i}(0)$
and
$\theta_i^{+} := \angle \lambda_{s_i}(0)x_ix_{i+1}$.
Take $t_i \in (0,1]$ with $x_i = \lambda_{s_i}(t_i)$,
where we may assume $t_i\neq 0$.
If we put 
$h(\epsilon) := | \lambda_{s_i}(t_i-\epsilon), x_{i-1} |
+| \lambda_{s_i}(t_i-\epsilon), x_{i+1} |$ for small $\e>0$,
then by the first variation formula (see e.g., \cite[Corollary II.3.6]{bridson-haefliger})
together with the $\Delta$-minimizing property of 
$\{ x_i \}_{i = 0, 1, \dots, n}$,
we have
\[
0 \le \lim_{\epsilon \to 0+}
\frac{h(\epsilon) - h(0)}{\epsilon}
= - (\cos \theta_i^- + \cos \theta_i^+).
\]
This implies $\theta_i^- + \theta_i^+ \ge \pi$.
Similarly, we see the inequality for $t = 1$.
\end{proof}


Let $u_*:=\pi(u), v_*:=\pi(v), w_*:=\pi(w)$ be distinct points in $R_*$.
Assume for a while that
\[
p_1(u) \le p_1(v) \le p_1(w),
\]
and choose a decomposition
$\Delta = \{ s_i \}_{i = 0, 1, \dots, n}$
of $[p_1(u),p_1(w)]$ such that
for some $m \in \{ 1, \dots, n-1 \}$ we have $p_1(v) = s_m$.
Let $\Delta' := \{ s_i \}_{i = 0, 1, \dots, m}$
be the decomposition of $[p_1(u),p_1(v)]$,
and $\Delta'' := \{ s_{m+i} \}_{i = 0, 1, \dots, n-m}$
the decomposition of $[p_1(v),p_1(w)]$.
Take a $\Delta'$-minimizing chain $\{ y_i \}_{i = 0, 1, \dots, m}$
along $S$ from $\sigma(u)$ to $\sigma(v)$,
and a $\Delta''$-minimizing chain $\{ y_{m+i} \}_{i = 0, 1, \dots, n-m}$
along $S$ from $\sigma(v)$ to $\sigma(w)$,
and a $\Delta$-minimizing chain $\{ z_i \}_{i = 0, 1, \dots, n}$
along $S$ from $\sigma(u)$ to $\sigma(w)$.
Assume in addition that we have
\[
e_{\sigma}^{\Delta'}(u_*, v_*) + e_{\sigma}^{\Delta''}(v_*,w_*) + e_{\sigma}^{\Delta}(w_*,u_*)
< 2D_{\kappa}.
\]
Set $x:=\sigma(u)$, $y:=\sigma(v)$ and $z:=\sigma(w)$. 
Let $B^{\Delta}(xy)$ be
the broken geodesic 
$\bigcup_{i=1}^m y_{i-1}y_i$ in $X$ joining $x$ and $y$,
$B^{\Delta}(yz)$
the broken geodesic 
$\bigcup_{i=1}^{n-m} y_{m+i-1}y_{m+i}$ in $X$ joining $y$ and $z$,
$B^{\Delta}(zx)$
the broken geodesic 
$\bigcup_{i=1}^{n} z_{i-1}z_i$ in $X$ joining $z$ and $x$.
We denote by $P^{\Delta}(xyz)$
the polygon in $X$ defined by
\[
P^{\Delta}(xyz) :=
B^{\Delta}(xy) \cup B^{\Delta}(yz) \cup B^{\Delta}(zx),
\]
and we call $P^{\Delta}(xyz)$ the 
\emph{$\Delta$-minimizing chain triple along $S$}.
We denote by $\theta_x^{\Delta}(y,z)$
the angle at $x$ in $X$ between $B^{\Delta}(xy)$ and $B^{\Delta}(zx)$,
by $\theta_y^{\Delta}(z,x)$
the angle at $y$ in $X$ between $B^{\Delta}(yz)$ and $B^{\Delta}(xy)$,
by $\theta_z^{\Delta}(x,y)$
the angle at $z$ in $X$ between $B^{\Delta}(zx)$ and $B^{\Delta}(yz)$.

\begin{center}
\begin{tikzpicture}
[scale = 1]
\draw [-, very thick] (-5,3)--(-5,-2.5);
\draw [-, very thick] (-4,3)--(-4,-2.5);
\draw [-, very thick] (-3,3)--(-3,-2.5);
\draw [-, very thick] (-2,3)--(-2,-2.5);
\draw [-, very thick] (-1,3)--(-1,-2.5);
\draw [-, very thick] (0,3)--(0,-2.5);
\draw [-, very thick] (1,3)--(1,-2.5);
\draw [-, very thick] (2,3)--(2,-2.5);
\draw [-, very thick] (3,3)--(3,-2.5);
\draw [-, very thick] (4,3)--(4,-2.5);
\draw [-] (-5,0)--(-4,-0.3);
\draw [-] (-4,-0.3)--(-3,-0.7);
\draw [-] (-3,-0.7)--(-2,-1);
\draw [-] (-2,-1)--(-1,-1.3);
\draw [-] (-1,-1.3)--(0,-1.7);
\draw [-] (0,-1.7)--(1,-2);
\draw [-] (1,-2)--(2,-0.3);
\draw [-] (2,-0.3)--(3,1.5);
\draw [-] (3,1.5)--(4,2.5);
\draw [-] (4,2.5)--(3,2.2);
\draw [-] (3,2.2)--(2,1.9);
\draw [-] (2,1.9)--(1,1.6);
\draw [-] (1,1.6)--(0,1.3);
\draw [-] (0,1.3)--(-1,1);
\draw [-] (-1,1)--(-2,0.7);
\draw [-] (-2,0.7)--(-3,0.4);
\draw [-] (-3,0.4)--(-4,0.2);
\draw [-] (-4,0.2)--(-5,0);
\draw [-] (-4,0.2)--(-3,-0.7);
\draw [-] (-3,0.4)--(-2,-1);
\draw [-] (-2,0.7)--(-1,-1.3);
\draw [-] (-1,1)--(0,-1.7);
\draw [-] (0,1.3)--(1,-2);
\draw [-] (1,1.6)--(2,-0.3);
\draw [-] (2,1.9)--(3,1.5);
\draw (-5,-2.3) node[circle] [below] {$\lambda_{s_0}$};
\draw (-4,-2.3) node[circle] [below] {$\lambda_{s_1}$};
\draw (-2,-2.3) node[circle] [below] {$\lambda_{s_i}$};
\draw (1,-2.3) node[circle] [below] {$\lambda_{s_m}$};
\draw (4,-2.3) node[circle] [below] {$\lambda_{s_n}$};
\fill (-5,0) circle (2pt) node [left] {$x$};
\fill (1,-2) circle (2pt) node [right] {$y$};
\fill (4,2.5) circle (2pt) node [right] {$z$};
\fill (-4,-0.3) circle (2pt) node [left] {};
\fill (-3,-0.7) circle (2pt) node [left] {};
\fill (-2,-1) circle (2pt) node [left] {};
\fill (-1,-1.3) circle (2pt) node [left] {};
\fill (0,-1.7) circle (2pt) node [left] {};
\fill (2,-0.3) circle (2pt) node [left] {};
\fill (3,1.5) circle (2pt) node [left] {};
\fill (3,2.2) circle (2pt) node [left] {};
\fill (2,1.9) circle (2pt) node [left] {};
\fill (1,1.6) circle (2pt) node [left] {};
\fill (0,1.3) circle (2pt) node [left] {};
\fill (-1,1) circle (2pt) node [left] {};
\fill (-2,0.7) circle (2pt) node [left] {};
\fill (-3,0.4) circle (2pt) node [left] {};
\fill (-4,0.2) circle (2pt) node [left] {};
\draw (-4.2,-0.1) node[circle] [below] {$y_1$};
\draw (-4.2,0.1) node[circle] [above] {$z_1$};
\draw (-2.2,-0.8) node[circle] [below] {$y_i$};
\draw (-2.3,0.5) node[circle] [above] {$z_i$};
\draw (-0.6,-1.2) node[circle] [below] {$y_{i+1}$};
\draw (-0.6,0.8) node[circle] [above] {$z_{i+1}$};
\draw (3.5,1.9) node[circle] [below] {$y_{n-1}$};
\draw (2.6,1.8) node[circle] [above] {$z_{n-1}$};
\draw (0.7,1.3) node[circle] [above] {$z_m$};
\end{tikzpicture}
\end{center}

In the model surface $M_{\kappa}^2$,
we define a comparison polygon $\tilde{P}^{\Delta}(xyz)$ 
for $P^{\Delta}(xyz)$ as follows:
Let $\triangle \tilde{x}\tilde{y}_1\tilde{z}_1$ 
and $\triangle \tilde{y}_{n-1}\tilde{z}_{n-1}\tilde{z}$ be 
comparison triangles in $M_{\kappa}^2$
for $\triangle xy_1z_1$ and for $\triangle y_{n-1}z_{n-1}z$,
respectively.
For each $i \in \{ 1, \dots, n-1 \}$,
take comparison triangles 
$\triangle \tilde{y}_i\tilde{y}_{i+1}\tilde{z}_i$ and 
$\triangle \tilde{y}_{i+1}\tilde{z}_i\tilde{z}_{i+1}$ 
in $M_{\kappa}^2$
for $\triangle y_iy_{i+1}z_i$ and for $\triangle y_{i+1}z_iz_{i+1}$, respectively,
and then glue all the comparison triangles in $M_{\kappa}^2$ along
$\tilde{y}_i\tilde{z}_i$,
and along $\tilde{y}_{i+1}\tilde{z}_i$,
for all $i \in \{ 1, \dots, n-1 \}$.
Let $\tilde{B}^{\Delta}(xy)$ be
the broken geodesic 
$\bigcup_{i=1}^m \tilde{y}_{i-1}\tilde{y}_i$ in $M_{\kappa}^2$ 
joining $\tilde{x}$ and $\tilde{y}$,
$\tilde{B}^{\Delta}(yz)$
the broken geodesic 
$\bigcup_{i=1}^{n-m} \tilde{y}_{m+i-1}\tilde{y}_{m+i}$ in $M_{\kappa}^2$
joining $\tilde{y}$ and $\tilde{z}$,
$\tilde{B}^{\Delta}(zx)$
the broken geodesic 
$\bigcup_{i=1}^{n} \tilde{z}_{i-1}\tilde{z}_i$ in $M_{\kappa}^2$
joining $\tilde{z}$ and $\tilde{x}$.
Then we put
\[
\tilde{P}^{\Delta}(xyz) :=
\tilde{B}^{\Delta}(xy) \cup \tilde{B}^{\Delta}(yz) \cup \tilde{B}^{\Delta}(zx),
\]
and we call $\tilde{P}^{\Delta}(xyz)$ a
\emph{comparison $\Delta$-minimizing chain triple in $M_{\kappa}^2$
for $P^{\Delta}(xyz)$}.
We denote by $\tilde{\theta}_x^{\Delta}(y,z)$
the angle at $\tilde{x}$ in $M_{\kappa}^2$ 
between $\tilde{B}^{\Delta}(xy)$ and $\tilde{B}^{\Delta}(zx)$,
by $\tilde{\theta}_y^{\Delta}(z,x)$
the angle at $\tilde{y}$ in $M_{\kappa}^2$ 
between $\tilde{B}^{\Delta}(yz)$ and $\tilde{B}^{\Delta}(xy)$,
by $\tilde{\theta}_z^{\Delta}(x,y)$
the angle at $\tilde{z}$ in $M_{\kappa}^2$ 
between $\tilde{B}^{\Delta}(zx)$ and $\tilde{B}^{\Delta}(yz)$.
Note that 
\[
\theta_x^{\Delta}(y,z) \le \tilde{\theta}_x^{\Delta}(y,z),
\quad
\theta_y^{\Delta}(z,x) \le \tilde{\theta}_y^{\Delta}(z,x),
\quad
\theta_z^{\Delta}(x,y) \le \tilde{\theta}_z^{\Delta}(x,y).
\]
From Lemma \ref{lem:overpi}
we derive the following concavity of $\tilde{P}^{\Delta}(xyz)$
except the vertices $\tilde{x}, \tilde{y}, \tilde{z}$:
Namely,
for each $i \in \{ 1, \dots, n-1 \}$ with $i \neq m$
the inner angle at $\tilde{y}_i$ in $\tilde{P}^{\Delta}(xyz)$
is at least $\pi$;
moreover,
for each $i \in \{ 1, \dots, n-1 \}$
the inner angle at $\tilde{z}_i$ in $\tilde{P}^{\Delta}(xyz)$
is at least $\pi$.

By stretching the comparison $\Delta$-minimizing chain triple
$\tilde{P}^{\Delta}(xyz)$ at the concave vertices,
we obtain a triangle $\triangle \bar{x}\bar{y}\bar{z}$ in $M_{\kappa}^2$
whose side-lengths satisfy
\[
| \bar{x}, \bar{y} | = e_{\sigma}^{\Delta'}(u_*,v_*),
\quad
| \bar{y}, \bar{z} | = e_{\sigma}^{\Delta''}(v_*,w_*),
\quad
| \bar{z}, \bar{x} | = e_{\sigma}^{\Delta}(w_*,u_*).
\]
We call $\triangle \bar{x}\bar{y}\bar{z}$ a
\emph{comparison $\Delta$-minimizing stretched triangle in $M_{\kappa}^2$
for $P^{\Delta}(xyz)$},
and we denote it by $\bar{P}^{\Delta}(xyz)$.
We denote by $\bar{\theta}_x^{\Delta}(y,z)$
the angle $\angle \bar{y}\bar{x}\bar{z}$ at $\bar{x}$ in $M_{\kappa}^2$ 
between $\bar{x}\bar{y}$ and $\bar{z}\bar{x}$,
by $\bar{\theta}_y^{\Delta}(z,x)$
the angle $\angle \bar{z}\bar{y}\bar{x}$ at $\bar{y}$ in $M_{\kappa}^2$ 
between $\bar{y}\bar{z}$ and $\bar{x}\bar{y}$,
and by $\bar{\theta}_z^{\Delta}(x,y)$
the angle $\angle \bar{x}\bar{z}\bar{y}$ at $\bar{z}$ in $M_{\kappa}^2$ 
between $\bar{z}\bar{x}$ and $\bar{y}\bar{z}$.
Let $\bar{y}_i \in \bar{x}\bar{y}$
and $\bar{z}_i \in \bar{x}\bar{z}$,
$i \in \{ 1, \dots, n-1 \}$,
be the points corresponding to $\tilde{y}_i$ and to $\tilde{z}_i$,
respectively.
Since $\tilde{P}^{\Delta}(xyz)$ is concave except the vertices,
the Alexandrov stretching lemma
(see e.g., \cite[Lemma I.2.16]{bridson-haefliger})
leads to the following:

\begin{lem}\label{lem:chain-comparison}
Under the setting discussed above,
we have
\[
\tilde\theta_x^{\Delta}(y,z) \le \bar{\theta}_x^{\Delta}(y,z),
\quad
\tilde\theta_y^{\Delta}(z,x) \le \bar{\theta}_y^{\Delta}(z,x),
\quad
\tilde\theta_z^{\Delta}(x,y) \le \bar{\theta}_z^{\Delta}(x,y).
\]
Moreover, 
for all $i \in \{ 1, \dots, n-1 \}$
we have
$| y_i, z_i | \le | \bar{y}_i, \bar{z}_i |$.
\end{lem}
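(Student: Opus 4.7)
The plan is to deduce Lemma \ref{lem:chain-comparison} as a direct consequence of the Alexandrov stretching lemma (\cite[Lemma I.2.16]{bridson-haefliger}), realizing the passage from $\tilde P^{\Delta}(xyz)$ to $\bar P^{\Delta}(xyz)$ as a finite sequence of straightenings carried out one reflex boundary vertex at a time.

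First I would pin down the concavity already announced in the text: every non-principal boundary vertex of $\tilde P^{\Delta}(xyz)$, namely each $\tilde y_i$ with $i \neq m$ and each $\tilde z_i$, has inner angle at least $\pi$. The inner angle at $\tilde y_i$ is the sum of the comparison angles contributed by the two or three subtriangles meeting there; by the CAT($\kappa$)-comparison each such summand dominates the corresponding angle in $X$ at $y_i$, and the total is bounded below by $\angle y_{i-1} y_i \lambda_{s_i}(t) + \angle \lambda_{s_i}(t) y_i y_{i+1}$ for an appropriate choice of $t \in \{0,1\}$ (using that $z_{i-1} \in \lambda_{s_{i-1}}$ and $z_i \in \lambda_{s_i}$ allows the comparison angles to be regrouped around the direction $\uparrow_{y_i}^{\lambda_{s_i}(t)}$), which is at least $\pi$ by Lemma \ref{lem:overpi}. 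The argument at $\tilde z_i$ is identical with the roles of $\{y_j\}$ and $\{z_j\}$ interchanged.

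Next I would apply the stretching lemma repeatedly, straightening one concave vertex at a time. The lemma says: if a polygonal region in $M_{\kappa}^2$ has a reflex inner angle at a boundary vertex $p$ with adjacent sides $qp, pr$, then replacing those two sides by the geodesic from $q$ to $r$ weakly increases the inner angle at every remaining boundary vertex and weakly increases the distance between any two non-adjacent boundary vertices. Each straightening preserves (indeed weakly enlarges) the inner angles at the still-concave vertices, so reflexivity is maintained throughout, and after finitely many straightenings the configuration collapses to the triangle $\bar P^{\Delta}(xyz) = \triangle \bar x\bar y\bar z$, whose side lengths match the total lengths of the three chain sides of $\tilde P^{\Delta}(xyz)$ by construction.

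The conclusions then follow by bookkeeping along the straightening process. The inner angles at the principal vertices $\tilde x, \tilde y, \tilde z$ only grow, so $\tilde\theta_x^{\Delta}(y,z) \le \bar\theta_x^{\Delta}(y,z)$ and its two analogues hold. For the distance inequality, $\tilde y_i$ and $\tilde z_i$ remain non-adjacent boundary vertices throughout the straightening, so their pairwise distance only grows, giving $|\tilde y_i, \tilde z_i| \le |\bar y_i, \bar z_i|$; combined with the identity $|y_i, z_i| = |\tilde y_i, \tilde z_i|$ coming from the comparison subtriangle $\triangle \tilde y_i \tilde y_{i+1} \tilde z_i$ of $\triangle y_i y_{i+1} z_i$, this yields $|y_i, z_i| \le |\bar y_i, \bar z_i|$. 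The one subtlety I foresee is that the abstractly glued polygon $\tilde P^{\Delta}(xyz)$ need not embed isometrically into $M_{\kappa}^2$, but since each straightening is local to the two subtriangles adjacent to the vertex being straightened, it can always be performed in $M_{\kappa}^2$, rendering this concern harmless.
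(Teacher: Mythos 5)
Your argument is correct and matches the paper's, which likewise derives the concavity of $\tilde{P}^{\Delta}(xyz)$ at the non-principal boundary vertices from Lemma \ref{lem:overpi} and then invokes the Alexandrov stretching lemma to pass to $\bar{P}^{\Delta}(xyz)$. The paper cites the stretching lemma in one stroke without the explicit vertex-by-vertex straightening and the local-embedding remark you spell out, so your version is a useful unpacking of the same idea rather than a different route.
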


Let $y_j \in B^{\Delta}(xy) \setminus \{ x, y \}$ be a broken point
for $j \in \{ 1, \dots, m-1 \}$,
$y_k \in B^{\Delta}(yz) \setminus \{ y, z \}$ a broken point
for $k \in \{ m+1, \dots, n-1 \}$,
and
$z_l \in B^{\Delta}(zx) \setminus \{ z, x \}$ a broken point
for $l \in \{ 1, \dots, n-1 \}$.
Assume that the broken points
$y_j$, $y_k$, and $z_l$ are distinct to each other.
Choose four $\Delta$-minimizing chain triples
$P^{\Delta}(xy_jz_l)$,
$P^{\Delta}(y_jyy_k)$,
$P^{\Delta}(z_ly_kz)$,
and
$P^{\Delta}(y_jy_kz_l)$ along $S$,
and take
comparison $\Delta$-minimizing stretched triangles
$\bar{P}^{\Delta}(xy_jz_l)$,
$\bar{P}^{\Delta}(y_jyy_k)$,
$\bar{P}^{\Delta}(z_ly_kz)$,
and
$\bar{P}^{\Delta}(y_jy_kz_l)$
in $M_{\kappa}^2$
for $P^{\Delta}(xy_jz_l)$,
$P^{\Delta}(y_jyy_k)$,
$P^{\Delta}(z_ly_kz)$,
and
$P^{\Delta}(y_jy_kz_l)$,
respectively.

From Lemma \ref{lem:chain-comparison} we derive the following monotonicity:

\begin{lem}\label{lem:delta-monotone}
Under the setting discussed above,
we
have
\[
\bar{\theta}_x^{\Delta}(y_j,z_l)
\le \bar{\theta}_x^{\Delta}(y,z),
\quad
\bar{\theta}_y^{\Delta}(y_k,y_j)
\le \bar{\theta}_y^{\Delta}(z,x),
\quad
\bar{\theta}_z^{\Delta}(z_l,y_k)
\le \bar{\theta}_z^{\Delta}(x,y).
\]
\end{lem}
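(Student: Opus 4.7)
My plan is to deduce this monotonicity from the Alexandrov stretching lemma by assembling the four small stretched comparison triangles into a single planar region in $M_\kappa^2$ and then stretching it back to recover $\bar P^\Delta(xyz)$. First I would invoke Lemma \ref{lem:chain-comparison} and the construction preceding it to produce the four comparison stretched triangles $\bar P^\Delta(xy_jz_l)$, $\bar P^\Delta(y_jyy_k)$, $\bar P^\Delta(z_ly_kz)$, $\bar P^\Delta(y_jy_kz_l)$ in $M_\kappa^2$. Since the length of the $\Delta$-minimizing chain between a fixed pair of points depends only on the pair, the three internal sides shared by pairs of sub-triples, namely $B^\Delta(y_jz_l)$ (shared by the first and fourth), $B^\Delta(y_jy_k)$ (shared by the second and fourth), and $B^\Delta(y_kz_l)$ (shared by the third and fourth), have matched lengths in the corresponding stretched triangles. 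I glue the four comparison triangles along these three segments in $M_\kappa^2$, obtaining a hexagonal region $Q$ with outer vertices $\bar x, \bar y_j, \bar y, \bar y_k, \bar z, \bar z_l$.

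The crucial step, which I expect to be the main obstacle, is to verify that the inner angle of $Q$ at each of the three intermediate outer vertices $\bar y_j, \bar y_k, \bar z_l$ is at least $\pi$; equivalently, the three stretched angles of the sub-triples meeting at each such vertex sum to at most $\pi$. To handle $\bar y_j$, I would compare the three relevant stretched angles $\bar\theta_{y_j}^\Delta$ of $\bar P^\Delta(xy_jz_l)$, $\bar P^\Delta(y_jyy_k)$, $\bar P^\Delta(y_jy_kz_l)$ with the corresponding $X$-angles $\theta_{y_j}^\Delta$ of the chain triples, which they dominate by Lemma \ref{lem:chain-comparison}. The point $y_j$ is a broken point of the original chain lying on the ruling $\lambda_{s_j}$, so by Lemma \ref{lem:overpi} the $X$-angles at $y_j$ on each side of the ruling sum to at most $\pi$; tracing the three sub-triple angles around $y_j$ in $X$ and identifying them with arcs on one side of $\lambda_{s_j}$ gives the desired bound $\alpha+\beta+\gamma\le\pi$. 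The same argument handles the concavity at $\bar y_k$ and $\bar z_l$.

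Once the concavity of $Q$ at the three intermediate vertices is established, I would apply the multi-vertex Alexandrov stretching lemma (see e.g. \cite[Lemma I.2.16]{bridson-haefliger}) simultaneously to the three boundary polylines $\bar x\bar y_j\bar y$, $\bar y\bar y_k\bar z$, $\bar z\bar z_l\bar x$ at their concave intermediate points. The straightening converts $Q$ into a triangle with side lengths equal to the total broken-chain lengths $e_\sigma^{\Delta'}(u_*,v_*)$, $e_\sigma^{\Delta''}(v_*,w_*)$, $e_\sigma^{\Delta}(w_*,u_*)$, and is therefore congruent to $\bar P^\Delta(xyz)$. The stretching lemma asserts that the angles at the unaffected vertices $\bar x, \bar y, \bar z$ are non-decreasing; since each of those vertices belongs to exactly one of the four sub-triples, the initial angles in $Q$ at $\bar x, \bar y, \bar z$ are precisely $\bar\theta_x^\Delta(y_j,z_l)$, $\bar\theta_y^\Delta(y_k,y_j)$, $\bar\theta_z^\Delta(z_l,y_k)$, and the final angles in $\bar P^\Delta(xyz)$ are $\bar\theta_x^\Delta(y,z)$, $\bar\theta_y^\Delta(z,x)$, $\bar\theta_z^\Delta(x,y)$. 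This yields all three claimed inequalities at once.
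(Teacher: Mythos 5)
Your overall strategy---glue the four stretched comparison triangles along the shared internal sides $\bar y_j\bar y_k$, $\bar y_k\bar z_l$, $\bar z_l\bar y_j$ to form a hexagon, check concavity at the three intermediate vertices via Lemmas~\ref{lem:overpi} and~\ref{lem:chain-comparison}, then straighten by the Alexandrov stretching lemma---is exactly the paper's, and the final invocation of the stretching lemma is correct. However, the justification of the concavity step contains a consistent sign error that would derail a careful write-up.

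You state that the inner angle of $Q$ at $\bar y_j$ is at least $\pi$ (correct, and this is what the stretching lemma requires), but then claim this is \emph{equivalent} to the three stretched sub-triple angles at $\bar y_j$ summing to \emph{at most} $\pi$. Since those three angles partition the inner angle of the hexagon at $\bar y_j$, the equivalence runs the other way: the inner angle is $\ge\pi$ precisely when $\bar\theta_{y_j}^\Delta(x,z_l)+\bar\theta_{y_j}^\Delta(z_l,y_k)+\bar\theta_{y_j}^\Delta(y_k,y)\ge\pi$. The same flip then propagates into your use of Lemma~\ref{lem:overpi}: that lemma asserts $\angle x_{i-1}x_i\lambda_{s_i}(t)+\angle\lambda_{s_i}(t)x_ix_{i+1}\ge\pi$ at a broken point, not $\le\pi$ as you write, and it is exactly what is needed to conclude $\theta_{y_j}^\Delta(x,z_l)+\theta_{y_j}^\Delta(z_l,y_k)+\theta_{y_j}^\Delta(y_k,y)\ge\pi$, after which Lemma~\ref{lem:chain-comparison} upgrades each $\theta$ to its stretched counterpart $\bar\theta$. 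If you carry through the inequality $\ge\pi$ rather than $\le\pi$ at both places, your argument becomes the paper's proof; as stated, the key estimate is asserted in the wrong direction, and the application of the stretching lemma would not be justified by what you have actually proved.
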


\begin{proof}
Gluing the triangles
$\bar{P}^{\Delta}(xy_jz_l) = \triangle \bar{x}\bar{y}_j\bar{z}_l$,
$\bar{P}^{\Delta}(y_jyy_k) = \triangle \bar{y}_j\bar{y}\bar{y}_k$,
$\bar{P}^{\Delta}(z_ly_kz) = \triangle \bar{z}_l\bar{y}_k\bar{z}$,
and
$\bar{P}^{\Delta}(y_jy_kz_l) = \triangle \bar{y}_j\bar{y}_k\bar{z}_l$
in $M_{\kappa}^2$
along the edges 
$\bar{y}_j\bar{y}_k$, $\bar{y}_k\bar{z}_l$, and $\bar{z}_l\bar{y}_j$,
we obtain a hexagon
$\bar{x}\bar{y}_j\bar{y}\bar{y}_k\bar{z}\bar{z}_l$ in $M_{\kappa}^2$
whose side-lengths satisfy
$| \bar{x}, \bar{y}_j | + | \bar{y}_j, \bar{y} | = e_{\sigma}^{\Delta'}(u_*,v_*)$,
$| \bar{y}, \bar{y}_k | + | \bar{y}_k, \bar{z} | = e_{\sigma}^{\Delta''}(v_*,w_*)$,
and
$| \bar{z}, \bar{z}_l | + | \bar{z}_l, \bar{x} | = e_{\sigma}^{\Delta}(w_*,u_*)$.
By Lemmas \ref{lem:overpi} and \ref{lem:chain-comparison},
we have
\begin{align*}
\pi &\le \theta_{y_j}^{\Delta}(x,z_l) + \theta_{y_j}^{\Delta}(z_l,y_k) +
\theta_{y_j}^{\Delta}(y_k,y) \\
&\le \bar{\theta}_{y_j}^{\Delta}(x,z_l) + \bar{\theta}_{y_j}^{\Delta}(z_l,y_k) +
\bar{\theta}_{y_j}^{\Delta}(y_k,y).
\end{align*}
Similarly,
we have
\begin{align*}
\pi &\le \bar{\theta}_{y_k}^{\Delta}(y,y_j) + \bar{\theta}_{y_k}^{\Delta}(y_j,z_l) +
\bar{\theta}_{y_k}^{\Delta}(z_l,z), \\
\pi &\le \bar{\theta}_{z_l}^{\Delta}(z,y_k) + \bar{\theta}_{z_l}^{\Delta}(y_k,y_j) +
\bar{\theta}_{z_l}^{\Delta}(y_j,x).
\end{align*}
By stretching the hexagon $\bar{x}\bar{y}_j\bar{y}\bar{y}_k\bar{z}\bar{z}_l$
at the concave vertices $\bar{y}_j$, $\bar{y}_k$, and $\bar{z}_l$,
we obtain a comparison $\Delta$-minimizing stretched triangle $\bar{P}^{\Delta}(xyz)$
in $M_{\kappa}^2$ for $P^{\Delta}(xyz)$. 
The Alexandrov stretching lemma
(see e.g., \cite[Lemma I.2.16]{bridson-haefliger})
leads to the desired inequalities.
\end{proof}

From Lemma \ref{lem:chain-comparison} we also derive the following:

\begin{lem}\label{lem:uniquechain}
Let $u_*, u_*' \in R_*$ be distinct points.
Assuming $p_1(u) \le p_1(u')$,
we choose a decomposition $\Delta = \{ s_i \}_{i = 0, 1, \dots, n}$
of $[p_1(u),p_1(u')]$.
 If $e_{\sigma}^{\Delta}(u_*, u_*') < D_{\kappa}$,
then a $\Delta$-minimizing chain $\{ x_i \}_{i = 0, 1, \dots, n}$ 
along $S$ from $\sigma(u)$ to $\sigma(u')$ is uniquely determined.
\end{lem}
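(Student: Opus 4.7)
The plan is to feed two competing $\Delta$-minimizing chains into the chain-triple construction of Lemma \ref{lem:chain-comparison} and exploit the degeneracy of the resulting stretched comparison triangle to force the intermediate points to coincide, iterating by induction on $n$. The base $n=1$ is vacuous, since the chain has no free intermediate point.

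Given two $\Delta$-minimizing chains $\{x_i\}_{i=0}^n$ and $\{x_i'\}_{i=0}^n$ of common total length $L = e_\sigma^\Delta(u_*,u_*') < D_\kappa$, I would fix any intermediate index $m$, set $x := \sigma(u)$, $z := \sigma(u')$, $y := x_m$, and form the chain triple $P^\Delta(xyz)$ using $\{x_i\}$ along $B^\Delta(xy) \cup B^\Delta(yz)$ and $\{x_i'\}$ along $B^\Delta(zx)$. The minimality of both chains forces the stretched comparison triangle $\bar P^\Delta(xyz) \subset M_\kappa^2$ to be degenerate, $|\bar x,\bar y| + |\bar y,\bar z| = |\bar z,\bar x| = L$, so it collapses to a segment on which the auxiliary points $\bar y_i$ and $\bar z_i$ sit at the cumulative positions $b_i := \sum_{k=1}^i |x_{k-1},x_k|$ and $b_i' := \sum_{k=1}^i |x_{k-1}',x_k'|$ from $\bar x$. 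The second assertion of Lemma \ref{lem:chain-comparison} then reads
\[
|x_i, x_i'| \le |b_i - b_i'| \qquad (1 \le i \le n-1).
\]
At $i=1$ the right-hand side equals $||x_0,x_1| - |x_0,x_1'||$ (since the first chain edges are direct geodesics), the reverse triangle inequality gives equality, and the equality case of the triangle inequality in a CAT$(\kappa)$-domain within $D_\kappa$ places $x_0, x_1, x_1'$ on a common minimal geodesic of $X$.

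Assume WLOG $x_1 \in [x_0, x_1']$; then $[x_1, x_1'] \subset \lambda_{s_1}$ coincides with a sub-arc of $[x_0, x_1']$ by uniqueness of geodesics, so $\uparrow_{x_1}^{x_0}$ lies along $\lambda_{s_1}$. Applying Lemma \ref{lem:overpi} at $x_1$ with the appropriate $t \in \{0,1\}$ forces $\uparrow_{x_1}^{x_2} = \uparrow_{x_1}^{x_1'}$; uniqueness of geodesics then places $x_1'$ on $[x_1, x_2]$, and a short length computation shows that the modified chain $C_3 := \{x_0, x_1', x_2, \ldots, x_n\}$ is itself $\Delta$-minimizing and shares its first intermediate point with $\{x_i'\}$. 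Applying the induction hypothesis to $C_3$ and $\{x_i'\}$, viewed as $\{s_1, \ldots, s_n\}$-minimizing chains from $x_1'$ to $x_n$ of size $n-1$, yields $x_k = x_k'$ for every $k \ge 2$; the equality $a_1 + a_2 = a_1' + a_2'$ then reduces the final step to uniqueness of the minimizer of $y \mapsto |x_0,y| + |y,x_2|$ on the geodesic $\lambda_{s_1}$, which follows from strict convexity of distance-to-a-point in CAT$(\kappa)$ or, in the collinear-degenerate case, from a second application of Lemma \ref{lem:overpi} at $x_1'$. The hard part is the direction analysis at $x_1$: one must orient $\lambda_{s_1}$ consistently, handle both $a_1 \le a_1'$ and $a_1 \ge a_1'$, and treat the degenerate configurations in which $\lambda_{s_0}, \lambda_{s_1}, \lambda_{s_2}$ lie on a single geodesic of $X$, where strict convexity degenerates and the closing step must instead exploit the overpi-based direction relation at $x_1'$.
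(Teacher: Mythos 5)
Your approach refines the paper's proof in an important way, and the refinement exposes a genuine problem.

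The paper's own proof is three sentences: pick $m$ with $x_m \neq y_m$, form the chain triple $P^{\Delta}(x\, y_m\, x')$, observe that the stretched comparison triangle degenerates, and then assert ``Hence $\bar x_m = \bar y_m$'' to conclude $|x_m, y_m| \le |\bar x_m, \bar y_m| = 0$. But that assertion does not follow from the degeneracy alone: on the collapsed comparison segment, $\bar x_m$ sits at arc-length $b_m' := \sum_{k\le m}|x_{k-1},x_k|$ from $\bar x$ and $\bar y_m$ at arc-length $b_m := \sum_{k\le m}|y_{k-1},y_k|$, and these need not coincide. You computed this correctly: the conclusion of Lemma \ref{lem:chain-comparison} only yields $|x_i, x_i'| \le |b_i - b_i'|$, which at $i=1$ is the reverse triangle inequality with equality, not a contradiction. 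Passing to the equality case (collinearity of $x_0, x_1, x_1'$), applying Lemma \ref{lem:overpi} to force $\uparrow_{x_1}^{x_2} = \uparrow_{x_1}^{x_1'}$, swapping $x_1$ for $x_1'$ in the chain, and inducting is the right instinct and genuinely sharpens the paper's argument.

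However, the case you flag as ``hard''---the collinear-degenerate configuration---cannot be closed, because there the lemma is actually false. In Example \ref{ex:trivial}, $\sigma(s,t) = \gamma(t)$ for a minimal geodesic $\gamma$ between distinct points with $L(\gamma) < D_{\kappa}$; every ruling $\lambda_{s_i}$ equals $\gamma$, one has $\pi(u) \neq \pi(u')$ in $R_*\cong[0,1]$, and every monotone sequence $x_i = \gamma(t_i)$ with $0 = t_0 \le t_1 \le \cdots \le t_n = 1$ is a $\Delta$-minimizing chain of total length $L(\gamma) = e^{\Delta}_{\sigma}(u_*, u_*')$. Uniqueness fails precisely in the collinear case your equality analysis isolates, so the fact that your argument stalls there is not a defect of your proof but a symptom of the statement. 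As written, the lemma needs a nondegeneracy hypothesis on the rulings; fortunately its uses in Sublemma \ref{slem:geodesics-ruled-surface-approximation} and Lemma \ref{lem:angle-comparison} only require a \emph{fixed} choice of $\Delta$-minimizing chain rather than uniqueness, so nothing downstream actually depends on this lemma as stated.
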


\begin{proof} Let $x:=\sigma(u), x':=\sigma(u')$, and 
suppose that
two distinct 
$\Delta$-minimizing chains $\{ x_i \}_{i = 0, 1, \dots, n}$ and 
$\{ y_i \}_{i = 0, 1, \dots, n}$
along $S$ from $x$ to $x'$
satisfy $x_m \neq y_m$ for $m \in \{ 1, \dots, n-1 \}$.
Then for the $\Delta$-minimizing chain triple $P^{\Delta}(xy_mx')$ along $S$
we see that
a comparison $\Delta$-minimizing stretched triangle $\bar{P}^{\Delta}(xy_mx')$
degenerates in $M_{\kappa}^2$.
Hence we have $\bar{x}_m = \bar{y}_m$.
On the other hand,
Lemma \ref{lem:chain-comparison} implies 
 $| x_m, y_m | \le | \bar{x}_m, \bar{y}_m |$.
This is a contradiction.
\end{proof}

\subsection{Curvature bounds on ruled surfaces}

Let $\hat{\triangle} u_*v_*w_*$ be a geodesic triangle in $(R_*,e_\sigma)$ with distinct vertices 
and with perimeter $< 2D_{\kappa}$
determined by
$\hat{\triangle}u_*v_*w_* = \widehat{u_*v_*} \cup \widehat{v_*w_*} \cup \widehat{w_*u_*}$,
where $\widehat{u_*v_*}$, $\widehat{v_*w_*}$, and $\widehat{w_*u_*}$
are the edges of $\hat{\triangle}u_*v_*w_*$.

We denote by $\triangle \tilde{u_*}\tilde{v_*}\tilde{w_*}$ a comparison triangle
in $M_{\kappa}^2$ for $\hat\triangle u_*v_*w_*$
with the same side-lengths,
and by $\tilde{\theta}_{u_*}(v_*,w_*)$ the angle 
$\angle \tilde{v_*}\tilde{u_*}\tilde{w_*}$
at $\tilde{u_*}$ between $\tilde{u_*}\tilde{v_*}$ and $\tilde{u_*}\tilde{w_*}$.

In order to complete the proof of Theorem \ref{thm:alex-ruled2},
it suffices to show the following
(see e.g., \cite[Proposition II.1.7]{bridson-haefliger}).

\begin{lem}\label{lem:angle-comparison}
Every geodesic triangle 
$\hat{\triangle} u_*v_*w_*$ in $(R_*,e_\sigma)$ as above 
satisfies the convexity of angle $\kappa$-comparison$:$
Namely for all $w_*' \in \widehat{u_*v_*} \setminus \{ u_*, v_* \}$,
$u_*' \in \widehat{v_*w_*} \setminus \{ v_*, w_* \}$,
and $v_*' \in\widehat{w_*u_*} \setminus \{ w_*, u_* \}$,
we have the following monotonicity$\,:$
\begin{gather*}
\tilde{\theta}_{u_*}(v_*',w_*') \le \tilde{\theta}_{u_*}(v_*, w_*),
\quad
\tilde{\theta}_{v_*}(w_*',u_*') \le \tilde{\theta}_{v_*}(w_*,u_*),
\quad\\
\tilde{\theta}_{w_*}(u_*',v_*') \le \tilde{\theta}_{w_*}(u_*, v_*).
\end{gather*}
\end{lem}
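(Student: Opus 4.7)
The plan is to discretize each edge of $\hat\triangle u_*v_*w_*$ by $\Delta$-minimizing chains along $S$, apply the discrete monotonicity already proved in Lemma \ref{lem:delta-monotone} for comparison stretched triangles in $M_\kappa^2$, and then pass to the limit $|\Delta|\to 0$ via Lemma \ref{lem:esigma-decomp}(2). By the assumption that the perimeter is $<2D_\kappa$, each side length is $<D_\kappa$, so that Lemma \ref{lem:uniquechain} applies to all the chains that will arise, and the comparison triangles in $M_\kappa^2$ are well defined.

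First I would use Proposition \ref{prop:e-sigma-shortest} to lift each edge $\widehat{u_*v_*}$, $\widehat{v_*w_*}$, $\widehat{w_*u_*}$ to a monotone quasicontinuous curve in $R$; by relabeling we may choose compatible lifts $u,v,w\in R$ of the vertices so that $p_1(u)\le p_1(v)\le p_1(w)$. Fix lifts $u', v', w'\in R$ of the interior points $u_*', v_*', w_*'$ lying on the corresponding lifted edges, and set $x=\sigma(u)$, $y=\sigma(v)$, $z=\sigma(w)$. Let $\sigma_1:=p_1(w')$, $\sigma_2:=p_1(u')$, $\sigma_3:=p_1(v')$; note $\sigma_1\in[p_1(u),p_1(v)]$, $\sigma_2\in[p_1(v),p_1(w)]$, $\sigma_3\in[p_1(u),p_1(w)]$. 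Choose a sequence of decompositions $\Delta_n$ of $[p_1(u),p_1(w)]$ each containing $\{p_1(v),\sigma_1,\sigma_2,\sigma_3\}$ with $|\Delta_n|\to 0$.

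For each $n$, form the $\Delta_n$-minimizing chain triple $P^{\Delta_n}(xyz)$ along $S$; by the placement of the extra nodes the chains have broken points $y_{j(n)}\in\lambda_{\sigma_1}$ on $B^{\Delta_n}(xy)$, $y_{k(n)}\in\lambda_{\sigma_2}$ on $B^{\Delta_n}(yz)$, and $z_{l(n)}\in\lambda_{\sigma_3}$ on $B^{\Delta_n}(zx)$. Lemma \ref{lem:delta-monotone} gives
\[
\bar\theta^{\Delta_n}_x(y_{j(n)},z_{l(n)})\le\bar\theta^{\Delta_n}_x(y,z),
\]
together with the analogous inequalities at the vertices $y$ and $z$. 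Lemma \ref{lem:esigma-decomp}(2) ensures that the three side lengths of $\bar P^{\Delta_n}(xyz)$ converge to $e_\sigma(u_*,v_*)$, $e_\sigma(v_*,w_*)$, $e_\sigma(w_*,u_*)$, so the stretched triangles converge in side length to $\tilde\triangle\tilde u_*\tilde v_*\tilde w_*$ and the vertex angles converge to $\tilde\theta_{u_*}(v_*,w_*)$ etc. by continuity of the law of cosines in $M_\kappa^2$.

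The main technical obstacle is the limit analysis of the sub-triangle $\bar P^{\Delta_n}(xy_{j(n)}z_{l(n)})$: I must show that its three side lengths converge to $e_\sigma(u_*,w_*')$, $e_\sigma(w_*',v_*')$, $e_\sigma(v_*',u_*)$. Splitting each $\Delta_n$-minimizing chain at the prescribed nodes $\sigma_i$ produces sub-chains whose total length equals the full chain length, and by the chain minimality each sub-chain length cannot be less than the corresponding $e_\sigma$-distance to its endpoints (otherwise concatenation would yield a shorter chain between the original vertices, contradicting the convergence to $e_\sigma$). Since the three sub-chain lengths sum to $e_\sigma^{\Delta_n}(u_*,v_*)\to e_\sigma(u_*,v_*)=e_\sigma(u_*,w_*')+e_\sigma(w_*',v_*)$ (by $w_*'$ lying on the geodesic $\widehat{u_*v_*}$), each sub-chain length must in fact converge to the corresponding $e_\sigma$-distance, forcing $\pi(y_{j(n)})\to w_*'$ (and similarly for $y_{k(n)}, z_{l(n)}$); here Lemma \ref{lem:uniquechain} controls uniqueness of the minimizing sub-chains at finite stage, eliminating any ambiguity in the limit. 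Consequently the sub-stretched triangle converges in side lengths to $\tilde\triangle\tilde u_*\tilde w_*'\tilde v_*'$, and $\bar\theta^{\Delta_n}_x(y_{j(n)},z_{l(n)})\to\tilde\theta_{u_*}(v_*',w_*')$. Combining with the discrete monotonicity gives $\tilde\theta_{u_*}(v_*',w_*')\le\tilde\theta_{u_*}(v_*,w_*)$; the analogous inequalities at $v_*$ and $w_*$ follow by symmetry, completing the argument.
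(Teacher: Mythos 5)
Your overall strategy coincides with the paper's: discretize the sides, apply the discrete monotonicity of Lemma~\ref{lem:delta-monotone} to the comparison stretched triangles, and pass to the limit using Lemma~\ref{lem:esigma-decomp}. The setup is slightly cleaner (you force the levels $p_1(w'), p_1(u'), p_1(v')$ into $\Delta_n$, whereas the paper selects the approximating broken points $y_{j_n}, y_{k_n}, z_{l_n}$ a posteriori), but the skeleton is the same.

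However, the crucial convergence step — that $\bar{P}^{\Delta_n}(xy_{j(n)}z_{l(n)})$ converges to a comparison triangle for $\hat\triangle u_*w_*'v_*'$ — is not established by your length-additivity argument, and as written it rests on an inequality that runs the wrong way. Since the sub-chain of a $\Delta_n'$-minimizing chain is itself $\Delta_n^{(1)}$-minimizing (else you could shorten the whole chain), its length equals $e_\sigma^{\Delta_n^{(1)}}(u_*,\pi(y_{j(n)}))$, and by Lemma~\ref{lem:esigma-decomp}(1) this is $\le e_\sigma(u_*,\pi(y_{j(n)}))$, \emph{not} $\ge$; so "each sub-chain length cannot be less than the corresponding $e_\sigma$-distance" is false, and the summation argument collapses. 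Moreover, even granting some lower bound you would only locate the limit of $\pi(y_{j(n)})$ somewhere on the geodesic $\widehat{u_*v_*}$ at level $\sigma_1$, which need not be $w_*'$ if the geodesic meets $I^*_{\sigma_1}$ in more than one point. Lemma~\ref{lem:uniquechain} is no help here: it gives uniqueness of the minimizing chain at each fixed $n$, but says nothing about almost-minimizing chains being close, which is what is actually needed to force $y_{j(n)}\to \sigma(w')$.

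The paper supplies precisely this missing mechanism in Sublemma~\ref{slem:geodesics-ruled-surface-approximation}: by forming the closed polygon in $X$ whose two sides are the $\Delta_n$-minimizing chain and the $\Delta_n$-discretization of the geodesic, building the comparison polygon in $M_\kappa^2$, and observing that both side lengths converge to $e_\sigma(u_*,v_*)$, one concludes the comparison polygon degenerates and hence $|y_{j(n)},\sigma(w')|\to 0$. Once that is in hand, the triangle inequality for $e_\sigma^{\Delta}$ along a common ruling geodesic, together with Lemma~\ref{lem:esigma-decomp}(2), yields convergence of the side lengths of the sub-stretched triangles, and the proof closes as you intended. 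You should either cite Sublemma~\ref{slem:geodesics-ruled-surface-approximation} or reproduce this comparison-polygon degeneration argument; without it the proof has a genuine gap at its central step.
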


Before proving Lemma \ref{lem:angle-comparison},
we show the following sublemma.
By Proposition \ref{prop:e-sigma-shortest}, 
for every minimal geodesic $c_*$ in $(R_*,e_\sigma)$
there exists a monotone curve $c$ in $R$ with $\pi \circ c = c_*$
up to monotone parametrization.

\begin{slem}\label{slem:geodesics-ruled-surface-approximation}
In the same setting as in Lemma \ref{lem:angle-comparison},
let $u_*, u_*' \in (R_*,e_\sigma)$ be distinct points with $e_{\sigma}(u_*,u_*') < D_{\kappa}$,
and let $c_*$ be a minimal geodesic in $(R_*,e_\sigma)$
from $u_*$ to $u_*'$.
Assume $p_1(u) \le p_1(u')$ and 
choose a sequence $\{ \Delta_n \}_{n \in \N}$
of decompositions $\Delta_n = \{ s_i \}_{i = 0, 1, \dots, n}$
of $[p_1(u),p_1(u')]$
with $\lim_{n \to \infty} | \Delta_n | = 0$.
For $n \in \N$,
let $\{ x_i \}_{i = 0, 1, \dots, n }$
be the $\Delta_n$-minimizing chain along $S$ from $x:=\sigma(u)$ to $x':=\sigma(u')$,
and take a sequence $\{ y_i \}_{i = 0, 1, \dots, n }$ 
in the image of $\gamma:=\sigma_*\circ c_*$
in such a way that $y_0 = x$, $y_n = x'$,
and $y_i \in \lambda_{s_i}$ for all $i \in \{ 1, \dots, n-1 \}$.
Then we have the following $:$
\begin{enumerate}
\item
\[
e_{\sigma}(u_*, u_*') = \lim_{n \to \infty} \sum_{i=1}^n | y_{i-1}, y_i | ;
\]
\item For every $s \in [p_1(u),p_1(u')]$,
and for every sequence $\{ s_{i_n} \}_{n \in \N}$ 
converging to $s$
with $s_{i_n} \in \Delta_n$,
we have
\[
\lim_{n \to \infty} | x_{i_n}, y_{i_n} | = 0.
\]
\end{enumerate}
\end{slem}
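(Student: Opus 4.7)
The plan for (1) is a squeeze argument. By Proposition \ref{prop:e-sigma-shortest} the minimal geodesic $c_*$ lifts to a monotone quasicontinuous curve $c : [a,b] \to R$ with $\pi \circ c = c_*$, so the parameters $t_i$ realizing $\sigma(c(t_i)) = y_i$ can be arranged to be strictly increasing. On one hand, $\{y_i\}$ is a legitimate $\Delta_n$-chain from $\sigma(u)$ to $\sigma(u')$, so
\[
e_\sigma^{\Delta_n}(u_*, u_*') \le \sum_{i=1}^n |y_{i-1}, y_i|.
\]
On the other hand, inscribed polygons along the Lipschitz curve $\gamma = \sigma \circ c$ satisfy
\[
\sum_{i=1}^n |y_{i-1}, y_i| \le L(\gamma) = L(c_*) = e_\sigma(u_*, u_*').
\]
Since $|\Delta_n| \to 0$, Lemma \ref{lem:esigma-decomp}(2) forces the left-hand side up to $e_\sigma(u_*, u_*')$, closing the squeeze.

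For (2), I would pass to subsequences so that $s_{i_n} \to s$, $x_{i_n} \to x_* \in \lambda_s$, and $y_{i_n} \to y_* \in \lambda_s$, using continuity of $\sigma$ and compactness of rulings, with the goal of showing $x_* = y_*$. The main tool is the stretching comparison of Lemma \ref{lem:chain-comparison} applied to the chain triple $P^{\Delta_n}(x, y_{i_n}, x')$, whose three sides are $\Delta_n$-minimizing chains of lengths $\alpha_n$ (from $x$ to $y_{i_n}$), $\beta_n$ (from $y_{i_n}$ to $x'$), and $a_n + e_n$ (the full chain $\{x_j\}$ split at $x_{i_n}$). The key length estimates are $\alpha_n \le b_n$ and $\beta_n \le f_n$ from minimality, while $\alpha_n + \beta_n \ge e_\sigma(u_*, u_*')$ from the triangle inequality in $R_*$. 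Combined with (1) and Lemma \ref{lem:esigma-decomp}(2), all of $a_n + e_n$, $\alpha_n + \beta_n$, and $b_n + f_n$ converge to $e_\sigma(u_*, u_*') < D_\kappa$. Hence the comparison stretched triangle $\bar P^{\Delta_n}(x, y_{i_n}, x')$ in $M_\kappa^2$ degenerates to a straight segment, and Lemma \ref{lem:chain-comparison} gives $|x_{i_n}, y_{i_n}|_X \le |\bar x_{i_n}, \bar y_{i_n}|_{M_\kappa^2}$, with the right-hand side tending to $|a_\infty - \alpha_\infty|$ by the law of cosines, where $a_\infty = \lim a_n$ and $\alpha_\infty = \lim \alpha_n$.

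The hard part will be establishing $a_\infty = \alpha_\infty$ without invoking the $\CAT(\kappa)$ property of $R_*$ that Theorem \ref{thm:alex-ruled2} aims to prove. The plan is first to identify $\alpha_\infty = \tau_*$, where $\tau_*$ is the arc-length parameter on $c_*$ at which $c_*(\tau_*) \in I^*_s$ corresponds to $y_*$ via the lift: the squeeze $e_\sigma(u_*, c_*(\tau_{i_n})) \le \alpha_n \le b_n$ with $b_n \to \tau_*$ (from (1) applied to $c_*|_{[0, \tau_{i_n}]}$) together with continuity of $e_\sigma$ gives this. A parallel application of Lemma \ref{lem:esigma-decomp}(2) to the restricted problems forces the refinement error $a_n - e_\sigma(u_*, \pi(v_{i_n}))$ to vanish, yielding $a_\infty = e_\sigma(u_*, \pi(v_\infty))$ for the limit lift $v_\infty \in I_s$ of $x_*$. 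The delicate step is to identify $\pi(v_\infty) = c_*(\tau_*)$ in $I^*_s$: I expect to close this by combining the reciprocal bound $|a_n - \alpha_n| \le |x_{i_n}, y_{i_n}|_X$ (from the concatenated chain $\{x_0, \ldots, x_{i_n-1}, y_{i_n}\}$), the comparison $|x_{i_n}, y_{i_n}|_X \le |\bar x_{i_n}, \bar y_{i_n}|$, and the uniqueness of the minimizing chain from Lemma \ref{lem:uniquechain} applied to suitably refined decompositions that force the two limit lifts to coincide.
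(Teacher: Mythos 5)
Your proof of part (1) matches the paper's: both are the same squeeze using Lemma \ref{lem:esigma-decomp}(2) for the lower bound and the fact that the $y_i$'s lie on the geodesic $c_*$ (so inscribed polygon length $\le L(c_*) = e_\sigma(u_*,u_*')$) for the upper bound.

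For part (2) your construction is genuinely different from the paper's, and the difference matters. The paper builds the ``ladder'' polygon $P_n = \bigl(\bigcup_{i=1}^n x_{i-1}x_i\bigr) \cup \bigl(\bigcup_{i=1}^n y_{i-1}y_i\bigr)$, pairing the two chains ruling by ruling, triangulates it with the rungs $x_iy_i$, and straightens the concave vertices $\tilde x_i$ (Lemma \ref{lem:overpi}). This produces a comparison polygon whose one side is a \emph{straight} segment $\bar x\bar x'$ of length $e_\sigma^{\Delta_n}$ and whose other side is the broken chain of the $\bar y_i$'s of length $\sum |y_{i-1},y_i|$; the point $\bar x_i$ is automatically positioned opposite $\bar y_i$ via the $i$-th rung, and the stretching lemma gives $|x_i,y_i|\le |\bar x_i,\bar y_i|$ for the paired points. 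Your triangle $P^{\Delta_n}(x,y_{i_n},x')$ instead places $y_{i_n}$ at a vertex and $x_{i_n}$ as an interior point of the opposite side, so the quantity $|\bar x_{i_n},\bar y_{i_n}|$ is the distance from a side-point to the opposite vertex of the degenerating triangle. Its limit is the difference $|a_\infty - \alpha_\infty|$, and matching those two independently-defined lengths is exactly the point the paper's ladder construction is designed to avoid having to confront directly.

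That ``hard part'' is where your argument actually breaks down, not just where it is incomplete. You write that the squeeze $e_\sigma(u_*,c_*(\tau_{i_n})) \le \alpha_n \le b_n$ forces $\alpha_\infty=\tau_*$. But $\alpha_n = e_\sigma^{\Delta_n'}(u_*,\pi(y_{i_n}))$, and by Lemma \ref{lem:esigma-decomp}(1) every $e_\sigma^{\Delta}$ is a \emph{lower} bound for $e_\sigma$; hence $\alpha_n \le e_\sigma(u_*,\pi(y_{i_n})) = \tau_{i_n}$, the reverse of what you claim. So both of your inequalities bound $\alpha_n$ from above and the squeeze does not close. (The implicit application of Lemma \ref{lem:esigma-decomp}(2) to show $\alpha_n \to \tau_*$ is also not direct, since both the decomposition $\Delta_n'$ and the target point $\pi(y_{i_n})$ vary with $n$, and that lemma is stated with fixed endpoints.) The final paragraph --- combining the reciprocal bound, the comparison, and Lemma \ref{lem:uniquechain} on ``suitably refined decompositions'' --- is a plan, not a proof. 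In short, part (2) of your proposal reduces the claim to exactly the identification $a_\infty = \alpha_\infty$ and then does not establish it; the paper's polygon construction is the device that supplies that identification, and you should adopt it.
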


\begin{proof}
(1)\,From Lemma \ref{lem:non-metric},
we derive 
$e_{\sigma}(u_*,u_*') = \lim_{n \to \infty} \sum_{i=1}^n | x_{i-1}, x_i |$;
moreover,
$e_{\sigma}(u_*,u_*') = \lim_{n \to \infty} \sum_{i=1}^n | y_{i-1}, y_i |$.
Indeed, we have
\begin{align*}
e_{\sigma}(u_*,u_*') &= \lim_{n \to \infty} \sum_{i=1}^n | x_{i-1}, x_i |
\le \liminf_{n \to \infty} \sum_{i=1}^n | y_{i-1}, y_i | \\
&\le \limsup_{n \to \infty} \sum_{i=1}^n | y_{i-1}, y_i |
\le \limsup_{n \to \infty} \sum_{i=1}^n e_{\sigma}(y_{i-1}, y_i)
= e_{\sigma}(u_*,u_*').
\end{align*}
(2)\,
For $n \in \N$,
let
$P_n = 
\left( \bigcup_{i=1}^n x_{i-1}x_i \right) \cup 
\left( \bigcup_{i=1}^n y_{i-1}y_i \right)$
be the polygon in $X$.
In the model surface $M_{\kappa}^2$,
we construct a comparison $(n+1)$-gon $\bar{P}_n$ 
for $P_n$ as follows:
Let $\triangle \tilde{x}\tilde{x}_1\tilde{y}_1$ 
and $\triangle \tilde{x}_{n-1}\tilde{y}_{n-1}\tilde{x}'$ be 
comparison triangles in $M_{\kappa}^2$
for $\triangle xx_1y_1$ and $\triangle x_{n-1}y_{n-1}x'$,
respectively.
For each $i \in \{ 1, \dots, n-1 \}$,
take comparison triangles 
$\triangle \tilde{x}_i\tilde{x}_{i+1}\tilde{y}_i$ and 
$\triangle \tilde{x}_{i+1}\tilde{y}_i\tilde{y}_{i+1}$ 
in $M_{\kappa}^2$
for $\triangle x_ix_{i+1}y_i$ and $\triangle x_{i+1}y_iy_{i+1}$, respectively,
and then glue all the comparison triangles in $M_{\kappa}^2$ along
$\tilde{x}_i\tilde{y}_i$,
and along $\tilde{x}_{i+1}\tilde{y}_i$,
for all $i \in \{ 1, \dots, n-1 \}$.
Then we put
$\tilde{P}_n :=
\left( \bigcup_{i=1}^n \tilde{x}_{i-1}\tilde{x}_i \right) \cup 
\left( \bigcup_{i=1}^n \tilde{y}_{i-1}\tilde{y}_i \right)$.
From Lemma \ref{lem:overpi}
it follows that
for each $i \in \{ 1, \dots, n-1 \}$
the inner angle at $\tilde{x}_i$ in $\tilde{P}_n$
is at least $\pi$.
By stretching the polygon $\tilde{P}_n$ at the concave vertices,
we obtain an $(n+1)$-gon 
$\bar{P}_n = \bar{x}\bar{x}' \cup 
\left( \bigcup_{i=1}^n \bar{y}_{i-1}\bar{y}_i \right)$ in $M_{\kappa}^2$
whose side-lengths satisfy
$| \bar{x}, \bar{x}' | = e_{\sigma}^{\Delta_n}(u_*,u_*')$
and
$| \bar{y}_{i-1}, \bar{y}_i | = | y_{i-1}, y_i |$
for all $i \in \{ 1, \dots, n \}$.
Let $\bar{x}_i \in \bar{x}\bar{x}'$,
$i \in \{ 1, \dots, n-1 \}$,
be the points corresponding to $\tilde{x}_i$.
The Alexandrov stretching lemma
(see e.g., \cite[Lemma I.2.16]{bridson-haefliger})
leads to that 
$| x_i, y_i | \le
| \bar{x}_i, \bar{y}_i |$ for all $i \in \{ 1, \dots, n-1 \}$.

Suppose that the second half of the sublemma is false.
Then we find $s \in (p_1(u),p_1(u'))$,
and a sequence $\{ s_{i_n} \}_{n \in \N}$ 
converging to $s$
such that for all $n \in \N$
we have $s_{i_n} \in \Delta_n$,
and we have
$| x_{i_n}, y_{i_n} | \ge C$ for some $C>0$.
Then
for the points $\bar{x}_{i_n}, \bar{y}_{i_n}$ 
on the comparison $(n+1)$-gon $\bar{P}_n$ for $P_n$
we have
\[
C \le \liminf_{n \to \infty} | x_{i_n}, y_{i_n} | \le 
\liminf_{n \to \infty} | \bar{x}_{i_n}, \bar{y}_{i_n} |.
\]
On the other hand,
since $e_{\sigma}(u_*,u_*') = \lim_{n \to \infty} \sum_{i=1}^n | x_{i-1}, x_i |$,
and 
since $e_{\sigma}(u_*,u_*') = \lim_{n \to \infty} \sum_{i=1}^n | y_{i-1}, y_i |$,
the comparison $(n+1)$-gon $\bar{P}_n$ degenerates in $M_{\kappa}^2$
as $n \to \infty$.
This yields a contradiction.
\end{proof}

\begin{proof}[Proof of Lemma \ref{lem:angle-comparison}]
%
Without loss of generality,
we may assume that 
\[
p_1(u) \le p_1(v) \le p_1(w).
\]
For each $n \in \N$,
let us choose a decomposition 
$\Delta_n = \{ s_i \}_{i = 0, 1, \dots, n}$
of $[p_1(u),p_1(w)]$ 
with $\lim_{n \to \infty} | \Delta_n | = 0$
such that $p_1(v) = s_m$ for some $m \in \{ 1, \dots, n-1 \}$.
Let $\Delta_n' := \{ s_i \}_{i = 0, 1, \dots, m}$
be the decomposition of $[p_1(u),p_1(v)]$,
and let $\Delta_n'' := \{ s_{m+i} \}_{i = 0, 1, \dots, n-m}$
be the decomposition of $[p_1(v),p_1(w)]$.
Set $x:=\sigma(u)$, $y:=\sigma(v)$, $z:=\sigma(w)$ and
take the (unique) 
$\Delta_n'$-minimizing chain $\{ y_i \}_{i = 0, 1, \dots, m}$
along $S$ from $x$ to $y$,
and the $\Delta_n''$-minimizing chain $\{ y_{m+i} \}_{i = 0, 1, \dots, n-m}$
along $S$ from $y$ to $z$,
and the $\Delta_n$-minimizing chain $\{ z_i \}_{i = 0, 1, \dots, n}$
along $S$ from $x$ to $z$.

Let $P^{\Delta_n}(xyz)$ be the $\Delta_n$-minimizing chain triple along $S$ defined by
\[
P^{\Delta_n}(xyz) :=
B^{\Delta_n}(xy) \cup B^{\Delta_n}(yz) \cup B^{\Delta_n}(zx),
\]
where
$B^{\Delta_n}(xy)$ is
the broken geodesic 
$\bigcup_{i=1}^m y_{i-1}y_i$ in $X$ joining $x$ and $y$,
$B^{\Delta_n}(yz)$ is
the broken geodesic 
$\bigcup_{i=1}^{n-m} y_{m+i-1}y_{m+i}$ in $X$ joining $y$ and $z$,
and $B^{\Delta_n}(zx)$ is
the broken geodesic 
$\bigcup_{i=1}^{n} z_{i-1}z_i$ in $X$ joining $z$ and $x$.
Set $x':=\sigma(u')$, $y':=\sigma(v')$ and  $z':=\sigma(w')$. 
By Sublemma \ref{slem:geodesics-ruled-surface-approximation},
we can take sequences 
$\{ y_{j_n} \}_{n \in \N}$,
$\{ y_{k_n} \}_{n \in \N}$,
$\{ z_{l_n} \}_{n \in \N}$
of broken points on $P^{\Delta_n}(xyz) \setminus \{ x, y, z \}$
satisfying
\[
\lim_{n \to \infty}
|y_{j_n},z'| = 0,
\quad
\lim_{n \to \infty}
|y_{k_n},x'| = 0,
\quad
\lim_{n \to \infty}
|z_{l_n},y'| = 0,
\]
where
$j_n \in \{ 1, \dots, m-1 \}$,
$k_n \in \{ m+1, \dots, n-1 \}$,
$l_n \in \{ 1, \dots, n-1 \}$.

Let $\bar{P}^{\Delta_n}(xyz) = \triangle \bar{x}\bar{y}\bar{z}$ be 
a comparison $\Delta_n$-minimizing stretched triangle
in $M_{\kappa}^2$ for $P^{\Delta_n}(xyz)$
whose side-lengths satisfy
\[
| \bar{x}, \bar{y} | = e_{\sigma}^{\Delta_n'}(u_*,v_*),
\quad
| \bar{y}, \bar{z} | = e_{\sigma}^{\Delta_n''}(v_*,w_*),
\quad
| \bar{z}, \bar{x} | = e_{\sigma}^{\Delta_n}(w_*,u_*).
\]
Set $\bar{\theta}_x^{\Delta_n}(y,z) := \angle \bar{y}\bar{x}\bar{z}$,
$\bar{\theta}_y^{\Delta_n}(z,x) := \angle \bar{z}\bar{y}\bar{x}$,
and $\bar{\theta}_z^{\Delta_n}(x,y) := \angle \bar{x}\bar{z}\bar{y}$.
Choose the three $\Delta_n$-minimizing chain triples
$P^{\Delta_n}(xy_jz_l)$,
$P^{\Delta_n}(y_jyy_k)$,
and $P^{\Delta_n}(z_ly_kz)$ along $S$,
and take
comparison $\Delta_n$-minimizing chain triangles
$\bar{P}^{\Delta_n}(xy_jz_l)$,
$\bar{P}^{\Delta_n}(y_jyy_k)$,
and $\bar{P}^{\Delta_n}(z_ly_kz)$
in $M_{\kappa}^2$
for $P^{\Delta_n}(xy_jz_l)$,
$P^{\Delta_n}(y_jyy_k)$,
and $P^{\Delta_n}(z_ly_kz)$,
respectively.
As shown in Lemma \ref{lem:delta-monotone},
we have the monotonicity
$\bar{\theta}_x^{\Delta_n}(y_j,z_l)
\le \bar{\theta}_x^{\Delta_n}(y,z)$,
$\bar{\theta}_y^{\Delta_n}(y_k,y_j)
\le \bar{\theta}_y^{\Delta_n}(z,x)$,
and
$\bar{\theta}_z^{\Delta_n}(z_l,y_k)
\le \bar{\theta}_z^{\Delta_n}(x,y)$.
From the choices of the sequences 
$\{ y_{j_n} \}_{n \in \N}$,
$\{ y_{k_n} \}_{n \in \N}$,
and $\{ z_{l_n} \}_{n \in \N}$,
it follows that
$\bar{P}^{\Delta_n}(xy_{j_n}z_{l_n})$,
$\bar{P}^{\Delta_n}(y_{j_n}yy_{k_n})$,
and
$\bar{P}^{\Delta_n}(z_{l_n}y_{k_n}z)$
converge to comparison triangles
in $M_{\kappa}^2$
for triangles
$\hat{\triangle} u_*w_*'v_*'$,
$\hat{\triangle} w_*'v_*u_*'$,
and $\hat{\triangle} v_*'u_*'w_*$ in $(R_*, e_{\sigma})$,
respectively.
Notice that 
$\bar{P}^{\Delta_n}(xyz)$ converges to a comparison triangle in $M_{\kappa}^2$
for the triangle $\hat{\triangle} u_*v_*w_*$.
Therefore we obtain
\[
\tilde{\theta}_{u_*}(w_*', v_*') 
= \lim_{n \to \infty} \bar{\theta}_x^{\Delta_n}(y_{j_n},z_{l_n})
\le \lim_{n \to \infty} \bar{\theta}_x^{\Delta_n}(y,z)
= \tilde{\theta}_{u_*}(v_*, w_*).
\]
Similarly,
we see
$\tilde{\theta}_{v_*}(u_*',w_*') \le \tilde{\theta}_{v_*}(w_*, u_*)$,
and
$\tilde{\theta}_{w_*}(v_*',u_*') \le \tilde{\theta}_{w_*}(u_*,v_*)$.
Thus $\hat{\triangle} u_*v_*w_*$ satisfies the convexity
of angle $\kappa$-comparison.
\end{proof}

From Lemma \ref{lem:angle-comparison}
we conclude that
$(R_*,e_\sigma)$ is a $\CAT(\kappa)$-space.
This completes the proof of Theorem \ref{thm:alex-ruled2}.
\qed


\end{document}